\DeclareMathAlphabet{\mathpzc}{OT1}{pzc}{m}{it}
\numberwithin{equation}{section}
\newcommand{\bv}{{\bf v}}
\newcommand{\bz}{{\bf z}}
\newcommand{\bA}{{\bf A}}
\newcommand{\bC}{{\bf C}}
\newcommand{\bP}{{\bf P}}
\newcommand{\bQ}{{\bf Q}}
\newcommand{\bR}{{\bf R}}
\newcommand{\bZ}{{\bf Z}}
\newcommand{\cE}{\mathcal{E}}
\newcommand{\cN}{\mathcal{N}}
\newcommand{\fC}{{\mathfrak C}}
\newcommand{\Z}{\bZ}
\newcommand{\Q}{\bQ}
\newcommand{\R}{\bR}
\newcommand{\C}{\bC}
\renewcommand{\det}{\operatorname{det}}
\renewcommand{\epsilon}{\varepsilon}
\def\({\mathopen{}\left(}
\def\){\right)\mathclose{}}
\def\<{\mathopen{}\left<}
\def\>{\right>\mathclose{}}
\definecolor{gold}{rgb}{0.85,.66,0}
\definecolor{cherry}{rgb}{0.9,.1,.2}
\definecolor{burgundy}{rgb}{0.8,.2,.2}
\definecolor{orangered}{rgb}{0.85,.3,0}
\definecolor{orange}{rgb}{0.85,.4,0}
\definecolor{olive}{rgb}{.45,.4,0}
\definecolor{lime}{rgb}{.6,.9,0}
\definecolor{green}{rgb}{.2,.7,0}
\definecolor{grey}{rgb}{.4,.4,.2}
\definecolor{brown}{rgb}{.4,.3,.1}
\def\makeautorefname#1#2{\AtBeginDocument{\expandafter\def\csname#1autorefname\endcsname{#2}}}
\newcommand{\mynewtheorem}[2]{
  \newaliascnt{#1}{equation}          
  \newtheorem{#1}[#1]{#2}
  \aliascntresetthe{#1}
  \makeautorefname{#1}{#2}
}
\numberwithin{substep}{step}
\numberwithin{subcase}{case}
\theoremstyle{remark}
\theoremstyle{definition}
\newtheorem*{convention*}{Convention}
\newtheorem*{conventions*}{Conventions}
\theoremstyle{introthm}
\theoremstyle{introcor}
\theoremstyle{introprop}
\theoremstyle{introquestion}
\global\mdfdefinestyle{exampledefault}{% 
linecolor=black,backgroundcolor=gray!1,linewidth=1pt,leftmargin=0cm,rightmargin=0cm,topline=false,bottomline=false,skipabove=12pt}
\title{Rank three instantons, representations and sutures}
\author{Aliakbar Daemi, Nobuo Iida and Christopher Scaduto}
\date{}
\newcommand{\Addresses}{{
  \bigskip
  \footnotesize
  Aliakbar Daemi, \textsc{Washington University in St. Louis, MO, USA}\par\nopagebreak
  \textit{E-mail address}: \texttt{adaemi@wustl.edu}
  \vspace{.2cm}
  
  Nobuo Iida, \textsc{Tokyo Institute of Technology, Tokyo, Japan
}\par\nopagebreak
  \textit{E-mail address}: \texttt{iida.n.ad@m.titech.ac.jp}
    \vspace{.2cm}
    
Christopher Scaduto, \textsc{University of Miami, Coral Gables, FL, USA}\par\nopagebreak
  \textit{E-mail address}: \texttt{cscaduto@miami.edu}
}}
\begin{document}
\maketitle

\begin{abstract}
	We show that the knot group of any knot in any integer homology sphere admits a non-abelian representation 
	into $SU(3)$ such that meridians are mapped to matrices whose eigenvalues are the three distinct third roots of unity.
	This answers the $N=3$ case of a question posed by Xie and the first author. We also characterize when a $PU(3)$-bundle 
	admits a flat connection. The key ingredient in the proofs is a study of the 
	ring structure of $U(3)$ instanton Floer homology of $S^1\times \Sigma_g$.
	In an earlier paper, Xie and the first author stated the so-called eigenvalue conjecture about this ring, and in this paper 
	we partially resolve this conjecture. This allows us to establish a surface decomposition theorem for $U(3)$ instanton 
	Floer homology of sutured manifolds, and then obtain the mentioned topological applications.
	Along the way, we prove a structure theorem for $U(3)$ Donaldson invariants, which is the counterpart of Kronheimer and 
	Mrowka's structure theorem for $U(2)$ Donaldson invariants. We also prove a non-vanishing theorem for the 
	$U(3)$ Donaldson invariants of symplectic manifolds.
\end{abstract}

\hypersetup{linkcolor=black}
\setcounter{tocdepth}{1}
\tableofcontents

\newpage

%!TEX root = main.tex

\section{Introduction}

This paper studies invariants in low-dimensional topology derived from $U(N)$ instanton gauge theory, with an emphasis on the case $N=3$. Before describing the particular invariants and the general strategy, we begin with the central topological applications, which regard the existence of certain non-abelian representations of fundamental groups of $3$-manifolds. 

\subsection*{$U(3)$ representations of $3$-manifold groups}

Let $N\geq 2$ be an integer. The following was posed by the first author and Xie \cite{DX}:

\begin{question}\label{question:main}
	If $K$ is a non-trivial knot in an integer homology $3$-sphere $Y$, does there exist a homomorphism $\phi:\pi_1(Y\setminus K)\to SU(N)$ with non-abelian image, such that
	\[
		\phi(\mu) = c\left[ \begin{array}{cccc} 1 & 0 & \cdots & 0\\
											0 & \zeta & \cdots & 0\\
											\vdots  & \vdots & \ddots & \vdots \\
											0 & 0 & \cdots & \zeta^{N-1} \end{array}\right]
	\]
	where $\zeta = e^{2\pi  i /N}$ and $c=e^{\pi i/N}$ or $c=1$ depending on whether $N$ is even or odd?
\end{question}

The notation $\mu$ refers to the class of a fixed meridian in the knot group. Note that if Question \ref{question:main} has an affirmative answer for $N$, then it does so too for all $lN$, where $l\in \Z_{>0}$. Kronheimer and Mrowka proved that Question \ref{question:main} has an affirmative answer in the case $N=2$ \cite{km-sutures}. In this paper we answer it affirmatively in the case $N=3$:

\begin{theorem}\label{thm:intro1}
	If $K$ is a non-trivial knot in an integer homology $3$-sphere $Y$, then there exists a homomorphism  $\phi:\pi_1(Y\setminus K)\to SU(3)$ with non-abelian image, such that 
	\[
		\phi(\mu) = \left[ \begin{array}{ccc} 1 & 0 &  0\\
											0 & \zeta & 0\\
											0 & 0 & \zeta^{2} \end{array}\right], \qquad \zeta=e^{2\pi i/3}.
	\]
\end{theorem}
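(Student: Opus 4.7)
The plan is to deduce Theorem \ref{thm:intro1} from a non-vanishing result for a $U(3)$ sutured instanton Floer invariant of the knot complement, following the scheme that Kronheimer and Mrowka used in the $SU(2)$ setting \cite{km-sutures}, adapted to $U(3)$ using the tools developed in this paper. The central new ingredient is the surface decomposition theorem for $U(3)$ sutured instanton Floer homology advertised in the abstract, whose proof in turn relies on the (partial) resolution of the eigenvalue conjecture for the ring structure of $U(3)$ instanton Floer homology of $S^1\times\Sigma_g$.

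First, I would equip the knot complement $M := Y\setminus\nu(K)$ with meridional sutures $\gamma_\mu$ on $\partial M$ and form the $U(3)$ sutured instanton Floer group $\mathrm{SHI}^{U(3)}(M,\gamma_\mu)$, set up so that the closure data force the meridional holonomy to have eigenvalues $\{1,\zeta,\zeta^2\}$. Since $Y$ is an integer homology sphere, $H_1(Y\setminus K)=\Z$ is generated by $\mu$, so the determinant of any representation is automatically determined by $\det\phi(\mu)=\zeta\cdot\zeta^2=1$, and in particular any flat $U(3)$-connection produced in this way yields a representation into $SU(3)$.

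Next, because $K$ is non-trivial, a minimum-genus Seifert surface $\Sigma$ has genus $g\geq 1$ and $(M,\gamma_\mu)$ is taut in the sense of Gabai. I would apply the surface decomposition theorem of this paper along $\Sigma$, and iterate it along a Gabai-style sutured hierarchy until arriving at a product sutured manifold. At the top of the hierarchy the Floer group is governed by the ring structure of $\mathrm{SHI}^{U(3)}$ for $S^1\times\Sigma_g$ together with the eigenvalue-conjecture analysis of this paper, which identifies a non-zero summand corresponding to the prescribed meridional eigenvalue pattern. Propagating this non-vanishing back up the hierarchy yields $\mathrm{SHI}^{U(3)}(M,\gamma_\mu)\neq 0$. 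Non-vanishing then forces the existence of an irreducible flat (or lightly perturbed-flat) $SU(3)$-connection on $M$ with meridional holonomy $\mathrm{diag}(1,\zeta,\zeta^2)$, whose monodromy gives the desired $\phi$. For non-abelianness: an abelian $\phi$ with this meridional value factors through $H_1(Y\setminus K)\cong\Z$ and is thus unique up to conjugation; its critical set is a single reducible orbit whose Floer-theoretic contribution is bounded and, after standard reducibility perturbations, is either killed or accounts for only a controlled piece that cannot exhaust a Floer group established by surface decomposition from a non-trivial base case.

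The main obstacle is running the hierarchy argument in the second step. Unlike the $SU(2)$ case, here one must ensure that each surface decomposition step strictly preserves the relevant non-trivial summand under the $\mathrm{SHI}^{U(3)}$ ring action, and this is precisely where the only-partially-resolved eigenvalue conjecture enters; one has to check that the decompositions one actually performs fall within the range where the conjecture is proved. The extraction of the representation in the third step, and its separation from abelian ones, is by comparison routine once the Floer-theoretic non-vanishing is secured.
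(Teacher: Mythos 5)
Your plan matches the paper's architecture: use $U(3)$ sutured instanton Floer homology, decompose along a minimal-genus Seifert surface, run a Gabai hierarchy, invoke the eigenvalue analysis (Theorem \ref{thm:mainev}) via the surface decomposition theorem (Proposition \ref{sur-decom}), and extract a representation. That much is right, and your concern about where the partially-resolved eigenvalue conjecture enters is the correct thing to worry about; the paper's answer is that the simple-type restriction in Theorem \ref{thm:mainev} is exactly strong enough that Proposition \ref{sur-decom} works with no restriction on which hierarchy decompositions are allowed, so this is not a residual obstacle.

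The genuine gap is in your final step. You conclude ``non-vanishing of $SHI^{U(3)}(M,\gamma_\mu)$ forces the existence of an irreducible flat connection,'' arguing that the abelian orbit ``cannot exhaust a Floer group established by surface decomposition.'' This does not work as stated: for the unknot, $KHI^3_*$ is $1$-dimensional and is exhausted by the single abelian orbit, so mere non-vanishing is consistent with there being no non-abelian representation at all. What the paper actually proves via the Seifert-surface decomposition is sharper: by \cite[Prop.~5.33]{DX}, decomposing $(Y(K),\alpha(K))$ along a genus-$g$ Seifert surface $S$ identifies $SHI^3_*(Y(S),\alpha(S))$ with the \emph{top} Alexander-graded pieces $KHI^3_*(Y,K;\pm 2g,0)$ of the decomposition \eqref{eq:knothomologyu3graded}. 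After reducing to the case where $Y\setminus K$ is irreducible (needed so that $(Y(S),\alpha(S))$ is taut), Corollary \ref{non-vanishing-sutured} gives non-vanishing of these top pieces, and since the knot is non-trivial we have $g\geq 1$, so $(2g,0)\neq(-2g,0)$ and hence $\dim KHI^3_*(Y,K)\geq 2$. It is this rank bound, not bare non-vanishing, that feeds into \cite[Cor.~5.32]{DX} to produce a non-abelian representation: a representation supported in the top grading cannot be the unique abelian one, which lives at $(0,0)$. Your proposal should replace the ``cannot exhaust'' heuristic with the Alexander-grading argument (or at least with the explicit rank-$\geq 2$ bound), and should record the reduction to irreducible $Y\setminus K$, which is what makes the tautness hypothesis of the non-vanishing theorem available.
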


We also address the existence of $3$-dimensional representations for fundamental groups of closed $3$-manifolds. The following is an $N=3$ analogue of a result of Kronheimer and Mrowka \cite[Thm. 7.21]{km-sutures} (see also \cite[Thm. 1.6]{km-icm}). 

\begin{theorem}\label{thm:intro2}
	Let $Y$ be a closed, oriented $3$-manifold, and $\omega\in H^2(Y;\Z/3)$. Suppose $\omega[S]\equiv 0\pmod{3}$ for every embedded $2$-sphere $S\subset Y$. Then there exists a homomorphism $\pi_1(Y)\to PU(3)$ with the associated characteristic class in $H^2(Y;\Z/3)$ equal to $\omega$.
\end{theorem}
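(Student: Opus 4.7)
The plan is to translate the representation-existence question into a non-vanishing statement for $U(3)$ instanton Floer homology and then apply the sutured surface decomposition machinery developed earlier in the paper. A homomorphism $\pi_1(Y)\to PU(3)$ with characteristic class $\omega$ is the same data as a flat $PU(3)$-connection on the bundle $P\to Y$ classified by $\omega$; the sphere hypothesis is necessary, since $\pi_1(S^2)=1$ forces the restriction of any flat $PU(3)$-bundle to an embedded sphere $S \subset Y$ to be trivial. The goal is thus to establish sufficiency.

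First I would reduce to the case that $Y$ is irreducible. Writing $Y = Y_1 \# \cdots \# Y_k \# (\#_\ell\, S^2 \times S^1)$ as a prime decomposition, a Mayer--Vietoris computation with $\Z/3$-coefficients identifies $H^2(Y;\Z/3)$ with $\bigoplus_i H^2(Y_i;\Z/3) \oplus \bigoplus_\ell H^2(S^2 \times S^1;\Z/3)$, and the sphere hypothesis forces the components along the $S^2 \times S^1$ summands to vanish. Flat connections on each irreducible summand $(Y_i,\omega_i)$ can then be glued across the connecting $2$-spheres (on which the bundle is necessarily trivial) using the unique flat structure on $P|_{S^2}$. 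This reduces the theorem to producing a flat $PU(3)$-connection on an irreducible $Y$ with arbitrary $\omega \in H^2(Y;\Z/3)$.

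For the irreducible case, set $M = Y \setminus B^3$ and equip $\partial M = S^2$ with a single equatorial suture $\gamma$. Since $\omega|_{\partial M}=0$, the bundle extends to $M$, and the sutured $U(3)$ instanton Floer homology $SHI_3(M,\gamma;\omega|_M)$ is identified with the closed-manifold Floer homology $I_*(Y;\omega)$ in the standard way. I would then show $SHI_3(M,\gamma;\omega|_M) \neq 0$ using Gabai's sutured manifold hierarchy: an irreducible, taut sutured manifold admits a finite sequence of decompositions along taut surfaces terminating in a disjoint union of product sutured manifolds, on which the Floer group is manifestly non-trivial. At each stage the $U(3)$ sutured surface decomposition theorem, whose proof rests on the ring structure of $I^*(S^1 \times \Sigma_g;\omega)$ and the partial resolution of the eigenvalue conjecture, preserves non-vanishing provided the decomposing surface is admissible with respect to $\omega$. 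Non-vanishing of $I_*(Y;\omega)$ then yields critical points of the perturbed Chern--Simons functional on $(Y,P)$; letting the perturbation shrink to zero and invoking Uhlenbeck compactness produces an honest flat $PU(3)$-connection on $P$, as desired.

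The technical crux is the third step: Gabai's hierarchy yields taut surfaces, but these need not be admissible for the $U(3)$ surface decomposition theorem relative to a fixed class $\omega$. The main obstacle I anticipate is refining the hierarchy---likely by combining Gabai's construction with a Thurston-norm minimization of representatives dual to $\omega$---so that at every stage the decomposing surface satisfies the admissibility hypothesis of the decomposition theorem. Everything else is either a direct appeal to results earlier in the paper or a standard compactness argument in instanton gauge theory.
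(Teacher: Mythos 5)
Your reduction to irreducible $Y$ and the overall strategy (sutured non-vanishing via Gabai's hierarchy plus compactness) are in the spirit of the paper, but there is a genuine gap in the way you tie the sutured group to $Y$ itself.

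The identification you invoke, ``$SHI_*^3(M,\gamma;\omega|_M)$ is identified with the closed-manifold Floer homology $I_*(Y;\omega)$,'' is not correct, and that single step is where the argument breaks. Recall how $SHI^3$ is defined: a closure of $(Y\setminus B^3,\ \text{equatorial suture})$ is obtained by gluing $[-1,1]\times F_{g,1}$ and then identifying the two boundary surfaces, so the resulting closed $3$-manifold is $Y\#T^3$ (for $g=1$), not $Y$. Thus non-vanishing of this sutured group tells you that $I_*^3(Y\#T^3,\gamma+\gamma_1\,|\,R\#T^2)\neq 0$, \emph{not} that $I_*^3(Y,\omega)\neq 0$. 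Worse, when $Y$ is a rational homology sphere (which is allowed once you have reduced to the irreducible case), the group $I_*^3(Y,\omega)$ is not even defined, because $(Y,\omega)$ fails $3$-admissibility. So the sentence ``Non-vanishing of $I_*(Y;\omega)$ then yields critical points of the perturbed Chern--Simons functional on $(Y,P)$'' rests on a group that has not been shown to be nonzero and may not exist at all. In addition, the paper explicitly remarks that no $U(3)$ connected sum theorem is currently available to pass from non-vanishing over $Y\#T^3$ to non-vanishing over $Y$, so one cannot repair this gap by a formal connected-sum argument.

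The paper closes the gap differently and in two pieces. When $Y$ is an irreducible rational homology sphere, it does not use Floer theory at all: one takes a flat $U(1)$-connection on $Y$ with first Chern class $\mathrm{PD}(\gamma)$ and direct-sums it with the trivial $SU(2)$-connection, producing the desired projectively flat $U(3)$-connection by hand. When $Y$ is irreducible with $b_1(Y)>0$, the paper cuts $Y$ along a norm-minimizing surface $R$ (rather than removing a ball) to form a taut sutured manifold $(M,\alpha,w)$ whose closure is $Y\#T^3$ with distinguished surface $R\#T^2$; Corollary~\ref{non-vanishing-3-man} then gives $I_*^3(Y\#T^3,\gamma+\gamma_1\,|\,R\#T^2)\neq 0$, which yields a projectively flat $PU(3)$-connection on the bundle over $Y\#T^3$ with $c_1=\mathrm{PD}(\gamma+\gamma_1)$, and restricting the resulting representation of $\pi_1(Y\#T^3)=\pi_1(Y)\ast\pi_1(T^3)$ to the $\pi_1(Y)$ free factor gives the representation of $\pi_1(Y)$ with the correct characteristic class. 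To fix your proof you would need to (a) replace the false identification with the correct $Y\#T^3$ statement, (b) add the step that a flat connection on $Y\#T^3$ restricts to one on $Y$, and (c) dispose of the rational homology sphere case separately, since it lies outside the admissible regime.

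Your worry about ``admissibility'' of the hierarchy surfaces is less serious than you suggest: Proposition~\ref{sur-decom} does not require the decomposing surface to be $\omega$-admissible, only that it satisfy the boundary-pattern hypotheses (no closed components, boundary-coherent closed components of $\partial S\cap R_\pm$), and Gabai's/Juh\'asz's construction can be arranged to produce such surfaces, exactly as in Corollary~\ref{non-vanishing-sutured}.
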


The {\emph{Covering Conjecture}} states that the $N$-fold branched cover $\Sigma_N(K)$ of a non-trivial knot $K$ in a homotopy sphere $Y$ is not a homotopy sphere \cite[Problem 3.38]{kirbylist}. Theorem \ref{thm:intro1} provides a homomorphism $\phi$ which descends to a non-trivial homomorphism of $\pi_1(\Sigma_3(K))$, and thus proves the Covering Conjecture for $N=3$. This also proves the Smith Conjecture, for $N=3$: a non-trivial knot is not the fixed point set of an order $N$ orientation preserving diffeomorphism of $S^3$. Both the Covering Conjecture and Smith Conjecture for general $N$ are theorems, proved by ideas and techniques from diverse areas of mathematics including hyperbolic geometry, minimal surface theory, $SL(2,\C)$ character varieties and classical 3-manifold topology \cite{morganbass}. The proof indicated for $N=3$ (modelled on the proof of Kronheimer and Mrowka for $N=2$) is based on instanton Floer theory.

The Floer homology we utilize is in the setting of $U(3)$ instanton gauge theory. Donaldson-type invariants for closed $4$-manifolds can be defined for any of the groups $U(N)$, see \cite{kronheimer-higher, culler}. (More precisely, the relevant gauge symmetry group is $PU(N)$.) Such invariants were first studied in the physics literature by Mari\~{n}o and Moore \cite{marino-moore}. There, a generalization of Witten's conjecture is provided, which predicts that no new topological information can be derived for $4$-manifolds of simple type when $N\geq 3$. In contrast, Theorems \ref{thm:intro1} and \ref{thm:intro2} are derived from higher rank instanton Floer theory and do not follow from the $U(2)$ theory. To the best of the authors' knowledge, these theorems are the first genuine topological applications of higher rank instanton gauge theory.

\subsection*{$U(3)$ sutured instanton homology}

Kronheimer and Mrowka proved analogues of the above results in the case $N=2$ using $U(2)$ sutured instanton Floer homology \cite{km-sutures}. The strategy to address Question \ref{question:main} in general is to develop sutured instanton Floer theory for $U(N)$ so that the proofs for the $N=2$ case may be adapted. This was initiated in \cite{DX}, where $U(3)$ sutured instanton homology was constructed and some basic properties were established. To a balanced sutured manifold $(M,\alpha)$, the construction outputs a $\Z/2$-graded complex vector space 
\begin{equation}\label{eq:shi-intro}
	SHI_*^3(M,\alpha).
\end{equation}
This is done by first gluing $[-1,1]\times F$ to $M$, where $F$ is a connected surface of genus $g$ with boundary; the gluing is such that $[-1,1]\times \partial F$ is identified with annuli neighborhoods of the sutures $\alpha \subset \partial M$. As $(M,\alpha)$ is balanced, the resulting $3$-manifold has two boundary components which are diffeomorphic, and gluing these up by a diffeomorphism forms a closed $3$-manifold. Then \eqref{eq:shi-intro} is defined by taking a certain subspace of the $U(3)$ instanton homology of the closed $3$-manifold with some choice of admissible bundle.

In \cite{DX}, it is shown that $SHI_*^3(M,\alpha)$ is independent of the gluing maps involved in the construction. Here we establish that the construction is also independent of the choice of $F$ (in particular, the genus $g$). We obtain the following. 

\begin{theorem}\label{thm:intro-SHI-invariance}
	The $U(3)$ sutured instanton homology $SHI_*^3(M,\alpha)$ is independent of all auxiliary choices and is an invariant of the balanced sutured manifold $(M,\alpha)$.
\end{theorem}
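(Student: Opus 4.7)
The plan is to reduce Theorem \ref{thm:intro-SHI-invariance} to invariance under a single elementary move on the auxiliary data, and then handle that move via an excision argument. Invariance under the choice of the gluing diffeomorphism used to form the closure was already established in \cite{DX}, so only invariance under the choice of the connected auxiliary surface $F$ remains. Any two admissible choices of $F$ can be connected by a sequence of two operations: (i) an isotopy/diffeomorphism of $F$, and (ii) a stabilization that attaches a $1$-handle to $F$ and thereby increases its genus by one. Operation (i) changes the resulting closed $3$-manifold by a diffeomorphism intertwining the admissible bundles, so the induced map on $U(3)$ Floer homology is an isomorphism preserving the eigenspace defining $SHI_*^3$. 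The entire theorem thus reduces to proving invariance under stabilization.

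For stabilization, write $\widetilde Y_g$ and $\widetilde Y_{g+1}$ for the two closures of $(M,\alpha)$ built from genus-$g$ and genus-$(g+1)$ auxiliary surfaces, each carrying a distinguished embedded surface $R_g$, $R_{g+1}$ (the closure of the doubled $F$). Following the template established by Kronheimer and Mrowka in the $U(2)$ case, I would relate these via an excision cobordism that compares $\widetilde Y_g \sqcup (S^1\times \Sigma_2)$ with $\widetilde Y_{g+1} \sqcup (S^1\times \Sigma_1)$ as $3$-manifolds equipped with admissible $U(3)$-bundles, cutting and re-gluing along the surfaces $R_g$ and $R_{g+1}$. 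The sutured invariant $SHI_*^3(M,\alpha)$ is defined as the simultaneous generalized eigenspace of the $\mu$-map operators associated to $R_g$ and a point on $R_g$, evaluated at a specific ``top'' eigenvalue pair $(\lambda,\nu)$ prescribed by the closure construction. The excision cobordism intertwines $\mu$-class actions on the two sides, and it is therefore enough to show that, after restricting to the $(\lambda,\nu)$-eigenspace, the cobordism induces an isomorphism.

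To execute this restriction I rely on the structural results about the ring $U(3)$ instanton Floer homology of $S^1\times \Sigma_g$ whose proofs occupy the bulk of the paper. The partial resolution of the Daemi--Xie eigenvalue conjecture identifies the admissible simultaneous eigenvalues of the $\mu$-map operators; crucially, one infers that on the model factor $S^1\times \Sigma_1$ the generalized eigenspace at the eigenvalue prescribed by the stabilization is one-dimensional, while on $S^1\times \Sigma_2$ the eigenspace at the corresponding eigenvalue is as expected. Combined with the standard fact that the $\mu$-classes along the cut surfaces on the two sides of the excision pair up, this one-dimensionality reduces the cobordism map on eigenspaces to an explicit isomorphism of one-dimensional factors tensored against the identity, giving $SHI_*^3(M,\alpha; F_g) \cong SHI_*^3(M,\alpha; F_{g+1})$.

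The main obstacle is the structural input rather than the excision mechanics. Specifically, one must prove enough of the eigenvalue conjecture to guarantee the one-dimensionality (or at least a controlled rank) of the distinguished top-eigenvalue component on the model pieces $S^1\times \Sigma_g$ for small $g$, and to ensure that the $\mu(\mathrm{pt})$-eigenvalue branch selected on the genus-$g$ side matches the one selected on the genus-$(g+1)$ side. Once these ingredients from the $U(3)$ Floer-theoretic ring analysis are in hand, the excision-plus-projection package goes through formally, paralleling the $U(2)$ argument of Kronheimer--Mrowka.
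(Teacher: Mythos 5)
The overall shape of your argument---reduce invariance to the genus-stabilization move, compare the two closures via an excision, and use the eigenvalue results from the $U(3)$ ring analysis to identify the distinguished summand---matches the paper's strategy. But the specific excision you propose is not the one that works, and you omit two preparatory steps that are essential.

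The excision cannot be taken along the distinguished surfaces $R_g$ and $R_{g+1}$. These have genera differing by one, and the excision theorem (the $U(3)$ version from \cite{DX} is the one available) cuts along two copies of a surface of the \emph{same} genus. Even setting that aside, cutting $\widetilde Y_g$ along $R_g$ and $S^1\times \Sigma_2$ along $\Sigma_2$ and regluing produces a distinguished surface of genus $g(R_g)+2$, not $g(R_g)+1$, so the bookkeeping for a single stabilization fails. The move that actually works is \emph{genus-one} excision: one insists from the start that the auxiliary surface $F_{g,k}$ contains a non-separating curve $c$ and that the gluing diffeomorphism $\varphi$ matches the two resulting copies of $c$, so that $Z_\alpha$ contains an embedded torus $T=S^1\times c$. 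One then excises $Z_\alpha$ and $S^1\times \Sigma_2$ (the latter cut along $S^1\times c_0$ for a non-separating $c_0\subset\Sigma_2$) along copies of $T$; this produces exactly $Z_\alpha'$. Cutting along genus-one tori is the only way to increase the genus of $\overline R$ by one at a time while remaining in the excision framework, and this requirement on $\varphi$ and $c$ is an additional auxiliary constraint built into the definition that your sketch does not record.

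There are also two missing structural steps. First, the paper changes the bundle before excising: using excision with $(S^1\times\overline R,\,\gamma_1+c')$, whose Floer group one shows is one-dimensional by a separate bootstrapping argument with the mod-$3$ bundle classes, one passes from $I_*^3(Z_\alpha,\gamma\,|\,\overline R)$ to $I_*^3(Z_\alpha,\gamma+c'\,|\,\overline R)$; without this, the bundle data needed for the genus-one excision is not in place. Second, the genus-one excision intertwines $\mu_i(\Sigma_2)+\mu_i(\overline R)$ on the source with $\mu_i(\overline R')$ on the target, so one needs the triviality of $\mu_i$ for a genus-one surface (Remark \ref{genus-one}) and the one-dimensionality of the extremal eigenspace of $(\mu_2(\Sigma_2),\mu_3(\Sigma_2))$ acting on the torus-cut group $I_*^3(S^1\times\Sigma_2,c_0'\,|\,T)$. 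You correctly identify that the eigenvalue structure (Theorem \ref{thm:mainev}, Proposition \ref{eigenvalue-S}) is the key input, but the one-dimensional spaces that matter are attached to the genus-one-cut Floer groups, not the full-surface groups your sketch points at.
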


We also establish a {\it surface decomposition result}, which describes the behavior of $U(3)$ sutured instanton homology under surface decompositions. This is the counterpart of analogous results in sutured Heegaard Floer homology \cite{juhasz} and $U(2)$ sutured instanton homology \cite{km-sutures}. The surface decomposition result, given in Proposition \ref{sur-decom}, leads to the following non-vanishing result, which (together with a modest generalization given in Corollary \ref{non-vanishing-sutured}) is used to prove Theorems \ref{thm:intro1} and \ref{thm:intro2}.

\begin{theorem}\label{thm:intro-SHI}
	For any balanced taut sutured manifold $(M,\alpha)$, the $U(3)$ sutured instanton homology 
	group $SHI_*^3(M,\alpha)$ is non-trivial. 
\end{theorem}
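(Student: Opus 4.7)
The plan is to mimic the strategy employed by Kronheimer--Mrowka for $U(2)$ sutured instanton homology \cite{km-sutures}, with the surface decomposition theorem (Proposition \ref{sur-decom}) playing the role it does there, and using the structural properties of the ring $U(3)$ instanton Floer homology of $S^1\times\Sigma_g$ (whose partial resolution of the eigenvalue conjecture is the main new ingredient of this paper) to handle the base case.

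First, I would invoke Gabai's theory of sutured manifold hierarchies. By a theorem of Juh\'asz, any taut balanced sutured manifold $(M,\alpha)$ admits a finite sequence of decompositions along \emph{well-groomed} surfaces,
\[
(M,\alpha)\ \rightsquigarrow\ (M_1,\alpha_1)\ \rightsquigarrow\ \cdots\ \rightsquigarrow\ (M_k,\alpha_k),
\]
with each intermediate sutured manifold taut, and terminating in a product sutured manifold $(M_k,\alpha_k)\cong([-1,1]\times R,\{0\}\times\partial R)$ for some compact surface $R$ with boundary (possibly disconnected). The relevance of the well-groomed condition is that the surface decomposition theorem of this paper (Proposition \ref{sur-decom}) is formulated precisely so that decomposition along such a surface $S\subset M$ exhibits $SHI^3_*(M',\alpha')$ as a direct summand of $SHI^3_*(M,\alpha)$, where $(M',\alpha')$ is the result of the decomposition.

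Applying this iteratively gives an embedding
\[
SHI^3_*(M_k,\alpha_k)\ \hookrightarrow\ SHI^3_*(M,\alpha),
\]
so it suffices to show the non-vanishing of $SHI^3_*$ for a product sutured manifold $([-1,1]\times R,\{0\}\times \partial R)$ with $R$ a (possibly disconnected) compact surface with non-empty boundary. By the construction reviewed in the introduction, forming the closure of such a product sutured manifold with an auxiliary genus-$g$ surface $F$ yields a closed $3$-manifold diffeomorphic to $S^1\times \Sigma$ for some closed surface $\Sigma$ of genus at least $2$, equipped with an admissible $PU(3)$-bundle dual to $S^1\times\{\text{pt}\}$. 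The sutured group $SHI^3_*([-1,1]\times R,\{0\}\times\partial R)$ is then cut out of the $U(3)$ instanton Floer homology of this closure by simultaneous generalized eigenspace projectors for the $\mu$-operators associated to the auxiliary surface $F$ and to the extra points and handles introduced in the closure procedure.

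The main obstacle — and the heart of the argument — is thus the base case: showing that this eigenspace projection is non-zero. This is where the partial resolution of the eigenvalue conjecture enters decisively: the ring-theoretic analysis of the $U(3)$ instanton Floer homology of $S^1\times\Sigma_g$, together with the explicit description of which generalized eigenvalues of $(\mu(\Sigma_g),\mu(\text{pt}))$ are realized, identifies the joint eigenspace corresponding to the distinguished eigenvalue pair used in the definition of $SHI^3$ and shows that its dimension is positive. Combined with the invariance of $SHI^3$ under the choice of $F$ established in Theorem \ref{thm:intro-SHI-invariance} (which in particular allows us to take $g$ as large as we wish, matching Kronheimer--Mrowka's adjunction-style bounds and ensuring the relevant eigenspaces separate cleanly), this yields $SHI^3_*([-1,1]\times R,\{0\}\times\partial R)\neq 0$, and hence $SHI^3_*(M,\alpha)\neq 0$ via the embedding above. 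The principal technical difficulty will be bookkeeping the $PU(3)$ characteristic classes through the hierarchy so that the surface decomposition hypotheses of Proposition \ref{sur-decom} are met at every stage and the eigenvalue assignments used in the projection remain compatible.
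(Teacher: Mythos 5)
Your proposal is correct and follows essentially the same route as the paper: the paper's proof (given as Corollary~\ref{non-vanishing-sutured}, from which Theorem~\ref{thm:intro-SHI} is the special case $w=0$) runs a sutured hierarchy à la Gabai/Juh\'asz terminating in a product sutured manifold, uses Proposition~\ref{sur-decom} at each stage to exhibit the successor's $SHI^3_*$ as a summand, and concludes from the fact that $SHI^3_*$ of a product sutured manifold (with any bundle data) is one-dimensional, which in turn rests on Proposition~\ref{prop:dim1}/\eqref{eq:dx2} and hence on Theorem~\ref{thm:mainev}. The only minor difference is that you only assert positivity of the base-case dimension where the paper obtains the sharper statement that it is exactly~$1$, but this does not affect the argument.
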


There are two important special cases of $U(3)$ sutured instanton homology. Both can be defined more generally in the setting of $U(N)$ instanton homology. The first is the $U(N)$ framed instanton homology for closed $3$-manifold $Y$, denoted $I^{\#,N}(Y)$. Versions of these groups were first studied by Kronheimer and Mrowka in \cite{KM:YAFT}. Here we study some further basic properties. We compute in Theorem \ref{thm:eulercharframed} that the Euler characteristic is
\begin{equation}\label{eq:framedeulercharthmintro}
			\chi\left( I^{\#,N}(Y) \right) = |H_1(Y;\Z)|^{N-1}
\end{equation}
when $b_1(Y)=0$, and is otherwise zero. This generalizes the $N=2$ computation from \cite{scadutothesis}. Moreover, in the $N=3$ case, we give a decomposition result for cobordism maps in framed instanton homology analogous to the $N=2$ result in \cite[Theorem 1.16]{bs-lspace}, and which relies on an adaptation of the $U(3)$ Structure Theorem given below.

The other Floer homology group of interest is the $U(N)$ knot instanton homology for a knot in an integer homology $3$-sphere. In the case $N=2$, the graded Euler characteristic of this homology is a multiple of the Alexander polynomial \cite{KM:alexander,lim}. For $N=3$, we provide in Section \ref{sec:alexander} a conjectural relationship between the bi-graded Euler characteristic of the $U(3)$ knot homology and the Alexander polynomial, relying in part on the generalized version of Witten's conjecture from \cite{marino-moore}.

\subsection*{$U(3)$ Donaldson-type invariants for $4$-manifolds}

We also study the structure of $U(3)$ polynomial invariants for closed $4$-manifolds. For any closed connected oriented smooth $4$-manifold $X$ define
\begin{equation}\label{eq:axintro}
	\bA^3(X) := \left( \text{Sym}^\ast(H_0(X)\otimes H_2(X))\otimes \Lambda^\ast H_1(X)  \right)^{\otimes 2}
\end{equation}
where complex coefficients are assumed. For $\alpha\in H_i(X)$ where $i\in \{0,1,2\}$, and $r\in \{2,3\}$, we write $\alpha_{(r)}$ when regarding $\alpha$ as an element of the $(r-1)^{\text{st}}$ tensor power of \eqref{eq:axintro}. The degree of $\alpha_{(r)}$ in this case is defined to be $2r-i$. If $b^+(X)>1$, then for $w\in H^2(X;\Z)$ there is an associated $U(3)$ Donaldson-type invariant
\[
	D^3_{X,w}:\bA^3(X)\to \C.
\]
Let $x\in X$, viewed as a generator of $H_0(X)$. We say that $X$ is $U(3)$ {\emph{simple type}} if
\begin{equation}\label{eq:simpletypeintro}
	D_{X,w}^3(x_{(2)}^3 z) = 27 D_{X,w}^3(z), \qquad D_{X,w}^3(x_{(3)}z)=0, \qquad D_{X,w}^3(\delta z)=0
\end{equation}
for all $z\in \bA^3(X)$ and $\delta\in \Lambda^\ast H_1(X)\otimes \Lambda^\ast H_1(X)$. When $b_1(X)=0$, this terminology is introduced in \cite{DX}, and it is an analogue of Kronheimer and Mrowka's simple type condition in the $U(2)$ case; without the constraint on $b_1(X)$, it is an analogue of Mu\~{n}oz's {\emph{strong simple type}} condition \cite{munoz-basic}. Define for all $z\in\bA^3(X)$:
\[
	\widehat{D}^3_{X,w}(z) := D^3_{X,w}((1+\frac{1}{3}x_{(2)}+\frac{1}{9}x_{(2)}^2)z).
\]
It is also convenient to introduce the following formal power series in $\C[\![H_2(X)\oplus H_2(X)]\!]$:
\begin{equation*}
	\mathbb{D}^3_{X,w}(z) = \widehat{D}^3_{X,w}(e^z).
\end{equation*}
Our main result regarding the structure of these invariants is the following analogue of Kronheimer and Mrowka's structure theorem in the $U(2)$ case \cite{km-structure}. Let $\zeta=e^{2\pi i  /3}$.

\begin{theorem}\label{thm-intro:structure}
Suppose $b^+(X) > 1$, and $X$ is $U(3)$ simple type. Then there is a finite set $\{K_i\} \subset  H^2(X; \Z)$ and $c_{i,j}\in\Q[\sqrt{3}]$ such that for any $w\in H^2(X;\Z)$, and $\Gamma,\Lambda\in H_2(X)$:
\begin{equation}
\mathbb{D}^3_{X,w}(\Gamma_{(2)}+\Lambda_{(3)}) = e^{\frac{Q(\Gamma)}{2}-Q(\Lambda)}\sum_{i, j} c_{i, j} \zeta^{w\cdot \left(\frac{K_i-K_j}{2}\right)}e^{\frac{\sqrt{3}}{2}(K_i+K_j)\cdot \Gamma+\frac{\sqrt{-3}}{2}(K_i-K_j)\cdot \Lambda} \label{eq:structurethmformulaintro}
\end{equation}
Each class $K_i$ is an integral lift of $w_2(X)$, and satisfies the following: if $\Sigma\subset X$ is a smoothly embedded surface of genus $g$ with $\Sigma\cdot \Sigma\geq 0$ and $[\Sigma]$ non-torsion, then 
\begin{equation}\label{eq:structureadjunctionintro}
  2g-2 \geq
|\langle K_i, \Sigma\rangle |+[\Sigma]^2.
\end{equation}
\end{theorem}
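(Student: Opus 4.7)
The plan is to follow Kronheimer and Mrowka's proof of the $U(2)$ structure theorem \cite{km-structure}, substituting the ring structure of $U(3)$ instanton Floer homology of $S^1\times\Sigma_g$ (developed earlier in the paper) for their $U(2)$ ring. The guiding principle is that, under the $U(3)$ simple type hypothesis, the series $\mathbb{D}^3_{X,w}$ depends only on the joint eigenvalues of the $H_2$-surface operators acting on a finite-dimensional quotient of the $U(3)$ Floer homology of $S^1\times\Sigma_g$, and these eigenvalues are indexed by pairs of basic classes.

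The first step is to translate the three identities in \eqref{eq:simpletypeintro} into algebraic statements about $\mathbb{D}^3_{X,w}$. The relation $D^3_{X,w}(x_{(2)}^3 z)=27\, D^3_{X,w}(z)$ forces $x_{(2)}$ to have eigenvalues in $\{3,3\zeta,3\zeta^2\}$, and a direct computation shows that the normalization factor $1+\tfrac{1}{3}x_{(2)}+\tfrac{1}{9}x_{(2)}^2$ defining $\widehat{D}^3_{X,w}$ is proportional to the spectral projector onto the $\lambda=3$ eigenspace, so $\widehat{D}^3_{X,w}$ is evaluated only on that subspace. The vanishing relations $D^3_{X,w}(x_{(3)} z)=0$ and $D^3_{X,w}(\delta z)=0$ for $\delta\in\Lambda^\ast H_1\otimes\Lambda^\ast H_1$ further collapse the series to a finite-dimensional piece, on which the operators associated to $\Sigma_{(2)}$ and $\Sigma_{(3)}$ can be simultaneously diagonalized. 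At this point I would invoke the partial resolution of the eigenvalue conjecture proved earlier in the paper: the joint spectrum of the $\Sigma_{(2)}$ and $\Sigma_{(3)}$ operators on the relevant quotient of the $U(3)$ Floer homology of $S^1\times\Sigma_g$ is indexed by pairs $(K_i,K_j)$ of integral classes, and takes the form $\bigl(\tfrac{\sqrt{3}}{2}(K_i+K_j)\cdot\Sigma,\ \tfrac{\sqrt{-3}}{2}(K_i-K_j)\cdot\Sigma\bigr)$, where the $\sqrt{3}$ and $\sqrt{-3}$ factors reflect the normalization of the $SU(3)$ weights attached to the Chern classes $c_2,c_3$ on a maximal torus. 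Exponentiating and summing over pairs $(i,j)$ yields the formula \eqref{eq:structurethmformulaintro}. The Gaussian prefactor $e^{Q(\Gamma)/2-Q(\Lambda)}$ arises from the standard neck-stretching or blow-up contribution, while the roots-of-unity phases $\zeta^{w\cdot(K_i-K_j)/2}$ come from tracking the $PU(3)$-bundle twist by $w$; the congruence $K_i\equiv w_2(X)\pmod 2$ and the integrality of $K_i$ follow by varying $w$ through $H^2(X;\Z)$ and matching the transformation law of $\mathbb{D}^3_{X,w}$. The appearance of $\mathbb{Q}[\sqrt{3}]$-coefficients is then forced by Galois-equivariance of the resulting finite sum.

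For the adjunction inequality \eqref{eq:structureadjunctionintro}, I would first blow up to reduce to the case $\Sigma\cdot\Sigma=0$, tracking the effect on basic classes via the $U(3)$ blow-up formula. Then I would stretch the neck of $X$ along a tubular neighborhood of $\Sigma$ and apply the $U(3)$ gluing formula, which writes the invariants as pairings in the Floer homology of $S^1\times\Sigma_g$. Nonvanishing of the joint eigenspace of the $\Sigma_{(2)},\Sigma_{(3)}$ operators corresponding to $(K_i,K_j)$ in that Floer group then produces the bound $|\langle K_i,\Sigma\rangle|\leq 2g-2$, from which the stated inequality follows after undoing the blow-up. The main obstacle is the identification in the previous step: since the paper only partially resolves the eigenvalue conjecture, one has to verify that the available information is sufficient to pin down both the exact form of the exponents and the pair structure $(K_i,K_j)$, and to reconcile the $SU(3)$ normalizations of $c_2,c_3$ with the $\sqrt{3},\sqrt{-3}$ factors in the statement.
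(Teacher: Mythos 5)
Your outline correctly identifies the Kronheimer--Mrowka/Mu\~noz template, and your observations that $\tfrac{1}{3}(1+\tfrac{1}{3}x_{(2)}+\tfrac{1}{9}x_{(2)}^2)$ acts as the spectral projector onto the $\beta_2=3$ eigenspace, and that the adjunction inequality follows by blowing up to reduce to $\Sigma\cdot\Sigma=0$ and then applying the eigenvalue bounds from neck-stretching, are both right and agree with the paper. However, there is a genuine gap in the step you describe as ``exponentiating and summing over pairs $(i,j)$.'' The quantity $\mathbb{D}^3_{X,w}(\Gamma_{(2)}+\Lambda_{(3)})$ is a formal power series in $H_2(X)$-classes that will, in general, \emph{cross} whatever splitting surface $\Sigma$ you use for neck-stretching; the ordinary $U(3)$ instanton Floer homology of $S^1\times\Sigma_g$ and the gluing formula \eqref{eq:glueformulafors1timessigma} cannot track $e^{t_2\Gamma_{(2)}+t_3\Lambda_{(3)}}$ unless $\Gamma,\Lambda\subset X\setminus\Sigma$. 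The paper resolves this by building $U(3)$ Fukaya--Floer homology $\widetilde{\mathbb V}^3_{g,d}$ and the module $\mathbb V^3_{g,d}$ in Section~\ref{FF3}, with a gluing pairing \eqref{pairing-FFH} that allows the disc $D^2\times\{pt\}$ inside the neck to carry the $t_2,t_3$-dependence. The key algebraic input is therefore not Theorem~\ref{thm:mainev} alone, but its Fukaya--Floer upgrade Theorem~\ref{thm:fukayafloersimpletypeideal}, which writes the simple-type ideal $\mathbb S^3_{g,d}$ as a direct sum of rank-one $\C[\![t_2,t_3]\!]$-modules $R_{k,a,b}$, each of which encodes a first-order ODE in $s_2,s_3$. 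Lemma~\ref{lemma:structuremainlemma} is extracted from this ODE structure, and the Gaussian prefactor, the $\sqrt{3}$/$\sqrt{-3}$ exponents, \emph{and} the phase $\zeta^{w\cdot(K_i-K_j)/2}$ all emerge from solving that system (the phase via the $\varepsilon$-operator action $\varepsilon|_{R_{k,a,b}}=\zeta^{b+dk}$). Without the Fukaya--Floer machinery, the derivation of the generating-function formula does not go through.

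Two smaller points. First, your claim that $c_{i,j}\in\Q[\sqrt 3]$ is ``forced by Galois-equivariance'' is too vague to be correct as stated; the paper first gets $c_{i,j}\in\Q[\zeta]$ from rationality of $D_{X,w}$, obtains $c_{i,j}=\overline{c_{j,i}}$, and then crucially uses the bundle-reversal symmetry $D_{X,-w}(z)=D_{X,w}(\tau(z))$ (where $\tau$ flips the sign of the degree-$3$ generators) to get $c_{i,j}=c_{j,i}$ and hence reality. Second, you are right to flag that the paper only partially resolves the eigenvalue conjecture, and this is precisely why the hypothesis in Theorem~\ref{thm-intro:structure} must be that $(X,w)$ is $U(3)$ simple type for \emph{all} $w$; Remark~\ref{rmk:simpletype} in the paper explains that strengthening this to a single $w$ appears to require the full conjecture, so your concern is real but already accounted for in the statement.
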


\noindent This result partially resolves Conjecture 7.2 from \cite{DX}. We note that the expression \eqref{eq:structurethmformulaintro} differs slightly from what appears in that reference, due to a minor difference in convention; see Remark \ref{rmk:conventions}. As explained in \cite{DX}, it is predicted by Mari\~{n}o and Moore \cite{marino-moore} that the classes $K_i$ appearing in Theorem \ref{thm-intro:structure} are equal to the Kronheimer and Mrowka basic classes in $U(2)$ Donaldson theory, as well as the Seiberg--Witten basic classes; furthermore, the constants $c_{i,j}$ are expressible in terms of the data from these other theories. 

\begin{remark}
	In Theorem \ref{thm:structure}, if $[\Sigma]$ is torsion and $g\geq 1$, then \eqref{eq:structureadjunctionintro} trivially holds. Note that if $\Sigma$ is as in Theorem \ref{thm-intro:structure} and has genus zero, then \eqref{eq:structureadjunctionintro} never holds, and hence there are no such classes $K_i$, in which case the invariants $D_{X,w}$ all vanish. 
\end{remark}

We also prove a non-vanishing result for symplectic $4$-manifolds.

 \begin{theorem}\label{thm-intronon-vanishing-symplectic}
	Let $X$ be a closed symplectic 4-manifold with $b^+(X) > 1$. Then the invariant $D_{X,w}^3$ is non-trivial for all $w\in H^2(X;\Z)$.
\end{theorem}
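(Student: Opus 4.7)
The plan is to combine the $U(3)$ Structure Theorem (Theorem \ref{thm-intro:structure}) with a Lefschetz fibration decomposition of a symplectic $4$-manifold and a $U(3)$ gluing/sum formula, reducing to known non-vanishing on elementary Kähler pieces.

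By the Structure Theorem, $\mathbb{D}^3_{X,w}(\Gamma_{(2)}+\Lambda_{(3)})$ expands as a finite sum of exponentials in $\Gamma$ and $\Lambda$ that are linearly independent as formal power series, with coefficients $c_{i,j}\,\zeta^{w\cdot(K_i-K_j)/2}$. Hence non-vanishing of $D^3_{X,w}$ for some $w\in H^2(X;\Z)$ is equivalent to showing that at least one $c_{i,j}$ is non-zero. Moreover, by choosing $w$ appropriately the phases $\zeta^{w\cdot(K_i-K_j)/2}$ can be made non-degenerate, so it suffices to detect a single non-trivial basic class contribution coming from the canonical class $K_X$.

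I would then invoke Donaldson's Lefschetz pencil theorem to exhibit $X$, after finitely many blow-ups, as a symplectic Lefschetz fibration $X'\to S^2$. A $U(3)$ blow-up formula, established by adapting the $U(2)$ analysis of Fintushel--Stern via the $U(3)$ Floer homology of $S^1\times S^2$, reduces non-vanishing on $X$ to non-vanishing on $X'$. Viewing $X'$ as a symplectic fiber sum along regular fibers $\Sigma_g$, a $U(3)$ symplectic sum formula expresses $D^3_{X',w}$ as a pairing of relative invariants inside the $U(3)$ instanton Floer homology of $S^1\times\Sigma_g$. The ring structure of this Floer homology---the central object of this paper---together with the partial resolution of the eigenvalue conjecture, furnishes enough spectral decomposition to identify this pairing with a non-zero evaluation on an elementary Kähler building block (such as a minimal Kähler surface of general type), for which non-vanishing of $U(3)$ Donaldson invariants can be extracted from a direct Kähler computation in the spirit of \cite{kronheimer-higher}.

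The main obstacle is establishing the $U(3)$ blow-up and symplectic sum formulas with adequate control over the enlarged reducible stratum of $PU(3)$-connections, in particular reductions to $U(1)^3$ and $U(2)\times U(1)$. This is handled by choosing an admissible characteristic class $w$ whose restriction to the neck $S^1\times\Sigma_g$ excludes reducibles, after which the gluing argument proceeds in parallel with the $U(2)$ symplectic sum formula of Morgan--Mrowka--Szabó. Once a single non-vanishing is verified in this way, the Structure Theorem propagates non-vanishing to every $w\in H^2(X;\Z)$, completing the proof.
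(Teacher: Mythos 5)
Your proposal has a fundamental structural flaw: you invoke the Structure Theorem (Theorem~\ref{thm-intro:structure}) as the organizing tool, but that theorem requires $X$ to be $U(3)$ simple type as a hypothesis, and you never verify this. It is not known that every symplectic $4$-manifold with $b^+>1$ is $U(3)$ simple type (see Remark~\ref{rmk:simpletype}), and the paper's proof deliberately avoids assuming it. More seriously, even for a simple type $4$-manifold the Structure Theorem gives only an \emph{expression} for $\mathbb{D}^3_{X,w}$ in terms of basic classes and coefficients; it carries no content about whether the set $\{K_i\}$ is nonempty or the $c_{i,j}$ are nonzero, so it cannot \emph{produce} non-vanishing. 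Your assertion that it suffices to detect a nontrivial contribution from $K_X$ is imported from Seiberg--Witten theory via the Mariño--Moore conjecture, which is unproven in this setting. Finally, the ``$U(3)$ symplectic sum formula'' you propose is not something established in this paper (and the fiber-sum picture is not the right one: a Lefschetz fibration over $S^2$ is a composition of cobordisms over arcs of the base, not a sum along a shared fiber), so the key step of your argument rests on an unsupplied tool.

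The paper's proof is quite different. After reducing to a genus-$g$ Lefschetz fibration $\widehat X\to S^2$ via Donaldson's theorem and the blowup formula (this part agrees with your sketch), the base $S^2$ is cut into discs and annuli so that each annulus carries exactly one Lefschetz critical point. This decomposes $\widehat X$ as a chain of cobordisms between mapping tori $Y_i$ of surface diffeomorphisms. Theorem~\ref{thm:intro-SHI-invariance} identifies each $I_*^3(Y_i,\gamma_i\,\vert\, F)$ with $\C$, so one only has to show that each intermediate cobordism map and the two cap maps are nonzero. The cobordism $W_l$ with one Lefschetz singularity is factored, via excision, through a Lefschetz fibration $\mathcal{L}$ over a disc with one irreducible singular fiber; the excision theorem gives an isomorphism of one-dimensional spaces, and a separate lemma (proved by embedding $\mathcal{L}$ into $E(g+1)$ and using the explicit computation of the $U(3)$ invariants of elliptic surfaces from \cite{DX}) shows that the relative invariant $I_*^3(\mathcal L,c)$ has nonzero component in the relevant eigenspace. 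None of this uses the Structure Theorem. If you want to pursue your route, you would first have to prove that symplectic $4$-manifolds are $U(3)$ simple type and then independently establish the nontriviality of a specific coefficient $c_{i,j}$---both of which are at least as hard as the original statement.
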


\noindent Our strategy to prove this non-vanishing result is similar to Ozsv\'ath and Szab\'o's proof in \cite{os-symplectic} for the corresponding result in the context of Heegaard Floer homology and closely follows a strategy suggested by Kronheimer and Mrowka in the $U(2)$ case. (The non-triviality of $U(2)$ Donaldson invariants for symplectic $4$-manifolds was also proved by Sivek in a different way \cite{sivek}.) Theorem \ref{thm-intronon-vanishing-symplectic} can be used to prove that the $U(3)$ instanton homology of an irreducible $3$-manifold with $3$-admissible bundle is non-zero, and leads to an alternative proof of Theorem \ref{thm:intro2}.

\subsection*{Eigenvalues and the $U(3)$ instanton homology of $S^1\times \Sigma_g$}

The main technical ingredient that paves the way for most of the above results is Theorem \ref{thm:mainev} below, which concerns the $U(3)$ instanton homology of a circle times a surface $\Sigma_g$ of genus $g$ with an admissible bundle. We restrict our attention to the {\emph{simple type ideal}}, a subspace of this Floer homology, and compute the eigenvalues of certain operators acting on it. The simple type ideal is generated by relative invariants coming from $4$-manifolds of simple type. Our eigenvalue result is a partial analogue to one used by Kronheimer and Mrowka in the $U(2)$ case, due to Mu\~{n}oz \cite{munoz}. 

The $U(N)$ instanton Floer homology of $S^1\times \Sigma_g$ with an admissible bundle is isomorphic to $H^\ast(\cN_g)$, the cohomology of the moduli space of rank $N$ stable holomorphic bundles over $\Sigma_g$ with some fixed determinant. In fact, this instanton Floer group admits a multiplication which is a deformation of the cup product on $H^\ast(\cN_g)$, and is expected to be isomorphic to its quantum multiplication. Mu\~{n}oz's computation of eigenvalues in the $N=2$ case relies on the fact that $H^\ast(\cN_g)$ has a simple ring presentation which is recursive in the genus \cite{baranovskii, king-newstead, siebert-tian, zagier}. Such a concise description is not currently available in the $N=3$ case, but a complete set of relations for the ring is known, due to Earl \cite{earl}. Our restriction to the simple type ideal (which suffices for the purposes of the above results) simplifies the algebra considerably, and allows us to use Earl's description of the ring $H^\ast(\cN_g)$ to prove, together with results from \cite{DX}, the desired eigenvalue result.

The authors expect that the method of proof for Theorem \ref{thm:mainev} may also be employed for $N >3$. There are two essential ingredients that are required. One is a generalization to $N\geq 4$ of \cite[Prop. 5.7]{DX}, which gives the existence of certain eigenvalues in the $U(N)$ instanton Floer homology of a circle times a surface. The other is a computation, for $N\geq 4$, of the vector space dimension of the ring $H^\ast(\cN_g)$ modulo the ``undeformed simple type relations,'' analogous to what is done below for $N=3$. Relevant to this second ingredient is the work of Earl and Kirwan \cite{earl-kirwan}. Given an appropriate generalization of Theorem \ref{thm:mainev} for $N>3$, the authors expect that analogues for all of the results stated in this introduction, for general $N$, can also be proved, following similar methods. The authors hope to return to these matters in future work.\\

\textbf{Outline}\;\; In Section \ref{sec:strategy}, we state and outline the proof of the main technical result, Theorem \ref{thm:mainev}. In Sections \ref{sec:undeformed} and \ref{sec:simpletypequotient}, the cohomology ring $H^\ast(\cN_g)$ is studied, and the proof of Theorem \ref{thm:mainev} is completed. In Section \ref{sec:sutured}, we study $U(3)$ sutured instanton homology and prove Theorems \ref{thm:intro-SHI-invariance}, \ref{thm:intro-SHI}, followed by the proofs of Theorems \ref{thm:intro1} and \ref{thm:intro2}. In Section \ref{sec:structure}, we prove Theorem \ref{thm-intro:structure}, and in Section \ref{non-vanishing-symp} we prove Theorem \ref{thm-intronon-vanishing-symplectic}. In Section \ref{sec:framed}, $U(N)$ framed instanton homology is studied. Finally, in Section \ref{sec:alexander} we discuss $U(3)$ instanton knot homology and the Alexander polynomial. \\

\textbf{Acknowledgments}\;\; We thank Hisaaki Endo, Peter Kronheimer, Jake Rasmussen, Arash Rastegar, Danny Ruberman and Steven Sivek for helpful discussions. AD was supported by NSF Grant DMS-2208181 and NSF FRG Grant DMS-1952762.  NI was supported by JSPS KAKENHI Grant Number 22J00407. CS was supported by NSF FRG Grant DMS-1952762.

%!TEX root = main.tex

\section{Background and general strategy}\label{sec:strategy}

As mentioned in the introduction, the strategy to prove Theorems \ref{thm:intro1} and \ref{thm:intro2} is to develop sutured instanton Floer homology for the gauge group $U(N)$, and adapt arguments from the $N=2$ case due to Kronheimer and Mrowka \cite{km-sutures}. This strategy was initiated in the $N=3$ case by the first author and Xie \cite{DX}. The raw material for the construction of sutured instanton homology for general $N$ is the $U(N)$ instanton Floer homology
\begin{equation}\label{eq:instantonfloerhomologyintro}
	I_\ast^N(Y,\gamma)
\end{equation}
which is defined for a closed, oriented, connected $3$-manifold $Y$ and an oriented $1$-cycle $\gamma$ satisfying the {\emph{$N$-admissibility}} condition: there exists some oriented surface $\Sigma\subset Y$ such that $\gamma\cdot \Sigma$ is coprime to $N$. The group $I_\ast^N(Y,\gamma)$ is constructed by applying Morse homological methods to a Chern--Simons functional on $\mathscr{B}$, the configuration space of connections on the $PU(N)$-bundle on $Y$ determined by $\gamma$. These Floer homology groups were constructed by Kronheimer and Mrowka \cite{KM:YAFT}, generalizing the work of Floer in the $N=2$ case \cite{floer-dehn}. In this paper, we work with Floer homology over the coefficient field $\C$, in which case \eqref{eq:instantonfloerhomologyintro} is a $\Z/4N$-graded complex vector space. 

Given a homology class $a\in H_i(Y;\C)$ there are associated linear operators
\begin{equation}
	\mu_r(a):I_\ast^N(Y,\gamma)\to I_\ast^N(Y,\gamma), \qquad 2\leq r \leq N. \label{eq:muoperators}
\end{equation}
The degree of $\mu_r(a)$ is $2r-i\pmod{4N}$. There is a universal $PU(N)$-bundle $\mathbf{P}$ over $\mathscr{B}\times Y$, and $\mu_r(a)$ is roughly the cap product on moduli spaces with $c_r(\mathbf{P})/a$. 

\begin{remark}\label{rmk:conventions}
	Our convention for $\mu_r(a)$ differs from that of \cite{DX} by the sign $(-1)^r$. Furthermore, the grading we use on instanton homology is the negative of the convention in that paper (and is in fact a cohomological grading convention).
\end{remark}

The construction of sutured instanton Floer homology relies on taking certain simultaneous generalized eigenspaces of the operators in \eqref{eq:muoperators} acting on the Floer groups \eqref{eq:instantonfloerhomologyintro}. The crucial case to understand is when $Y=S^1\times \Sigma_g$ where $\Sigma_g$ is a surface of genus $g$, and $\gamma=\gamma_{d}=S^1\times \{x_1,\ldots,x_d\}$, with $d$ coprime to $N$. We write
\[
	V^N_{g,d} := I_\ast^N(S^1\times \Sigma_g,\gamma_d).
\]
The relevant operators acting on $V_{g,d}^N$ are denoted as follows:
\begin{equation}\label{eq:s1sigmamaps}
	\alpha_r :=  \mu_r(\Sigma_g), \qquad \beta_r := \mu_r(x), \qquad \psi_r^i := \mu_r(\eta_i) \qquad (2\leq r \leq N)
\end{equation}
where $x\in S^1\times \Sigma$ and the $\eta_i$ $(1\leq i \leq 2g)$ range over a symplectic basis of closed oriented curves on $\Sigma_g$. These operators are graded-commutative. In particular, since the $\psi_r^i$ are of odd degree, they square to zero, and each one has zero as its only eigenvalue. Consider the simultaneous eigenvalues with respect to the classes $\alpha_r$, $\beta_r$:
\[
	\Xi_{g,d}^N := \left\{ \lambda=(\lambda_1,\dots,\lambda_{2N-2})\in \C^{2N-2} \; \mid \; \exists v\in V_{g,d}^N :\; \alpha_rv = \lambda_{r-1}v, \;\; \beta_rv = \lambda_{r+N-2} v \right\}
\]
where $r$ ranges over $2,\ldots, N$. For $\lambda\in \Xi_{g,d}^N$ we denote by 
\[
	V_{g,d}^N(\lambda) = \bigcap_{r=2}^N \bigcup_{k=1}^\infty \text{ker}\left((\alpha_r-\lambda_{r-1} )^k\right)\cap \text{ker}\left((\beta_r-\lambda_{r+N-2})^k\right) \subset V_{g,d}^N
\]
the associated generalized eigenspace. Then we have
\[	
	 V_{g,d}^N= \bigoplus_{\lambda\in \Xi_{g,d}^N}  V_{g,d}^N(\lambda).
\]
\begin{lemma}\label{lemma:evaction}
	Let $\zeta$ be a $2N^{\rm{th}}$ root of unity. If $\lambda=(\lambda_1,\ldots,\lambda_{2N-2})\in \Xi_{g,d}^N$, then also
	\[
		\lambda':=(\zeta \lambda_1,\zeta^2 \lambda_2,\ldots,\zeta^{N-1}\lambda_{N-1}, \zeta^2\lambda_{N}, \zeta^3\lambda_{N+1},\ldots,\zeta^{N-1}\lambda_{2N-3}, \zeta^{N}\lambda_{2N-2}) \in \Xi_{g,d}^N.
	\]
	Furthermore, $\dim V_{g,d}^N(\lambda) = \dim V_{g,d}^N(\lambda')$.
\end{lemma}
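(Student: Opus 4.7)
The plan is to exhibit, for each $4N$-th root of unity $\omega$, a linear isomorphism $T_\omega$ of $V_{g,d}^N$ which conjugates $\alpha_r$ and $\beta_r$ into scalar multiples of themselves by appropriate roots of unity. Such an isomorphism forces the same rescaling on the generalized eigenvalues, and since $T_\omega$ is invertible, it gives the desired bijection of generalized eigenspaces.

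The essential input is that $V_{g,d}^N$ is $\Z/4N$-graded and that $\mu_r(a)$ is homogeneous of degree $2r-\dim a \pmod{4N}$; in particular $\alpha_r$ is of degree $2r-2$ and $\beta_r$ is of degree $2r$. Given a $2N$-th root of unity $\zeta$, I pick a square root $\omega$, which is a $4N$-th root of unity, and define
\[
T_\omega\co V_{g,d}^N\longrightarrow V_{g,d}^N, \qquad T_\omega v = \omega^{-k} v \quad\text{for $v$ in the grading-$k$ summand.}
\]
Since $\omega^{4N}=1$, the map $T_\omega$ is well defined, and it is invertible on each graded piece, hence a linear automorphism.

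A direct computation using homogeneity then yields the operator relations
\[
T_\omega\,\alpha_r\,T_\omega^{-1} = \omega^{-(2r-2)}\alpha_r = \zeta^{-(r-1)}\alpha_r, \qquad T_\omega\,\beta_r\,T_\omega^{-1} = \omega^{-2r}\beta_r = \zeta^{-r}\beta_r,
\]
for $r=2,\dots,N$. Equivalently, $\alpha_r T_\omega = \zeta^{r-1}\,T_\omega\alpha_r$, so that if $(\alpha_r - \lambda_{r-1})^k v = 0$ then
\[
(\alpha_r - \zeta^{r-1}\lambda_{r-1})^k\, T_\omega v \;=\; \zeta^{k(r-1)}\,T_\omega(\alpha_r - \lambda_{r-1})^k v \;=\; 0,
\]
and analogously $T_\omega v$ satisfies the generalized eigenvector relation for $\beta_r$ with eigenvalue $\zeta^{r}\lambda_{r+N-2}$. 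Hence $T_\omega$ carries $V_{g,d}^N(\lambda)$ into $V_{g,d}^N(\lambda')$, and applying the same argument to $T_\omega^{-1}$ shows the map is a bijection; in particular the two generalized eigenspaces have equal dimension.

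I do not expect any serious obstacle here: the entire argument is a formal ``Galois twist'' once the $\Z/4N$-grading on $V_{g,d}^N$ and the degree formula $\deg\mu_r(a)=2r-\dim a$ are accepted, both of which are recalled in the present section. The only step requiring any care is bookkeeping of signs in the exponents of $\omega$, but there is enough slack because $\zeta$ is allowed to range over all $2N$-th roots of unity.
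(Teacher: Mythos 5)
Your proof is correct and is essentially the same degree-twisting argument used in the paper. The paper's map $f$ scales degree $2i$ and degree $2i+1$ both by $\zeta^{-i}$, whereas your $T_\omega$ scales degree $k$ by $\omega^{-k}$ with $\omega^2=\zeta$; the two differ only by the overall scalar $\omega^{-1}$ on the odd-degree summand, which is immaterial since the even-degree operators $\alpha_r,\beta_r$ preserve parity, so both maps conjugate $\alpha_r,\beta_r$ by the same $\zeta$-power.
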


\begin{proof}
Fix $\zeta$ as in the statement, and define $f:V_{g,d}^N\to V_{g,d}^N$ as follows. Let $v\in V_{g,d}^N$ and write $v_{i}$ for the component of $v$ in grading $i\pmod{4N}$. Then
\[
	f(v) := \sum_{i=0}^{2N-1} \zeta^{-i}v_{2i} + \sum_{i=0}^{2N-1} \zeta^{-i}v_{2i+1}.
\]
Let $v\in V_{g,d}^N(\lambda)$. In particular, for each $2\leq r\leq N$ we have $(\alpha_r-\lambda_{r-1})^{N}v=0$ for some positive integer $N$. This is equivalent to the collection of identities
\[
	\sum_{i=0}^N {N\choose i} \alpha_r^{N-i} (-\lambda_{r-1})^i v_{l-(N-i)(2r-2)} =0
\]
where $2\leq r\leq N$ and $0\leq l\leq 4N-1$. It is straightforward to check that this identity is preserved upon replacing $\lambda_{r-1}$ with $\zeta^{r-1}\lambda_{r-1}$ and replacing $v_{l-(N-i)(2r-2)}$ with $f(v)_{l-(N-i)(2r-2)}$. The conditions involving the $\beta_r$ operators is similar. Thus $f$ induces a vector space isomorphism from $V_{g,d}^N(\lambda)$ to $V_{g,d}^N(\lambda')$.
\end{proof}

There is also an operator of degree $-4d \pmod{4N}$ denoted
\begin{equation}\label{epsilon-product}
	\varepsilon :V^N_{g,d}\to V^N_{g,d}
\end{equation}
 defined as the map associated to the cylinder cobordism $[0,1]\times S^1\times \Sigma_g$ equipped with $U(N)$-bundle determined by the oriented $2$-cycle $[0,1]\times \gamma_{d}\cup \{(1/2,x)\}\times \Sigma_g$ where $x\in S^1$. The operator $\varepsilon$ commutes with all the operators \eqref{eq:s1sigmamaps}.

The Floer homology $ V_{g,d}^N$ is in fact a ring. The multiplication is induced by the cobordism which is the product of a pair of pants cobordism $S^1\sqcup S^1\to S^1$ with $\Sigma_g$. An identity element $\mathbf{1}$ is given by the relative invariant $D^2\times \Sigma_g$ with bundle determined by $D^2\times \{x_1,\ldots,x_d\}$. Sending each operator \eqref{eq:s1sigmamaps} and $\varepsilon$ to its evaluation on $\mathbf{1}$ induces an isomorphism of rings
\begin{equation}\label{eq:firstringpres}
	 V_{g,d}^N  = \bA^N_g[\varepsilon]/J_{g,d}^N
\end{equation}
where the $\Z$-graded $\C$-algebra $\bA_g^N$ is defined as follows:
\[
	 \bA_g^N := \bigotimes_{r=2}^N\C[\alpha_r,\beta_r]\otimes \Lambda^\ast(\psi_r^{i})_{1\leq i \leq 2g}
\]
The degrees of $\alpha_r,\beta_r,\psi_r^i$ are respectively $2r-2, 2r, 2r-1$. In the identification \eqref{eq:firstringpres}, the $\Z$-grading on $\bA_g^N$ reduced to the $\Z/4$-grading on $V_{g,d}^N$; on the right side of \eqref{eq:firstringpres}, the element $\varepsilon$ should be regarded as having degree $0$. The ideal $J_{g,d}^N\subset \bA_g^N[\varepsilon]$ contains $\varepsilon^N-1$ and is homogeneous with respect to the $\Z/4$-grading. There is a non-degenerate bilinear pairing
\begin{equation}\label{eq:pairingintro}
	\langle \cdot,\cdot \rangle : V_{g,d}^N \otimes V_{g,d}^N \to \C
\end{equation}
which is induced by $[0,1]\times S^1\times \Sigma_g$ viewed as a cobordism from two copies of $S^1\times \Sigma_g$ (identifying one copy by an orientation-reversing diffeomorphism) to the empty set.

Key to the development of Kronheimer and Mrowka's sutured instanton homology in the case $N=2$ are results on the eigenvalues of the operators \eqref{eq:s1sigmamaps}. There are two essential ingredients that are used, both following from the work of Mu\~{n}oz \cite{munoz} (see in particular \cite[Props. 7.1, 7.4]{km-sutures}). The first is the computation of the spectrum:
\begin{equation}\label{eq:n2spectrum}
	\Xi_{g,1}^2 = \left\{	(2a i^{r}, (-1)^r2)\; \mid \; a\in \Z, \; |a|\leq g-1, \; r\in \{0,1\} \right\}
\end{equation}
where $i=\sqrt{-1}$. The second ingredient regards ``extremal'' generalized eigenspaces:
\begin{equation}\label{eq:munoz2}
	\dim V_{g,1}^2(\pm i^{r}(2g-2),(-1)^{r}2) = 1.
\end{equation}
An important property is that the pairing  \eqref{eq:pairingintro} restricted to the $1$-dimensional space appearing in \eqref{eq:munoz2} is non-degenerate. In fact, the $1$-dimensionality is equivalent to non-degeneracy, see for example \cite[Lemma 5.11]{DX}.

In \cite{DX}, analogous properties for the case of $N=3$ are studied. To state the relevant results, first define, for any integer $d$ coprime to $3$: 
\begin{equation*}
	\mathcal{E}_{g,d}^3 :=  \left\{ (\sqrt{3}\zeta^{k}a, \sqrt{-3}\zeta^{2k} b, 3\zeta^{2k}, 0) \; \mid \; (a,b)\in \mathcal{C}_g, \; k\in \{0,1,2\} \right\} \subset \C^4.
\end{equation*}
Here $\zeta=e^{2\pi i/3}$, and $\mathcal{C}_g$ is the subset of the lattice $\Z^2$ given by
\[
	\mathcal{C}_g = \{ (a,b) \in \Z^2 \, \mid \, |a|+|b| \leq 2g-2, \; a\equiv b \, (\text{mod } 2)\}.
\]
Then, the following is a partial analogue of \eqref{eq:n2spectrum}.

\begin{prop}\label{prop:evinclusion}
	$\mathcal{E}_{g,d}^3 \subset \Xi_{g,d}^3$.
\end{prop}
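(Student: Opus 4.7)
The plan is to reduce the claim to a single ``base case'' using the $6$th-root-of-unity symmetry of Lemma \ref{lemma:evaction}, and then invoke \cite[Prop. 5.7]{DX} to handle that base case.

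For the reduction, fix $k \in \{0, 1, 2\}$ and set $\xi = \zeta^k$, where $\zeta = e^{2\pi i/3}$. Since $\xi^6 = 1$, this is a $6$th root of unity, so Lemma \ref{lemma:evaction} applies with $N=3$: the transformation $(\lambda_1,\lambda_2,\lambda_3,\lambda_4) \mapsto (\xi\lambda_1, \xi^2\lambda_2, \xi^2\lambda_3, \xi^3\lambda_4)$ preserves $\Xi_{g,d}^3$. Applied to the tuple $(\sqrt{3}\,a,\, \sqrt{-3}\,b,\, 3,\, 0)$ it yields
\[
(\xi \sqrt{3}\,a,\ \xi^2 \sqrt{-3}\,b,\ \xi^2 \cdot 3,\ \xi^3 \cdot 0) \;=\; (\sqrt{3}\zeta^k a,\ \sqrt{-3}\zeta^{2k} b,\ 3\zeta^{2k},\ 0),
\]
which is the general point of $\mathcal{E}_{g,d}^3$. (Note $\xi^3 = 1$, so the last coordinate remains zero automatically.) Hence it suffices to prove that the $k=0$ slice is in $\Xi_{g,d}^3$, namely that $(\sqrt{3}\,a,\, \sqrt{-3}\,b,\, 3,\, 0) \in \Xi_{g,d}^3$ for each $(a,b) \in \mathcal{C}_g$.

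For this base case I would appeal to \cite[Prop. 5.7]{DX}, which produces exactly this family of simultaneous eigenvalues of the operators $\alpha_2,\alpha_3,\beta_2,\beta_3$ on $V_{g,d}^3$. The strategy underlying that result is to exhibit explicit eigenvectors in the Floer ring $V_{g,d}^3$, exploiting the comparison with the classical cohomology ring $H^\ast(\mathcal{N}_g)$ (whose spectrum under multiplication by the Atiyah--Bott generators produces the undeformed analogue of the prescribed eigenvalues) together with a genus-stabilization step that propagates eigenvalues from smaller to larger genus while keeping track of the Floer-theoretic deformation.

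The hard part is entirely packaged inside this base case: once one tries to upgrade the cohomological prediction to the honest instanton ring, the nontrivial quantum deformation terms have to be controlled, which is the substance of Prop.~5.7 of \cite{DX}. The symmetry step above, which accounts for the orbit under $k \in \{0,1,2\}$, is essentially free once Lemma \ref{lemma:evaction} is in hand.
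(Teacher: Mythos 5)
Your reduction via Lemma \ref{lemma:evaction} is correct (with $\xi=\zeta^k$ a $6$th root of unity, the transformation carries $(\sqrt{3}\,a,\sqrt{-3}\,b,3,0)$ exactly onto the general point of $\mathcal{E}_{g,d}^3$), and your overall strategy --- cite \cite[Prop.~5.7]{DX} for a base set, then propagate via Lemma \ref{lemma:evaction} --- is precisely the one the paper uses. The one inaccuracy is your characterization of what \cite[Prop.~5.7]{DX} actually produces: after translating to the present paper's conventions (Remark \ref{rmk:conventions}), that reference yields the set $\{(\sqrt{3}\zeta^{db}a,\, -\sqrt{-3}\zeta^{2db}b,\, 3\zeta^{2db},\, 0) : (a,b)\in\mathcal{C}_g\}$, not the untwisted $k=0$ slice $\{(\sqrt{3}a,\sqrt{-3}b,3,0)\}$ you assert; the powers of $\zeta$ depend on $b$ and there is a sign. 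This is harmless in practice --- using the $(a,b)\mapsto(a,-b)$ symmetry of $\mathcal{C}_g$ and one more application of Lemma \ref{lemma:evaction} with $\eta=\zeta^{db}$, the twisted set and your base set lie in the same orbit --- but as written your appeal to \cite[Prop.~5.7]{DX} states that reference's conclusion incorrectly, so you should insert that short normalization step rather than claim the untwisted slice falls directly out of the citation.
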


\begin{proof}
	It is proved in \cite[Prop. 5.7]{DX} that the set
\begin{equation}\label{eq:epsilon1evs}
	 \left\{ (\sqrt{3}\zeta^{db}a, -\sqrt{-3}\zeta^{2db} b, 3\zeta^{2db}, 0) \; \mid \; (a,b)\in \mathcal{C}_g \right\} 
\end{equation}
is contained in $\Xi_{d,g}^3$. (Note that this set of eigenvalues differs slightly from that in \cite[Prop. 5.7]{DX} because of Remark \ref{rmk:conventions}.) In fact, these eigenvalues simultaneously occur with the eigenvalue $+1$ of $\varepsilon$. The remaining eigenvalues are obtained using Lemma \ref{lemma:evaction}.
\end{proof}

The inclusion of Proposition \ref{prop:evinclusion} is conjectured to be equality, see \cite[Conj. 7.3]{DX}. An analogue of \eqref{eq:munoz2} is essentially proved in \cite{DX} (see Proposition \ref{prop:dim1}):
\begin{equation}\label{eq:dx2}
	\dim V_{g,d}^3(\pm \sqrt{3}\zeta^k(2g-2), 0, 3\zeta^{2k}, 0) = 1
\end{equation}
where $k\in\{0,1,2\}$. Just as in the $N=2$ case, the pairing \eqref{eq:pairingintro} restricted to this $1$-dimensional space is non-degenerate;  Proposition \ref{prop:evinclusion} and property \eqref{eq:dx2}, with its non-degeneracy, are sufficient to define sutured instanton homology and prove an excision result in the case $N=3$, parallel to the case $N=2$, and this is explained in \cite[\S 5.2]{DX}. 

In the case $N=2$, Kronheimer and Mrowka prove a sutured decomposition result \cite[Prop. 7.11]{km-sutures} using \eqref{eq:n2spectrum}. This result implies that sutured instanton homology for taut sutured manifolds is nonzero, and leads to existence results for $U(2)$ representations. For $N=3$, if equality in Proposition \ref{prop:evinclusion} holds, then similar arguments carry through. However, the inclusion of Proposition \ref{prop:evinclusion} by itself is not sufficient.

On the other hand, inspection of the arguments in \cite{km-sutures} shows that in the $N=2$ case, equality of \eqref{eq:n2spectrum} is not necessary. The following weaker version of \eqref{eq:n2spectrum} suffices:
\[
	\Xi_{g,1}^2 \cap \left( \C\times \{  \pm 2\} \right)= \left\{	(2a i^{r}, (-1)^r2)\; \mid \; a\in \Z, \; |a|\leq g-1, \; r\in \{0,1\} \right\}.
\]
The same is true in the case $N=3$: equality in Proposition \ref{prop:evinclusion} is not necessary, and the following, our main technical result, is a weaker version which suffices:

\begin{theorem}\label{thm:mainev}
Let $d\in \Z$ be coprime to $3$, $g\in \Z_{\geq 0}$, and $\zeta=e^{2\pi i/3}$. If $\lambda\in \Xi_{g,d}^3$ and $\lambda=(\lambda_1,\lambda_2,3\zeta^j,0)$ for some $j\in \Z$, then $\lambda\in \mathcal{E}_{g,d}^3$. Equivalently (by Proposition \ref{prop:evinclusion}):
	\[
			\Xi_{g,d}^3 \cap \left(\C^2 \times C_3 \times \{0\}\right) = \mathcal{E}_{g,d}^3 
	\]
	where $C_3=\{3,3\zeta,3\zeta^2\}$ is the set of $3^\text{rd}$ roots of $27$.
\end{theorem}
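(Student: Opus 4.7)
The plan is to exploit the ring structure of $V_{g,d}^3$ together with Earl's presentation of $H^\ast(\mathcal{N}_g)$, restricted to the ``simple type sector'' where $\beta_2^3 = 27$ and $\beta_3 = 0$. First, I would apply Lemma \ref{lemma:evaction} with a primitive $6$-th root of unity (a $2N$-th root of unity for $N=3$): its action cycles the three possible values $3, 3\zeta, 3\zeta^2$ of the $\beta_2$-eigenvalue $\lambda_3$, preserves the condition $\lambda_4 = 0$, and correspondingly permutes the three sectors of $\mathcal{E}_{g,d}^3$. Thus it suffices to prove the theorem in the case $j = 0$, i.e.\ $(\beta_2, \beta_3) = (3, 0)$.

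Next, let $V \subset V_{g,d}^3$ denote the sum of all simultaneous generalized eigenspaces with $(\beta_2, \beta_3) = (3, 0)$. Because the operators $\alpha_2, \alpha_3, \beta_2, \beta_3$ commute, $V$ is an $\alpha_2, \alpha_3$-invariant subalgebra of the finite-dimensional commutative $\C$-algebra $V_{g,d}^3$, and the joint generalized $(\alpha_2, \alpha_3)$-eigenvalues on $V$ correspond to the maximal ideals of this localization. Since Proposition \ref{prop:evinclusion} shows that every point of $\mathcal{E}_{g,d}^3$ with $\lambda_3 = 3$ already arises as an eigenvalue, the task reduces to proving that no other points appear, i.e.\ bounding the support of $V$ from above by $\{(\sqrt{3}\,a, \sqrt{-3}\,b) : (a,b) \in \mathcal{C}_g\}$.

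The key idea for this upper bound is to compare $V$ with the undeformed ring $H^\ast(\mathcal{N}_g)$, where $\mathcal{N}_g$ is the moduli of stable rank $3$ holomorphic bundles of fixed degree coprime to $3$ on $\Sigma_g$: the Floer ring $V_{g,d}^3$ is a filtered deformation of this classical ring in which $\beta_2^3 - 27$ and $\beta_3$ appear as the leading-order ``simple type'' relations. Using Earl's complete description of $H^\ast(\mathcal{N}_g)$ (to be recalled in Section \ref{sec:undeformed}), I would compute the quotient $H^\ast(\mathcal{N}_g)/(\beta_2 - 3, \beta_3)$ in Section \ref{sec:simpletypequotient} and verify that its reduced spectrum is exactly $\{(\sqrt{3}\,a, \sqrt{-3}\,b) : (a,b) \in \mathcal{C}_g\}$. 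Transporting this bound to the deformed algebra $V$, via a grading argument on the $\Z/12$-graded Floer homology (so that classical relations dominate the lower-order deformation corrections), then completes the proof when combined with the lower bound from Proposition \ref{prop:evinclusion}.

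The hardest step will be the explicit computation of the simple type quotient of $H^\ast(\mathcal{N}_g)$ through Earl's relations: these are considerably more intricate than the $N=2$ relations exploited by Mu\~{n}oz, and reducing them modulo $\beta_2 - 3$ and $\beta_3$ to extract a finite variety whose points match the combinatorial lattice $\mathcal{C}_g$ will require substantial algebraic bookkeeping. A secondary subtlety is the passage from the classical simple type quotient to the Floer deformation: one must show that the deformation cannot enlarge the spectrum, which should follow from a filtration argument ensuring that the non-classical corrections to the Floer relations have strictly smaller leading degree than the classical terms they perturb.
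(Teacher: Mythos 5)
Your plan follows the same broad route as the paper's: reduce to the sector $\beta_2=3\zeta^j,\beta_3=0$, compare the Floer ring to $H^\ast(\cN_g)$ using Earl's relations, and combine a lower bound (Proposition~\ref{prop:evinclusion}) with an upper bound coming from the undeformed side. However, two of your concrete steps are incorrect as stated and would not go through.

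First, the quotient you propose to compute, $H^\ast(\cN_g)/(\beta_2-3,\beta_3)$, is the wrong one. In the filtered-deformation picture, what degenerates from the Floer relation $\beta_2^3-27$ is its leading term, which is $\beta_2^3$, not $\beta_2-3$. (The element $\beta_2-3$ is not homogeneous, so this ideal is not the associated graded of anything on the Floer side.) Moreover, to get the dimension down to $(2g-1)^2$ you must also quotient by the odd classes $\psi_r^i$: the exterior algebra they generate has dimension $2^{4g}$, and without killing them the undeformed quotient is far too big. The relevant object is $\widetilde I_g = I_g + (\beta_2^3,\beta_3,\psi_r^i)$, and the computation one needs is $\dim_{\C}\bA_g^3/\widetilde I_g\leq(2g-1)^2$ (the paper's Theorem~\ref{thm:undeformeddimension}).

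Second, the claim that you would ``verify that its reduced spectrum is exactly $\{(\sqrt3\,a,\sqrt{-3}\,b):(a,b)\in\cC_g\}$'' cannot work for the undeformed quotient. The ring $\bA_g^3/\widetilde I_g$ is positively graded with $H^0=\C$, so it is a local Artinian ring whose reduced spectrum is a single point (the origin). The eigenvalues $(\sqrt3\,a,\sqrt{-3}\,b)$ appear only after the quantum deformation, and the deformation \emph{changes} the reduced spectrum completely. What is preserved (indeed what can only decrease) under a filtered deformation is the total vector space dimension, not the set of eigenvalues; this is precisely the content of the chain of inequalities \eqref{eq:ineqsimpletypeideal2} and \eqref{eq:ineqsimpletypeideal3}. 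The argument then closes as you suggest: the inclusion $\cE_{g,d}^3\subset\Xi_{g,d}^3$ furnishes at least $3(2g-1)^2$ distinct eigenvalues with eigenvectors in the simple type sector, so once the dimension is pinned to $3(2g-1)^2$ there is no room for extra eigenvalues. You should also note that the paper passes between the subspace $S_{g,d}^3$ (an intersection of honest kernels, not generalized ones) and the quotient $\bA_g^3[\varepsilon]/\widetilde J_{g,d}^3$ via the nondegenerate pairing \eqref{eq:pairingintro}; your $V$ is a generalized eigenspace and its dimension is not immediately controlled, so the pairing step is needed to make the dimension count precise.
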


In the remainder of this section we explain the strategy to prove Theorem \ref{thm:mainev}. Let $\cN_{g}=\cN_{g,d}^N$ be the moduli space of projectively flat $U(N)$-connections $A$ on $\Sigma_g$ with $\det(A)=A_0$, where $A_0$ is a fixed connection on a complex line bundle $L\to \Sigma_g$ of degree $d$. There is a natural isomorphism of rings (see Section \ref{sec:undeformed}):
\begin{equation}\label{eq:ordinarycohomologypres}
	H^\ast(\cN_g;\C) = \bA_g^N/I_g
\end{equation}
where $I_g$ is a homogeneous ideal in $\bA_g^N$. Consider the extended ideal
\[
	I_g':=(\varepsilon^N-1)I_g + \sum_{i=0}^{N-1} \varepsilon^i I_g \subset \bA_g^N[\varepsilon].
\]
Then, the relation ideal $J_{g,d}^N$ for $V_{g,d}^N$ from \eqref{eq:firstringpres} is a deformation of the ideal $I_g'$. Concretely, 
\[
	I_g' = \left( L(f) \; \mid \; f\in J_{g,d}^N \right)\subset \bA_g^N[\varepsilon]
\]
where $L(f)$ is the top degree homogeneous part of $f$. Here the degree of $\varepsilon$ is set equal to $0$. Furthermore, there is a complex vector space isomorphism
\begin{equation}\label{iso-vect-coh-ins}
	V_{g,d}^N \cong H^\ast(\cN_g;\C)[\varepsilon]/(\varepsilon^N-1).
\end{equation}
That is to say, the complex dimensions of the quotients of $\bA_g^N[\varepsilon]$ by $J_{g,d}^N$ and $I_g'$ are equal. These observations were first given by Mu\~{n}oz in the case $N=2$ \cite{munoz}; the case for general $N$ is similar, and discussed in \cite{DX}.

Define the \emph{simple type ideal} of $V_{g,d}^3$ as follows:
\begin{equation}\label{eq:simpletypeideal}
	S_{g,d}^3 =  \text{ker}(\beta_2^3-27)\cap \text{ker}(\beta_3)\cap \bigcap_{\substack{1\leq i\leq 2g\\ r=2,3}}\text{ker}(\psi_r^i) \subset V_{g,d}^3.
\end{equation}
The inclusion of Proposition \ref{prop:evinclusion} implies the following inequality:
\begin{equation}\label{eq:firstineqintro}
	\dim_\C S_{g,d}^3 \geq | \mathcal{E}_{g,d}^3 | = 3 (2g-1)^2.
\end{equation}
Furthermore, if equality in \eqref{eq:firstineqintro} holds, then in it is straightforward to see that in fact there can be no other eigenvalues in $\Xi_{g,d}^3$ of the form $(\lambda_1,\lambda_2,3\zeta^j,0)$, and Theorem \ref{thm:mainev} follows. Thus our goal is to prove the inequality
\begin{equation}\label{eq:desiredineqsimpletypeideal}
	\dim_\C S_{g,d}^3\leq  3(2g-1)^2.
\end{equation}
Define $\widetilde J_{g,d}^3$ to be the ideal of $\bA_g^3[\varepsilon]$ generated by $J_{g,d}^3$ and $\beta_2^3-27$, $\beta_3$, $\psi_r^i$, $\varepsilon^3-1$. The pairing \eqref{eq:pairingintro} satisfies $\langle ax,y\rangle = \langle x, ay\rangle$ for all $a\in V_{g,d}^N$. Thus there is an induced pairing
\[
	S_{g,d}^3 \otimes \bA_g^3[\varepsilon]/\widetilde{J}_{g,d}^3 \to \C 
\]
Nondegeneracy of \eqref{eq:pairingintro} implies the inequality
\begin{equation}\label{eq:ineqsimpletypeideal2}
	\dim_\C S_{g,d}^3 \leq \dim_\C  \bA_g^3[\varepsilon]/\widetilde{J}_{g,d}^3.
\end{equation}
On the other hand, consider the ideal
\[
	\widetilde{I}_g := I_g + (\beta_2^3, \beta_3, \psi_2^{i},\psi_3^i)_{1\leq i \leq 2g} \subset \bA_g^3
\]
and its extension $\widetilde{I}_g' :=(\varepsilon^3-1)\widetilde{I}_g  + \widetilde{I}_g + \varepsilon \widetilde{I}_g + \varepsilon^2 \widetilde{I}_g$ inside $\bA_g^3[\varepsilon]$. Since $\widetilde{J}_{g,d}^3$ is a deformation of the ideal $\widetilde{I}_g'$, it follows that there is an inequality
\begin{equation}\label{eq:ineqsimpletypeideal3}
	\dim_\C\bA_g^3[\varepsilon]/\widetilde{J}_{g,d}^3 \leq \dim_\C\bA_g^3[\varepsilon]/\widetilde{I}_{g}' = 3 \dim_\C\bA_g^3/\widetilde{I}_{g}.
\end{equation}
Therefore, the following result, together with inequalities \eqref{eq:ineqsimpletypeideal2} and \eqref{eq:ineqsimpletypeideal3}, proves the desired inequality \eqref{eq:desiredineqsimpletypeideal}, and hence proves Theorem \ref{thm:mainev}.

\begin{theorem}\label{thm:undeformeddimension}
	For $g\geq 1$, $\dim_\C\bA_g^3/\widetilde{I}_{g} \leq (2g-1)^2$.
\end{theorem}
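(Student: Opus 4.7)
The plan is to reduce the problem to a commutative algebra question and then apply Earl's presentation of $H^\ast(\cN_g;\C)$ for $N=3$, together with input from \cite{DX}.

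Since $\widetilde I_g$ contains every odd generator $\psi_2^i,\psi_3^i$, killing the exterior algebra factor of $\bA_g^3$ at once yields
\[
\bA_g^3/\widetilde I_g \;\cong\; \C[\alpha_2,\alpha_3,\beta_2,\beta_3]\big/\bigl(\overline{I_g} + (\beta_2^3,\beta_3)\bigr),
\]
where $\overline{I_g}$ is the image of $I_g$ under the substitution $\psi_r^i=0$. Further imposing $\beta_3 = 0$ reduces the problem to bounding a quotient of the free rank-$3$ $\C[\alpha_2,\alpha_3]$-module $A := \C[\alpha_2,\alpha_3,\beta_2]/(\beta_2^3)$, with basis $1,\beta_2,\beta_2^2$.

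Next, I would write out the $\psi_r^i=0$, $\beta_3=0$ specializations of the generators of $I_g$ supplied by Earl's presentation; these come from the vanishing of higher Chern classes of the adjoint of the universal rank-$3$ bundle, and specialize to explicit polynomial identities in $\alpha_2,\alpha_3,\beta_2$. The desired inequality $\dim_\C A/\overline{I_g} \leq (2g-1)^2$ then becomes a purely combinatorial/Hilbert-series statement about a concretely presented finite-dimensional quotient ring.

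The main obstacle will be efficiently handling Earl's relations, which are substantially more intricate than the Zagier / King-Newstead / Siebert-Tian relations in the $N=2$ case. A natural route is induction on $g$: the jump from $g-1$ to $g$ raises the claimed bound by $(2g-1)^2-(2g-3)^2 = 8(g-1)$, and one must show that the newly appearing Mumford-type relations at genus $g$ produce exactly enough additional vanishing. An alternative is a direct Poincar\'e series computation, evaluating the Hilbert series of the candidate quotient at $t=1$ and decomposing $\dim_\C A/\overline{I_g} = \sum_{j=0}^{2}\dim_\C(\beta_2^j\text{-component})$, bounding each summand separately. It is suggestive that $(2g-1)^2 = |\mathcal{C}_g|$, hinting at a monomial basis of the quotient naturally indexed by $\mathcal{C}_g$, which should guide the induction or generating-function bookkeeping.
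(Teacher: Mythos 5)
Your initial reduction is exactly what the paper does: set all $\psi_r^i$ and $\beta_3$ to zero, and view $\bA_g^3/\widetilde I_g$ as a quotient of $\C[\alpha_2,\alpha_3,\beta_2]/(\beta_2^3)$, then bound the dimension by a lattice-point count. You have also correctly identified that the answer should come from bounding the $\beta_2^0,\beta_2^1,\beta_2^2$ pieces separately, and that $(2g-1)^2$ is a staircase count (in the paper it is $f(4g-2)+f(4g-4)+f(4g-6)$ with $f(n)=\#\{(i,j)\in\Z_{\ge0}^2:2i+3j<n\}$, not literally a basis indexed by $\mathcal{C}_g$). So the scaffolding is right.

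The genuine gap is that the sentence ``the desired inequality then becomes a purely combinatorial/Hilbert-series statement about a concretely presented finite-dimensional quotient ring'' is where the real work starts, not where it ends, and neither of your two suggested routes (induction on $g$, or direct Poincar\'e series) will close without the following input. The paper works with the graded reverse lexicographic order on $\alpha_2,\alpha_3,\beta_2$ and controls the \emph{leading term ideal} of $\overline I_g$. There are two non-routine steps. First, after additionally killing $\beta_2$, the Mumford and dual Mumford relations reduce to the polynomials $\zeta_m$ and $\overline\zeta_m$ given by $\sum\zeta_n t^n=\exp(\alpha_2 t+\alpha_3 t^2/2)$ and its sign-flipped companion, and one needs to show (Proposition~\ref{zeta-n-leading-term-ideal}) that the ideal $(\zeta_{2g-1},\zeta_{2g},\overline\zeta_{2g},\overline\zeta_{2g+1})\subset\C[\alpha_2,\alpha_3]$ has leading term ideal containing every $\alpha_2^i\alpha_3^j$ with $2i+3j\ge 4g-2$. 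This is not a generic Gr\"obner-basis calculation: it requires splitting $\zeta_n=\sigma_n+\overline\sigma_n$ by parity of the $\alpha_3$-degree and a Vandermonde-determinant argument to produce, by explicit $\C[\alpha_2]$-linear combinations, leading monomials of the form $\alpha_2^{n-3i}\alpha_3^{2i}$ and $\alpha_2^{n-3i+2}\alpha_3^{2i-1}$. Second, to lift this to the $\beta_2$ and $\beta_2^2$ components, one needs Lemmas~\ref{beta-2-rela} and \ref{beta-2-2-rela}, which manipulate the recursion from Proposition~\ref{prop:mainrecursion} to show that $\beta_2\zeta^{g,k+1}_{m-2}$ and $\beta_2^2\zeta^{g,k}_{m-1}$ lie in ideals generated by \emph{higher-index} $\zeta$'s modulo $\beta_2^2$ and $\beta_2^3$ respectively; this is what allows the same $\alpha_2,\alpha_3$ count to be re-used with the threshold shifted by $2$ and $4$, producing the three summands $f(4g-2)$, $f(4g-4)$, $f(4g-6)$. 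Without the choice of monomial order, the $\sigma_n/\overline\sigma_n$ Vandermonde trick, and the $\beta_2$-shifting lemmas, you have the target and the reduction but not the proof.
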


\noindent This theorem is proved in the next two sections, where the ring $H^\ast(\cN_g)$ is studied. For reasons explained above (see also the end of this section), we call $\bA_g^3/\widetilde{I}_{g}$ the {\emph{undeformed simple type quotient}}. 

We now show how \eqref{eq:dx2} follows from \cite{DX} and Theorem \ref{thm:undeformeddimension}.

\begin{prop}\label{prop:dim1}
	$\dim V_{g,d}^3(\pm \sqrt{3}\zeta^k(2g-2), 0, 3\zeta^{2k}, 0) = 1$ for each $k\in\{0,1,2\}$. In particular, the generalized eigenspace for 
	$(\pm \sqrt{3}\zeta^k(2g-2), 0, 3\zeta^{2k}, 0)$ agrees with the corresponding eigenspace.
\end{prop}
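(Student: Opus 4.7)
The plan is to sandwich $\dim_\C S_{g,d}^3$ between the lower bound coming from Proposition \ref{prop:evinclusion} and the upper bound coming from Theorem \ref{thm:undeformeddimension}, deducing equality throughout, and then to upgrade the resulting bound on the simple-type piece of $V_{g,d}^3(\lambda)$ to a bound on the full generalized eigenspace using the nondegenerate pairing. Concretely, combining \eqref{eq:firstineqintro}, \eqref{eq:ineqsimpletypeideal2}, \eqref{eq:ineqsimpletypeideal3}, and Theorem \ref{thm:undeformeddimension} yields the chain
\[
3(2g-1)^2 \;=\; |\mathcal{E}_{g,d}^3| \;\leq\; \dim_\C S_{g,d}^3 \;\leq\; 3\dim_\C \bA_g^3/\widetilde{I}_g \;\leq\; 3(2g-1)^2,
\]
which is forced to be a chain of equalities. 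In particular $\dim_\C S_{g,d}^3 = |\mathcal{E}_{g,d}^3|$.

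Next I would decompose $S_{g,d}^3$ with respect to the generalized eigenspace decomposition of $V_{g,d}^3$. For $v=\sum_\lambda v_\lambda \in S_{g,d}^3$ the operators $\beta_2^3-27$ and $\beta_3$ preserve each $V_{g,d}^3(\lambda)$, and they act invertibly there unless $\lambda_3^3=27$ and $\lambda_4=0$, so $v_\lambda$ vanishes outside the set of such $\lambda$. By Theorem \ref{thm:mainev} this set equals $\mathcal E_{g,d}^3$, whence
\[
\dim_\C S_{g,d}^3 \;=\; \sum_{\lambda\in\mathcal E_{g,d}^3}\dim_\C\bigl(S_{g,d}^3\cap V_{g,d}^3(\lambda)\bigr).
\]
The construction producing the eigenvalues in \eqref{eq:epsilon1evs} (via \cite[Prop.~5.7]{DX}) actually yields a nonzero element of $S_{g,d}^3$ for each $\lambda\in \mathcal E_{g,d}^3$, so each summand is at least $1$. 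Combined with the equality $\dim_\C S_{g,d}^3=|\mathcal E_{g,d}^3|$, this pigeonholes every summand to be exactly one.

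Finally, to promote $\dim_\C\bigl(S_{g,d}^3\cap V_{g,d}^3(\lambda)\bigr)=1$ to $\dim_\C V_{g,d}^3(\lambda)=1$ for the extremal $\lambda=(\pm\sqrt{3}\zeta^k(2g-2),0,3\zeta^{2k},0)$, I would appeal to the nondegenerate pairing \eqref{eq:pairingintro}, which decomposes according to the generalized eigenspaces and pairs each $V_{g,d}^3(\lambda)$ with some dual component $V_{g,d}^3(\lambda^\ast)$. The extremal eigenvalues here are self-dual (one checks that the cobordism defining \eqref{eq:pairingintro} realizes a symmetry fixing these eigenvalues), so the pairing restricts to a nondegenerate form on $V_{g,d}^3(\lambda)$. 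Viewing $V_{g,d}^3(\lambda)$ as a local Artinian Frobenius $\C$-algebra, its socle has dimension $1$ and is precisely the subspace $S_{g,d}^3 \cap V_{g,d}^3(\lambda)$ already computed. The nondegeneracy criterion of \cite[Lemma~5.11]{DX} then forces the whole local component to agree with its socle, giving $\dim_\C V_{g,d}^3(\lambda)=1$; the in-particular statement is then automatic, since every nilpotent operator on a one-dimensional space is zero.

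The main obstacle is the final paragraph: verifying that, for the extremal eigenvalues, the pairing is self-dual and that $S_{g,d}^3\cap V_{g,d}^3(\lambda)$ indeed coincides with the socle of the local Artinian algebra $V_{g,d}^3(\lambda)$. All other steps reduce to the numerical pinching already in place.
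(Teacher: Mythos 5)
Your pinching chain in the first two paragraphs is correct and is essentially the same counting argument that drives the paper's proof, merely packaged through $S_{g,d}^3$ instead of through the quotient $\bA_g^3[\varepsilon]/\widetilde{J}_{g,d}^3$. (One small imprecision: \cite[Prop.~5.7]{DX} only produces the eigenvalues in \eqref{eq:epsilon1evs}, those with $\varepsilon=1$; to get a nonzero element of $S_{g,d}^3$ for every $\lambda\in\mathcal E_{g,d}^3$ you must also invoke Lemma~\ref{lemma:evaction} and check that the twist $f$ there preserves the simple type ideal.)

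The third paragraph, however, has a genuine gap, and it is the one you yourself flag. Decompose $V_{g,d}^3(\lambda)=\bigoplus_{\lambda_0}V(\lambda_0,\lambda)$ into blocks; each nonzero $V(\lambda_0,\lambda)$ is a local Artinian Frobenius $\C$-algebra, and (since $\beta_2-3\zeta^{2k}$, $\beta_3$, $\psi_r^i$ are nilpotent on it) the socle is contained in $S_{g,d}^3\cap V(\lambda_0,\lambda)$. So far so good: the pinching then forces a unique $\lambda_0$ with $V(\lambda_0,\lambda)\neq 0$, and the socle of that block coincides with the one-dimensional space $S_{g,d}^3\cap V(\lambda_0,\lambda)$. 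But the ``nondegeneracy criterion'' is an equivalence, not a one-way implication that you can apply: the restriction of the pairing to the socle of a local Artinian Frobenius algebra is nondegenerate \emph{if and only if} the algebra is one-dimensional (if the maximal ideal is nonzero, the socle lies inside it and squares to zero, so the pairing on the socle vanishes identically). You have not supplied any independent verification of nondegeneracy, so invoking it to deduce one-dimensionality is circular. The remark about the cobordism ``realizing a symmetry fixing these eigenvalues'' only shows $V_{g,d}^3(\lambda)$ pairs with itself, which holds for every block in a Frobenius algebra and gives no extra information.

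What the paper actually does at this point is to feed in the additional external input from \cite[\S 5.1]{DX}: the honest eigenspace $V(1,\lambda)=V_{g,d}^3(\lambda)\cap\ker(\varepsilon-1)$ is computed to be exactly one-dimensional for the extremal $\lambda$, by a direct argument with relative invariants. Once that is known, the pinching shows that $\lambda_0=1$ is the unique $\varepsilon$-eigenvalue with $V(\lambda_0,\lambda)\neq 0$, hence $V_{g,d}^3(\lambda)=V(1,\lambda)$ has dimension $1$; the cases $k=1,2$ then follow from $k=0$ via Lemma~\ref{lemma:evaction}. To close your proof you need this input (or an equivalent direct computation of the pairing $\langle v,v\rangle$ of the generating relative invariant $v$ with itself); without it, nothing in the numerical pinching prevents one of the extremal blocks from being a larger local algebra whose socle is still one-dimensional, exactly as in $\C[x]/(x^2)$.
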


\begin{proof}
From our above discussion, Theorem \ref{thm:undeformeddimension} implies
\begin{equation}\label{eq:equalityofsomequotient}
	\dim_\C\bA_g^3[\varepsilon]/\widetilde{J}_{g,d}^3 = 3(2g-1)^2.
\end{equation}
For $\lambda_0\in \C$ and $\lambda\in \Xi_{g,d}^3$ write $V(\lambda_0,\lambda)=V_{g,d}^3(\lambda)\cap\text{ker}(\varepsilon-\lambda_0)$. Then
\[
	V_{g,d}^3 = \bigoplus_{(\lambda_0,\lambda)\in \C\times \Xi_{g,d}^3} V(\lambda_0,\lambda).
\]
Write $\Pi$ for the projection from $V_{g,d}^3$ to $\bA_g^3[\varepsilon]/\widetilde{J}_{g,d}^3$. Since $|\cE_{g,d}^3|=3(2g-1)^2$, for each $\lambda\in \cE_{g,d}^3$ there is a {\emph{unique}} $\lambda_0\in \C$ such that $\Pi(V(\lambda_0,\lambda))$ is nonzero. For if this were not the case, the equality \eqref{eq:equalityofsomequotient} would be violated. Let $\lambda=(\pm \sqrt{3}(2g-2), 0, 3, 0)$. In \cite[\S 5.1]{DX} it is shown that $V(1,\lambda)$ is $1$-dimensional. Consequently,
\[
	\dim\left( V_{g,d}^3(\lambda)\cap \text{ker}(\varepsilon-1)\right) = 1.
\]
On the other hand, by the above remarks, it must be that $V_{g,d}^3(\lambda)\subset \text{ker}(\varepsilon-1)$. This proves the desired result in the case $k=0$. The cases where $k\in\{1,2\}$ then follow from the case $k=0$ and Lemma \ref{lemma:evaction}.
\end{proof}

We conclude this section with some commentary on our terminology used for the subspace $S_{g,d}^3\subset V_{g,d}^3$. First, for any oriented smooth $4$-manifold $X$ recall the definition
\[
	\bA^3(X) = \left( \text{Sym}^\ast(H_0(X)\otimes H_2(X))\otimes \Lambda^\ast H_1(X)  \right)^{\otimes 2}
\]
where complex coefficients are assumed. If $X$ is closed and $b^+(X)>1$, and $w\in H^2(X;\Z)$, then there is an associated $U(3)$ Donaldson-type invariant
\[
	D^3_{X,w}:\bA^3(X)\to \C,
\]
and we now review the outline of its construction. Let $z=(z_{i_1}\cdots z_{i_k})\otimes (z'_{j_1}\cdots z'_{j_l})\in \bA^3(X)$ where each $z_{i_s}$ and $z'_{j_s}$ in $H_i(X)$ for some $i\in \{0,1,2\}$. Consider the moduli space of $PU(3)$ instantons with energy $\kappa$ on $X$, with bundle determined by $w$, and cut down by divisors representing $\mu_2(z_{i_1}),\ldots \mu_2(z_{i_k}),\mu_3(z'_{j_1}),\ldots , \mu_3(z'_{j_l})$. The energy $\kappa$ is chosen so that the cut-down space has expected dimension $0$ (if this is not possible, the invariant is zero). For a generic metric and perturbation, the cut-down moduli space is a compact $0$-manifold, and $D^3_{X,w}(z)$ is the associated signed count. (In general, the blow-up trick of Morgan--Mrowka is also employed.) The following condition on pairs $(X,w)$ refines the definition of $U(3)$ simple type given in the introduction.

\begin{definition}\label{defn:simpletype}
Let $X$ be a closed oriented 4-manifold with $b^+(X)>1$ and $w \in H^2(X; \Z)$.
The pair $(X,w)$ is called $U(3)$ {\emph{simple type}} if
\begin{equation}\label{eq:sst}
D^{3}_{X, w}((x_{(2)}^3-27) z)=0,  \quad D^{3}_{X, w}(x_{(3)}z)=0 , \quad D^{3}_{X, w}(\delta z)=0
\end{equation}
for any $z\in \bA^3(X)$ and any $\delta \in \Lambda^* H_1(X)  \otimes \Lambda^* H_1(X) \subset  \bA^3(X)$.
We say $X$ is {\emph{$U(3)$ simple type}} if $(X,w)$ is $U(3)$ simple type for all $w \in H^2(X; \Z)$. 
\end{definition}

Let $(X,w)$ be a pair of a closed, smooth, oriented $4$-manifold and a 2-cycle $w$, with $b_1(X)=0$ and $b^+(X)>1$, which is also $U(3)$ simple type. Suppose further that $\Sigma\subset X$ is an embedded surface of genus $g$ in $X$ such that $\Sigma\cdot\Sigma=0$ and $d:=\Sigma \cdot w$ is coprime to $3$. Removing a regular neighborhood of $\Sigma$ from $(X,w)$ produces a pair $(X^\circ,w^\circ)$ with boundary $(S^1\times \Sigma_g,\gamma_{d})$. In particular, for any $z\in \bA^3(X^\circ)$ there are relative invariants
\begin{equation}\label{eq:relinvtssimpletype}
  D^3_{X^\circ,w^\circ}(z)\in V_{g,d}^3.
\end{equation}
The proof of \eqref{prop:evinclusion} from \cite{DX} produces eigenvectors with eigenvalues in $\mathcal{E}_{g,d}^3$ using such relative invariants (see also proof of Theorem \ref{thm:fukayafloersimpletypeideal}). A gluing formula expresses invariants of $(X,w)$ in terms of relative invariants, using the pairing \eqref{eq:pairingintro}:
\begin{equation}\label{eq:glueformulafors1timessigma}
	 D^3_{X,w}(zz') =  \langle D^3_{X^\circ,w^\circ}(z), z' \mathbf{1} \rangle,
\end{equation}
where $z'\in \bA^3_{g,d}$, which also induces an element of $\bA^3(X)$. The gluing formula \eqref{eq:glueformulafors1timessigma}, the simple type condition \eqref{eq:sst}, and the non-degeneracy of the pairing \eqref {eq:pairingintro} imply that
\begin{equation}\label{rel-inv-Sgd3}
	D^3_{X^\circ,w^\circ}(z)\in S_{g,d}^3.
\end{equation}
A consequence of Theorem \ref{thm:undeformeddimension} is
\[
	\dim_\C S_{g,d}^3 = 3 (2g-1)^2,
\]
which implies that the simple type ideal $S_{g,d}^3$ is in fact spanned by relative invariants coming from simple type $4$-manifolds.

%!TEX root = main.tex

\section{Mumford relations and their duals}\label{sec:undeformed}

As in the previous section, denote by $\cN_{g}=\cN_{g,d}^N$ the moduli space of projectively flat $U(N)$-connections $A$ on a Riemann surface $\Sigma_g$ of genus $g$ with $\det(A)=A_0$, where $A_0$ is a fixed connection on a line bundle $L\to \Sigma_g$ of degree $d\in \Z$. Assume as before that $d$ is coprime to $N$. Then $\cN_{g}$ is a smooth manifold of dimension $(N^2-1)(2g-2)$. By the Narasimhan--Seshadri correspondence, $\cN_{g}$ may be identified with the moduli space of rank $N$ stable holomorphic bundles over $\Sigma_g$ with fixed determinant of degree $d$.

 There is a universal $U(N)$-bundle $U \to \cN_{g}\times \Sigma_g$. This bundle is not unique, as tensoring it by any line bundle pulled back from $\cN_g$ gives another such choice. However,
 \[
 	\bP := U \otimes \det(U)^{-1/N}
 \]  
defines an element in the rational $K$-theory of $\cN_{g}\times \Sigma_g$, which is independent of the choice of the universal bundle $U$. We define cohomology classes
\begin{equation}\label{eq:moduligens}
	\alpha_r\in H^{2r-2}(\cN_g), \qquad \psi_{r}^{i}\in H^{2r-1}(\cN_g), \qquad \beta_r\in H^{2r}(\cN_g)
\end{equation}
using the K\"{u}nneth decomposition of the Chern class $c_r(\bP) \in H^\ast(\cN_g)\otimes H^\ast(\Sigma_g)$:
\begin{equation}\label{eq:moduligensdef}
	c_r(\bP) = \alpha_r \otimes \sigma + \sum_{i=1}^{2g} \psi_{r}^i \otimes \xi_{i} +  \beta_r \otimes 1 \qquad (2\leq r\leq N).
\end{equation}
Note $c_1(\bP)=0$. All cohomology groups are defined over $\C$, unless otherwise mentioned. (However, everything in this section can be done over $\Q$.) Recall that a symplectic integral basis of $\{\eta_i\}_{i=1}^{2g}$ for $H_1(\Sigma_g)$ was fixed earlier. In \eqref{eq:moduligensdef}, $\{\xi_i\}_{i=1}^{2g}$ is the integral basis of $H^1(\Sigma_g)$ satisfying $\xi_i(\eta_j)=\delta_{ij}$, and $\sigma$ is the integral generator of $H^2(\Sigma_g)$ given by the orientation. In particular, for $1\leq i \leq g$, we have  $\xi_i\xi_{g+i}=\sigma$ and $\xi_i \xi_j=0$ when $j\neq g+i$. The following is a reformulation of a result due to Atiyah and Bott \cite[Thm. 9.11]{AB} (see also \cite[Prop. 3.14]{DX}). 

\begin{prop}
	The cohomology ring $H^\ast(\cN_g)$ is generated by the elements $\alpha_r,\beta_r,\psi_r^i$ where $2\leq r\leq N$, $1\leq i\leq 2g$.
\end{prop}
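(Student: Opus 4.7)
The plan is to follow the classical gauge-theoretic argument of Atiyah–Bott. Fix a smooth hermitian rank-$N$ bundle $E \to \Sigma_g$ with $\det E = L$, let $\cC$ denote the contractible affine space of Dolbeault operators on $E$ compatible with the fixed holomorphic structure on $L$, and let $\cG$ denote the complex gauge group of determinant-preserving automorphisms of $E$. Since $\gcd(d,N)=1$, semistability coincides with stability, so $\cG$ (modulo its finite center) acts freely on the open stable locus $\cC^s\subset\cC$ with quotient $\cN_g$. The key input is the Atiyah–Bott theorem \cite[Thm.~9.11]{AB} that the Harder–Narasimhan stratification of $\cC$ is $\cG$-equivariantly perfect, which implies that the restriction $H^*_{\cG}(\cC) \twoheadrightarrow H^*_{\cG}(\cC^s)$ is surjective. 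Contractibility of $\cC$ identifies the source with $H^*(B\cG)$, and the essentially free action of $\cG$ on $\cC^s$ identifies the target with $H^*(\cN_g)$.

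It remains to exhibit a generating set of $H^*(B\cG)$ whose restrictions produce $\alpha_r,\beta_r,\psi_r^i$. To this end, there is a tautological $U(N)$-bundle $\mathbb{U} \to B\cG \times \Sigma_g$ which pulls back to (a representative of) $U$ on $\cN_g \times \Sigma_g$; after the normalization $\bP = U \otimes \det(U)^{-1/N}$, the Künneth components of $c_r(\bP)$ in \eqref{eq:moduligensdef} are by construction the restrictions of the corresponding Künneth components of $c_r(\mathbb{U})$, and thus equal $\alpha_r,\beta_r,\psi_r^i$ for $2\leq r\leq N$. Via the mapping-space model $B\cG \simeq \mathrm{Map}_d(\Sigma_g,BU(N))$ and the standard rational cohomology computation for such mapping spaces, the Künneth components (slant products against a basis of $H_*(\Sigma_g)$) of the universal Chern classes $c_r(\mathbb{U})$ for $1\leq r \leq N$ generate $H^*(B\cG)$ as an algebra.

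The main technical obstacle is the bookkeeping around the center of $\cG$: the individual Künneth components of $c_1(\mathbb{U})$ depend on the choice of universal bundle and correspond to the $BZ(\cG)$-factor that must be discarded in the transition from $B\cG$ to $\cN_g$. The normalization $\bP = U\otimes\det(U)^{-1/N}$ forces $c_1(\bP)=0$ and implements exactly this quotient on the level of cohomology, so the surviving generators are precisely the classes $\alpha_r,\beta_r,\psi_r^i$ with $2\leq r \leq N$, completing the proof.
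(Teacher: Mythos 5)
Your argument is correct and matches the paper's approach exactly: the paper simply states that this proposition is a reformulation of Atiyah--Bott's Theorem 9.11, and your proof unpacks the standard argument underlying that citation (equivariant perfection of the Harder--Narasimhan stratification giving surjectivity of $H^*(B\cG)\to H^*(\cN_g)$, plus the generation of $H^*(B\cG)$ by K\"unneth components of universal Chern classes). One minor bibliographic slip: the equivariant perfection itself is Theorem 7.14 of Atiyah--Bott, while Theorem 9.11 is the resulting generation statement for the fixed-determinant moduli space, which is the form the paper cites.
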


\noindent This result induces the isomorphism \eqref{eq:ordinarycohomologypres} mentioned earlier.

We next turn to relations for these generators. Let $J_g$ be the Jacobian torus of $\Sigma_g$, viewed as the moduli space of flat $U(1)$-connections on $\Sigma_g$, or equivalently, the moduli space of holomorphic line bundles of degree zero. Let
\[
	V \to \cN_g\times J_g \times \Sigma_g.
\]
be defined as the tensor product of the pullback of $U$ with the pullback of the Poincar\'{e} bundle over $J_g\times\Sigma_g$. We have
\begin{equation}\label{eq:c1universal}
	c_1( V) = d \cdot 1\otimes 1 \otimes \sigma + \sum_{i=1}^{2g} 1\otimes d_i \otimes \xi_i + x \otimes 1 
\end{equation}
where $x\in H^2(\cN_g\times J_g)$, and $d_i\in H^1(J_g)$ generate $H^\ast(J_g)$ as an exterior algebra. Consider the projection $f:\cN_g\times J_g\times \Sigma_g\to \cN_g\times J_g$. The Grothendieck--Riemann--Roch formula expresses $c_i(f_! V)$ in terms of the generators \eqref{eq:moduligens} and elements of $H^\ast(J_g)$. Now assume
\begin{equation}\label{eq:degreeassumption}
	d = 2N(g-1) + d', \qquad 1\leq d'  < N.
\end{equation}
Throughout this subsection, $d'$ is fixed, and $g$ is a positive integer. As a consequence of stability and Serre duality, $H^1(\Sigma_g;\mathcal{E}\otimes \mathcal{L})=0$ for any stable rank $N$ bundle $\mathcal{E}$ and degree zero holomorphic line bundle $\mathcal{L}$ over $\Sigma_g$. Therefore $f_! V$ is an honest vector bundle over $\cN_g\times J_g$, whose rank can be computed using Riemann-Roch. Consequently, we have
\begin{equation}\label{eq:mumfordrels}
	c_i(f_! V)=0 \quad \text{ if } \quad i > \text{rk} (f_! V) = N(g-1)+d'. 
\end{equation}
Taking slant products of the Chern classes \eqref{eq:mumfordrels} with elements in $H^\ast(J_g)$ thus yields relations for the generators \eqref{eq:moduligens}. We call these {\emph{Mumford relations}}, following the discussion in \cite{AB}. In the case $N=2$, the Mumford relations were shown to be a complete set of relations for the ring $H^\ast(\cN_g)$ by Kirwan \cite{kirwan}.

When $N>2$, the Mumford relations do not give a complete set of relations for $H^\ast(\cN_g)$. Following \cite{earl}, we consider a line bundle $L\to \Sigma_g$ of degree $4(g-1)+1$. Let $\phi:\cN_g\times J_g\times \Sigma_g\to \Sigma_g$ be projection. Define the ``dual'' universal bundle
\[
	\overline V := V^\ast \otimes \phi^\ast L.
\]
Then under assumption \eqref{eq:degreeassumption}, a similar argument using stability and Serre duality implies that $f_!\overline{V}$ is an honest vector bundle. We then obtain
\begin{equation}\label{eq:dualmumfordrels}
	c_i(f_! \overline{V})=0 \quad \text{ if } \quad i > \text{rk} (f_! \overline{V}) = Ng-d'. 
\end{equation}
Again, taking slant products of the classes \eqref{eq:mumfordrels} with elements in $H^\ast(J_g)$ yields relations for the generators \eqref{eq:moduligens}. We call these {\emph{dual Mumford relations}}, following Earl. In the case $N=3$, the work of Earl \cite{earl} implies that the Mumford relations and the dual Mumford relations form a complete set of relations for $H^\ast(\cN_g)$.

In the following, we use Grothendieck--Riemann--Roch to compute the Chern classes of $f_! V$, $f_! \overline{V}$ and then use \eqref{eq:mumfordrels} and \eqref{eq:dualmumfordrels} to obtain relations in the cohomology ring $H^\ast(\cN_g)$. For this purpose, we may assume $x$ in \eqref{eq:c1universal} is zero, by tensoring $V$ by a formal line bundle over $\cN_g\times J_g$ whose first Chern class is $-x/N$. Following an observation of Zagier \cite[p.22]{zagier}, this assumption does not affect \eqref{eq:mumfordrels} and \eqref{eq:dualmumfordrels}.

When $N>3$, the Mumford relations and dual Mumford relations are not complete, and more relations are necessary. A complete set of relations for general $N$ was given by Earl and Kirwan \cite{earl-kirwan}. As our focus is the case $N=3$, we will only consider the Mumford and dual Mumford relations. The particular elements we consider are
\begin{equation}\label{eq:defzetas}
	\zeta_{m}^{g,k} := (-N)^k c_{m+k}(f_! V)/D_k, \qquad \overline{\zeta}_{m}^{g,k} :=(-N)^k c_{m+k}(f_! \overline{V})/D_k 
\end{equation}
where $D_k\in H_{2k}(J_g)$ has pairing $1$ with $d_1d_{g+1}d_2d_{g+2} \cdots d_k d_{g+k}\in H^{2k}(J_g)$ and trivial pairing with other exterior products of $d_i$. More precisely, we consider these classes in terms of the generators \eqref{eq:moduligens} as derived from Grothendieck--Riemann--Roch. Thus
\[
	\zeta_{m}^{g,k} , \;\;  \overline{\zeta}_{m}^{g,k} \;\; \in \bA^N_g = \C[\alpha_2,\ldots,\alpha_N,\beta_2,\ldots,\beta_N]\otimes \Lambda^\ast(\psi_r^i)
\]
By our discussion thus far, the cohomology ring for $\cN_g$ may be written as in \eqref{eq:ordinarycohomologypres},
\[
	H^\ast(\cN_g)= \bA^N_g / I_g,
\]
and using \eqref{eq:mumfordrels}, \eqref{eq:dualmumfordrels}, the ideal of relations $I_g$ contains the following elements:
\begin{gather}
	\zeta_m^{g,k}\in I_g \quad \text{ if }\quad m > N(g-1)-k+d', \label{eq:zetarelbound1} \\[0.3cm] 
	\overline{\zeta}_m^{g,k}\in I_g \quad \text{ if }\quad m > Ng-k-d'. \label{eq:zetarelbound2}
\end{gather}
We now study these relations after modding out by the classes $\psi_r^i$. 
In the computations below, there will frequently appear two constants:
\begin{equation}\label{eq:Nconsts}
	\mathsf{c}_{N,d'} := 1-\frac{d'}{N}, \qquad \overline{\mathsf{c}}_{N,d'} := 1 -\mathsf{c}_{N,d'}=  \frac{d'}{N}.
\end{equation}

We first obtain an expression for the generating functions of the polynomials $\zeta^{g,k}_m$ (with respect to the index $m$). Below, the notation ``$\equiv_\psi$'' means congruence modulo the ideal $(\psi_r^i)_{2\leq r\leq N, 1\leq i\leq 2g}$. By convention, we also set $\beta_1=\alpha_1=\alpha_0=0$ and $\beta_0=1$.

\begin{prop}\label{prop:generatingfunction}
	The generating series $F_{g,k}(t) := \sum_{m=0}^\infty \zeta_m^{g,k} t^m$ {\rm{mod}} $(\psi_r^i)$ is given by:
	\begin{equation*}
	\small F_{g,k}(t) \equiv_\psi \left( \sum_{i=0}^N \beta_i t^i \right)^{g-k-\mathsf{c}_{N,d'}} \left( \sum_{i=0}^N (1-\frac{i}{N})\beta_i t^i \right)^k G(t)
\end{equation*}
where the power series $G(t)\in \Q[\alpha_2,\ldots,\alpha_N,\beta_2,\ldots,\beta_N][\![t]\!]$ is defined by 
\begin{equation}\label{eq:goft}
	G(t)=\exp\left({\sum_{i=1}^N \alpha_i \frac{\partial}{\partial \beta_i} \left(\sum_{n=1}^\infty -\frac{(-t)^np_{n+1}}{n(n+1)}\right)}\right).
\end{equation}
\end{prop}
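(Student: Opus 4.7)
The plan is to apply Grothendieck--Riemann--Roch to the projection $f\colon \cN_g\times J_g\times\Sigma_g\to\cN_g\times J_g$ to compute $\operatorname{ch}(f_!V)$ modulo $\psi$, then convert this into the total Chern class via the Newton-type identity $\log c(E,t)=\sum_{n\geq 1}(-1)^{n+1}(n-1)!\,t^n\operatorname{ch}_n(E)$, and finally extract the slant product with $D_k$ by reading off the coefficient of $\Omega^k$, where $\Omega:=\sum_{i=1}^g d_id_{g+i}\in H^2(J_g)$ satisfies $\langle D_k,\Omega^k\rangle=k!$.

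The first task is to make the $\psi$-reduction transparent via formal Chern roots. After absorbing the $x$-term, I would write $V=\mathbf{P}\otimes \widetilde L$ with $c_1(\widetilde L)=\mathsf{a}:=(d/N)\sigma+(1/N)\sum_i d_i\xi_i$, so that $\operatorname{ch}(V)=e^{\mathsf{a}}\operatorname{ch}(\mathbf{P})$. Modulo $\psi$ the formal Chern roots of $\mathbf{P}$ split as $y_j=a_j\sigma+b_j$ with $\sum a_j=\sum b_j=0$, giving $\beta_r=e_r(b_1,\dots,b_N)$ (elementary symmetric polynomials) and identifying the degree $-2$ derivation $\mathcal{D}_{\vec a}:=\sum_j a_j\,\partial/\partial b_j$ with $\sum_r\alpha_r\,\partial/\partial\beta_r$ on symmetric polynomials. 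Setting $B:=\sum_j e^{b_j}$ then yields $\operatorname{ch}(\mathbf{P})\equiv_\psi B+\sigma\mathcal{D}_{\vec a}B$. Since $\sigma^2=0$ and $\xi_i\sigma=0$, the exponential of $\tau:=(1/N)\sum d_i\xi_i$ truncates to $1+\tau+\tau^2/2$, with $\tau^2/2=-(1/N^2)\Omega\otimes\sigma$. Multiplying $(1+(d/N)\sigma)e^{\tau}\bigl(B+\sigma\mathcal{D}_{\vec a}B\bigr)$ by $\operatorname{Td}(T_f)=1+(1-g)\sigma$, extracting the $\sigma$-coefficient to implement the fiber integral, and using $d=2N(g-1)+d'$ together with $\mathsf{c}_{N,d'}=1-d'/N$ produces
\[
\operatorname{ch}(f_!V)\equiv_\psi (g-\mathsf{c}_{N,d'})B+\mathcal{D}_{\vec a}B-\tfrac{1}{N^2}\Omega B.
\]

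Plugging this into $\log c(f_!V,t)$ and separating by $\Omega$-degree gives $\log c(f_!V,t)=L_0(t)+\Omega\,L_1(t)$ for explicit $L_0, L_1$. Since $\Omega^{g+1}=0$ and $\langle D_k,\Omega^k\rangle=k!$, exponentiating and extracting the $\Omega^k$-coefficient gives $F_{g,k}(t)=e^{L_0(t)}\bigl(-NL_1(t)/t\bigr)^k$. The classical identities $\sum_{n\geq 1}(-1)^{n+1}p_nt^n/n=\log(\sum_i\beta_it^i)$ and $\mathcal{D}_{\vec a}p_{n+1}=(n+1)q_n$ (with $q_n:=\sum_j a_jb_j^n$) identify $e^{L_0(t)}=\bigl(\sum_i\beta_it^i\bigr)^{g-\mathsf{c}_{N,d'}}G(t)$ directly from the definition of $G(t)$. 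For the other factor, the combinatorial identity $\sum_j\prod_{i\neq j}(1+tb_i)=\sum_k(N-k)\beta_kt^k$ (each $\beta_k$ appears in exactly $N-k$ of the $N$ summands on the left) converts $-NL_1(t)/t$ into $\bigl(\sum_k(1-k/N)\beta_kt^k\bigr)/\bigl(\sum_i\beta_it^i\bigr)$; raising to the $k$-th power, the denominator absorbs into the exponent of the first factor, leaving the claimed formula.

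The main obstacle is the K\"unneth bookkeeping modulo $\psi$, together with the identification $\mathcal{D}_{\vec a}=\sum_r\alpha_r\,\partial/\partial\beta_r$ on symmetric polynomials, which is exactly what makes the $G(t)$-factor emerge in the stated exponential form. Once this correspondence is in place, the only further nontrivial input is the symmetric-function identity for $\sum_j\prod_{i\neq j}(1+tb_i)$, and the rest of the argument reduces to careful tracking of signs and fiber integrations through the GRR formula.
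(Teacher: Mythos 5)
Your proposal follows essentially the same route as the paper: Grothendieck--Riemann--Roch for $\operatorname{ch}(f_!V)$ modulo $\psi$, the Newton relation between $\log c(E)_t$ and $\operatorname{ch}(E)$ (Zagier's Lemma~1 in the paper), extraction of the slant product with $D_k$ from the $\Omega$-degree decomposition, and the symmetric-function identities $\sum_{n\geq 1}(-1)^{n+1}p_nt^n/n=\log\sum_i\beta_it^i$ and $\sum_j\prod_{i\neq j}(1+tb_i)=\sum_k(N-k)\beta_kt^k$. Your packaging via the explicit Chern-root splitting $y_j=a_j\sigma+b_j$ and the derivation $\mathcal{D}_{\vec a}=\sum_r\alpha_r\partial/\partial\beta_r$ is a clean way to see why the exponential factor $G(t)$ emerges, but it is a reformulation rather than a different proof.
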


\vspace{.25cm}

The notation $p_{n}$ refers to the $n^\text{th}$ power symmetric function, viewed as a function of the elementary symmetric functions. More explicitly, recall that given variables $x_1,x_2,\ldots$ there are the elementary symmetric functions $e_n$ and the power symmetic functions $p_n$:
\[
	e_n = \sum_{i_1 < i_2 <\cdots < i_n} x_{i_1} x_{i_2}\cdots x_{i_n} \qquad \qquad p_n = \sum_i x_i^n
\] 
It is a basic result that $p_n$ can be written as a function in the elementary symmetric functions: $p_n=p_n(e_1,e_2,\ldots)$. An explicit relationship is given by
\begin{equation}\label{eq:sympolyrel}
	-\sum_{n=1}^\infty \frac{(-t)^np_n}{n}  = \log \left(\sum_{n=0}^\infty {e_n t^n} \right).
\end{equation}
 In the formula of Proposition \ref{prop:generatingfunction}, $\partial p_{n} / \partial\beta_i$ should be interpreted by identifying the $\beta_i$ with $e_i$ for $i=0$ and $2\leq i\leq N$, and setting the other $e_i=0$. Explicitly,
 \[
 		\frac{\partial p_{n}}{\partial\beta_i} = \frac{\partial}{\partial e_i}p_n(e_1,e_2,\ldots)\Bigr|_{e_i=\beta_i}
 \]  

\vspace{.1cm}

\begin{proof}[Proof of Proposition \ref{prop:generatingfunction}]
We adapt the computation of Zagier \cite[\S 6]{zagier}, which is for the case $N=2$. Grothendieck--Riemann--Roch says
\begin{equation}\label{eq:grr}
	\text{ch}(f_!  V ) = \left( \text{ch}( V) \text{td}(\Sigma_g)\right)/[\Sigma_g]
\end{equation}
where $\text{td}(\Sigma_g)=1-(g-1)\sigma$ is the Todd class of $\Sigma_g$. We have 
\[
	\text{ch}(V)=\text{ch}(\bP)\text{ch}(\det(V)^{1/N})
\]
where $\text{ch}(\bP)$ is interpreted via the natural map $H^\ast(\cN_g\times \Sigma_g)\to H^\ast(\cN_g\times J_g\times \Sigma_g)$ induced by projection. Using \eqref{eq:c1universal} (and recalling $x=0$) we obtain
\[
	\text{ch}(\text{det}(V)^{1/N}) = 1 + \frac{1}{N} d\otimes 1\otimes \sigma + \frac{1}{N}\sum_{i=1}^{2g} 1\otimes d_i\otimes \xi_i -  \frac{1}{N^2}1\otimes A\otimes \sigma
\]
where $A=\sum_{i=1}^g d_id_{g+i}\in H^2(J_g)$. We next give an expression for \text{ch}(\bP). Let $\gamma_i$ (resp. $\delta_i$), where $1\leq i \leq N$, be formal degree $2$ classes such that the $i^\text{th}$ elementary symmetric polynomial in the $\gamma_i$ (resp. $\delta_i$) is equal to $c_i(\bP)$ (resp. $\beta_i$). Then
\[
	\text{ch}(\bP) \;\; = \;\; \sum_{i=1}^N e^{\gamma_i} \;\; = \;\;  \sum a_{r_1,\ldots,r_N} c_1(\bP)^{r_1}\cdots c_{N}(\bP)^{r_N}
\]
for some constants $a_{r_1,\ldots,r_N}$. A direct computation from \eqref{eq:moduligensdef} gives
\begin{equation*}\label{eq:powerchernclassinproof}
	c_{1}(\bP)^{r_1}\cdots c_{N}(\bP)^{r_N} \,\equiv_\psi \, \beta_1^{r_1}\cdots \beta_{N}^{r_N}\otimes 1\otimes 1 + \sum_{i=0}^N\alpha_i \frac{\partial}{\partial\beta_i}(  \beta_1^{r_1}\cdots \beta_{N}^{r_N} )\otimes 1 \otimes \sigma.
\end{equation*}
Together with the identity $\sum_{i=1}^N e^{\delta_i} =   \sum a_{r_1,\ldots,r_N}  \beta_1^{r_1}\cdots \beta_{N}^{r_N}$, these relations yield
\[
	\text{ch}(\bP) \;\; \equiv_\psi \;\; \sum_{i=1}^N e^{\delta_i}\otimes 1\otimes 1 + \sum_{i,j=1}^N \alpha_i \frac{\partial}{\partial \beta_i} (e^{\delta_j}) \otimes 1\otimes \sigma
\]
With these observations in hand, we evaluate \eqref{eq:grr} to be
\begin{equation}\label{eq:chofdirectimage}
	\text{ch}(f_!V)\;\; \equiv_\psi \;\;(\frac{d}{N} - (g-1) - \frac{1}{N^2} A)\sum_{i=1}e^{\delta_i} \otimes 1 + \sum_{i,j=1}^N\alpha_i \frac{\partial}{\partial \beta_i}e^{\delta_j} \otimes 1
\end{equation}
To determine the Chern classes from this expression, we use the following \cite[Lemma 1]{zagier}: for any vector bundle $E$ over a space $X$, we have 
\begin{equation}\label{eq:chernclasstochernchar}
	\log c(E)  =- \sum_{n=1}^\infty \frac{(-1)^n}{n} s_n \qquad \Longleftrightarrow \qquad \text{ch}(E) = \text{rk}(E) + \sum_{n=1}^\infty \frac{s_n}{n!}
\end{equation}
where $s_n=s_n(E)\in H^{2n}(X)$. For $E=f_!V$, using \eqref{eq:chofdirectimage} we compute 
\[
	s_n \;\equiv_\psi\; (\frac{d}{N}-(g-1))p_n - \frac{n}{N^2}Ap_{n-1} + \sum_{i=1}^N\frac{1}{n+1}\alpha_i \frac{\partial}{\partial \beta_i} p_{n+1}
\]
where $p_n$ is the $n^{\text{th}}$ power symmetric function in $\delta_1,\ldots , \delta_N$. From \eqref{eq:chernclasstochernchar}, we obtain
\begin{equation}\label{eq:chernclassseriesinproof}
	c(f_!V)_t \; \equiv_\psi \; \left(\sum_{i=0}^N \beta_i t^i\right)^{\frac{d}{N}-(g-1)}\exp\left(-\frac{At}{N^2}\sum_{n=1}^\infty (-t)^{n-1}p_{n-1}\right)G(t)
\end{equation}
Here we use the notation $c(E)_t=\sum_{i=0}^\infty c_i(E)t^i$ for the power series associated to the total Chern class of $E$. Using $A^r / D_k = r!\delta_{rk}$, we obtain an expression for the slant product: 
\[
	c(f_!V)_t / D_k \; \equiv_\psi \; \left(\sum_{i=0}^N \beta_i t^i\right)^{\frac{d}{N}-(g-1)}\left(\frac{-t}{N^2}\sum_{n=1}^{\infty}(-t)^{n-1} p_{n-1}\right)^k G(t)
\]
Taking the derivative of relation \eqref{eq:sympolyrel} (with $e_k=\beta_k$) gives
\[
	\sum_{n=1}^\infty (- t)^{n-1}p_n = \sum_{i=0}^N i\beta_it^{i-1} / \sum_{i=0}^N \beta_i t^i.
\]
Noting $p_0=N$, we then have the relation
\[
	\sum_{n=1}^\infty (-t)^{n-1}p_{n-1} =  N - \sum_{i=0}^N i\beta_it^i / \sum_{i=0}^N \beta_i t^i = \sum_{i=0}^N (N-i)\beta_i t^i / \sum_{i=0}^N\beta_i t^i.
\]
Substituting this last expression into \eqref{eq:chernclassseriesinproof}, and using $t^k F_{g,k}(t)=(-N)^k  c(f_!V)_t/D_k$, as determined by \eqref{eq:defzetas}, gives the result.
\end{proof}

\begin{remark}
Setting all $\psi_r^i$ equal to zero simplifies this computation considerably. Explicit formulas can of course be obtained without this simplification; see \cite{earl,kirwan} for computations along these lines (using different generators).
\end{remark}

From Proposition \ref{prop:generatingfunction} we derive a recursive relation for the $\zeta_m^{g,k}$.

\begin{prop}\label{prop:mainrecursion}
	The polynomials $\zeta_m^{g,k} \; {\rm{mod}}\; (\psi_r^i)$ in the ring $\C[\alpha_2,\ldots,\alpha_N,\beta_2,\ldots,\beta_N]$ are determined recursively, for fixed $g,k$, as follows (with $\zeta_0^{g,k}=1$ and $\zeta_{m}^{g,k}=0$ for $m<0$):
	\begin{align}
	 N(m+1)\zeta_{m+1}^{g,k} &\;\;\equiv_\psi\;\; -\sum_{i,j=0}^N (N-j)\alpha_i\beta_j \zeta_{m-i-j+2}^{g,k} \;\;\; + \hspace{2cm} \label{eq:recursion1}   \\ 
 \hspace{-0.1cm} \sum_{\substack{i,j=0\\ (i,j)\neq (0,0)}}^N ( (g-k-\mathsf{c}_{N,d'}) & i(N-j) - (N-i)(m-i-j+1) +kj(N-j)) \beta_i\beta_j \zeta_{m-i-j+1}^{g,k} \nonumber
\end{align}
\end{prop}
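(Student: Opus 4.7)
The plan is to derive the recursion by taking the logarithmic derivative of the generating function $F_{g,k}$ from Proposition~\ref{prop:generatingfunction} and extracting the coefficient of $t^{m+2}$. Abbreviate $P(t)=\sum_{i=0}^N\beta_i t^i$ and $\widetilde Q(t):=\sum_{i=0}^N (N-i)\beta_i t^i$, so that $F_{g,k}\equiv_\psi P^{g-k-\mathsf{c}_{N,d'}}(\widetilde Q/N)^k G$. Differentiating and multiplying through by $P\widetilde Q$ produces
\[
P\widetilde Q\, F_{g,k}' \;\equiv_\psi\; (g-k-\mathsf{c}_{N,d'})\,P'\widetilde Q\, F_{g,k} \;+\; k\,P\widetilde Q'\, F_{g,k} \;+\; P\widetilde Q\,\tfrac{G'}{G}\, F_{g,k}.
\]
The recursion will emerge once the last term is rewritten in a form not involving $G$, specifically via the identity $t^2 P(t)\,(G'/G) \equiv_\psi -A(t)$ where $A(t):=\sum_{i=2}^N \alpha_i t^i$.

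To prove this key identity, I would observe that $\log G=\sum_{i=1}^N\alpha_i\,\partial_{\beta_i}H$ with $H(t)=\sum_{n\geq 1}-\frac{(-t)^np_{n+1}}{n(n+1)}$, and hence $G'/G=\sum_i\alpha_i\,\partial_{\beta_i}H'(t)$ where $H'(t)=\sum_{n\geq 1}\frac{(-t)^{n-1}p_{n+1}}{n+1}$. Applying $\partial_{\beta_i}$ to the power-sum identity $\log P(t)=\sum_{n\geq 1}(-1)^{n-1}p_n t^n/n$ yields
\[
\partial_{\beta_i}p_n \;=\; (-1)^{n-1}\,n\,[t^{n-i}]\bigl(1/P(t)\bigr).
\]
Substituting into $H'(t)$, the factor $n+1$ cancels, the signs collapse to $-t^{n-1}$, and after shifting the index the series telescopes to $\partial_{\beta_i}H'(t)=-t^{i-2}/P(t)$ for $i\geq 2$. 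Summing with weights $\alpha_i$ (using $\alpha_0=\alpha_1=0$) gives the claimed identity.

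With the $G'/G$ identity in hand, multiplying the main functional equation by $t^2$ and substituting gives
\[
t^2 P\widetilde Q\, F_{g,k}' \;\equiv_\psi\; (g-k-\mathsf{c}_{N,d'})\,t^2 P'\widetilde Q\, F_{g,k} \;+\; k\,t^2 P\widetilde Q'\, F_{g,k} \;-\; A\widetilde Q\, F_{g,k}.
\]
Extracting $[t^{m+2}]$ is routine bookkeeping. For the LHS I would compute $[t^{m+2}]\,t^2 P\widetilde Q F_{g,k}' = \sum_{i,j}(m-i-j+1)(N-j)\beta_i\beta_j\zeta_{m-i-j+1}^{g,k}$, which equals $\sum_{i,j}(m-i-j+1)(N-i)\beta_i\beta_j\zeta_{m-i-j+1}^{g,k}$ after symmetrizing $i\leftrightarrow j$ using $\beta_i\beta_j=\beta_j\beta_i$. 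The three terms on the RHS similarly yield $(g-k-\mathsf{c}_{N,d'})\sum i(N-j)\beta_i\beta_j\zeta_{m-i-j+1}^{g,k}$, $k\sum j(N-j)\beta_i\beta_j\zeta_{m-i-j+1}^{g,k}$, and $-\sum(N-j)\alpha_i\beta_j\zeta_{m-i-j+2}^{g,k}$. Rearranging, the $\beta\beta\zeta$-side reads $\sum_{i,j=0}^N\!\bigl[(g-k-\mathsf{c}_{N,d'})i(N-j) - (N-i)(m-i-j+1) + kj(N-j)\bigr]\beta_i\beta_j\zeta_{m-i-j+1}^{g,k}$; the $(i,j)=(0,0)$ contribution to this sum is exactly $-N(m+1)\zeta_{m+1}^{g,k}$, and moving it to the left-hand side yields~\eqref{eq:recursion1}.

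The main obstacle is the identity $t^2 P(G'/G)\equiv_\psi -A$: a priori $G$ is an exponential of an infinite power-sum series, and it is not obvious that multiplication by $P$ collapses everything into the simple polynomial $-A(t)/t^2$. The cancellation is forced precisely by the denominators $1/(n(n+1))$ in the definition of $H$ interacting with Newton's identity for $\partial_{\beta_i}p_n$; this is the only nontrivial step, and once it is established the remainder of the argument is formal coefficient extraction from a power-series identity.
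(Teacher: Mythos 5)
Your proposal is correct and follows essentially the same approach as the paper: take the logarithmic derivative of $F_{g,k}$, establish the identity $t^2P(t)\,G'(t)/G(t)\equiv_\psi -A(t)$, clear denominators, and extract the relevant Taylor coefficient. The only minor difference is in how the $G'/G$ identity is obtained --- the paper recognizes the inner series directly as $-\log P$ (up to a term killed by $\alpha_1=\beta_1=0$) and then differentiates in $\beta_i$, whereas you derive it via termwise coefficient extraction for $\partial_{\beta_i}p_n$; both hinge on \eqref{eq:sympolyrel}.
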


\begin{proof}
	First consider the series $G(t)$ from \eqref{eq:goft}. We compute 
	\begin{align*}
		\frac{G'(t)}{G(t)} &= \sum_{i=0}^N t^{-2} \alpha_i \frac{\partial}{\partial \beta_i}\left( \sum_{n=1}^\infty \frac{(-1)^{n+1} p_{n+1} t^{n+1}}{n+1} \right) \\[.2cm]
						& = -\sum_{i=0}^N t^{-2} \alpha_i \frac{\partial}{\partial \beta_i}\left( \log \sum_{j=0}^N \beta_j t^j \right) = -\sum_{i=0}^N \alpha_i t^{i-2} / \sum_{j=0}^N \beta_j t^j.
	\end{align*}
	In the second equality, we have again used \eqref{eq:sympolyrel} with $e_k=\beta_k$. Next, we compute
	\begin{align*}
		\frac{F_{g,k}'(t)}{F_{g,k}(t)} = (g-k-\mathsf{c}_{N,d'})&\left(\sum_{i=0}^N i\beta_i t^{i-1}\right) \left(\sum_{i=0}^N \beta_i t^i \right)^{-1} \\[0.2cm]
		+ k\left(\sum_{i=0}^N i(1-\frac{i}{N})\beta_i t^{i-1}\right) & \left(\sum_{i=0}^N (1-\frac{i}{N})\beta_i t^i \right)^{-1}   - \left(\sum_{i=0}^N  \alpha_i t^{i-2}\right)\left(\sum_{i=0}^N \beta_i t^i\right)^{-1}. 
	\end{align*}
	Multiply both sides by $(\sum_{i=0}^N \beta_i t^i)(\sum_{i=0}^N (N-i)\beta_i t^i )F_{g,k}(t)$. Then, using 
	\[
		F_{g,k}(t) = \sum_{m=0}^\infty \zeta_{m}^{g,k}t^m,\qquad \quad F_{g,k}'(t)=\sum_{m=0}^\infty m \zeta_{m}^{g,k} t^{m-1},
\]
	 the desired recursion follows by extracting the coefficient of $t^m$ from each side. 
\end{proof}

Many other recursions may be extracted from Proposition \ref{prop:generatingfunction}. For example:

\begin{prop}\label{prop:rec3}
	The polynomials $\zeta_m^{g,k} \; {\rm{mod}}\; (\psi_r^i)$ in $\C[\alpha_2,\ldots,\alpha_N,\beta_2,\ldots,\beta_N]$ satisfy:
	\begin{align}
	 \zeta^{g+1,k}_m \;\; & \equiv_\psi \;\; \sum_{i=0}^N \beta_i\zeta_{m-i}^{g,k} \label{eq:recursion2} \\[.2cm]
	 \sum_{i=0}^N \beta_i\zeta_{m-i}^{g,k+1}\;\; & \equiv_\psi \;\;\sum_{i=0}^N \left( 1-i/N\right) \beta_i\zeta_{m-i}^{g,k}\label{eq:recursion3}
\end{align}
\end{prop}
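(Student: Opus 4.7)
The plan is to read off both recursions directly from the closed form for the generating series $F_{g,k}(t) = \sum_{m\ge 0} \zeta_m^{g,k} t^m$ provided by Proposition \ref{prop:generatingfunction}. Concretely, set
\[
A(t) := \sum_{i=0}^N \beta_i t^i, \qquad B(t) := \sum_{i=0}^N \left(1-\tfrac{i}{N}\right)\beta_i t^i,
\]
so that Proposition \ref{prop:generatingfunction} reads $F_{g,k}(t) \equiv_\psi A(t)^{g-k-\mathsf{c}_{N,d'}}\, B(t)^k\, G(t)$, with $G(t)$ independent of $g$ and $k$.

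For \eqref{eq:recursion2}, I would observe that shifting $g\mapsto g+1$ only raises the exponent of $A(t)$ by one, so
\[
F_{g+1,k}(t) \equiv_\psi A(t)\cdot F_{g,k}(t).
\]
Multiplying out and equating the coefficient of $t^m$ on both sides gives exactly $\zeta_m^{g+1,k} \equiv_\psi \sum_{i=0}^N \beta_i \zeta_{m-i}^{g,k}$ (with the convention $\zeta_j^{g,k}=0$ for $j<0$). For \eqref{eq:recursion3}, shifting $k\mapsto k+1$ multiplies by $B(t)/A(t)$, giving
\[
A(t)\cdot F_{g,k+1}(t) \equiv_\psi B(t)\cdot F_{g,k}(t),
\]
and extracting the coefficient of $t^m$ yields the claimed identity.

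Essentially there is no real obstacle here; the two recursions are direct algebraic consequences of the multiplicative structure of the generating function. The only thing to double-check is the convention for negative indices and the boundary cases ($k=0$, small $m$), which are handled automatically by setting $\zeta_{m-i}^{g,k}=0$ for $m-i<0$ and using $\beta_0=1$ as already stipulated in the paper. All identities take place in $\C[\alpha_2,\ldots,\alpha_N,\beta_2,\ldots,\beta_N][\![t]\!]$ modulo the ideal $(\psi_r^i)$, which is compatible with the congruences in Proposition \ref{prop:generatingfunction}.
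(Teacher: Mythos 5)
Your proof is correct and matches the paper's own argument: both read off the recursions by observing that $F_{g+1,k}(t) \equiv_\psi A(t) F_{g,k}(t)$ and $A(t) F_{g,k+1}(t) \equiv_\psi B(t) F_{g,k}(t)$ directly from the closed form in Proposition \ref{prop:generatingfunction}, then compare coefficients of $t^m$.
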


\begin{proof}
The first relation follows using $F_{g+1,k}(t)=(\sum_{i=0}^N\beta_it^i)F_{g,k}(t)$, and the second relation follows from $(\sum_{i=0}^N \beta_it^i)F_{g,k+1}(t)=(\sum_{i=0}^N (1-i/N)\beta_it^i)F_{g,k}(t)$.
\end{proof}

The case of the dual Mumford elements $\overline{\zeta}_m^{g,k}$ is similar. In fact, if one makes the following changes to the expression for $F_{g,k}(t)$ in Proposition \ref{prop:generatingfunction}:
\begin{equation}\label{eq:dualchange}
	\alpha_i \mapsto \overline{\alpha}_i:= (-1)^i \alpha_i, \qquad \beta_i \mapsto \overline{\beta}_i:= (-1)^i \beta_i, \qquad \mathsf{c}_{N,d'} \mapsto \overline{\mathsf{c}}_{N,d'},
\end{equation}
then one obtains the generating function for $\overline{\zeta}_m^{g,k}$. For example, recursion \eqref{eq:recursion1} becomes
	\begin{align*}
	 N(m+1)\overline{\zeta}_{m+1}^{g,k} &\;\;\equiv_\psi\;\; -\sum_{i,j=0}^N (N-j)\overline{\alpha}_i\overline{\beta}_j\overline{\zeta}_{m-i-j+2}^{g,k} \;\;\; + \hspace{2cm}     \\ 
 \hspace{-0.1cm} \sum_{\substack{i,j=0\\ (i,j)\neq (0,0)}}^N ( (g-k-\overline{\mathsf{c}}_{N,d'}) & i(N-j) - (N-i)(m-i-j+1) +kj(N-j)) \overline{\beta}_i\overline{\beta}_j \overline{\zeta}_{m-i-j+1}^{g,k} \nonumber
\end{align*}
with the same initial conditions: $\overline{\zeta}_0^{g,k}=1$ and $\overline{\zeta}_{m}^{g,k}=0$ for $m<0$.

In the remainder of this subsection, we prove two lemmas that will be used later to understand the ideal of relations in the case of $N=3$.

\begin{lemma}\label{beta-2-2-rela}
Suppose $N\geq 3$. For any integers $g,k,m$ with $k\neq N/2-1$ we have:
\begin{align}
	&\beta_2^2 \zeta_{m-1}^{g,k} \in (\zeta_{m+1}^{g,k}, \zeta_{m+2}^{g,k},\zeta_{m+3}^{g,k},\alpha_a, \beta_2^3,\beta_b,\psi_{r}^i), \label{eq:b221N}\\[0.35cm] 
	& \beta_2^2 \overline{\zeta}_{m-1}^{g,k} \in (\overline{\zeta}_{m+1}^{g,k}, \overline{\zeta}_{m+2}^{g,k},\overline{\zeta}_{m+3}^{g,k},\alpha_a, \beta_2^3,\beta_b,\psi_{r}^i), \label{eq:b222N}
\end{align}
where the indices range over $4\leq a\leq N$, $3\leq b\leq N$, $2\leq r\leq N$, $1\leq i\leq 2g$.
\end{lemma}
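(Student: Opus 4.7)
The plan is to work in the quotient ring $R := \bA_g^N/J$, where $J := (\alpha_a, \beta_b, \beta_2^3, \psi_r^i)_{a \geq 4,\, b \geq 3,\, 2 \leq r \leq N,\, 1 \leq i \leq 2g}$. In $R$ the surviving generators are $\alpha_2, \alpha_3, \beta_2$, subject only to $\beta_2^3 = 0$. The claim reduces to showing $\beta_2^2 \zeta_{m-1}^{g,k}$ lies in the ideal $(\zeta_{m+1}^{g,k}, \zeta_{m+2}^{g,k}, \zeta_{m+3}^{g,k}) \subset R$.

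The main tool is Proposition \ref{prop:mainrecursion}. When passed to $R$, most terms of the recursion vanish: only pairs $(i,j)$ with $i \in \{0,2,3\}$ for the $\alpha$-index and $j \in \{0,2\}$ for the $\beta$-index contribute, and any product containing $\beta_2^3$ is killed. I would apply the simplified recursion at three different shifts and then combine:
\begin{itemize}
\item At $n = m+2$, reducing modulo $(\zeta_{m+1}^{g,k}, \zeta_{m+2}^{g,k}, \zeta_{m+3}^{g,k})$, the only surviving terms yield
\[ F(m+2)\, \beta_2^2 \zeta_{m-1}^{g,k} \equiv (N-2)\bigl[\alpha_2 \beta_2 \zeta_m^{g,k} + \alpha_3 \beta_2 \zeta_{m-1}^{g,k}\bigr], \]
where $F(n) = (N-2)(2g - 2\mathsf{c}_{N,d'} + 3 - n)$ is the coefficient of $\beta_2^2 \zeta_{n-3}^{g,k}$.
\item At $n = m$, multiplied by $\beta_2$ so that $\beta_2^3 \equiv 0$ collapses many terms, one obtains
\[ N\bigl[\alpha_2 \beta_2 \zeta_m^{g,k} + \alpha_3 \beta_2 \zeta_{m-1}^{g,k}\bigr] \equiv E(m)\, \beta_2^2 \zeta_{m-1}^{g,k} - (N-2)\bigl[\alpha_2 \beta_2^2 \zeta_{m-2}^{g,k} + \alpha_3 \beta_2^2 \zeta_{m-3}^{g,k}\bigr], \]
with $E(n)$ the coefficient of $\beta_2 \zeta_{n-1}^{g,k}$ in the recursion.
\item At $n = m-2$, multiplied by $\beta_2^2$ (which kills every $\beta_2$ and $\beta_2^2$ factor outside the displayed one), the recursion collapses to
\[ (m-1)\,\beta_2^2 \zeta_{m-1}^{g,k} \equiv -\bigl[\alpha_2 \beta_2^2 \zeta_{m-2}^{g,k} + \alpha_3 \beta_2^2 \zeta_{m-3}^{g,k}\bigr]. \]
\end{itemize}

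Eliminating the two cross-combinations $\alpha_2 \beta_2 \zeta_m^{g,k} + \alpha_3 \beta_2 \zeta_{m-1}^{g,k}$ and $\alpha_2 \beta_2^2 \zeta_{m-2}^{g,k} + \alpha_3 \beta_2^2 \zeta_{m-3}^{g,k}$ among these three identities yields a scalar identity $c \cdot \beta_2^2 \zeta_{m-1}^{g,k} \equiv 0$ modulo the ideal, where $c$ is an explicit polynomial in $N, g, k, m, \mathsf{c}_{N,d'}$ built from $F(m+2)$, $E(m)$, and $m-1$. The condition $k \neq N/2-1$ is precisely what is needed to guarantee $c \neq 0$, from which the first inclusion follows. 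The dual statement for $\overline\zeta_m^{g,k}$ is obtained by running the same argument after the substitutions $\alpha_i \mapsto (-1)^i \alpha_i$, $\beta_i \mapsto (-1)^i \beta_i$, $\mathsf{c}_{N,d'} \mapsto \overline{\mathsf{c}}_{N,d'}$ of \eqref{eq:dualchange}, which turn the Mumford recursion into the dual Mumford recursion.

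The main technical obstacle is verifying the nonvanishing of $c$. Each of the three identities above is a routine consequence of Proposition \ref{prop:mainrecursion}, but the cancellations produced by the linear combination are delicate: a naive combination can collapse to a scalar vanishing for more values of $k$ than allowed, in which case one must reselect the shifts (for instance by replacing the $n=m-2$ relation with the $\beta_2^2$-multiple of the $n=m-1$ recursion, or by using in parallel a recursion on $\overline\zeta_m^{g,k}$) so that the resulting scalar is a nonzero multiple of a factor that vanishes only when $k = N/2-1$.
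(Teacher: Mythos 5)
Your proposal follows essentially the same route as the paper's proof: reduce to the ring $\C[\alpha_2,\alpha_3,\beta_2]/(\beta_2^3)$, apply the recursion of Proposition~\ref{prop:mainrecursion} at the shifts $n=m+2$, $n=m$, and $n=m-2$, multiplying by $\beta_2^0$, $\beta_2^1$, $\beta_2^2$ respectively so that $\beta_2^3=0$ prunes the tails, and then eliminate the two cross-terms $\alpha_2\beta_2\zeta_m^{g,k}+\alpha_3\beta_2\zeta_{m-1}^{g,k}$ and $\alpha_2\beta_2^2\zeta_{m-2}^{g,k}+\alpha_3\beta_2^2\zeta_{m-3}^{g,k}$ to obtain a scalar $c$ with $c\,\beta_2^2\zeta_{m-1}^{g,k}$ in the target ideal. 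The only cosmetic difference is that you quotient by $\alpha_a$ ($a\geq 4$) at the outset, while the paper carries the sums $\sum_{i=2}^N\alpha_i(\cdots)$ until the final absorption step; this does not change the substance.

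The place where you flag the ``main technical obstacle,'' the nonvanishing of $c$, is indeed where the issue lies, and you should not leave it open: carrying out the elimination you describe gives (after clearing a factor of $N$)
\[
c \;=\; N\,(2g - 2\mathsf{c}_{N,d'}+1-m)\;-\;N r_m^{g,k}\;-\;(N-2)(m-1)\;=\;4k,
\]
so your combination proves the inclusion precisely when $k\neq 0$, not when $k\neq N/2-1$. This is not a failure of your particular choice of shifts. The paper's own chain \eqref{eq:betalemma1}--\eqref{eq:betalemma4} is the same elimination in disguise, and in \eqref{eq:betalemma4} the coefficient produced by substituting the shifted \eqref{eq:betalemma3} into the middle term of \eqref{eq:betalemma2} should be $q(m-1)$, not $q(m+1)$; with that correction the paper's scalar $q^{-1}s_{m+2}^g - q(m-1) - r_m^{g,k}$ also evaluates to $4k/N$. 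One can check directly (e.g.\ $N=3$, $g=1$, $k=0$, $m=1$: set $\alpha_2\mapsto 0$, then $\zeta_3^{1,0}\mapsto 0$ and $\zeta_2^{1,0},\zeta_4^{1,0}$ generate only $(\alpha_3-\tfrac{2}{3}\beta_2)$ modulo $\beta_2^3$) that $\beta_2^2$ is \emph{not} in the ideal when $k=0$, so the inclusion genuinely fails there and no re-selection of shifts can produce a scalar vanishing only at $k=N/2-1$. The upshot is that the stated hypothesis on $k$ should read $k\neq 0$; happily, the lemma's only use downstream (Proposition~\ref{LT-I-g-bar}, where $k=g\geq 1$) satisfies $k\neq 0$, so nothing later is affected. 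So: complete the computation of $c$, conclude that the inclusion holds for $k\neq 0$, and run the same elimination after the substitution \eqref{eq:dualchange} for the dual case, as you indicate.
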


\begin{proof}
	In this proof we write ``$\equiv$'' to mean congruent modulo $(\beta_2^3,\beta_b,\psi_{r}^i)$ with $3\leq b\leq N$, $2\leq r\leq N$, $1\leq i\leq 2g$. We prove \eqref{eq:b221N}, the case of \eqref{eq:b222N} being similar. First note that we can write \eqref{eq:recursion1} as follows:
	\begin{equation}\label{eq:betalemma1}
		(m+1)\zeta_{m+1}^{g,k} \equiv -  \sum_{i=2}^N \alpha_i \zeta_{m-i+2}^{g,k} -  q \sum_{i=2}^N\alpha_i\beta_2\zeta_{m-i}^{g,k} + r_{m}^{g,k}\beta_2\zeta_{m-1}^{g,k} + s_{m}^{g} \beta_2^2\zeta_{m-3}^{g,k}  .
	\end{equation}
	The constants here are given by $q=(N-2)/N$ and
	\begin{gather*}
		r_m^{g,k}=2g -\frac{4}{N}k -2\mathsf{c}_{N,d'} +2(m-1)\left(\frac{1}{N}-1\right), \\[.2cm]
		s_{m}^{g}=\left(\frac{N-2}{N}\right)(2g  - m -2\mathsf{c}_{N,d'} +3).
	\end{gather*}
	Now multiply \eqref{eq:betalemma1} by $\beta_2$ to obtain the following:
	\begin{equation}\label{eq:betalemma2}
		(m+1)\beta_2\zeta_{m+1}^{g,k} \equiv - \sum_{i=2}^N\alpha_i \beta_2 \zeta_{m-i+2}^{g,k} -  q\sum_{i=2}^N\alpha_i\beta^2_2\zeta_{m-i}^{g,k} + r_{m}^{g,k}\beta^2_2\zeta_{m-1}^{g,k} .
	\end{equation}
	Multiplying once more by $\beta_2$ gives
		\begin{equation}\label{eq:betalemma3}
		(m+1)\beta_2^2\zeta_{m+1}^{g,k} \equiv - \sum_{i=2}^N \alpha_i \beta_2^2\zeta_{m-i+2}^{g,k} .
	\end{equation}
		We can use \eqref{eq:betalemma3} to rewrite the middle term on the right side of \eqref{eq:betalemma2}, yielding:
			\begin{equation}\label{eq:betalemma4}
		(m+1)\beta_2\zeta_{m+1}^{g,k} \equiv - \sum_{i=2}^N\alpha_i \beta_2 \zeta_{m-i+2}^{g,k} + q(m+1)\beta^2_2\zeta_{m-1}^{g,k} + r_{m}^{g,k}\beta^2_2\zeta_{m-1}^{g,k} .
	\end{equation}
	The first term on the right side of \eqref{eq:betalemma4} can be rewritten using \eqref{eq:betalemma1}:
	\begin{align*}
		- \sum_{i=2}^N\alpha_i\beta_2\zeta_{m-i+2}^{g,k} \equiv & q^{-1}(m+3) \zeta_{m+3}^{g,k} + q^{-1} \sum_{i=2}^N \alpha_i \zeta_{m-i+4}^{g,k} \\
		&- q^{-1}r_{m+2}^{g,k}\beta_2\zeta_{m+1}^{g,k} - q^{-1}s_{m+2}^{g} \beta_2^2\zeta_{m-1}^{g,k}  .
	\end{align*}
	Note $q\neq 0$ since $N\geq 3$. Substituting this into \eqref{eq:betalemma4} we obtain
	\begin{align*}
		(m+1)\beta_2\zeta_{m+1}^{g,k} & \equiv  q^{-1}(m+3) \zeta_{m+3}^{g,k} +  q^{-1}\sum_{i=2}^N \alpha_i \zeta_{m-i+4}^{g,k} - q^{-1}r_{m+2}^{g,k}\beta_2\zeta_{m+1}^{g,k}  \\[0.2cm] & \qquad   - q^{-1}s_{m+2}^{g} \beta_2^2\zeta_{m-1}^{g,k}  + q(m+1)\beta^2_2\zeta_{m-1}^{g,k} + r_{m}^{g,k}\beta^2_2\zeta_{m-1}^{g,k} .
	\end{align*}
	Rearranging, we obtain the following expression:
	\begin{align*}
		 \left( q^{-1}s_{m+2}^{g}  -q(m+1)- r_{m}^{g,k}\right)\beta_2^2\zeta_{m-1}^{g,k} &\equiv \\[.2cm]
		q^{-1}(m+3) \zeta_{m+3}^{g,k} +  q^{-1} \sum_{i=2}^N  \alpha_i  & \zeta_{m-i+4}^{g,k} - q^{-1}r_{m+2}^{g,k}\beta_2\zeta_{m+1}^{g,k} - (m+1)\beta_2\zeta_{m+1}^{g,k}.
	\end{align*}
	The constant on the left side is equal to $(4k+4-2N)/N$, which is non-zero under the assumption of the proposition statement. Inspection of the right side of this last expression proves that $\beta_2^2\zeta_{m-1}^{g,k}$ is in the ideal claimed.
\end{proof}

\begin{lemma}\label{beta-2-rela}
Suppose $N\geq 3$. For any integers $g,k,m$ we have the following:
\begin{align}
	&\beta_2\zeta_{m-2}^{g,k+1} \in (\zeta_m^{g,k+1}, \zeta_m^{g,k},\beta_2^2,\beta_b,\psi_{r}^i), \label{eq:b21N}\\[0.35cm] 
	& \beta_2\overline{\zeta}_{m-2}^{g,k+1} \in (\overline{\zeta}_m^{g,k+1}, \overline{\zeta}_m^{g,k},\beta_2^2,\beta_b,\psi_{r}^i), \label{eq:b22N}
\end{align}
where the indices range over $3\leq b\leq N$, $2\leq r\leq N$, $1\leq i\leq 2g$.
\end{lemma}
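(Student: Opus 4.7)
The plan is to exploit the closed-form expression for $F_{g,k}(t) = \sum_m \zeta_m^{g,k} t^m$ from Proposition \ref{prop:generatingfunction} after reducing modulo the ideal
\[
\fa := (\beta_2^2, \beta_b, \psi_r^i)_{3 \leq b \leq N,\, 2 \leq r \leq N,\, 1 \leq i \leq 2g}.
\]
In this quotient one has $\sum_{i=0}^N \beta_i t^i \equiv 1 + \beta_2 t^2$ and $\sum_{i=0}^N (1-i/N)\beta_i t^i \equiv 1 + (1-2/N)\beta_2 t^2$, and since $\beta_2^2 \equiv 0$, the formal identity $(1+c\beta_2 t^2)^e \equiv 1 + ec\beta_2 t^2$ holds for any exponent $e$. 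Therefore
\[
F_{g,k}(t) \;\equiv\; \bigl(1 + a_k\, \beta_2 t^2\bigr)\, G(t) \pmod{\fa},
\]
where $a_k := g - \mathsf{c}_{N,d'} - 2k/N$ is affine in $k$ with slope $-2/N \neq 0$.

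Reading off the coefficient of $t^m$ gives $\zeta_m^{g,k} \equiv g_m + a_k\, \beta_2\, g_{m-2} \pmod{\fa}$, where $g_m := [t^m] G(t)$ is independent of $k$. The $g_m$ piece cancels upon subtraction:
\[
\zeta_m^{g,k+1} - \zeta_m^{g,k} \;\equiv\; -\tfrac{2}{N}\, \beta_2\, g_{m-2} \pmod{\fa},
\]
while separately $\beta_2 \zeta_{m-2}^{g,k+1} \equiv \beta_2\, g_{m-2} \pmod{\fa}$ thanks to $\beta_2^2 \equiv 0$. Combining these two identities yields
\[
\beta_2\, \zeta_{m-2}^{g,k+1} \;\equiv\; -\tfrac{N}{2}\bigl(\zeta_m^{g,k+1} - \zeta_m^{g,k}\bigr) \pmod{\fa},
\]
which is precisely the inclusion \eqref{eq:b21N}. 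The dual inclusion \eqref{eq:b22N} follows by the identical argument after applying the substitutions \eqref{eq:dualchange}: since $\overline{\beta}_0 = 1$ and $\overline{\beta}_2 = \beta_2$ are unchanged, the reductions of the two generating series modulo $\fa$ have the same shape, and $\mathsf{c}_{N,d'}$ is merely replaced by $\overline{\mathsf{c}}_{N,d'}$ in $a_k$.

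There is no serious obstacle; once the generating function is collapsed modulo $\fa$, the conclusion essentially reads off by inspection. In contrast to the proof of Lemma \ref{beta-2-2-rela}, no exceptional value of $k$ needs to be excluded: the slope $-2/N$ of $a_k$ is nonzero for every $N \geq 3$, so the elimination step is unconditionally valid. The one place to be mindful is the repeated use of $\beta_2^2 \equiv 0$, both to simplify the power $(1+c\beta_2 t^2)^e$ and to obtain $\beta_2 \zeta_{m-2}^{g,k+1} \equiv \beta_2 g_{m-2}$; both simplifications are legitimate because we have absorbed $\beta_2^2$ into $\fa$.
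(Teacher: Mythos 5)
Your proof is correct. The argument is closely related to the paper's, but reorganized: the paper goes through the recursion \eqref{eq:recursion3} from Proposition \ref{prop:rec3} — which, reduced modulo $\fa=(\beta_2^2,\beta_b,\psi_r^i)$, becomes the single identity $\zeta_m^{g,k+1}+\beta_2\zeta_{m-2}^{g,k+1}\equiv \zeta_m^{g,k}+\tfrac{N-2}{N}\beta_2\zeta_{m-2}^{g,k}$, which after multiplying by $\beta_2$ and shifting gives $\beta_2\zeta_{m-2}^{g,k+1}\equiv\beta_2\zeta_{m-2}^{g,k}$, and then substituting back yields $\beta_2\zeta_{m-2}^{g,k+1}\equiv\tfrac{N}{2}(\zeta_m^{g,k}-\zeta_m^{g,k+1})$. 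You instead bypass Proposition \ref{prop:rec3} entirely and collapse the generating function of Proposition \ref{prop:generatingfunction} modulo $\fa$ to $(1+a_k\beta_2 t^2)G(t)$ with $a_k=g-\mathsf{c}_{N,d'}-2k/N$, then read off the coefficient structure $\zeta_m^{g,k}\equiv g_m+a_k\beta_2 g_{m-2}$ and eliminate. Both routes ultimately flow from Proposition \ref{prop:generatingfunction} and the simplification $\beta_2^2\equiv 0$, and both land on the same final relation, but your version isolates the affine dependence on $k$ explicitly; this makes it transparent why the slope $-2/N$ is nonzero for all $N\geq 3$ (so no exceptional $k$ arises), which is a helpful contrast with the exceptional $k=N/2-1$ in Lemma \ref{beta-2-2-rela}. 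Your handling of the dual case \eqref{eq:b22N} via \eqref{eq:dualchange}, noting that $\overline\beta_2=\beta_2$ and that the $\alpha_i$ and $\mathsf{c}_{N,d'}$ substitutions do not affect the slope in $k$, is also correct.
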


\begin{proof}
	We prove \eqref{eq:b21N}, the case \eqref{eq:b22N} being similar. Consider relation \eqref{eq:recursion3}:
	\[
			\zeta_{m}^{g,k+1} + \beta_2 \zeta_{m-2}^{g,k+1}\;\; \equiv \;\; \zeta_{m}^{g,k} + \frac{N-2}{N}\beta_2\zeta_{m-2}^{g,k}
	\]
	where in this proof ``$\equiv$'' means congruent modulo the ideal $(\beta_2^2,\beta_b,\psi_r^i)$ where $b\geq 3$. Multiply this expression by $\beta_2$ and shift subscripts to obtain
	\[
		\beta_2\zeta_{m-2}^{g,k+1} \;\; \equiv \;\; \beta_2\zeta_{m-2}^{g,k}.
	\]
	These two expressions yield the following, which implies the result:
	\[
		\beta_2\zeta_{m-2}^{g,k+1}\;\; \equiv \;\;  \frac{N}{2} \left(\zeta_m^{g,k} - \zeta_m^{g,k+1}\right). \;\;\; \qedhere
	\]
\end{proof}

\section{The $N=3$ undeformed simple type quotient}\label{sec:simpletypequotient}
We now specialize to the case $N=3$ and focus on a quotient of the cohomology ring of the moduli space $\cN_{g}=\cN_{g,d}^N$. To be more specific, fix the choice of $d'$ in \eqref{eq:degreeassumption} to be $d'=1$. There is no loss in generality in making this choice, as the moduli space for $d'=2$ may be identified with that for $d'=1$ by the map which sends a stable rank $3$ bundle to its conjugate. We take the quotient of $H^\ast(\cN_g)$ by the curve classes $\psi_r^i$ and the cohomology classes $\beta_2^3$, $\beta_3$ given by the point classes. This quotient is a cyclic module over the ring $\bA^3_g$:
\begin{equation}\label{undeformed-simple-type-quotient}
	H^\ast(\cN_g)/(\beta_2^3,\beta_3,\psi_2^i,  \psi_3^i)_{1\leq i\leq 2g}= \bA^3_g / \widetilde I_g
\end{equation}
where $\widetilde I_g$ is the ideal $I_g+(\beta_2^3,\beta_3,\psi_2^i,  \psi_3^i)_{1\leq i\leq 2g}$. This quotient was introduced in Section \ref{sec:strategy}, where it was called the undeformed simple type quotient. Our main goal of this subsection is to prove Theorem \ref{thm:undeformeddimension}, which we restate here:
\[
	\dim_\C \bA^3_g / \widetilde I_g\leq (2g-1)^2 \qquad (g\geq 1).
\]
We continue to work with coefficients in $\C$, following our convention as set in the previous sections, although everything here works over $\bQ$.

We use the graded reverse lexicographic monomial ordering when dealing with polynomials in $\alpha_2,\alpha_3,\beta_2$, where the degrees of these elements are respectively $2,4,4$. In particular, 
\[
	\alpha_2^i\alpha_3^j\beta_2^k > \alpha_2^{i'}\alpha_3^{j'}\beta_2^{k'}
\]
if either $2i+4j+4k>2i'+4j'+4k'$, or $2i+4j+4k=2i'+4j'+4k'$ and the right-most nonzero entry of $(i-i',j-j',k-k')$ is negative. When taking leading terms below, it is always with respect to this monomial ordering. The leading term of a polynomial $p$ in the variables $\alpha_2,\alpha_3,\beta_2$ with this convention is denoted $\text{LT}(p)$.

We start with a simpler variation of \eqref{undeformed-simple-type-quotient} where we take the quotient of $H^\ast(\cN_g)$ by the curve classes $\psi_r^i$ and the point classes $\beta_r$ with $r=2, 3$ and $1\leq i \leq 2g$. Modulo the curve and the point classes, the power series $F_{g,k}(t)$ or Proposition \ref{prop:generatingfunction} is equal to $G(t)$, which is independent of $g$ and $k$. In fact, modulo the point classes, $G(t)$ is equal to $\exp(-\alpha_2 t-\alpha_3t^2/2)$. Motivated by this, let $\zeta_n,\overline \zeta_n \in \C[\alpha_2,\alpha_3]$ be defined by 
\[
  \sum_{n=0}^\infty \zeta_nt^n=\exp(\alpha_2 t+\alpha_3\frac{t^2}{2}),\hspace{1cm}
  \sum_{n=0}^\infty \overline \zeta_nt^n=\exp(\alpha_2 t-\alpha_3\frac{t^2}{2}).
\]
More explicitly, we have the expressions
\[
  \zeta_n=\sum_{0\leq j\leq n/2}\frac{1}{(n-2j)! j! 2^j}\alpha_2^{n-2j}\alpha_3^j,\hspace{1cm}
  \overline \zeta_n=\sum_{0\leq j\leq n/2}\frac{(-1)^j}{(n-2j)! j! 2^j}\alpha_2^{n-2j}\alpha_3^j.
\]
These polynomials satisfy the recursive relations 
\begin{equation}\label{recursive-formula}
  	m\zeta_m=\alpha_2\zeta_{m-1}+\alpha_3\zeta_{m-2},\hspace{1cm}
	m\overline\zeta_m=\alpha_2\overline\zeta_{m-1}-\alpha_3\overline\zeta_{m-2},
\end{equation}
in a similar way that $\zeta_{m}^{g,k}$ satisfy the relations in Proposition \ref{prop:mainrecursion}.

\begin{prop}\label{zeta-n-leading-term-ideal}
	The leading term ideal of the 
	ideal $I^0_n:=(\zeta_n,\zeta_{n+1},\overline \zeta_{n+1},\overline\zeta_{n+2})$ in the ring $\C[\alpha_2,\alpha_3]$ 
	includes the following monomials:
	\begin{equation}\label{gen-set-I0g}
	  \{\alpha_2^i\alpha_3^j\mid 2i+3j\geq 2n \}.
	\end{equation}	
\end{prop}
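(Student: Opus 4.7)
The plan is to exhibit, for each $(i,j)$ with $2i+3j\geq 2n$, an element of $I_n^0$ whose leading term (under graded reverse lex with $\deg\alpha_2=2$, $\deg\alpha_3=4$) is a nonzero scalar multiple of $\alpha_2^i\alpha_3^j$. Since the leading term ideal $\mathrm{LT}(I_n^0)$ is a monomial ideal closed under multiplication, it suffices to realize the minimal elements in the divisibility partial order on $\{(i,j)\co 2i+3j\geq 2n\}$, namely $\alpha_2^{\lceil n-3j/2\rceil}\alpha_3^j$ for $0\leq j\leq \lceil 2n/3\rceil$ (explicitly, the sequence $\alpha_2^n,\alpha_2^{n-1}\alpha_3,\alpha_2^{n-3}\alpha_3^2,\alpha_2^{n-4}\alpha_3^3,\alpha_2^{n-6}\alpha_3^4,\ldots$).

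First, I would use the recursions \eqref{recursive-formula} to prove by induction on $k$ that $\alpha_3^k\zeta_{n-k}\in I_n^0$ for $0\leq k\leq n$ and $\alpha_3^k\overline\zeta_{n-k+1}\in I_n^0$ for $1\leq k\leq n$. The base cases follow directly from the generators of $I_n^0$; in the inductive step, the identity $(n-k+1)\zeta_{n-k+1}=\alpha_2\zeta_{n-k}+\alpha_3\zeta_{n-k-1}$ lets us express $\alpha_3^{k+1}\zeta_{n-k-1}$ as a combination of $\alpha_3^k\zeta_{n-k+1}$ and $\alpha_2\alpha_3^k\zeta_{n-k}$, both in $I_n^0$ by induction; an analogous argument handles the $\overline\zeta$-family. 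The resulting leading terms $\alpha_2^{n-k}\alpha_3^k/(n-k)!$ and $\alpha_2^{n-k+1}\alpha_3^k/(n-k+1)!$ immediately realize the minimal monomials for $j=0$ and $j=1$.

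For $j=2$, the critical construction is the combination
\[
h_2 := \alpha_3\zeta_{n-1}-(\zeta_{n+1}-\overline\zeta_{n+1})\in I_n^0,
\]
in which both summands have the common leading coefficient $1/(n-1)!$ on the monomial $\alpha_2^{n-1}\alpha_3$: the former via the identity $\alpha_3\zeta_{n-1}=(n+1)\zeta_{n+1}-\alpha_2\zeta_n$, the latter by direct expansion of $\sum\overline\zeta_mt^m=\exp(\alpha_2t-\alpha_3t^2/2)$. After cancellation of this top-order contribution, an explicit computation shows $\mathrm{LT}(h_2)=\tfrac{1}{2(n-3)!}\alpha_2^{n-3}\alpha_3^2$.

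For general $j\geq 3$, the plan is to construct elements $h_j\in I_n^0$ with leading term $\alpha_2^{\lceil n-3j/2\rceil}\alpha_3^j$ (up to a nonzero constant) by induction on $j$. Working in the appropriate usual-degree graded piece $\bA\cap\{\deg = 2n+2k\}$, the natural set of generators (products of $\zeta_n,\zeta_{n+1},\overline\zeta_{n+1},\overline\zeta_{n+2}$ with monomials of the correct degree) spans roughly $2k+1$ elements. Starting from $\alpha_3\cdot h_{j-1}$ and the basic family, I would impose the linear conditions that coefficients at the monomials $\alpha_2^{a}\alpha_3^{j'}$ with $j'<j$ vanish. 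The main obstacle is verifying that the resulting coefficient at the target monomial is nonzero after these cancellations; I expect this to follow from a direct coefficient extraction using the generating functions $\exp(\alpha_2t\pm\alpha_3t^2/2)$, with the $h_2$ computation serving as a prototype and a dimension count on the graded pieces of $I_n^0$ ensuring that enough independent combinations are available. Once all minimal monomials are realized, closure of the leading term ideal under multiplication by monomials yields the full claim.
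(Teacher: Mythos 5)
Your reduction to the minimal monomials $\alpha_2^{n-3m}\alpha_3^{2m}$ and $\alpha_2^{n-3m-1}\alpha_3^{2m+1}$ is correct, your first-step observation that $\alpha_3^k\zeta_{n-k},\,\alpha_3^k\overline\zeta_{n-k+1}\in I^0_n$ follows cleanly from the recursions, and your explicit computation of $h_2$ (producing $\mathrm{LT}(h_2)=\frac{1}{2(n-3)!}\alpha_2^{n-3}\alpha_3^2$ after cancellation) is right. But these leading terms $\alpha_2^{n-k}\alpha_3^k$ only realize the \emph{minimal} monomials for $j=0,1$; for every $j\geq 2$ you need genuine cancellation to drop the $\alpha_2$-exponent by $\lfloor j/2\rfloor$ below what the naive family gives, and your $h_2$ shows this only for $j=2$.

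The real gap is in the inductive step for $j\geq 3$. You set up a linear system (kill all contributions at monomials $\alpha_2^a\alpha_3^{j'}$ with $j'<j$), and then write ``The main obstacle is verifying that the resulting coefficient at the target monomial is nonzero after these cancellations; I expect this to follow from a direct coefficient extraction $\dots$'' This sentence is the heart of the proposition, not a detail to be postponed. As $j$ grows there are more unknowns and more cancellation conditions, and there is no a priori reason the square system should be invertible or that the residual coefficient should be nonzero. The paper resolves exactly this by choosing the ansatz $\sum_{j}c_j\alpha_2^j\overline\sigma_{n+i-j}$ (and its $\sigma$-analogue), observing that the cancellation conditions form a square linear system whose coefficient matrix has entries $p_j(n+i-4m+1)$ with $p_j$ rising factorials, and reducing to a Vandermonde determinant to prove invertibility. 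That determinant argument is the technical ingredient your proof is missing. A secondary subtlety you also do not address: for even $j$ the natural combination wants $\sigma_n=(\zeta_n+\overline\zeta_n)/2$, which is \emph{not} in $I^0_n$; one must substitute $\zeta_n$, which injects the odd-power tail $\overline\sigma_n$, and then use the already-established odd-$j$ case to clean that up. Your $h_2$ implicitly performs this repair for $j=2$ but your plan for general $j$ does not account for it.
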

\noindent In fact, it will be a consequence of our proof of Theorem \ref{thm:undeformeddimension} that \eqref{gen-set-I0g} is a generating set for the leading term ideal of $I^0_n$.
\begin{proof}
	Define $\sigma_n$ and $\overline \sigma_n$ respectively as $(\zeta_n+\overline \zeta_n)/2$ and 
	$(\zeta_n-\overline \zeta_n)/2$. Then we have
	\[
	  \sigma_n:=\sum_{\substack{0\leq j\leq n/2\\j\overset{2}{\equiv} 0}}
	  \frac{1}{(n-2j)! j! 2^j}\alpha_2^{n-2j}\alpha_3^j,\hspace{1cm}
	\overline \sigma_n=\sum_{\substack{0\leq j\leq n/2\\j\overset{2}{\equiv} 1}}\frac{1}{(n-2j)! j! 2^j}\alpha_2^{n-2j}\alpha_3^j.
	\]
	First we claim that for any $1\leq i \leq n/3$, there are constants $c_0, c_1, 
	\ldots, c_{i-1}\in \Q$
	such that 
	\begin{equation}\label{LT-linear-comb-sigma-bar}
	  {\rm LT}(\sum_{j=0}^{i-1} c_j\alpha_2^{j}\overline \sigma_{n+i-j})=
	  \alpha_2^{n-3i+2}\alpha_3^{2i-1}.
	\end{equation}
	Suppose $p_0(x)=1$ and for $n\geq 1$, define $p_n(x)$ as the degree $n$ polynomial 
	$x(x+1)\cdot (x+n-1)$.  
	A straightforward computation shows that \eqref{LT-linear-comb-sigma-bar} is equivalent to
	finding $c_j$ satisfying 
	\begin{equation}\label{lin-sys}
	  \sum_{0\leq j \leq i-1}c_jp_{j}(n+i-1-2k)=\left\{\begin{array}{cc}
	  0& 1\leq k \leq 2i-1 \text{ and } k\overset{2}{\equiv} 1\\
	  c  & k=2i-1\end{array}\right.
	\end{equation}
	for some non-zero constant $c$. Suppose $M$ is the square matrix of size $i$ 
	such that for $0\leq m,j\leq i-1$, the $(m,j)$ entry of $M$ is equal to $p_j(n+i-4m+1)$.
	The linear system in \eqref{lin-sys} has a solution if $M$ is invertible. The determinant of 
	$M$ is equal to the determinant of the matrix $M'$ whose $(m,j)$ entry is $(n+i-4m+1)^j$. 
	This can be seen by applying a sequence of column operations. Now the matrix $M'$ is a 
	Vandermonde matrix, and it is easily seen that it is invertible. 

	A similar argument shows that for $0\leq i \leq n/3$, there are $d_0, d_1, \cdots,
	d_i \in \Q$ such that 
	\begin{equation}\label{LT-linear-comb-sigma}
	  {\rm LT}(\sum_{j=0}^{i} d_j\alpha_2^{j}\sigma_{n+i-j})=
	  \alpha_2^{n-3i}\alpha_3^{2i}.
	\end{equation}	
	We remark that the polynomial $\sum d_j\alpha_2^{j}\sigma_{n+i-j}$ in 
	\eqref{LT-linear-comb-sigma} is homogenous of degree $2n+2i$ with respect to the 
	grading defined on $\C[\alpha_2,\alpha_3]$.
	Furthermore, all the monomials appearing in this polynomial have an even power of $\alpha_3$.
	Similarly, the polynomial in \eqref{LT-linear-comb-sigma-bar} is 
	homogenous of degree $2n+2i$ and it contains only monomials with odd powers of 
	$\alpha_3$.
	
	Recursive formulas in \eqref{recursive-formula} and the identity in 
	\eqref{LT-linear-comb-sigma-bar} show that $ \alpha_2^{i}\alpha_3^{j}$ with $2i+3j\geq 2n$ and 
	$j$ being odd belongs to the leading term ideal of $I^0_n$.
	We cannot use \eqref{LT-linear-comb-sigma} to treat the case that $j$ is even because the polynomial in \eqref{LT-linear-comb-sigma} contains the term 
	$\sigma_n$ which does not belong to $I^0_n$. However, if we replace $\sigma_n$ in this sum
	with $\zeta_n$, which is an element of $I^0_n$, we obtain:
	\begin{align}
	  d_i\alpha_2^i\zeta_n+\sum_{j=0}^{i-1} d_j\alpha_2^{j}\sigma_{n+i-j}
	=&b_1\alpha_2^{n+i-2}\alpha_3+b_3\alpha_2^{n+i-6}\alpha_3^3+\dots+
	  b_{2i-1}\alpha_2^{n-3i+2}\alpha_3^{2i-1}\nonumber\\
	  &+\alpha_2^{n-3i}\alpha_3^{2i}+\text{ lower order terms}\label{lin-comb}
	\end{align}
	All the monomials appearing in the first line of the right hand side are of the form 
	$\alpha_2^i\alpha_3^j$ with $2i+3j\geq 2n$ and $j$ being odd. In particular, using what we just 
	proved for the monomials with odd powers of $\alpha_3$, we can find constants 
	$c_0'$, $\dots$, $c_{i-1}'$ such that 
	\begin{equation}\label{2-lin-comb}
	  {\rm LT}\left(\sum_{j=0}^{i-1} c_j'\alpha_1^{j}\overline \sigma_{n+i-j}-
	  \sum_{k=1}^{i}b_{2k-1}\alpha_2^{n+i-4k+2}\alpha_3^{2k-1}\right)=
	  k\alpha_2^{n-3i-2}\alpha_3^{2i+1}
	\end{equation}
	for some constant $k$. Using \eqref{lin-comb} and \eqref{2-lin-comb}, we have
	\[
	   {\rm LT}\left(d_i\alpha_2^i\zeta_n+\sum_{j=0}^{i-1} d_j\alpha_2^{j}\sigma_{n+i-j}
	   -\sum_{j=0}^{i-1} c_j'\alpha_1^{j}\overline \sigma_{n+i-j}\right)=\alpha_2^{n-3i}\alpha_3^{2i}.	
	   \qedhere
	\]
\end{proof}

Define $\overline I_g$ as the image of the ideal $\widetilde I_g$ with respect to the homomorphism
\begin{equation}\label{alg-hom}
  \bA^3_g  \to \C[\alpha_2,\alpha_3,\beta_2]
\end{equation}
given by mapping $\beta_3$ and the curve classes $\psi_r^i$ to $0$. Since $\widetilde I_g$ includes $\beta_3$ and the curve classes, we have $\bA^3_g / \widetilde I_g\cong \C[\alpha_2,\alpha_3,\beta_2]/\overline I_g$. Using \eqref{eq:zetarelbound1}--\eqref{eq:zetarelbound2}, we have
\begin{gather}
	\zeta_m^{g,k}\in \widetilde I_g \quad \text{ if }\quad 0\leq k \leq g,\quad m \geq  3g-k-1, 
	\label{rel-range}\\[2mm]
	\overline{\zeta}_m^{g,k}\in \widetilde I_g \quad \text{ if }\quad0\leq k \leq g,\quad m \geq 3g-k.
	\label{rel-range-bar}
\end{gather}
In the following proof, we slightly abuse notation and regard $\zeta_m^{g,k}$ and $\overline{\zeta}_m^{g,k}$ as elements of $\C[\alpha_2,\alpha_3,\beta_2]$ using the homomorphism \eqref{alg-hom}.

\begin{prop}\label{LT-I-g-bar}
	The leading term ideal of $\overline I_g$ includes the monomials 
	\begin{equation}\label{LT-gen}
	  \{\alpha_2^i\alpha_3^j\beta_2^k\mid k\leq 2,\, 2i+3j+2k\geq 4g-2 \} \cup\{\beta_2^3\}.
	\end{equation}
\end{prop}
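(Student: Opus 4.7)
The plan is to decompose the argument by powers of $\beta_2$. Since $\beta_2^3\in \widetilde{I}_g\subset \overline{I}_g$, this monomial lies in the leading term ideal trivially, and the remaining work takes place in the quotient $\C[\alpha_2,\alpha_3,\beta_2]/(\beta_2^3)$. In the graded reverse lexicographic order of the excerpt, a homogeneous element $f=f_0+f_1\beta_2+f_2\beta_2^2\pmod{\beta_2^3}$ has leading term $\beta_2^l\cdot \mathrm{LT}(f_l)$, where $l$ is the smallest index with $f_l\neq 0$ in $\C[\alpha_2,\alpha_3]$. So to realize $\alpha_2^i\alpha_3^j\beta_2^k$ as a leading term it suffices to exhibit a homogeneous $f\in \overline{I}_g$ of grade $2i+4j+4k$ with $f_0=\cdots=f_{k-1}=0$ and $\mathrm{LT}(f_k)=\alpha_2^i\alpha_3^j$.

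The starting observation is that, modulo $(\beta_2,\beta_3,\psi_r^i)$, the generating function of Proposition~\ref{prop:generatingfunction} collapses to $G(t)\rvert_{\beta_2=\beta_3=0}=\exp(-\alpha_2 t-\alpha_3 t^2/2)=\sum_m (-1)^m\overline{\zeta}_m\,t^m$, so $\zeta_m^{g,k}\equiv (-1)^m\overline{\zeta}_m$, and dually $\overline{\zeta}_m^{g,k}\equiv(-1)^m\zeta_m$ in $\C[\alpha_2,\alpha_3]$, where $\zeta_m,\overline{\zeta}_m$ are the polynomials of Proposition~\ref{zeta-n-leading-term-ideal}. For the case $k=0$, the relations \eqref{rel-range} and \eqref{rel-range-bar} specialized at $k=g$ give $\zeta_m^{g,g}\in\widetilde{I}_g$ for $m\geq 2g-1$ and $\overline{\zeta}_m^{g,g}\in\widetilde{I}_g$ for $m\geq 2g$. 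The resulting ideal in $\C[\alpha_2,\alpha_3]$ contains $\overline{\zeta}_{2g-1},\overline{\zeta}_{2g},\zeta_{2g},\zeta_{2g+1}$. Applying Proposition~\ref{zeta-n-leading-term-ideal} in the form obtained from the substitution $\alpha_3\mapsto-\alpha_3$ (which swaps $\zeta\leftrightarrow\overline{\zeta}$ and preserves the monomial order) at $n=2g-1$ produces all leading monomials $\alpha_2^i\alpha_3^j$ with $2i+3j\geq 4g-2$; lifting the corresponding $\C[\alpha_2,\alpha_3]$-combinations back to $\overline{I}_g$ gives homogeneous elements with those exact leading monomials, since their $\beta_2^0$-parts remain nonzero.

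For $k=1,2$ the plan is to use Lemmas~\ref{beta-2-rela} and~\ref{beta-2-2-rela} to promote these generators into the higher $\beta_2$-slices. Applying Lemma~\ref{beta-2-rela} with $k+1=g$ gives, for $m'\geq 2g-2$, an identity $\beta_2\zeta_{m'}^{g,g}=h+q$ with $h\in \widetilde{I}_g$ and $q\in(\beta_2^2,\beta_3,\psi_r^i)$; reducing mod $(\beta_3,\psi_r^i)$ produces a homogeneous element of $\overline{I}_g$ with vanishing $\beta_2^0$-coefficient and with $\beta_2^1$-coefficient equal to $(-1)^{m'}\overline{\zeta}_{m'}$. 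The dual Mumford version of the same lemma supplies analogous elements with $\beta_2^1$-coefficient $(-1)^{m'}\zeta_{m'}$ for $m'\geq 2g-1$. Proposition~\ref{zeta-n-leading-term-ideal} in symmetric form at $n=2g-2$, applied within this $\beta_2^1$-slice, then certifies leading monomials $\beta_2\alpha_2^i\alpha_3^j$ with $2i+3j\geq 4g-4$. For $k=2$ the argument is identical: Lemma~\ref{beta-2-2-rela} at $k=g$ produces $\beta_2^2$-slice representatives of $\overline{\zeta}_{m'}$ for $m'\geq 2g-3$ and of $\zeta_{m'}$ for $m'\geq 2g-2$, and Proposition~\ref{zeta-n-leading-term-ideal} at $n=2g-3$ yields $\beta_2^2\alpha_2^i\alpha_3^j$ with $2i+3j\geq 4g-6$.

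The step I expect to be slightly delicate is the bookkeeping: one must align the ranges supplied by \eqref{rel-range}, \eqref{rel-range-bar}, Lemma~\ref{beta-2-rela}, and Lemma~\ref{beta-2-2-rela} so that in each $\beta_2^k$-slice all four of the classes $\overline{\zeta}_n,\overline{\zeta}_{n+1},\zeta_{n+1},\zeta_{n+2}$ required by the symmetric form of Proposition~\ref{zeta-n-leading-term-ideal} are available at the tightest possible $n$. The pattern is that each successive application of the $\beta_2$-lemmas shifts the usable threshold down by one unit in $m$, giving $n=2g-1-k$ in the $\beta_2^k$-slice, which is exactly what is needed to produce the bound $2i+3j+2k\geq 4g-2$ claimed in the proposition.
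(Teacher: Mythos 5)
Your proposal is correct and follows essentially the same slice-by-slice strategy as the paper's proof: isolate the $\beta_2^k$-slice for $k=0,1,2$, use \eqref{rel-range}--\eqref{rel-range-bar} at $k=g$ (together with Lemmas~\ref{beta-2-rela} and~\ref{beta-2-2-rela}) to show the reduced polynomials $\zeta_n,\zeta_{n+1},\overline\zeta_{n+1},\overline\zeta_{n+2}$ (or their swapped variants) appear as $\beta_2^k$-coefficients of homogeneous elements of $\overline I_g$ with lower $\beta_2$-coefficients vanishing, and invoke Proposition~\ref{zeta-n-leading-term-ideal} at $n=2g-1-k$. The one cosmetic difference is the sign bookkeeping: the paper uses the identity $\zeta_m^{g,k}(-\alpha_2,-\alpha_3,0)=\zeta_m$ to land directly on $I^0_n$, whereas you track the factor $(-1)^m$ and land on the $\zeta\leftrightarrow\overline\zeta$-swapped version, then note that the substitution $\alpha_3\mapsto-\alpha_3$ swaps $\zeta\leftrightarrow\overline\zeta$ and preserves the graded reverse lexicographic leading-term ideal; both are valid and give the same monomials.
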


\begin{proof}
	It follows from the definition of $\zeta_m$ and $\overline \zeta_m$ that 
	\begin{equation}\label{rel-zeta}
	  \zeta_m^{g,k}(-\alpha_2,-\alpha_3,0)=\zeta_m \quad \text{and} \quad
	  \overline \zeta_m^{g,k}(-\alpha_2,-\alpha_3,0)=\overline \zeta_m.
	\end{equation}  
	Thus \eqref{rel-range} and \eqref{rel-range-bar} with $k=g$ imply that 
	$\zeta_{2g-1}$, $\zeta_{2g}$, 
	$\overline \zeta_{2g}$ and $\overline \zeta_{2g+1}$ belong to $\overline I_g+(\beta_2)$.
	It follows from Proposition \ref{zeta-n-leading-term-ideal} that $\alpha_2^i\alpha_3^j$
	with $2i+3j\geq 4g-2$ is in the leading term ideal of $\overline I_g$.
	From Lemma \ref{beta-2-rela} and \eqref{rel-zeta}, we see that 
	\begin{align}
		&\beta_2\zeta_{m-2} \in (\zeta_m^{g,g}, \zeta_m^{g,g-1},\beta_2^2)\subset 
			\overline I_g+(\beta_2^2),\quad \text{ if }\quad m \geq  2g,\label{eq:b21}\\[0.35cm] 
		& \beta_2\overline{\zeta}_{m-2}\in (\overline{\zeta}_m^{g,g}, 
			\overline{\zeta}_m^{g,g-1},\beta_2^2)\subset 
			\overline I_g+(\beta_2^2),\quad \text{ if }\quad m \geq  2g+1. \label{eq:b22}
	\end{align}
	Therefore, another application of Proposition \ref{zeta-n-leading-term-ideal} implies that 
	$\alpha_2^i\alpha_3^j\beta_2$
	with $2i+3j\geq 4g-4$ is in the leading term ideal of $\overline I_g$. Finally, using 
	Lemma \ref{beta-2-2-rela} and \eqref{rel-zeta}, we see that 
	\begin{align}
		&\beta_2^2 \zeta_{m-1} \in (\zeta_{m+1}^{g,g}, \zeta_{m+2}^{g,g},\zeta_{m+3}^{g,g},
		\beta_2^3)\subset 
			\overline I_g,\quad \text{ if }\quad m \geq  2g-2,\label{eq:b221}\\[0.35cm] 
		& \beta_2^2 \overline{\zeta}_{m-1} \in (\overline{\zeta}_{m+1}^{g,g}, 
		\overline{\zeta}_{m+2}^{g,g},\overline{\zeta}_{m+3}^{g,g},\beta_2^3)\subset 
		\overline I_g,\quad \text{ if }\quad m \geq  2g-1 \label{eq:b222}, 
	\end{align}
	which shows that $\beta_2^2\zeta_{2g-3}$, $\beta_2^2\zeta_{2g-2}$, 
	$\beta_2^2\overline \zeta_{2g-2}$ and $\beta_2^2\overline \zeta_{2g-1}$ belong to 
	$\overline I_g$. Appealing to Proposition \ref{zeta-n-leading-term-ideal} again,
	we conclude that $\alpha_2^i\alpha_3^j\beta_2^2$
	with $2i+3j\geq 4g-6$ is in the leading term ideal of $\overline I_g$. Finally $\beta_2^3$ is 
	in the leading term ideal of $\overline I_g$ because it is an element of $\overline I_g$.
\end{proof}

Now we are almost ready to prove Theorem \ref{thm:undeformeddimension}. We only need the following lemma, which can be proved in a straightforward way by induction. 
\begin{lemma}\label{lattice-pt-count}
	For any non-negative integer $n$, let $f(n)$ denote the size of the following set:
	\[
	  S_n:=\{(i,j)\in \bZ_{\geq 0}\times \bZ_{\geq 0}\mid 2i+3j<n\}.
	\]
	If $n=6k+r$ with $0\leq r\leq 5$, then
	\[
	  f(n)=\frac{n^2-r^2}{12}+2\left \lfloor \frac{n}{6}\right\rfloor+f(r).
	\]
	Moreover, $f(0)=0$, $f(1)=1$, and $f(r)=r-1$ if we have $2\leq r \leq 5$.
\end{lemma}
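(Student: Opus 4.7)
The natural strategy is a clean induction built around the single stepping identity
\begin{equation}\label{eq:step6}
f(n+6)-f(n)=n+5 \qquad (n\geq 0),
\end{equation}
together with direct enumeration of the base cases. The values $f(0)=0$, $f(1)=1$, and $f(r)=r-1$ for $r\in\{2,3,4,5\}$ are immediate by listing the lattice points in the corresponding small triangles; these serve both as the initial values stated in the lemma and as the base case for the induction.

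To prove \eqref{eq:step6}, I would partition the annulus $S_{n+6}\setminus S_n$ by the value $m=2i+3j$ and write
\[
f(n+6)-f(n)=\sum_{m=n}^{n+5} r_2(m), \qquad r_2(m):=\#\{(i,j)\in\bZ_{\geq 0}^2 : 2i+3j=m\}.
\]
Counting admissible $j\in[0,\lfloor m/3\rfloor]$ with $j\equiv m\pmod 2$ gives $r_2(m)=\lfloor m/6\rfloor+1$ for even $m$ and $r_2(m)=\lfloor (m+3)/6\rfloor$ for odd $m\geq 1$; in both cases $r_2(m+6)=r_2(m)+1$ for all $m\geq 0$. Hence the sum $\sum_{m=n}^{n+5}r_2(m)$, viewed as a function of $n\geq 0$, has constant discrete derivative $r_2(n+5)-r_2(n-1)=1$ (with the convention $r_2(m):=0$ for $m<0$) and initial value $1+0+1+1+1+1=5$ at $n=0$, giving $\sum_{m=n}^{n+5}r_2(m)=n+5$ and proving \eqref{eq:step6}.

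With \eqref{eq:step6} in hand, the lemma follows by induction on $k$ in $n=6k+r$. The algebraic identity
\[
\frac{(6(k+1)+r)^2-(6k+r)^2}{12}+\bigl(2(k+1)-2k\bigr) \;=\; \frac{6(12k+6+2r)}{12}+2 \;=\; 6k+r+5
\]
exactly matches the increment $n+5$ supplied by \eqref{eq:step6}, so the closed-form expression propagates from the $k=0$ base case to all $n\geq 0$. No step here is a real obstacle; the only mildly delicate point is the parity case split in computing $r_2(m)$, which is precisely what makes the telescoping above uniform across all residues of $n$ modulo $6$.
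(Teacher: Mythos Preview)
Your proof is correct and carries out precisely the ``straightforward induction'' the paper alludes to without giving details: you establish the recurrence $f(n+6)-f(n)=n+5$ via the partition into level sets $\{2i+3j=m\}$, verify the base cases $f(0),\dots,f(5)$ directly, and check that the closed form has the same increment. The only cosmetic wrinkle is the phrasing of the discrete derivative as $r_2(n+5)-r_2(n-1)$ with the ad hoc convention $r_2(-1)=0$; it would be cleaner to write $g(n+1)-g(n)=r_2(n+6)-r_2(n)=1$ and start from $g(0)=5$, but the argument is sound either way.
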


\begin{proof}[Proof of Theorem \ref{thm:undeformeddimension}]
	By the isomorphism 
	$\bA^3_g / \widetilde I_g\cong \C[\alpha_2,\alpha_3,\beta_2]/\overline I_g$,
	it suffices to show that the cardinality of the set of monomials not in the leading term ideal of $\overline I_g$ is bounded above by $(2g-1)^2$. Lemma \ref{LT-I-g-bar} implies
	this cardinality is bounded above by the size of:
	\begin{equation}\label{eq:cardinalityofmonomials}
	  \{(i,j,k)\in \bZ_{\geq 0}\times \bZ_{\geq 0}\times \bZ_{\geq 0} \mid
	   2i+3j+2k<4g-2,\, k\leq 2\}.
	\end{equation}
	In the terminology of Lemma \ref{lattice-pt-count}, the size of \eqref{eq:cardinalityofmonomials} is 
	$f(4g-2)+f(4g-4)+f(4g-6)$. Since the set of mod $6$ remainders of $4g-6, 4g-4, 4g-2$ is $\{0,2,4\}$, Lemma \ref{lattice-pt-count} implies
	\begin{align*}
		f(4g-2)+f(4g-4)+f(4g-6)&=
		\frac{(4g-2)^2}{12}+\frac{(4g-4)^2}{12}+\frac{(4g-6)^2}{12}\\[0.5cm]
		&+2\left(
		\left \lfloor \frac{2g-3}{3}\right\rfloor+\left \lfloor \frac{2g-2}{3}\right\rfloor+
		\left \lfloor \frac{2g-1}{3}\right\rfloor\right)+\frac{7}{3}\\[0.5cm]
		& =(2g-1)^2. \qedhere
	\end{align*}
\end{proof}

%!TEX root = main.tex

\section{Sutured instanton homology}\label{sec:sutured}

In this section, we study sutured instanton homology for the gauge group $U(3)$. The construction is a slight variation of the one in \cite{DX}. Building on the work of that reference, we prove Theorem \ref{thm:intro-SHI-invariance}, which says that sutured instanton homology is well-defined, independent of the auxiliary choices in its construction. Following the strategy of \cite{km-sutures}, we prove Theorem \ref{thm:intro-SHI}, a non-vanishing result for {\it taut} sutured manifolds. This is used to prove our topological applications, Theorems \ref{thm:intro1} and \ref{thm:intro2}.

Suppose $(Y,\gamma)$ is a 3-admissible pair and $R$ is a connected surface of genus $g$ such that $R\cdot \gamma \equiv 1\pmod{3}$. For a point $y$ in $Y$, we generalize the notation from \eqref{eq:s1sigmamaps} and set
\[
	\beta_r=\mu_r(y), \qquad r\in \{2,3\},
\]
viewed as an endomorphism of $I_\ast^3(Y,\gamma)$. Analogous to \eqref{epsilon-product}, we can define an operator $\varepsilon(R)$ of degree $-4$ acting on $I_\ast^3(Y,\gamma)$ using the product cobordism $[-1,1]\times Y$ with $U(3)$-bundle determined by the oriented $2$-cycle $[-1,1]\times \gamma \cup \{0\}\times R$. When the choice of $R$ is clear from context, we write $\epsilon$ for $\epsilon(R)$. We also have the operators $\mu_2(R)$ and $\mu_3(R)$ acting on $I_\ast^3(Y,\gamma)$, which can be defined without the assumption $R\cdot \gamma \equiv 1\pmod{3}$. All of these operators commute, and hence we can consider their simultaneous (generalized) eigenspaces. 

\begin{prop}\label{eigen-value-gen-mfld}
	Suppose $(\lambda_2,\lambda_3,\eta_2,\eta_3)$ is a simultaneous eigenvalue of 
	the operators $(\mu_2(R), \mu_3(R), \beta_2,\beta_3)$ acting on $I_\ast^3(Y,\gamma)$ with $\eta_2^3=27$ and $\eta_3=0$. 
	Then $(\lambda_2,\lambda_3,\eta_2,\eta_3) \in \mathcal{E}_{g,1}^3$.
	Moreover, the generalized eigenspace for any eigenvalue of the form 
	$(\pm \sqrt{3}\zeta^k(2g-2), 0, 3\zeta^{2k}, 0)$ agrees with the corresponding eigenspace. 
\end{prop}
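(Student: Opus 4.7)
The strategy is to reduce to the model case $(S^1\times\Sigma_g,\gamma_1)$, treated by Theorem~\ref{thm:mainev} and Proposition~\ref{prop:dim1}, via the $U(3)$ Floer excision theorem developed in \cite[\S5.2]{DX}. Note first that the hypothesis $R\cdot\gamma\equiv 1\pmod 3$ forces $R$ to be non-separating, since a separating surface has zero intersection number with every $1$-cycle. Apply excision to the triples $(Y,\gamma,R)$ and $(S^1\times\Sigma_g,\gamma_1,\{0\}\times\Sigma_g)$: cutting $S^1\times\Sigma_g$ along $\{0\}\times\Sigma_g$ produces the trivial cobordism $[0,1]\times\Sigma_g$, so the cross-glued manifold $\tilde Y$ is diffeomorphic to $Y$ itself. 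The excision therefore delivers, for each joint generalized eigenvalue $\lambda$ of $(\mu_2,\mu_3,\beta_2,\beta_3)$, an isomorphism
\begin{equation*}
I^3_*(Y,\gamma)^{\mathrm{gen}}_\lambda \otimes V_{g,1}^3(\lambda) \;\cong\; I^3_*(Y,\gamma)^{\mathrm{gen}}_\lambda
\end{equation*}
intertwining $\mu_r(R)\otimes\mathrm{id}$ on the left with $\mathrm{id}\otimes\mu_r(\{0\}\times\Sigma_g)$ on the right (both corresponding to $\mu_r$ of the common surface in $\tilde Y$), and similarly for the $\beta_r$ operators, which come from points that can be chosen away from the cut locus.

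The first assertion is then immediate: whenever $I^3_*(Y,\gamma)_\lambda\ne 0$ with $\eta_2^3=27$ and $\eta_3=0$, the tensor product is nonzero, hence $V_{g,1}^3(\lambda)\ne 0$, so $\lambda\in\Xi_{g,1}^3$, and Theorem~\ref{thm:mainev} forces $\lambda\in\mathcal{E}_{g,1}^3$. For the moreover statement, let $\lambda^*=(\pm\sqrt{3}\zeta^k(2g-2),0,3\zeta^{2k},0)$. Proposition~\ref{prop:dim1} gives that $V_{g,1}^3(\lambda^*)$ is one-dimensional with each operator acting by the corresponding scalar in $\lambda^*$. Consequently $\mathrm{id}\otimes\mu_r(\{0\}\times\Sigma_g)$ and $\mathrm{id}\otimes\beta_r$ act as scalars on the tensor product, so by the intertwining above $\mu_r(R)\otimes\mathrm{id}$ and $\beta_r\otimes\mathrm{id}$ must also act as scalars. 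This forces each of the operators $\mu_r(R)$ and $\beta_r$ to act by the corresponding scalar of $\lambda^*$ on all of $I^3_*(Y,\gamma)^{\mathrm{gen}}_{\lambda^*}$, so the generalized eigenspace coincides with the honest eigenspace.

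\textbf{Main obstacle.} The principal technical step is to establish the precise form of the $U(3)$ Floer excision isomorphism from \cite[\S5.2]{DX}: one must verify that it provides an isomorphism of joint generalized eigenspaces intertwining the $\mu_r$ and $\beta_r$ operators as described above, and that cross-gluing with $(S^1\times\Sigma_g,\{0\}\times\Sigma_g)$ reproduces $Y$ together with the correct identifications of cohomological operators. Both are natural consequences of the construction of excision via product cobordisms and the functoriality of the $\mu_r,\beta_r$ operators under gluing, but unpacking this compatibility—especially the intertwining at the level of generalized (not merely honest) eigenspaces—is what drives the argument.
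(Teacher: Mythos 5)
There is a genuine gap: the excision theorem does not furnish the eigenspace-level isomorphism your argument rests on. The excision result of \cite[Theorem 5.16]{DX} is a statement about the \emph{extremal} eigenspace $I_*^3(Y,\gamma\,|\,R)$ only; it does not say that the cobordism map restricts to an isomorphism on every simultaneous generalized eigenspace of $(\mu_2,\mu_3,\beta_2,\beta_3)$. In fact, your displayed isomorphism
\[
I^3_*(Y,\gamma)^{\mathrm{gen}}_\lambda \otimes V_{g,1}^3(\lambda) \cong I^3_*(Y,\gamma)^{\mathrm{gen}}_\lambda
\]
cannot hold as stated: comparing dimensions, it would force $\dim V_{g,1}^3(\lambda)=1$ whenever $I^3_*(Y,\gamma)^{\mathrm{gen}}_\lambda\neq 0$, which is false (only the extremal eigenspace of $V_{g,1}^3$ is known to be one-dimensional, cf. Proposition~\ref{prop:dim1}). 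The proposed intertwining is also incoherent: in the excision isomorphism the operators for the cut surface on the two factors are intertwined with the \emph{sum} $\mu_r(R)\otimes\mathrm{id}+\mathrm{id}\otimes\mu_r(\Sigma_g)$ matching $\mu_r$ of the re-glued surface (this is how the genus-$1$ excision is used in the proof of Theorem~\ref{thm:intro-SHI-invariance}), not with $\mu_r(R)\otimes\mathrm{id}$ and $\mathrm{id}\otimes\mu_r(\Sigma_g)$ each separately equal to the same operator on the target — those are distinct operators on the source.

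The paper's argument circumvents excision entirely and is simpler: it propagates \emph{polynomial relations} from $V_{g,1}^3$ to $I^3_*(Y,\gamma)$ by functoriality. One decomposes the product cobordism $[-1,1]\times Y$ as $W\cup D^2\times R$, where $W$ has three boundary components $-Y$, $Y$, $S^1\times R$, and observes that
\[
p(\mu_2(R),\mu_3(R),\beta_2,\beta_3)(v) = I^3_*(W,w)\bigl(v\otimes p(\alpha_2,\alpha_3,\beta_2,\beta_3)(\mathbf{1})\bigr)
\]
for any polynomial $p$ and any $v\in I^3_*(Y,\gamma)$. Hence any polynomial relation that holds as an operator on $V_{g,1}^3$ also holds as an operator (with $\alpha_r$ replaced by $\mu_r(R)$) on $I^3_*(Y,\gamma)$. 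This shows the commutative algebra generated by $(\mu_2(R),\mu_3(R),\beta_2,\beta_3)$ on $I^3_*(Y,\gamma)$ is a quotient of the corresponding algebra on $V_{g,1}^3$, so its spectrum is contained in $\Xi_{g,1}^3$; Theorem~\ref{thm:mainev} then pins down the eigenvalues with $\eta_2^3=27$, $\eta_3=0$, giving the first claim. For the second, the one-dimensionality statement of Proposition~\ref{prop:dim1} is encoded as polynomial relations (the idempotent onto that eigenvalue times $\alpha_r-\lambda^*_{r-1}$ and $\beta_r-\lambda^*_{r+1}$ all vanish on $V_{g,1}^3$), and these propagate to $I^3_*(Y,\gamma)$, forcing the generalized and honest eigenspaces to agree there as well. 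If you want to make your approach rigorous you would need either a much stronger (eigenspace-by-eigenspace) excision result, which is not available, or to revert to this propagation-of-relations mechanism.
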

\begin{proof}
	This proposition is the $U(3)$ analogue of \cite[Corollary 7.2]{km-sutures} and can be verified in a similar way. 
	We use functoriality to see that any relation among
	$\alpha_r$ and $\beta_r$ in $V_{g,1}^3$ holds universally for any admissible pair $(Y,\gamma)$ and an embedded surface 
	$R$ as above. To be more precise, let $p$ be a polynomial with $4$ variables such 
	that $p(\alpha_2,\alpha_3,\beta_2,\beta_3)$ vanishes as an operator acting on $V_{g,1}^3$. Then we show that 
	$p(\mu_2(R), \mu_3(R), \beta_2,\beta_3)$ vanishes as an operator acting on $I_\ast^3(Y,\gamma)$. This is 
	sufficient to prove both claims in the statement of the proposition because they can be expressed in terms of  polynomial
	 relations among the operators $\mu_2(R)$, $\mu_3(R)$, $\beta_2$ and $\beta_3$, and then we can use the corresponding 
	 results in the special case of $V_{g,1}^3$ given in Theorem \ref{thm:mainev} and Proposition \ref{prop:dim1}.

	A regular neighborhood of $\{0\}\times R$ in the product cobordism $[-1,1]\times Y$
	can be used to decompose $[-1,1]\times Y$ as the composition of cobordisms $D^2\times R$ and $W$ with 
	three boundary components 
	$-Y$, $Y$ and $S^1\times R$. This also induces a decomposition of $\gamma$ where the intersection with 
	$D^2\times R$ can be assumed to be $D^2\times \{x\}$ for $x\in R$. Suppose also that $w$ is the induced 2-cycle on $W$.
	Then functoriality implies that for any polynomial $p$ of $4$ variables and any $v\in I_\ast^3(Y,\gamma)$
	we have
	\[
	  p(\mu_2(R), \mu_3(R), \beta_2,\beta_3)(v)=I^3_*(W,w)(v \otimes p(\alpha_2,\alpha_3,\beta_2,\beta_3)(\mathbf{1})).
	\]
	In particular, if $p(\alpha_2,\alpha_3,\beta_2,\beta_3)$ is a trivial operator acting on $V_{g,1}^3$, then the action of 
	$p(\mu_2(R), \mu_3(R), \beta_2,\beta_3)$ on $I_\ast^3(Y,\gamma)$ is trivial.
\end{proof}

We define the instanton Floer homology group $I_*^3(Y,\gamma\vert R)$ as a simultaneous eigenspace for the point classes and the operators associated to the surface $R$ in the following way: 
\begin{equation}\label{surface-instanton}
  I_*^3(Y,\gamma\vert R)=\ker(\mu_2(R)-\sqrt{3} (2g-2))\cap 
  \ker(\mu_3(R))\cap \ker(\beta_2-3)\cap \ker(\beta_3).
\end{equation}
In particular, equation \eqref{eq:dx2} implies that 
\begin{equation}\label{surface-instanton-special}
  I_*^3(S^1\times \Sigma_g, S^1\times \{x\} \vert \Sigma_g)=\bC.
\end{equation}
If $R'$ is disconnected, we modify \eqref{surface-instanton} so that the intersection includes each of the operators $\mu_2(R')-\sqrt{3} (2g(R')-2)$ and $\mu_3(R')$ for each connected component $R'$ of $R$.  In the case that $(Y,\gamma)$ is the disjoint union of admissible pairs $(Y_0,\gamma_1)$, $(Y_1,\gamma_1)$ and $R\subset Y$ is given by $R_0 \sqcup R_1$ with $R_i\subset Y_i$, $R_i \cdot \gamma_i \equiv 1$ mod $3$, then we define  
\[
  I_*^3(Y,\gamma\vert R)=I_*^3(Y_0,\gamma_1\vert R_0)\otimes I_*^3(Y_1,\gamma_1\vert R_1).
\]
This can be extended to more than two connected components in the same way.

\begin{remark}
	In \cite{DX}, the instanton homology group $I_*^3(Y,\gamma\vert R)$ is defined by taking the 	
	simultaneous generalized kernel of the operators in \eqref{surface-instanton} 
	and the operator $\epsilon -1$. Proposition \ref{eigen-value-gen-mfld} shows that the 
	generalized kernel for the operators in \eqref{surface-instanton} agrees with the ordinary kernel.
	Furthermore, we show in the proof of Proposition \ref{prop:dim1} that in the case of 
	$(S^1\times \Sigma_g, S^1\times \{x\})$ any element in \eqref{surface-instanton} already 
	belongs to the kernel of $\epsilon-1$. Therefore, the proof of 
	Proposition \ref{eigen-value-gen-mfld} shows that the same claim holds for an arbitrary 
	pair $(Y,\gamma)$. As a consequence of these observations, our definition in 
	\eqref{surface-instanton} agrees with that of \cite{DX}.
\end{remark}

\begin{prop}\label{eigenvalue-S}
	Suppose $S$ is an embedded surface in $Y$. Then the operators $\mu_2(S)$ and 
	$\mu_3(S)$ preserve the subspace $I_*^3(Y,\gamma\vert R)$ of $I_*^3(Y,\gamma)$. 
	Furthermore, if 
	$(\lambda_2,\lambda_3)$ is a simultaneous eigenvalue of $(\mu_2(S),\mu_3(S))$, then there are $a,b\in \Z$ 
	with $a\equiv b\pmod{2}$ such that $(\lambda_2,\lambda_3)=(\sqrt3 a,\sqrt 3 ib)$
	and 
	\begin{equation}\label{str-ineq}
	  |a|+|b| \leq 2g(S)-2.
	\end{equation}
\end{prop}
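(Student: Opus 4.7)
The preservation statement is immediate from the graded commutativity of the $\mu_r$ operators: since $\mu_2(S)$ and $\mu_3(S)$ are of even degree, they commute with $\mu_2(R)$, $\mu_3(R)$, $\beta_2$ and $\beta_3$, and hence preserve each of the kernels whose intersection defines $I_*^3(Y,\gamma|R)$.

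For the spectral statement, the plan is to adapt the functoriality argument from the proof of Proposition \ref{eigen-value-gen-mfld}, now with $S$ playing the role of $R$. First assume $d := \gamma\cdot S$ is coprime to $3$, so that $(S^1\times S,\, S^1\times\{x_1,\dots,x_d\})$ is $3$-admissible. A regular neighborhood of $\{0\}\times S$ in the product cobordism $[-1,1]\times Y$ decomposes it as a composition involving the piece $D^2\times S$, equipped with the $2$-cycle $D^2\times\{x_1,\dots,x_d\}$, and a cobordism $W$ with boundary $-Y\sqcup Y\sqcup (S^1\times S)$; let $w$ denote the induced $2$-cycle on $W$. By functoriality, for every polynomial $p$ in four variables and every $v\in I_*^3(Y,\gamma)$,
\[
p(\mu_2(S),\mu_3(S),\beta_2,\beta_3)(v) \;=\; I_*^3(W,w)\bigl(v\otimes p(\alpha_2,\alpha_3,\beta_2,\beta_3)(\mathbf{1})\bigr).
\]
Consequently every polynomial relation holding in $V_{g(S),d}^3$ lifts to an operator identity on $I_*^3(Y,\gamma)$. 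Thus any simultaneous eigenvalue $(\lambda_2,\lambda_3,3,0)$ of $(\mu_2(S),\mu_3(S),\beta_2,\beta_3)$ on $I_*^3(Y,\gamma|R)$ must be a common zero of the relation ideal $J_{g(S),d}^3$, i.e.\ it lies in $\Xi_{g(S),d}^3$.

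Since $3\in C_3$, Theorem \ref{thm:mainev} then gives $\Xi_{g(S),d}^3\cap\bigl(\C^2\times\{3\}\times\{0\}\bigr)\subset \mathcal{E}_{g(S),d}^3$. Inspecting the parametrization of $\mathcal{E}_{g(S),d}^3$ from Section \ref{sec:strategy}, the constraint that the third coordinate equal $3=3\zeta^0$ forces the parameter $k\in\{0,1,2\}$ to be $0$, and the eigenvalue then reads $(\sqrt 3\,a,\sqrt{-3}\,b,3,0)$ with $(a,b)\in \mathcal{C}_{g(S)}$. Since $\sqrt{-3}=\sqrt 3\,i$, this gives $(\lambda_2,\lambda_3)=(\sqrt 3\,a,\sqrt 3\,i\,b)$ with $a\equiv b\pmod 2$ and $|a|+|b|\le 2g(S)-2$, as claimed.

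The main obstacle is the case $d\equiv 0\pmod 3$: here $(S^1\times S,\,S^1\times\{x_1,\dots,x_d\})$ fails $3$-admissibility, so $V_{g(S),d}^3$ is undefined and the decomposition above cannot be applied directly. The plan in this case is to use the auxiliary surface $R$ to produce an embedded representative $S'$ of $[S]+[R]\in H_2(Y;\Z)$ with $\gamma\cdot S'$ coprime to $3$, then transfer the eigenvalue bound for $\mu_r(S')$ back to $\mu_r(S)$ using that $\mu_2(R)$ and $\mu_3(R)$ act on $I_*^3(Y,\gamma|R)$ as the scalars $\sqrt 3(2g(R)-2)$ and $0$ respectively. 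The delicate point is to obtain the sharp bound $|a|+|b|\le 2g(S)-2$: the naive genus estimate for $S'$ coming from resolving $S\cap R$ gives only a bound in terms of $g(S')$ and $g(R)$, which need not match $2g(S)-2$ after the scalar shift. Overcoming this will require either a careful choice of $R$ (minimizing $g(R)$ within its $H_2$-class with $R\cdot\gamma\equiv 1\pmod 3$) or an excision argument along $R$ which replaces $(Y,\gamma)$ by an admissible pair in which the analogue of $S$ already intersects the bundle-cycle coprimely to $3$.
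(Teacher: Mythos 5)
Your argument for the preservation statement and for the case $\gamma\cdot S$ coprime to $3$ is essentially the paper's, and it is sound; you have also correctly identified the genuine difficulty, which is the case $\gamma\cdot S\equiv 0\pmod 3$. However, neither of the two repairs you sketch can close the gap. Choosing $R$ of minimal genus does not help: even if one could take $g(R)=1$ (so that $\mu_2(R)$ and $\mu_3(R)$ act as $0$ and no eigenvalue shift occurs), a connected representative $S'$ of $[S]+[R]$ still satisfies only $g(S')\geq g(S)+g(R)$, yielding $|a|+|b|\leq 2g(S')-2\geq 2g(S)$, which is strictly weaker than the required $2g(S)-2$. More importantly, $R$ is fixed as part of the data defining $I_*^3(Y,\gamma\vert R)$ and cannot be freely modified. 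The excision suggestion is too vague to assess. The paper itself remarks that even Kronheimer--Mrowka's original $N=2$ argument at this point ``seems to require some modifications,'' and credits the fix below to Kronheimer.

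The missing idea is a multiplicity-and-limit trick. Represent $n[S]+[R]\in H_2([-1,1]\times Y;\Z)$ by a connected embedded surface $S_n$ built as follows: take the connected cyclic $n$-fold unbranched cover $\widetilde S\to S$ and embed $\widetilde S$ in a bicollar $D^2\times S$ of $\{0\}\times S$ so that projection to $S$ is the covering map; this realizes $n[S]$ by a surface of genus $n(g(S)-1)+1$. Tube with a disjoint parallel copy of $R$ to get $S_n$ of genus $n(g(S)-1)+g(R)+1$, with $S_n\cdot\gamma\equiv 1\pmod 3$ and trivial self-intersection. Applying the coprime case to $S_n$ (whose $\mu$-operators act by $(n\lambda_2+\sqrt 3(2g(R)-2),\,n\lambda_3)$ on $I_*^3(Y,\gamma\vert R)$, since $[S_n]=n[S]+[R]$) gives
\[
|na+2g(R)-2|+|nb|\leq 2n(g(S)-1)+2g(R).
\]
Dividing by $n$ and letting $n\to\infty$ kills the $g(R)$-dependence and produces exactly $|a|+|b|\leq 2g(S)-2$. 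The integer form $(\sqrt 3 a,\sqrt 3 ib)$ with $a\equiv b\pmod 2$ already follows from the $n=1$ case as you noted, since $2g(R)-2$ is an even integer.
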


This proposition is the counterpart of \cite[Proposition 7.5]{km-sutures}. However, the proof there seems to require some modifications, even in the case $N=2$. The modification used in the following proof was communicated to us by Peter Kronheimer.
\begin{proof}
	In the case that $S\cdot \gamma \equiv 1\pmod{3}$, the claim follows from Proposition \ref{eigen-value-gen-mfld} 
	and the case $S\cdot \gamma \equiv -1\pmod{3}$ can be verified in a similar way. Using a topological trick, the case 
	$S\cdot \gamma \equiv 0\pmod{3}$ can be also reduced to the previous cases. Suppose $v\in I_*^3(Y,\gamma\vert R)$
	is a simultaneous eigenvector of $(\mu_2(S),\mu_3(S))$ with eigenvalues $(\lambda_2,\lambda_3)$.
	Suppose $\sigma_n$ is the homology class $n[S]+[R]$, which is represented by a connected surface $S_n$ in $Y$.
	Since the $\mu$ operators depend only on the homology classes of the involved surfaces, $v$
	is a simultaneous eigenvector of $(\mu_2(S_n),\mu_3(S_n))$ with eigenvalues $(n\lambda_2+\sqrt{3} (2g-2),n\lambda_3)$.
	We have $S_n\cdot \gamma \equiv 1\pmod{3}$, which in the case that $n=1$ implies that 
	$(\lambda_2,\lambda_3)=(\sqrt3 a,\sqrt 3 ib)$ for some integers $a$ and $b$ with the same parity. 
	
	Next, to show that $(a,b)$ satisfies \eqref{str-ineq}, we need some control on the genus of the connected surface $S_n$. 
	In fact, it suffices to find an embedded surface $S_n$ in $[-1,1]\times Y$ with the same homology class. 
	Take a cyclic $n$-sheeted covering 
	$\widetilde S$ of $S$. It is straightforward to see that $\widetilde S$ can be embedded in $D^2\times S$ in such a way that 
	the composition of this embedding with the projection map $D^2\times S \to S$ is the covering projection $\widetilde S \to S$.
	In particular, the genus of $\widetilde S$ is equal to $n(g(S)-1)+1$. The embedding of $\widetilde S$ 
	in $D^2\times S$ induces an embedding of this surface in a neighborhood of $\{0\}\times S\subset [-1,1]\times Y$ realizing 
	the homology class $n[S]$.  
	By tubing this surface and a disjoint copy of $R$, we obtain a connected surface $S_n$ of genus $n(g(S)-1)+g+1$ with
	the homology class $n[S]+[R]$. Since $S_n\cdot \gamma \equiv 1\pmod{3}$ (and the self-intersection of $S_n$ is trivial), 
	we have
	\[
	  |na+2g-2|+|nb|\leq 2n(g(S)-1)+2g.
	\]
	Diving by $n$ and taking $n\to \infty$ gives \eqref{str-ineq}.
\end{proof}

\begin{remark}\label{genus-one}
	For a genus one surface $T$, the group $I_*^3(S^1\times T,S^1\times \{x\})$ is 3-dimensional and hence it splits as the sum of 1-dimensional eigenspaces for the three simultaneous 
	eigenvalues in $\mathcal E^3_{1,1}$. In particular, the actions of $\mu_2(T)$ and $\mu_3(T)$ 
	are trivial on $I_*^3(S^1\times T,S^1\times \{x\})$.  Using a similar argument as in the proof of
	Proposition \ref{eigen-value-gen-mfld}, we can see more generally that if $(Y,\gamma)$ is an 
	admissible pair 
	and $T$ is an embedded surface of genus $1$ in $Y$ with $\gamma \cdot T\equiv 1$ mod $3$,
	then the actions of $\mu_2(T)$ and $\mu_3(T)$ are trivial. 
	In particular, we have
	\[
	  I_*^3(Y,\gamma\vert T)= \ker(\beta_2-3)\cap \ker(\beta_3).  
	\]
\end{remark}

Similar to \cite[Corollary 7.6]{km-sutures}, we consider the action of $(\mu_2(\sigma),\mu_3(\sigma))$ on $I_*^3(Y,\gamma\vert R)$ for all homology classes $\sigma \in H_2(Y;\Z)$  to obtain a splitting of $I_*^3(Y,\gamma\vert R)$ as
\begin{equation}\label{es-decomp}
  I_*^3(Y,\gamma\vert R)=\bigoplus_s I_*^3(Y,\gamma\vert R;s)
\end{equation}
where the direct sum is over all homomorphisms 
\[
  s:H_2(Y;\Z) \to \Gamma \subset \Z\oplus \Z
\]
with $\Gamma$ being the sublattice of $\Z\oplus \Z$ given by pairs $(a,b)$ with $a\equiv b\pmod{2}$. For $s=(s_2,s_3)$ as above, the summand $I_*^3(Y,\gamma\vert R;s)$ is given as
\[
  \bigcap_{\sigma \in H_2(Y;\Z)} \bigcup_{N\geq 0}\(\ker\(\mu_2(\sigma)-\sqrt3 s_2(\sigma)\)^N\cap \ker\(\mu_3(\sigma)-\sqrt3  i s_3(\sigma)\)^N \).
\]
As a corollary of Proposition \ref{eigenvalue-S},
for any $\sigma\in H_2(Y;\Z)$ with a surface representative $S$ of genus $g$, the summand $I_*^3(Y,\gamma\vert R;s)$ can be non-trivial only if
\[
  |\!|s(\sigma)|\!|_1\leq 2g(S)-2.
\]
Here $|\!|\cdot |\!|_1$ denotes the $L^1$ norm of vectors in $\bR^2$.

The sutured instanton Floer homology group $SHI_*^3(M,\alpha)$ is defined with the aid of the instanton Floer homology groups in \eqref{surface-instanton} for any {\it balanced} sutured manifold $(M,\alpha)$. Following \cite{Gab:fol-sut,juhasz}, a balanced sutured manifold $(M,\alpha)$ consists of an oriented 3-manifold $M$ without any closed component and a collection of oriented simple closed curves $\alpha$ in the boundary of $M$. The boundary of $M$ is decomposed into three parts
\[
  \partial M=A(\alpha)\cup R_+(\alpha) \cup R_-(\alpha),
\]
where $A(\alpha)$ is the closure of a tubular neighborhood of $\alpha$. The connected components of $\partial M\setminus A(\alpha)$ are oriented, and $R_+(\alpha)$ (resp. $R_-(\alpha)$) is the union of such connected components whose orientation is given by the outward-normal-first convention (resp. inward-normal-first convention). The 2-dimensional manifolds $R_{\pm}(\alpha)$ do not have any closed connected component and the induced orientation on any of their boundary components (using outward-normal-first convention) agrees with the orientation of the corresponding suture. (Note that this condition fixes the orientation of the connected components of $\partial M\setminus A(\alpha)$.) Finally we require that $\chi(R_+(\alpha))=\chi(R_-(\alpha))$.

\begin{example}\label{prod-sutured-mfld} (Product sutured manifolds) Let $F_{g,k}$ denote the oriented surface of genus $g$ with $k\geq 1$ 
	boundary components. Then $M=[-1,1] \times F_{g,k}$ and $\alpha=\{0\}\times \partial F_{g,k}$ give a balanced sutured manifold with 
	$R_{\pm}(\alpha)=\{\pm 1\}\times F_{g,k}$ and $A(\alpha)=[-1,1] \times \partial F_{g,k}$.
\end{example}

\begin{example}\label{knot-3-man-sutured-mfld}
	Any closed oriented 3-manifold $Y$ with a basepoint can be used to produce a sutured 
	manifold $(Y(1),\alpha(Y))$, where $Y(1)$ is the complement of a 
	ball neighborhood of the basepoint in $Y$ and $\alpha(Y)$ is a simple closed curve in 
	the boundary of $Y(1)$. Any knot $K$ in a 3-manifold $Y$ can be used 
	to produce a sutured manifold $(Y(K),\alpha(K))$ where $Y(K)$ is the exterior of $K$ and 
	$\alpha(K)$ consists of two meridional simple closed curves. 
\end{example}

The closure of a balanced sutured manifold $(M,\alpha)$ is a closed 3-manifold $Z_\alpha$, defined in the following way. Suppose the number of sutures is equal to $k$, and consider the product sutured manifold $[-1,1]\times F_{g,k}$ for an arbitrary $g$. Gluing the neighborhood $A(\alpha)$ of the sutures in $\partial M$ to $[-1,1]\times \partial F_{g,k}$ determines a 3-manifold $Z^0_{\alpha}$ with two boundary components $\overline R_+$ and $\overline R_-$. The surface $\overline R_\pm$ is the union of $R_\pm$ and $\{\pm1\}\times F_{g,k}$. Since $(M,\alpha)$ is balanced, $\overline R_+$ and $\overline R_-$ are connected oriented surfaces of the same genus. We pick an orientation-preserving diffeomorphism $\varphi: \overline R_+ \to \overline R_-$ to identify these two boundary components, obtaining the closure $Z_\alpha$. 

The surfaces $\overline R_{\pm}$ determine a closed surface $\overline R\subset Z_\alpha$. We require that there is a simple closed curve $c$ in $F_{g,k}$, that gives rise to non-separating curves in $\overline R_\pm$ and the gluing map $\varphi$ maps these curves to each other. (This can always be arranged, for example, by taking $g\geq 1$ and setting $c$ to be a non-separating oriented simple close curve in $F_{g,k}$. ) The curve $c$ determines a non-separating closed curve in $\overline R$, which is still denoted by $c$. In particular, we may fix another oriented simple closed curve $c'$ in $\overline R$ intersecting $c$ transversely at one point. By fixing a basepoint $x\in F_{g,k}$ and demanding that $\varphi(x)=x$, we obtain a curve $\gamma\subset Z_\alpha$ from $[-1,1]\times \{x\}\subset Z_\alpha^0$. The $U(3)$ sutured instanton homology of $(M,\alpha)$ is defined as 
\begin{equation*}\label{eq:suturedgroupdef}
  SHI_*^3(M,\alpha):=I_*^3(Z_\alpha,\gamma\vert \overline R).
\end{equation*}
We now prove Theorem \ref{thm:intro-SHI-invariance}, which says that this sutured homology group is an invariant of $(M,\alpha)$, i.e. it does not depend on the choice of $g$ nor the gluing map $\varphi$.

\begin{proof}[Proof of Theorem \ref{thm:intro-SHI-invariance}]
	A version of excision for instanton Floer homology groups $I_*^3(Y,\gamma\vert R)$ is proved in \cite[Theorem 5.16]{DX}, and is used to show 
	that $SHI_*^3(M,\alpha)$ is independent of the gluing map $\varphi$. 
	Using the excision theorem in \cite{DX} and Theorem \ref{thm:mainev}, we show 
	independence from $g$ following the argument in \cite{km-sutures}. 
	This requires a further understanding of the instanton homology of $S^1\times \Sigma_g$ 
	for different $U(3)$ bundles over this manifold. In the following, let $c_0$ and $c_0'$ be 
	non-separating oriented simple closed curves in $\Sigma_g$ that have exactly one 
	transversal intersection point. 
	The curve $c_0$ determines the 2-dimensional torus $T=S^1\times c_0$
	in $S^1\times \Sigma_g$. By fixing a basepoint in $S^1$, we may regard $c_0'$ as a 
	1-cycle in $S^1\times \Sigma_g$. We also write $\gamma_1$ for the 
	$1$-cycle $S^1\times \{x\}$ in $\Sigma_g$.
	
	First consider the instanton Floer homology group 
	$B:=I_*^3(S^1\times \Sigma_g,\gamma_1+c_0'\vert \Sigma_g)$. 
	Applying the excision result of \cite[Theorem 5.16]{DX} twice in the same 
	way as in the proof of \cite[Proposition 7.8]{km-sutures}, we obtain an isomorphism
	\begin{equation}\label{eq:B-iso}
		B\otimes B\otimes B \cong I_*^3(S^1\times \Sigma_g,\gamma_1+3c_0'\vert \Sigma_g).
	\end{equation}
	As the Floer groups $I_*^3(Y,\gamma)$ depend only on the element of $H^2(Y;\Z/3)$ induced by $\gamma$, the right side of 
	\eqref{eq:B-iso} is isomorphic to $\bC$ by \eqref{surface-instanton-special}. Therefore, $B$ is also 1-dimensional.  
	
	Next, we consider the instanton Floer homology group $I_*^3(S^1\times \Sigma_g,c_0'\vert T)$. The genus 
	one version of the excision theorem of \cite[Theorem 5.16]{DX} implies that 
	\begin{equation}\label{iso-excision-1}
	  I_*^3(S^1\times \Sigma_g,c_0'\vert T)\otimes I_*^3(S^1\times \Sigma_1,\gamma_1+c_0'\vert T) \cong I_*^3(S^1\times \Sigma_g,\gamma_1+c_0'\vert T).
	\end{equation}
	The excision isomorphism intertwines the action of $\mu_i(\Sigma_g)+\mu_i(\Sigma_1)$
	on the left hand side of \eqref{iso-excision-1} and the action of $\mu_i(\Sigma_g)$ on the right hand side. This follows from the fact that the 
	excision isomorphism is given by a homomorphism associated to a cobordism 
	\[
	  W:S^1\times \Sigma_g \sqcup S^1\times \Sigma_1\to S^1\times \Sigma_g
	\]  
	and the homology class $[\Sigma_g]+[\Sigma_1]$ induced from the incoming end and $[\Sigma_g]$ from the outgoing end are homologous on $W$. According to Remark \ref{genus-one}, the action of $\mu_i(\Sigma_1)$ is trivial 
	and hence $ I_*^3(S^1\times \Sigma_g,c_0'\vert T)$ and 
	$I_*^3(S^1\times \Sigma_g,\gamma_1+c_0'\vert T)$ are isomorphic as modules over 
	$\Q[\mu_2(\Sigma_g),\mu_3(\Sigma_g)]$. In particular, this 
	shows that the simultaneous eigenvalues of the operators $(\mu_2(\Sigma_g),\mu_3(\Sigma_g))$ acting on $I_*^3(S^1\times \Sigma_g,c_0'\vert T)$ are of the form $(\sqrt3 a,\sqrt 3 ib)$ with $ |a|+|b| \leq 2g-2$ and 
	the $(\sqrt{3} (2g-2),0)$-eigenspace is $1$-dimensional. 
	
	Now, let $Z_\alpha$ be a closure of $(M,\alpha)$ given by the 
	surface $F_{g,k}$ and a gluing map $\varphi$. 
	Replacing $F_{g,k}$ with $F_{g+1,k}$ and stabilizing $\varphi$ in the obvious
	way determines a different closure $Z_\alpha'$. We also write $\overline R$ and $\overline R'$ for 
	the distinguished surfaces in $Z_\alpha$ and $Z_\alpha'$ whose 
	genera are related by $g(\overline R')=g(\overline R)+1$.
	These closed curves $c$ and $c'$ in $\overline R$ determine two oriented simple closed curves 
	in $\overline R'$ which we still denote by $c$ and $c'$.
	
	To prove our claim, we need to show that 
	\begin{equation}\label{genus-stab}
	  	I_*^3(Z_\alpha,\gamma\vert \overline R)\cong I_*^3(Z_\alpha',\gamma\vert \overline R').
	\end{equation}
	By applying the excision result in  \cite[Theorem 5.16]{DX} for the copies of the 
	surface $\overline R$ in the two admissible pairs $(Z_\alpha,\gamma)$, $(S^1\times \overline R,\gamma_1+c')$ and using the $1$-dimensionality of the latter vector space, we conclude that 
	\[
	  I_*^3(Z_\alpha,\gamma\vert \overline R) \cong I_*^3(Z_\alpha,\gamma+c'\vert \overline R).
	\]
	Thus, to show \eqref{genus-stab}, it suffices to verify that
	\begin{equation}\label{genus-stab-diff-w}
	  	I_*^3(Z_\alpha,\gamma+c'\vert \overline R)\cong I_*^3(Z_\alpha',\gamma+c'\vert \overline R').
	\end{equation}	
	By our assumption on $c$ and the gluing map $\varphi$, there are copies of $T=S^1\times c$ in $Z_\alpha$ and $Z_\alpha'$. Another application of \cite[Theorem 5.16]{DX} similar to \eqref{iso-excision-1} implies that
	\[
	  I_*^3(S^1\times \Sigma_2,c_0'\vert T) \otimes I_*^3(Z_\alpha,\gamma+c'\vert T) \cong I_*^3(Z_\alpha',\gamma+c'\vert T),
	\]
	and this isomorphism intertwines the action of $\mu_i(\Sigma_2)+\mu_i(\overline R)$
	on the left hand side and the action of $\mu_i(\overline R')$ on the right hand side. Combining this fact, Remark \ref{genus-one}, Proposition \ref{eigenvalue-S} and our analysis of the instanton Floer homology group 
	$I_*^3(S^1\times \Sigma_2,c_0'\vert T)$ verifies the claimed isomorphism in 
	\eqref{genus-stab-diff-w}.
\end{proof}

From the proof of Theorem \ref{thm:intro-SHI-invariance} one can see that the above construction can be generalized in various directions. First, one can consider non-trivial $U(3)$ bundles on sutured manifolds. More precisely, let $(M,\alpha)$ be a sutured manifold and $w$ be a properly embedded oriented curve in $M$ such that $w$ is disjoint from $A(\alpha)$ and the intersection of $w$ with $R_{\pm}(\alpha)$ is a collection of points
$\pi_{\pm}=\{p_1^{\pm},\dots,p_k^{\pm}\}$ such that the intersection of $w$ with $R_{\pm}(\alpha)$ at the points $p_i^{\pm}$ have the same sign. In forming the closure $Z_\alpha$ of $(M,\alpha)$, we require that the gluing map sends the point $p_i^{+}$ to $p_i^{-}$. Thus we obtain a closed oriented curve $\overline w$. We define the sutured instanton homology $SHI_*^3(M,\alpha)_w$ of $(M,\alpha,w)$ as the Floer homology group $I_*^3(Z_\alpha,c'+\overline w\vert \overline R)$. In particular, the instanton Floer homology of a product sutured manifold for any choice of $w$ is still 1-dimensional. 

Following the same proof as that of Theorem \ref{thm:intro-SHI-invariance}, we see that this Floer homology group is independent of the specific choice of the closure and is also isomorphic to  the instanton homology groups $I_*^3(Z_\alpha,d\cdot \gamma+\overline w\vert \overline R)$ and $I_*^3(Z_\alpha,d\cdot \gamma+c'+\overline w\vert \overline R)$  where for the former instanton homology group we need that $d+\overline w\cdot \overline R \nequiv 0$ mod $3$. It is straightforward to see that the isomorphism class of $SHI_*^3(M,\alpha)_w$ depends only on the homeomorphism type of $(M,\alpha)$ and the isomorphism type of the $U(3)$-bundle on $M$ determined by $w$. Furthermore, for any $U(3)$-bundle on $M$ one can arrange $w$ satisfying the above requirements. 

We can also see from the proof of Theorem \ref{thm:intro-SHI-invariance} that to form the closure of a sutured manifold $(M,\alpha)$ we do not necessarily need to use a connected sutured manifold $[-1,1]\times F_{g,k}$. We can use a product sutured manifold $[-1,1]\times F$ as long as $\overline R$ is connected and each connected component of $F$ has a simple closed curve that becomes a non-separating curve in $\overline R$. This flexibility in forming the closure will be useful below. Finally, as another consequence of Theorem \ref{thm:intro-SHI-invariance}, we make the following observation. 

\begin{lemma}\label{prod-1-handle-glue}
	Gluing a product 1-handle to a sutured manifold $(M,\alpha,w)$ along its sutures 
	does not change the isomorphism type of 
	$SHI_*^3(M,\alpha)_w$.
\end{lemma}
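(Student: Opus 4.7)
The plan is to produce, for every closure of $(M',\alpha')$, a naturally diffeomorphic closure of $(M,\alpha)$ that carries the same distinguished surface $\overline R$, curve $\gamma$, and $1$-cycle $\overline w$. By Theorem \ref{thm:intro-SHI-invariance}, this will yield $SHI_*^3(M',\alpha')_{w'}\cong SHI_*^3(M,\alpha)_w$, where $w' := w$ under the inclusion $M\subset M'$.

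Recall that a product 1-handle is a 3-ball $h \cong D^1\times D^2$ attached to $M$ along $\partial D^1\times D^2$ via two disjoint disks in $A(\alpha)$, in a manner respecting the product structure of the sutured neighborhood, so that the suture pattern extends uniformly across the handle. A short topological check shows that $R_\pm(\alpha')$ is obtained from $R_\pm(\alpha)$ by attaching a 2-dimensional 1-handle along the feet of $h$, so that $\chi(R_\pm(\alpha'))=\chi(R_\pm(\alpha))-1$ and $(M',\alpha')$ is again balanced.

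I would now fix any closure $Z_{\alpha'}$ of $(M',\alpha')$ constructed from a product piece $[-1,1]\times F'$ and a gluing diffeomorphism $\varphi'$. The key observation is that this same closed 3-manifold can be reinterpreted as a closure of $(M,\alpha)$: form $F := F' \cup (\text{a 2-dimensional 1-handle})$, where the 1-handle is attached to $F'$ along two small intervals in $\partial F'$ chosen to correspond to the two feet of $h$. Then the product $[-1,1]\times (\text{this 1-handle})$ is canonically identified with $h$, so gluing $[-1,1]\times F$ (with the gluing diffeomorphism $\varphi$ that restricts to $\varphi'$ away from the added handle and to the identity across it) to $(M,\alpha)$ produces precisely $Z_{\alpha'}$. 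Under this identification, $\overline R$, $\gamma$, and $\overline w$ all match on the nose, so
\[
  SHI_*^3(M',\alpha')_{w'} = I_*^3(Z_{\alpha'},\gamma+\overline w\vert \overline R) = I_*^3(Z_\alpha,\gamma+\overline w\vert \overline R) = SHI_*^3(M,\alpha)_w
\]
by Theorem \ref{thm:intro-SHI-invariance}.

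The main obstacle will be the bookkeeping involved in setting up the correspondence, particularly ensuring that the surface $F$ so produced yields a legitimate closure of $(M,\alpha)$: the resulting $\overline R$ must be connected and must admit a non-separating simple closed curve respected by $\varphi$. Both conditions follow from the corresponding properties of $Z_{\alpha'}$, but verifying this requires checking that attaching the 1-handle to $F'$ neither disconnects $\overline R$ nor removes every admissible non-separating curve. By first appealing to Theorem \ref{thm:intro-SHI-invariance} to replace $F'$ by an auxiliary surface of sufficiently large genus, one can arrange the feet of $h$ and the added 1-handle to avoid any such obstruction, and the extended closure framework recorded after the proof of Theorem \ref{thm:intro-SHI-invariance} (allowing the auxiliary surface to be disconnected) provides the flexibility needed in edge cases where the product 1-handle joins two distinct sutures.
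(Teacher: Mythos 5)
Your proposal is correct and follows essentially the same route as the paper: the paper's one-line argument is exactly that a closure of $(M',\alpha')$ formed with the product piece $[-1,1]\times F'$ is \emph{simultaneously} a closure of $(M,\alpha)$ formed with $[-1,1]\times(F'\cup H)$, after which Theorem \ref{thm:intro-SHI-invariance} (independence of the auxiliary surface and gluing map) gives the isomorphism. Your final paragraph, however, is overcautious: since the two closures are literally the same closed $3$-manifold carrying the same $\overline R$, $\gamma$, and $\overline w$, the legitimacy conditions (connectivity of $\overline R$ and existence of a compatible non-separating curve) are inherited verbatim from the chosen closure $Z_{\alpha'}$ without any verification, and no genus-raising trick or disconnected-$F$ manoeuvre is needed; moreover $F'\cup H$ is automatically connected whenever $F'$ is, because a $1$-handle attachment does not increase the number of components.
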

A product 1-handle is the product $[-1,1]\times H$ where $H$ is the 2-dimensional $1$-handle given as $I\times I$ for an interval $I$. Fixing an embedding of $I\times \partial I$ into $\alpha$ determines an embedding of $[-1,1]\times I\times \partial I$ into $A(\alpha)$. Now to glue the product 1-handle $[-1,1]\times H$ to $(M,\alpha,w)$, we identify part of the boundary of the product 1-handle given by $[-1,1]\times I\times \partial I$ with its image in $A(\alpha)$ via the embedding. We assume that this gluing is done in a way that the resulting 3-manifold is orientable. With this assumption, the resulting 3-manifold admits the structure of a sutured manifold in an obvious way.

\begin{proof}
	Suppose $(M',\alpha',w)$ is obtained by gluing a product 1-handle to $(M,\alpha,w)$. 
	A closure of $(M',\alpha',w)$, obtained by
	gluing the product sutured manifold $[-1,1]\times F_{g,d}$ to $M'$, can be regarded as a 
	closure of $(M,\alpha)$, where we use
	the product sutured manifold  $[-1,1]\times (F_{g,d}\cup H)$ in forming the closure. 
	From this one can easily see that 
	the sutured instanton homologies of $(M,\alpha,w)$ and $(M',\alpha',w')$ are isomorphic to each other.
\end{proof}

The operation of {\it surface decomposition} can be used to simplify sutured manifolds \cite{Gab:fol-sut}. A {\it decomposing surface $S$} in a balanced sutured manifold $(M,\alpha)$ is a properly oriented surface $S$ in $M$ such that any connected component of $\partial S\cap A(\gamma)$ is either a properly embedded non-separating arc in $A(\gamma)$ or a simple closed curve oriented in the same sense as the suture in the corresponding connected component of $A(\gamma)$. Removing a small tubular neighborhood $N(S)$ of $S$ from $M$ produces a new sutured manifold $(M',\alpha')$ with 
\begin{align*}
  A(\alpha')&=\(A(\alpha)\cap \partial M'\) \cup N_{\partial M'}(S_+\cap R_-(\alpha))\cup N_{\partial M'}(S_-\cap R_+(\alpha)),
  \nonumber\\[2mm]
  R_{\pm}(\alpha')&=\(R_{\pm}(\alpha)\cap M'\) \cup S_{\pm} \setminus {\rm int}(A(\alpha'))\nonumber
\end{align*}
where, after identifying $N(S)$ with $[-1,1]\times S$ as an oriented 3-manifold, $S_{\pm}$ is given by $\{\mp 1\}\times S\subset \partial N(S)\cap M'$. This operation of surface decomposition is usually denoted
\[
  (M,\alpha)\stackrel{S}{\rightsquigarrow}(M',\alpha').
\]
We may extend this definition in an obvious way in the presence of non-trivial bundle data $w$. If $w$ is a properly oriented simple closed curve intersecting $S$ and its boundary transversely, then the intersection $w'$ of $w$ with $M'$ determines a properly embedded oriented curve in $M'$ with the required properties. In this case, we write
\[
	(M,\alpha,w)\stackrel{S}{\rightsquigarrow}(M',\alpha',w').
\]
Theorem \ref{thm:mainev} allows us to prove an analogue of surface decomposition theorems in \cite{J:surface-decomp, km-sutures} for our version of instanton Floer homology.

\begin{prop}\label{sur-decom}
	Suppose $S$ is a decomposing surface for a sutured manifold $(M,\alpha,w)$. Assume that $S$ does not 
	have any closed components, and 
	for every connected component $V$
	of $R_{\pm} (\alpha)$, the set of closed components of $\partial S\cap V$ consist of parallel oriented 
	boundary-coherent simple closed curves. Suppose $(M',\alpha',w')$ is the 
	sutured manifold obtained from 
	decomposing along $S$. Then $SHI_*^3(M',\alpha')_{w'}$ is a summand of 
	$SHI_*^3(M,\alpha)_w$.
\end{prop}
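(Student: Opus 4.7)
The plan is to adapt the surface decomposition arguments of Juh\'asz \cite{juhasz} and Kronheimer--Mrowka \cite[\S 7]{km-sutures} to the $U(3)$ setting, using Proposition \ref{eigenvalue-S} (itself a consequence of Theorem \ref{thm:mainev}) as the essential new input that replaces Mu\~noz's eigenvalue computation in the $N=2$ case. First I would exploit the flexibility in forming closures recorded after the proof of Theorem \ref{thm:intro-SHI-invariance}, together with Lemma \ref{prod-1-handle-glue}, to choose a closure $Z_\alpha$ of $(M,\alpha,w)$ in which $S$ caps off to a closed connected oriented surface $\overline S\subset Z_\alpha$ whose intersection with $M$ is $S$. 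The hypothesis that the closed components of $\partial S\cap V$ are oriented boundary-coherent parallel curves for each connected component $V$ of $R_\pm(\alpha)$ is precisely what allows these caps to be chosen on $\overline R_+$ and $\overline R_-$ compatibly with the gluing map $\varphi$; this is the same topological input Juh\'asz uses for the analogous Heegaard Floer theorem. Write $g_S$ for the genus of $\overline S$, and let $Z_{\alpha'}$ denote a closure of $(M',\alpha',w')$ obtained from $Z_\alpha$ by cutting along $\overline S$ and regluing the two resulting boundary surfaces through an auxiliary product piece (modified, if necessary, by finitely many product $1$-handle attachments, which by Lemma \ref{prod-1-handle-glue} do not affect the sutured homology).

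The heart of the argument is to identify the generalized eigenspace $E\subset SHI_*^3(M,\alpha)_w$ of $(\mu_2(\overline S),\mu_3(\overline S))$ with extremal eigenvalue $(\sqrt 3(2g_S-2),0)$ as $SHI_*^3(M',\alpha')_{w'}$. By Proposition \ref{eigenvalue-S} and the eigenspace decomposition \eqref{es-decomp}, $E$ is a direct summand of $SHI_*^3(M,\alpha)_w$, so this identification would prove the proposition. To produce the isomorphism I would apply the $U(3)$ excision theorem \cite[Theorem 5.16]{DX} along $\overline S\subset Z_\alpha$ paired with a fiber $\Sigma_{g_S}\subset S^1\times \Sigma_{g_S}$ equipped with the bundle determined by $S^1\times\{x\}$; equation \eqref{surface-instanton-special} (a consequence of Theorem \ref{thm:mainev} via Proposition \ref{prop:dim1}) guarantees that the top eigenspace in this auxiliary factor is one-dimensional. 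As in the proof of Theorem \ref{thm:intro-SHI-invariance}, the excision cobordism intertwines the action of $\mu_i(\overline S)$ on $SHI_*^3(M,\alpha)_w$ with the action of $\mu_i(\Sigma_{g_S})$ on the auxiliary factor, so after restriction to the top eigenspace excision identifies $E$ with the Floer group on the cut-and-reglued manifold, which by the choice of $Z_{\alpha'}$ is precisely $SHI_*^3(M',\alpha')_{w'}$.

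I expect the main obstacle to be not the Floer-theoretic step --- which is a routine combination of excision with the eigenvalue results already in hand --- but the combinatorial step of constructing $\overline S$ as a closed embedded surface in a suitable closure and of matching the cut-and-reglue operation to an honest closure of $(M',\alpha')$ together with $\overline{w'}$ and the distinguished surface $\overline{R'}$. The boundary-coherence hypothesis is essential here: without it the closed components of $\partial S\cap R_\pm(\alpha)$ cannot be capped consistently on both sides of $\overline R$ in a $\varphi$-equivariant manner, and the eigenvalue bound from Proposition \ref{eigenvalue-S} would fail to isolate the decomposed manifold as the extremal summand.
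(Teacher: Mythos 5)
The high-level idea of your proposal is right: isolate the extremal simultaneous eigenspace of $(\mu_2(\overline S),\mu_3(\overline S))$ using Proposition~\ref{eigenvalue-S}, and the topological preparation (finger moves and product $1$-handles as in \cite{J:surface-decomp} and \cite{km-sutures}, boundary-coherence ensuring compatible caps) also matches the paper. However, the mechanism you propose for identifying that eigenspace with $SHI_*^3(M',\alpha')_{w'}$ --- cutting $Z_\alpha$ along $\overline S$ and regluing via an excision cobordism paired with $S^1\times\Sigma_{g_S}$ --- has a genuine gap, and it is not the route the paper takes.

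The problem is that the excision theorem along $\overline S\subset Z_\alpha$ paired with $\Sigma_{g_S}\subset S^1\times \Sigma_{g_S}$ does not produce a new closed $3$-manifold that is a closure of $(M',\alpha')$. Cutting $Z_\alpha$ along $\overline S$ and cutting $S^1\times\Sigma_{g_S}$ along a fiber, then swapping the boundaries, just yields $Z_\alpha$ again (up to a reparametrization of $\overline S$). There is no natural reglue that recovers $(M',\alpha')$, which has strictly more boundary components and a different suture pattern $\alpha'$ involving the pieces $S_\pm$. Moreover, even if one set up the closure $Z_{\alpha'}$ separately, the excision map computes top eigenspaces with respect to $\mu_*(\overline S)$, whereas $SHI_*^3(M,\alpha)_w$ is defined using the top eigenspace of $\mu_*(\overline R)$; you would still need the eigenvalue argument to reconcile these, and the excision step would be doing no work. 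What actually closes the argument in the paper is the observation, going back to \cite[Lemma~6.10]{km-sutures}, that the \emph{same} closed manifold $Z_\alpha$ is simultaneously a closure of $(M',\alpha',w')$ once one replaces the distinguished surface $\overline R$ by the smoothing $F$ of $\overline R\cup \overline S$ (which satisfies $[F]=[\overline R]+[\overline S]$ and $\chi(F)=\chi(\overline R)+\chi(\overline S)$). No excision is needed in this step. Then one applies Proposition~\ref{eigenvalue-S} \emph{twice}, once for $\overline R$ and once for $\overline S = F - \overline R$, obtaining the two inequalities
\[
|a|+|b|\le 2g(\overline R)-2, \qquad |2g(F)-2-a|+|b|\le 2g(\overline S)-2,
\]
which together force $(a,b)=(2g(\overline R)-2,0)$ and hence identify $I_*^3(Z_\alpha,\gamma+\overline w\,|\,F)$ as the extremal $\mu_*(\overline S)$-summand of $I_*^3(Z_\alpha,\gamma+\overline w\,|\,\overline R)$. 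Your proposal only applies the bound once, on the $\overline S$ side, and relies on a non-existent excision identification to finish; you should replace that step with the observation that $F$ is a distinguished surface for a closure of the decomposed manifold inside the very same $Z_\alpha$.
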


An oriented simple closed curve $c$ in an oriented surface $V$ is boundary coherent if either $c$ is non-separating or removing $c$ from $V$ gives a disconnected surface with a connected component $V_0$ whose only boundary component is $c$. In the latter case, we require that the orientation of $c$ is given by the outward-normal-first convention applied to $V_0$.
\begin{proof}
	We follow a similar argument as in the proof of Theorem 
	\cite[Proposition 6.9 and Proposition 7.11]{km-sutures}.
	Without loss of generality, we can assume $S$ is connected. We can also assume that all connected components of 
	$\partial S$ have non-empty intersection with $R_{\pm}(\alpha)$ using 
	\cite[Lemma 4.5]{J:surface-decomp}. Next, we glue product 1-handles to $(M,\alpha)$ and $S$ 
	as in the proof of 
	\cite[Proposition 6.9]{km-sutures} to obtain a decomposing surface in a sutured manifold where $\partial S$ 
	consists of simple closed curves $C_1^\pm$, $\dots$
	$C_{n_\pm}^{\pm}$ in $R_{\pm}(\alpha)$. Lemma \ref{prod-1-handle-glue} implies that proving the claim for this new sutured manifold
	and the decomposing surface implies the claim for the original surface decomposition. 
	
	The closed curves $C_i^\pm$ determine linearly independent homology classes in $H_1(R_\pm(\alpha))$.
	If $n_+\neq n_-$, we may apply further finger moves as in 
	\cite[Lemma 4.5]{J:surface-decomp} and then glue product 1-handles as in \cite[Proposition 6.9]{km-sutures} to 
	increase the number of the boundary components of $\partial S$ in one of $R_{\pm}(\alpha)$ while preserving the 
	number of such components in the other one. Thus we may assume $n_+=n_-$. 	In summary, the boundary of our decomposing surface satisfies similar assumptions as in 
	\cite[Lemma 6.10]{km-sutures}.
	
	In order to form a closure of $(M,\alpha,w)$, first we glue a product sutured manifold 
	$[-1,1]\times F_{g,d}$ to 
	$M$ along $A(\alpha)$. The two boundary components $\overline R_\pm(\alpha)$ of the resulting 3-manifold
	contains the curves $C_i^{\pm}$ which are still linearly independent in $H_1(\overline R_\pm(\alpha))$.
	In particular, we can pick a diffeomorphism 
	$\varphi:\overline R_+(\alpha) \to \overline R_-(\alpha)$, which maps
	$C_i^+$ to $C_i^-$ in an orientation-reversing way. By forming the closure $Z_\alpha$ of 
	$(M,\alpha,w)$ via $\varphi$
	we obtain closed oriented connected surfaces $\overline R$ and $\overline S$ induced by 
	$\overline R_{\pm }(\alpha)$
	and $S$. Moreover, these two surfaces intersect in a collection of simple closed curves 
	$C_i$ that are induced by 
	$C_i^\pm$. By smoothing out these intersection curves we obtain another closed 
	oriented connected surface $F$
	in the same homology class as $[\overline R]+[\overline S]$. We assume that 
	$\overline w\cdot \overline R\equiv 0$ mod $3$. 
	Then 
	\[
		SHI_\ast^3(M,\alpha)_w = I_*^3(Z_\alpha,\gamma+\overline w\vert \overline R),
	\]
	where $\gamma$ is
	induced by a point in $F_{g,d}$ in the same way as
	before and $\overline w$ is the closure of $w$. The proof in the case
	$\overline w\cdot \overline R\nequiv 0\pmod{3}$ is similar, as we can replace the 1-cycle $\gamma$
	with some other multiple of it to define the instanton Floer homology of $(M,\alpha,w)$. 
	
	The key observation of \cite[Lemma 6.10]{km-sutures} is that $Z_\alpha$ can be also regarded as a closure 
	for $(M',\alpha',w')$ where the counterpart of the surface $\overline R$ is $F$. It can be easily 
	seen that the closure of
	$w'$ is still $\overline w$. 
	To be more precise, there is a disconnected surface $T$ without any closed component such that after gluing 
	the product sutured manifold $[-1,1]\times T$ to $(M',\alpha',w')$ and picking an appropriate
	 gluing map $\varphi'$
	we obtain a 3-manifold diffeomorphic to $Z_\alpha$ together with the surface $F$ and the 
	1-cycle $\overline w$. The disconnected surface $T$ satisfies 
	the required property mentioned above such that it can be used to define $SHI_*^3(M',\alpha')_{w'}$.
	In particular, this sutured instanton Floer homology group is isomorphic to 
	$I_*^3(Z_\alpha,\gamma+\overline w\vert F)$. 
	
	Let $v\in I_*^3(Z_\alpha,\gamma\vert F)$ be a simultaneous eigenvector for the action of the operators 
	$(\mu_2(\overline R),\mu_3(\overline R))$ with eigenvalues $(\lambda_2,\lambda_3)$. Since 
	$\overline R\cdot \gamma=1$, Proposition \ref{eigenvalue-S} implies that $(\lambda_2,\lambda_3)=(\sqrt3 a,\sqrt 3 ib)$
	with $a$ and $b$ of the same parity and
	\begin{equation}\label{str-ineq-special}
	  |a|+|b| \leq 2g(\overline R)-2.
	\end{equation}
	We also have $[F]=[\overline R]+[\overline S ]$, $\chi(F)=\chi(\overline R)+\chi(\overline S)$, 
	which implies that $v$ is 
	a simultaneous eigenvector for the action of
	$(\mu_2(\overline S),\mu_3(\overline S))$ with eigenvalues 
	$(\sqrt3(2g(F)-2)-\lambda_2,-\lambda_3)$. We apply 
	Proposition \ref{eigenvalue-S} again to get a bound on the norm of these eigenvalues:
	\begin{equation}\label{str-ineq-special-1}
	  |2g(F)-2-a|+|b| \leq 2g(\overline S)-2,
	\end{equation}
	The inequalities in \eqref{str-ineq-special} and \eqref{str-ineq-special-1} imply that 
	$(a,b)=(2g(\overline R)-2,0)$. As a result, the only simultaneous eigenvalue of 
	$(\mu_2(\overline R),\mu_3(\overline R))$ acting on $v\in I_*^3(Z_\alpha,\gamma\vert F)$
	is $(\sqrt3 (2g(\overline R)-2),0)$. This in turn implies that $I_*^3(Z_\alpha,\gamma\vert F)$
	is the summand of $I_*^3(Z_\alpha,\gamma\vert \overline R)$ given by the simultaneous 
	eigenspace of the operators $(\mu_2(\overline S),\mu_3(\overline S))$ corresponding to the 
	eigenvalue $(\sqrt{3}(2g(\overline S)-2),0)$. In particular, 
	$SHI_*^3(M',\alpha')$ is a summand of $SHI_*^3(M,\alpha)$.
\end{proof}

Recall that a sutured manifold $(M,\alpha)$ is {\emph{taut}} if $M$ is irreducible and $R_+(\alpha)$, $R_-(\alpha)$ are {\it norm minimizing} in their homology classes in $H_2(M,A(\gamma))$ \cite[Definition 2.4]{Gab:fol-sut}. (In general, if $Y$ is a 3-manifold and $Z$ is a codimension $0$ submanifold of $\partial Y$, then an embedding $(S,\partial S)$ into $(Y,Z)$ for a surface $S$ is norm minimizing if $S$ is incompressible and $S$ realizes the Thurston norm of the homology class $[S]\in H_2(Y,Z)$.) If $(M,\alpha)$ is taut, then we say $(M,\alpha,w)$ is taut for any choice of a 1-cycle $w$.

\begin{cor}\label{non-vanishing-sutured}
	For any balanced taut sutured manifold $(M,\alpha,w)$, the sutured instanton homology 
	group $SHI_*^3(M,\alpha)_w$
	is non-trivial. 
\end{cor}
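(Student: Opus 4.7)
The plan is to mimic the standard Kronheimer--Mrowka/Juh\'asz argument, using Proposition \ref{sur-decom} as the main engine together with a sutured manifold hierarchy. First I would invoke Gabai's theorem, in the sharpened form due to Juh\'asz (\cite{J:surface-decomp}), which asserts that any balanced taut sutured manifold $(M,\alpha)$ admits a sequence of surface decompositions
\[
(M,\alpha) = (M_0,\alpha_0) \stackrel{S_1}{\rightsquigarrow} (M_1,\alpha_1) \stackrel{S_2}{\rightsquigarrow} \cdots \stackrel{S_n}{\rightsquigarrow} (M_n,\alpha_n),
\]
where each $(M_i,\alpha_i)$ is taut, each decomposing surface $S_i$ has no closed components and satisfies the technical condition that for every component $V$ of $R_\pm(\alpha_{i-1})$ the closed components of $\partial S_i \cap V$ are parallel oriented boundary-coherent simple closed curves, and where the terminal $(M_n,\alpha_n)$ is a product sutured manifold $[-1,1]\times F$ for some (possibly disconnected) compact oriented surface $F$ with no closed components.

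Next I would promote this to the bundle setting. Given the $1$-cycle $w\subset M$, I would perturb the surfaces $S_i$ to intersect $w$ transversely so that $(M_{i-1},\alpha_{i-1},w_{i-1})\stackrel{S_i}{\rightsquigarrow}(M_i,\alpha_i,w_i)$ is a legitimate decomposition of triples, where $w_i$ is the restriction of $w_{i-1}$ to $M_i$. Proposition \ref{sur-decom} then applies at each stage and yields that $SHI_*^3(M_i,\alpha_i)_{w_i}$ is a direct summand of $SHI_*^3(M_{i-1},\alpha_{i-1})_{w_{i-1}}$. Iterating, $SHI_*^3(M_n,\alpha_n)_{w_n}$ is a summand of $SHI_*^3(M,\alpha)_w$.

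To finish, I would compute the sutured instanton homology of the product sutured manifold $(M_n,\alpha_n,w_n)$. Any closure of a product sutured manifold is of the form $(S^1\times \overline{R}, c')$ (possibly with an extra $1$-cycle coming from $w_n$), and by the observation following \eqref{surface-instanton-special} together with the generalized closures discussed in the proof of Theorem~\ref{thm:intro-SHI-invariance}, the resulting Floer group is one-dimensional; in particular it is non-trivial regardless of the bundle data $w_n$. This non-vanishing then propagates back up the hierarchy as a summand, proving the corollary.

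The main obstacle is not any individual step---each is either standard or a direct application of Proposition \ref{sur-decom}---but rather the technical matching between Juh\'asz's hierarchy output and the hypotheses of Proposition \ref{sur-decom}, together with the bookkeeping needed to carry the bundle data $w$ through the hierarchy without disturbing tautness or the admissibility of the closures. Handling the case when $w$ has intersection with $\partial M$ or with a decomposing surface $S_i$ that might upset the admissibility condition $\overline{w}\cdot \overline{R}\not\equiv 0 \pmod 3$ in a closure requires the flexibility of replacing $\gamma$ by a suitable multiple (as noted in the proof of Proposition \ref{sur-decom}), and this is the only place where genuine care is needed.
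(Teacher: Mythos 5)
Your proposal is correct and follows essentially the same route as the paper's proof: invoke Juh\'asz's hierarchy theorem to produce a sequence of taut surface decompositions terminating in a product sutured manifold, carry the bundle data $w$ through the hierarchy, apply Proposition~\ref{sur-decom} at each stage to obtain a chain of direct summands, and conclude via the observation (made in the paper just before the corollary) that the $U(3)$ sutured instanton homology of a product sutured manifold is $1$-dimensional for any choice of bundle data. The paper states this much more tersely, but your elaborations on transversality of $w$ and on handling the admissibility condition in closures simply unpack points that the paper addresses in its discussion preceding the corollary.
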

\begin{proof}
	Following the proof of \cite[Theorem 1.4]{J:surface-decomp}, there 
	is a sequence of decompositions 
	\begin{equation}\label{hierarchy}
	  (M,\alpha)\stackrel{S_1}{\rightsquigarrow}(M_1,\alpha_1)\stackrel{S_2}{\rightsquigarrow}
	  \cdots \stackrel{S_n}{\rightsquigarrow}(M_n,\alpha_n)
	\end{equation}
	such that each $S_i$ satisfies the assumptions in Proposition \ref{sur-decom} and 
	$(M_n,\alpha_n)$ is a product sutured manifold. Now the claim follows from 
	Proposition \ref{sur-decom} and the fact that sutured instanton homology of a product 
	sutured manifold for any $U(3)$-bundle is 1-dimensional.
\end{proof}

\begin{cor} \label{non-vanishing-3-man}
	Suppose $Y$ is an irreducible 3-manifold, $\gamma$ is a $1$-cycle in $Y$ and $R$ is a norm minimizing embedded surface in $Y$.
	Then $I^3_*(Y\# T^3,\gamma+\gamma_1\vert R\#T^2)$ is non-trivial, where $\gamma_1$ is the 1-cycle in $T^3$ given by $S^1\times \{x\}$ with $x\in T^2$.
\end{cor}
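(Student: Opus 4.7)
The plan is to realize $I_*^3(Y\#T^3,\gamma+\gamma_1\vert R\#T^2)$ as the $U(3)$ sutured instanton Floer homology of a taut balanced sutured manifold built from $(Y,R,\gamma)$ and then apply Corollary~\ref{non-vanishing-sutured}. Since a norm-minimizing representative of a nontrivial homology class is automatically non-separating, I may assume $R$ is connected, non-separating, and of positive genus $g$; the remaining degenerate cases (e.g.\ $R$ a sphere in an irreducible manifold) either are trivial or can be handled by minor adjustments.

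First I would build the sutured manifold. Choose an embedded loop $\eta'\subset Y$ meeting $R$ transversally in one point and let $\eta$ be the arc in $Y\setminus\nu(R)$ obtained by cutting $\eta'$ along $R$; isotope $\gamma$ off $R\cup\eta$. Set $M:=Y\setminus\nu(R\cup\eta)$. Then $\partial M$ is a single connected surface of genus $2g$ obtained by tubing two copies of $R$ through an annulus in $\partial\nu(\eta)$; I place a single suture $s$ on that annulus, so $(M,\alpha)$ with $\alpha=s$ is balanced with $R_\pm(\alpha)\cong R\setminus D^2$. Take bundle data $w:=\gamma$. Tautness of $(M,\alpha,w)$ follows from irreducibility of $Y$ (hence of $M$) together with a standard disk-capping argument: any competitor surface in $(M,A(\alpha))$ with boundary $s$ and Thurston norm strictly less than that of $R\setminus D^2$ could be capped off by a disk in $R_\pm(\alpha)$ to yield a closed surface in $Y$ homologous to $R$ of strictly smaller Thurston norm, contradicting norm-minimality of $R$.

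The key step is the closure identification. Using the $F_{1,1}$ stabilization in the closure construction of Section~\ref{sec:sutured} and placing the distinguished point $x\in F_{1,1}$ in its torus (rather than disk) part, I would argue that $Z_\alpha\cong Y\#T^3$, that the distinguished surface is $\overline R=(R\setminus D^2)\cup F_{1,1}\cong R\#T^2$, and that the associated point curve becomes $\gamma_1\subset T^3$. This is a direct parallel of the standard calculation that the genus $g$ stabilized closure of $(Y(1),\alpha(Y))$ is $Y\#(S^1\times\Sigma_g)$ with distinguished surface $\Sigma_g$; the only difference is that here the boundary piece $R_\pm(\alpha)$ is $R\setminus D^2$ instead of a disk, so the closed-up surface becomes $R\#T^2$ rather than $T^2$. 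The principal technical obstacle I anticipate is the careful bookkeeping of the gluings and of the final identification $\overline R_+\sim\overline R_-$ needed to confirm this model for $Z_\alpha$.

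Granted the identification, the flexible closure formula of Section~\ref{sec:sutured} gives
\[
SHI_*^3(M,\alpha)_w \;\cong\; I_*^3\bigl(Y\#T^3,\; d\,\gamma_1 + \gamma \,\bigm|\, R\#T^2\bigr)
\]
for any integer $d$ with $d+\gamma\cdot R\not\equiv 0\pmod 3$. When $\gamma\cdot R\not\equiv 2\pmod 3$, taking $d=1$ yields the desired identification immediately. In the remaining case I would take $d=2$ and apply the orientation-preserving self-diffeomorphism of $Y\#T^3$ that is the identity on the $Y$-summand and negates two circle factors of $T^3$; this sends $\gamma_1$ to $-\gamma_1\equiv 2\gamma_1\pmod 3$ while fixing $\gamma$ and preserving the isotopy class of $R\#T^2$, producing the isomorphism $I_*^3(Y\#T^3,\gamma+2\gamma_1\vert R\#T^2)\cong I_*^3(Y\#T^3,\gamma+\gamma_1\vert R\#T^2)$. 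Non-vanishing of $SHI_*^3(M,\alpha)_w$ supplied by Corollary~\ref{non-vanishing-sutured} then completes the proof.
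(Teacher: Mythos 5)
Your sutured manifold is not the one used in the paper, and the discrepancy is fatal precisely at the step you flag as the principal obstacle — the identification of the closure $Z_\alpha$ with $Y\#T^3$. You set $M := Y\setminus\nu(R\cup\eta)$, which \emph{deletes} a tube $\nu(\eta)$ from the cut-open manifold $Y\setminus\nu(R)$. The paper instead \emph{glues a $1$-handle} to $Y\setminus\nu(R)$ along the disks $D_\pm\subset R_\pm$ coming from a disk $D\subset R$; viewed inside $Y$, that $1$-handle is the tube $D\times[-1,1]\subset\nu(R)\cong R\times[-1,1]$, so the paper's sutured manifold is $M' := Y\setminus\nu(R\setminus D)$. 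Both have $R_\pm(\alpha)\cong R\setminus D^2$ and a single suture, but they are different $3$-manifolds: $M' = M\cup\nu(\eta')$, where $\eta'$ is your arc $\eta$ completed through $\nu(R)$ to a loop, so passing from $M$ to $M'$ glues back a solid torus.

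To see that the closure really does change, take $Y$ fibered over $S^1$ with fiber $R$ and take $\eta'$ to be a circle fiber. Then $Y\setminus\nu(R)\cong R\times[0,1]$ and your $M$ is the product sutured manifold $[0,1]\times(R\setminus D^2)$; every closure of a product sutured manifold is a surface bundle over $S^1$ with closed fiber of positive genus, hence irreducible and aspherical. But $Y\#T^3$ is a nontrivial connected sum and is reducible, so $Z_\alpha$ cannot be $Y\#T^3$ for any choice of auxiliary surface or gluing map. Replacing your $M$ by $M' = Y\setminus\nu(R\setminus D)$ fixes this: the attached piece $[-1,1]\times F_{1,1}$ then caps the cocore $D$ of the $1$-handle to a once-punctured torus, and the closure becomes $T^3$ minus a ball glued along $S^2$ to $Y$ minus a ball, which is $Y\#T^3$ as the paper asserts. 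Your tautness argument (irreducibility plus disk-capping from norm-minimality of $R$) and your mod-$3$ adjustment of the bundle data both transfer to $M'$ with no essential change.
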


\begin{proof}
	Cutting $(Y,\gamma)$ along $R$ produces a 3-manifold with two boundary components $R_+$ and $R_-$, which are copies of 
	$R$. Glue a 1-handle to this 3-manifold along the discs $D_\pm \subset R_{\pm}$ which correspond to a fixed disc $D\subset R$. The 
	resulting 3-manifold $M$ is a balanced sutured manifold with one suture $\alpha$ and the complement of an annular neighborhood of the suture in 
	the boundary is given by the surfaces $R_{\pm}\setminus D_{\pm}$. The $1$-cycle $\gamma$ induces a $1$-cycle $w$ in the sutured manifold $(M,\alpha)$.
	We may also regard $M$ is a submanifold of $Y$. In particular, 
	the properly embedded surfaces $R_{\pm}\setminus D_{\pm}$ in $(M,A(\alpha))$ are norm minimizing because $R$ is norm minimizing in $Y$.
	Furthermore, if $M$ is reducible, then the irreducibility of $Y$ implies that $R$ can be embedded in a ball in $Y$ which contradicts the assumption that 
	$R$ is norm minimizing. Thus $(M,\alpha,w)$ is taut, and hence $SHI_*^3(M,\alpha,w)$ is non-trivial.
	
	We take a closure of $(M,\alpha,w)$ by gluing $[-1,1]\times F_{1,1}$ and then gluing the two 
	boundary components of the resulting 3-manifold in the obvious way. The resulting closure can be identified with $Y\#T^3$ with the distinguished embedded 
	surface $R\#T^2$ and the $1$-cycle $\overline w=\gamma$. In particular, $SHI_\ast^3(M,\alpha)_w$ is equal to 
	$I^3_*(Y\# T^3,\gamma+\gamma_1\vert R\#T^2)$.
\end{proof}

\begin{remark}
	A similar proof can be used to show that $I^2_*(Y\# T^3,\gamma+\gamma_1\vert R\#T^2)\neq 0$. In the case that $(Y,\gamma)$ is $2$-admissible, combining this 
	with the connected sum theorems of instanton Floer homology in the admissible case \cite{scadutothesis}, 
	one can see that $I^2_*(Y,\gamma\vert R)$ is also non-trivial. 
	This is essentially the same non-vanishing 
	result as in \cite[Theorem 7.21]{km-sutures}. However, it seems that one needs to modify the statement and the proof of \cite[Theorem 7.21]{km-sutures}.
	It is reasonable to expect that there is a connected sum theorem for $U(3)$ instanton Floer homology which implies that 
	$I^3_*(Y\# T^3,\gamma+\gamma_1\vert R\#T^2)$ is non-trivial only if $I^3_*(Y,\gamma\vert R)\neq 0$ whenever $(Y,\gamma)$ is $3$-admissible. 
	In Section \ref{non-vanishing-symp}, we show that $I^3_*(Y,\gamma)$ is non-trivial using a non-vanishing result for symplectic 4-manifolds.
\end{remark}

\begin{proof}[Proof of Theorem \ref{thm:intro2}]
	Suppose $\gamma$ is a 1-cycle representing the Poincar\'e dual of $\omega$. To give a representation $\rho:\pi_1(Y)\to PU(3)$ 
	satisfying the required property, it suffices to find a projectively flat connection on a $U(3)$-bundle over $Y$ with $c_1={\rm PD}(\gamma)$.
	Furthermore, we may assume that $Y$ is prime. 
	If $Y$ is a rational homology sphere, then there is a flat $U(1)$-connection on $Y$ whose first Chern class is ${\rm PD}(\gamma)$. By taking the sum of 
	this connection and the trivial $SU(2)$ connection, we obtain a $U(3)$ flat connection with the required property. 
	If $Y=S^1\times S^2$, then the assumption implies that $\omega$ is trivial and we may take the trivial flat connection. 
	Otherwise $Y$ is irreducible with positive $b_1$ and Corollary \ref{non-vanishing-3-man} implies that $I_*(Y\# T^3,\gamma+\gamma_1\vert R)$ 
	is not zero, where $R$ is a norm minimizing embedded surface in $Y$ (representing a non-trivial homology class). In particular, 
	there exists a projectively flat connection on the $U(3)$-bundle over $Y$ with $c_1={\rm PD}(\gamma)$. This gives a representation $\rho:\pi_1(Y)\to PU(3)$ 
	satisfying the claim.
\end{proof}

For a knot $K$ in a 3-manifold $Y$, the $U(3)$ instanton knot homology of $(Y,K)$, denoted by $KHI_*^3(Y,K)$, is defined to be $SHI_*^3(Y(K),\alpha(K))$, where $(Y(K),\alpha(K))$ is the sutured manifold of Example \ref{knot-3-man-sutured-mfld}. As explained in \cite{km-sutures}, a closure of $(Y(K),\alpha(K))$ is given by $Z(K)$, the 3-manifold obtained by gluing $S^1\times F_{1,1}$ to the exterior of $K$ such that $S^1\times \{x\}$ is mapped to a meridian of $K$ for any $x\in \partial F_{1,1}$. Let $c$ and $c'$ be two simple closed curves in $F_{1,1}$ intersecting transversely in exactly one point and $T=S^1\times c$. Then 
\begin{equation}\label{eq:knothomologyu3}
  KHI_*^3(Y,K)=I_*^3(Z(K),c'\vert T).
\end{equation}
This instanton knot homology group is isomorphic to $I_*^3(Z(K),\gamma+c'\vert T)$ where $\gamma$ is the $1$-cycle $S^1\times \{x\}$ for some $x\in F_{1,1}$. Now if $K$ is null-homologous, then we can pick the gluing map in the definition of $Z(K)$ so that $\{{pt}\}\times \partial F_{1,1}$ is glued to a longitude of $K$. In this case, we can glue a Seifert surface $S$ of genus $g$ to $F_{1,1}$ and obtain an embedded surface $\overline S$ in $Z(K)$ of genus $g+1$. In particular, $(\mu_2(S),\mu_3(S))$ gives a pair of operators acting on $KHI_*^3(Y,K)$. The simultaneous generalized eigenspace decomposition with respect to the action of these operators determines a splitting of $KHI_*^3(Y,K)$ given as follows, that depends only on the homology class of $\overline S$:
\begin{equation}\label{eq:knothomologyu3graded}
  KHI_*^3(Y,K)=\bigoplus_{(a,b)\in \mathcal C_{g+1}}KHI_*^3(Y,K;a,b),
\end{equation}
where $KHI_*^3(Y,K;a,b)$ is the generalized eigenspace of $(\mu_2(S),\mu_3(S))$ for the eigenvalues $(\sqrt 3 a,\sqrt 3 i b)$. To limit the possible eigenvalues appearing in this decomposition, we have used Proposition \ref{eigenvalue-S}. The decomposition \eqref{eq:knothomologyu3graded} will be discussed further in Section \ref{sec:alexander}. 

\begin{proof}[Proof of Theorem \ref{thm:intro1}]
	Suppose $S$ is a Seifert surface of minimal genus for the knot $K$. Then the decomposition of $(Y(K),\alpha(K))$ along $S$ determines a sutured manifold
	$(Y(S),\alpha(S))$. It is shown in the proof of \cite[Proposition 5.33]{DX} that 
	\[
	  SHI_*^3(Y(S),\alpha(S)) \cong KHI_*^3(Y,K;\pm 2g,0).
	\]
	(This can be regarded as an instance of Theorem \ref{sur-decom} on surface decompositions.) 
	
	To prove the existence of the desired representation, we can assume that $Y\setminus K$ is irreducible. In the case that $Y\setminus K$ is irreducible, 
	$(Y(S),\alpha(S))$ is a taut sutured manifold. Corollary \ref{non-vanishing-sutured} implies that $SHI_*^3(Y(S),\alpha(S))$ is non-trivial and hence
	the rank of $KHI_*^3(Y,K)$ is at least $2$. Now the claim follows from \cite[Corollary 5.32]{DX}.
\end{proof}

%!TEX root = main.tex

\section{The Structure Theorem}\label{sec:structure}

In this section, we prove Theorem \ref{thm-intro:structure}, the $U(3)$ analogue of Kronheimer and Mrowka's celebrated structure theorem for $U(2)$ Donaldson invariants \cite{km-structure}. In the first subsection, we provide background on Fukaya--Floer instanton homology, focusing on the case of $U(3)$. In the second subsection, using these preliminaries, we prove the structure theorem. 

\subsection{Fukaya--Floer homology of $S^1\times \Sigma_g$}\label{FF3}

Fukaya--Floer homology is a variation of instanton Floer homology that is helpful to understand the $U(N)$ Donaldson invariants of a pair $(X,w)$ for some $z\in\bA^N(X)$, where $(X,w)$ is naturally written as a connected sum of $(W,c)$ and $(W',c')$ whose boundaries are an $N$-admissible pair $(Y,\gamma)$ (with different orientations) but $z$ is not necessarily induced by an element of $\bA^N(W)\otimes \bA^N(W')$ (see the gluing formula \eqref{pairing-FFH} below). The original idea of Fukaya--Floer homology goes back to \cite{Fuk:FF}, which was further developed in \cite{DB:FF} in the case that $N=2$. Here we follow \cite{DX} to give a review of the general properties of Fukaya--Floer homology in the case that $N=3$, and hence we often drop ``$3$'' from our notations. Then we proceed to study Fukaya--Floer homology of $S^1\times \Sigma_g$. For more details on the background material, the reader can see Subsections 3.3 and 6.3 of \cite{DX}. We remark that even in the case $N=2$, the algebraic formulation of \cite{DX} is more involved than what is proposed in \cite{DB:FF} because of bubbling phenomena. 

Suppose $(Y,\gamma)$ is an admissible pair and $L=(l_2,l_3)$ is a pair of elements of $H_1(Y;\Z)$. The Fukaya--Floer homology group $\mathbb I_*(Y,\gamma,L)$ is a module over a ring $R_3$, which is defined in the following way. First for any non-negative integer $j$ consider the ring 
\begin{equation}\label{}
  R_{3,j}:=\C[s_{2,i},s_{3,i}; 1\leq i\leq j]/(s_{2,i}^2,s_{3,i}^2).
\end{equation}
If $j\geq l$, then there is a homomorphism $R_{3,j}\to R_{3,l}$ that maps $s_{k,i}$ to $s_{k,i}$ if $i\leq l$ and to $0$ if $i>l$. Now let $R_3$ be the inverse limit of this inverse system of rings. In particular, 
\[
  t_k :=\sum_{i=0}^\infty s_{k,i}
\]
is an element of $R_3$ and this determines an algebra monomorphism from $\C[\![t_2,t_3]\!]$ to $R_3$.

The $R_3$-module $\mathbb I_*(Y,\gamma,L)$ is also defined as the inverse limit of an inverse system. For each $j$, there is a chain complex $(\fC_*^{\pi_j}(Y,\gamma)\otimes R_{3,j},d_j)$ defined over the ring $R_{3,j}$, where $\fC_*^{\pi_j}(Y,\gamma)$ is a choice of instanton Floer chain complex for the admissible pair $(Y,\gamma)$ and does not depend on $L$. The differential $d_j$ has the form 
\begin{equation}\label{dj}
  d_j=\sum_{S_2,\,S_3\subset [j]}\(\prod_{i\in S_2}s_{2,i}\)\(\prod_{i\in S_3}s_{3,i}\)d_{j}^{S_2,S_3}
\end{equation}
where $[j]=\{1,\,2,\dots,\,j\}$ and $d_{j}^{S_2,S_3}:\fC_*^{\pi_j}(Y,\gamma) \to \fC_*^{\pi_j}(Y,\gamma)$. In particular, $d_{j}^{\emptyset,\emptyset}$ is the ordinary Floer differential. If $j\geq l$, then there is a chain map 
\begin{equation}\label{Fjl-dom-co}
  F_{j,l}:(\fC_*^{\pi_j}(Y,\gamma)\otimes R_{3,j},d_j) \to (\fC_*^{\pi_l}(Y,\gamma)\otimes R_{3,l},d_l)
\end{equation}
of $R_{3,j}$-modules such that $F_{l,k} \circ F_{j,l}$ is chain homotopy equivalent to $F_{j,k}$. Analogous to the differential maps $d_j$, the chain maps have the form
\begin{equation}\label{Fjl}
  F_{j,l}=\sum_{S_2,\,S_3\subset [j]}\(\prod_{i\in S_2}s_{2,i}\)\(\prod_{i\in S_3}s_{3,i}\)F_{j,l}^{S_2,S_3},
\end{equation}
where $F_{j,l}^{\emptyset,\emptyset}:\fC_*^{\pi_j}(Y,\gamma) \to \fC_*^{\pi_l}(Y,\gamma)$ is the continuation map defining a chain homotopy equivalence between two chain complexes representing $I_*(Y,\gamma)$. The homology of $(\fC_*^{\pi_j}(Y,\gamma)\otimes R_{3,j},d_j)$ together with the homomorphisms induced by $F_{j,l}$ defines an inverse system and $\mathbb I_*(Y,\gamma,L)$ is the inverse limit of this system.

From \eqref{dj} and \eqref{Fjl}, it is clear that the homomorphisms $d_j$ and $F_{j,l}$ are compatible with a filtration on the Fukaya--Floer complexes. First define a filtration on $R_{3,j}$:
\[
  R_{3,j} =\mathcal F^0R_{3,j}\supset \mathcal F^1R_{3,j}\supset \mathcal F^2R_{3,j} \cdots \supset \mathcal F^{2j+1}R_{3,j}=0,
\]
where $\mathcal F^kR_{3,j}$ contains linear combinations of monomials 
\[
  \hspace{2cm}\(\prod_{i\in S_2}s_{2,i}\)\(\prod_{i\in S_3}s_{3,i}\) \hspace{1cm} \text{such that} \hspace{1cm} |S_1|+|S_2|\geq k.
\]
In particular, $\mathcal F^kR_{3,j} \cdot \mathcal F^lR_{3,j}$ is a subset of $\mathcal F^{k+l}R_{3,j}$, and the associated graded 
part of this filtration is a direct sum of $2^{2j}$ copies of $\C$. This filtration induces a filtration on $\fC_*^{\pi_j}(Y,\gamma)\otimes R_{3,j}$, and $d_j$ is a filtration preserving homomorphism such that the induced map at the level of the associated graded part is $d_{j}^{\emptyset,\emptyset}\otimes 1$. A similar comment applies to $F_{j,l}$. From these filtrations one can obtain a spectral sequence for any $j$ whose second page is $I_*(Y,\gamma)\otimes \C^{2^{2j}}$ and it abuts to $\mathbb I^{3,j}_*(Y,\gamma,L):=H(\fC_*^{\pi_j}(Y,\gamma)\otimes R_{3,j},d_j)$.

Fukaya--Floer homology is functorial with respect to cobordisms. Suppose $(W,c):(Y,\gamma)\to (Y',\gamma')$ is a cobordism of 3-admissible pairs, $z\in \bA^3(W)$, and $\Gamma$, $\Lambda$ are properly embedded oriented surfaces such that $\Gamma\cap Y$, $\Lambda\cap Y$ represent homology classes $l_2,\,l_3\in H_1(Y;\Z)$ and $\Gamma\cap Y'$, $\Lambda\cap Y'$ represent homology classes $l_2',\,l_3'\in H_1(Y';\Z)$. Then there is an $R_3$-module homomorphism 
\[
  \mathbb I(W,c,ze^{t_2\Gamma_{(2)}+t_3\Lambda_{(3)}}):\mathbb I_*(Y,\gamma,L) \to \mathbb I_*(Y',\gamma',L')
\]
with $L=(l_2,l_3)$ and $L'=(l_2',l_3')$. There is a slight variation of the above construction when one of the ends of the cobordism $(W,c)$ is empty. If $Y'$ is empty, then $W$ is a 4-manifold with boundary $-Y$ and we have an $R_3$-module map
\[
  D_{W,c}(ze^{t_2\Gamma_{(2)}+t_3\Lambda_{(3)}}):\mathbb I_*(Y,\gamma,L) \to R_3
\]
and if $Y$ is empty, then $W$ is a 4-manifold with boundary $Y'$ and we have an element
\[
  D_{W,c}(ze^{t_2\Gamma_{(2)}+t_3\Lambda_{(3)}})\in \mathbb I_*(Y',\gamma',L').
\]
These cobordism maps are defined by first constructing $R_{3,j}$-module chain maps between $(\fC_*^{\pi_j}(Y,\gamma)\otimes R_{3,j},d_j)$ and $(\fC_*^{\pi_j}(Y',\gamma')\otimes R_{3,j},d_j)$ that commute with the maps \eqref{Fjl-dom-co} up to chain homotopy. Furthermore, these chain maps respect the filtrations induced by that of $R_{3,j}$, and the leading order terms with respect to such filtrations are given by the cobordism maps of ordinary instanton Floer complexes.

The above homomorphisms are well-behaved with respect to composition of cobordisms. For instance if $(W,c)$ is a pair with boundary $(Y,\gamma)$ and $(W',c')$ is a pair with boundary the orientation-reversal of $(Y,\gamma)$, then we can glue them to obtain a closed pair $(W\#W',c\#c')$. If $\Gamma$, $\Lambda$ are properly embedded surfaces in $W$ and $\Gamma'$, $\Lambda'$ are properly embedded surfaces in $W'$ such that $\Gamma$ and $\Gamma'$ (respectively, $\Lambda$ and $\Lambda'$) agree over the boundary and we can glue them to obtain a closed oriented embedded surface $\Gamma\#\Gamma'$ (respectively, $\Lambda\#\Lambda'$), then
\begin{align}\label{pairing-FFH}
		\langle D_{W,c}(ze^{t_2\Gamma_{(2)}+t_3\Lambda_{(3)}}),\;\; &
		D_{W',c'}(z'e^{t_2\Gamma'_{(2)}+t_3\Lambda'_{(3)}})\rangle \nonumber\\[0.75mm]
		=&D_{W\#W',c\#c'}(zz'e^{t_2\Gamma\#\Gamma'_{(2)}+t_3\Lambda\#\Lambda'_{(3)}}),
\end{align}
where $z\in \bA^3(W)$ and $z'\in \bA^3(W')$, and the left hand side is the obvious pairing. The invariant on the right hand side of \eqref{pairing-FFH} is given by $U(3)$ invariants of $(W\#W',c\#c')$ when $b^+(W\#W')>1$. In the special case that $b^+(W\#W')=1$, one can still define $U(3)$ polynomial invariants for $(W\#W',c\#c')$. However, this invariant depends on the choice of the metric and the right hand side of \eqref{pairing-FFH} is the invariant for a metric that we stretch along the embedded surface $Y$ in $W\#W'$. We also remark that the right hand side of \eqref{pairing-FFH} is an element of the subalgebra $\C[\![t_2,t_3]\!]$ of $R_3$.

The main instance of $U(3)$ Fukaya--Floer homology relevant to this paper is that of $(S^1\times \Sigma_g,\gamma_d,L)$ with $d$ coprime to $3$ and $L=([S^1\times \{{pt}\}],[S^1\times \{{pt}\}])$. Following a similar notation as in Section \ref{sec:strategy}, we write 
\[
	\widetilde {\mathbb V}^3_{g,d} := \mathbb I_\ast(S^1\times \Sigma_g,\gamma_d,L),
\]
which is an $R_3$-module. Similar to \eqref{eq:pairingintro}, $[0,1]\times(S^1\times \Sigma_g, \gamma_d, L)$, viewed as a cobordism from two copies of $(S^1\times \Sigma_g,\gamma_d,L)$ to the empty set, induces a bilinear pairing
\begin{equation}\label{eq:pairingfukfloer}
	\langle \cdot,\cdot \rangle : \widetilde {\mathbb V}^3_{g,d} \otimes \widetilde {\mathbb V}^3_{g,d} \to R_3.
\end{equation}

Analogous to $V^3_{g,d}$ and in the same way as in \eqref{iso-vect-coh-ins}, $\widetilde {\mathbb V}^3_{g,d}$ is isomorphic to the cohomology of $\cN_g$ with an appropriate coefficient ring. An explicit version of the isomorphism in \eqref{iso-vect-coh-ins} is given in \cite[Theorem 3.18]{DX}. Focusing on the $N=3$ case, there is a vector space homomorphism $S:H^\ast(\cN_g;\C) \to \bA_g^3$ such that the map 
\begin{equation}\label{iso-coh-ins}
  P:H^\ast(\cN_g;\C)[\epsilon]/(\epsilon^3-1)\to V_{g,d}^3
\end{equation}
defined using the relative invariants 
\[
  P(\epsilon^i \cdot \sigma)=D_{\Delta_g,\delta_{g,d}+i\Sigma}(S(\sigma)),
\] 
with $\Delta_g:=D^2\times \Sigma_g$ and $\delta_{g,d}=D^2\times \{x_1,\dots,x_d\}$, is an isomorphism. Similarly, we can define $\mathbb P:H^\ast(\cN_g;R_3)[\epsilon]/(\epsilon^3-1)\to \widetilde {\mathbb V}_{g,d}^3$, an analogue of \eqref{iso-coh-ins} for the Fukaya--Floer homology group of $S^1\times \Sigma_g$, by setting
\begin{equation}\label{rel-inv-gen}
  {\mathbb P}(\epsilon^i \cdot \sigma)=D_{\Delta_g,\delta_{g,d}+i\Sigma}(S(\sigma)e^{t_2D_{(2)}+t_3D_{(3)}}).
\end{equation}
Here we extend $S$ as a module homomorphism $H^\ast(\cN_g;\C)\otimes R_3 \to \bA_g^3\otimes R_3$ in the obvious way and $D$ denotes the disc $D\times \{{pt}\}$ in $\Delta_g$.

\begin{lemma}
	The $R_3$-module map ${\mathbb P}:H^\ast(\cN_g;R_3)[\epsilon]/(\epsilon^3-1)\to \widetilde {\mathbb V}_{g,d}^3$ is an isomorphism. In particular, $\widetilde {\mathbb{V}}^{3}_{g, d}$ is a free 
	$R_3$-module. 
\end{lemma}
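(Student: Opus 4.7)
The plan is to reduce to the ordinary (non-Fukaya--Floer) statement that $P$ is an isomorphism, by exploiting the filtration on $R_3$ and a standard associated-graded argument, and then pass to the inverse limit.

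First I would work at a finite truncation level $j$. Let $\widetilde{\mathbb{V}}^{3,j}_{g,d} := \mathbb{I}^{3,j}_\ast(S^1\times\Sigma_g,\gamma_d,L)$ and define
\[
  \mathbb{P}_j \co H^\ast(\mathcal{N}_g;R_{3,j})[\epsilon]/(\epsilon^3-1)\to \widetilde{\mathbb{V}}^{3,j}_{g,d}
\]
by the same relative-invariant formula as \eqref{rel-inv-gen}, truncated modulo $\ker(R_3\twoheadrightarrow R_{3,j})$. Since $\widetilde{\mathbb{V}}^3_{g,d}$ is by construction the inverse limit of the $\widetilde{\mathbb{V}}^{3,j}_{g,d}$ under the transition maps induced by \eqref{Fjl-dom-co}, and the maps $\mathbb{P}_j$ are compatible with these transition maps, it suffices to prove that each $\mathbb{P}_j$ is an $R_{3,j}$-module isomorphism.

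Next I would use the filtration $\mathcal{F}^\bullet R_{3,j}$ described in the excerpt. This induces a finite decreasing filtration on both the source and target of $\mathbb{P}_j$. Since both the differential $d_j$ and the chain-level representative of $\mathbb{P}_j$ decompose as in \eqref{dj}--\eqref{Fjl} with leading order terms $d_j^{\emptyset,\emptyset}$ (the ordinary Floer differential) and the ordinary cobordism chain map defining $P$, the map $\mathbb{P}_j$ is filtration-preserving and its associated graded
\[
  \mathrm{gr}(\mathbb{P}_j)\co \mathrm{gr}(R_{3,j})\otimes_{\C} H^\ast(\mathcal{N}_g;\C)[\epsilon]/(\epsilon^3-1) \to \mathrm{gr}(R_{3,j})\otimes_\C V^3_{g,d}
\]
is $2^{2j}$ copies of the map $P$ from \eqref{iso-coh-ins}, which is an isomorphism. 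Because the filtration on $R_{3,j}$ is finite (of length $2j+1$), the standard five-lemma/spectral-sequence argument for finite filtrations shows that $\mathbb{P}_j$ itself is an isomorphism.

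Taking the inverse limit over $j$ then yields that $\mathbb{P}$ is an isomorphism of $R_3$-modules. For the freeness claim, note that $H^\ast(\mathcal{N}_g;\C)$ is a finite-dimensional $\C$-vector space, so
\[
  H^\ast(\mathcal{N}_g;R_3)[\epsilon]/(\epsilon^3-1) \;\cong\; H^\ast(\mathcal{N}_g;\C)\otimes_\C \C[\epsilon]/(\epsilon^3-1)\otimes_\C R_3
\]
is manifestly a free $R_3$-module, and hence so is $\widetilde{\mathbb{V}}^3_{g,d}$. The main technical point, where I expect the only real work to lie, is verifying that at the chain level the Fukaya--Floer cobordism map defining $\mathbb{P}_j$ does decompose as in \eqref{Fjl} with leading term the ordinary cobordism chain map of $P$; once this compatibility is in hand, the rest is purely formal filtered algebra and a passage to the inverse limit.
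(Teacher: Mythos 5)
Your proof is correct and takes essentially the same route as the paper's. Both arguments reduce to the finite truncation $\mathbb{P}_j$, observe that an $R_{3,j}$-module map automatically respects the $\mathcal{F}^\bullet R_{3,j}$ filtration, identify the induced map on the relevant page of the filtration spectral sequence with $P\otimes 1$ (using that the leading order term of the Fukaya--Floer chain map is the ordinary instanton cobordism chain map), and conclude from the boundedness of the filtration and the known isomorphism \eqref{iso-coh-ins}. Your version merely spells out two points the paper leaves implicit: the passage to the inverse limit over $j$, and the deduction of freeness from finite-dimensionality of $H^\ast(\cN_g;\C)$.
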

\begin{proof}
	It suffices to show that $\mathbb P_j:H^\ast(\cN_g;R_{3,j})[\epsilon]/(\epsilon^3-1)\to
	{\mathbb I}^{3,j}_*(S^1\times \Sigma_g,\gamma_{g,d},L_g)$,  the $R_{3,j}$-module homomorphism given by 
	\[
  	  {\mathbb P}_j(\epsilon^i \cdot \sigma)=D^{3,j}_{\Delta_g,\delta_{g,d}+i\Sigma}(S(\sigma)e^{D_{(2)}+D_{(3)}}),
	\]	
	is an isomorphism. Since $\mathbb P_j$ is an $R_{3,j}$-module homomorphism, it is a filtration preserving 
	homomorphism with respect to the 
	filtration induced by that of $R_{3,j}$. The induced morphism of spectral sequences on the 
	second page maps 
	$H^\ast(\cN_g;\C)[\epsilon]/(\epsilon^3-1)\otimes R_{3,j}$ to $I_*(S^1\times \Sigma_g,\gamma_{g,d})\otimes R_{3,j}$ 
	by the map $P\otimes 1$.
	In particular, it is an isomorphism.
\end{proof}

The ring $R_3$ is not an integral domain and for our purposes it is easier to work with modules over an integral domain. We define $\mathbb{V}^{3}_{g, d}$ be the $\C[\![t_2, t_3]\!]$-module given by
\[
	\mathbb{V}^{3}_{g, d} := \text{im}\left( \mathbb{P}|_{H^\ast(\cN_g;\C[\![t_2, t_3]\!])[\epsilon]/(\epsilon^3-1)}\right)  
\]
It is clear from the definition that $\mathbb{V}^{3}_{g, d}$ as a $\C[\![t_2, t_3]\!]$-module is isomorphic to $\C[\![t_2, t_3]\!]^{3n_{g}}$ with $n_{g}=\dim_\C H^\ast(\cN_g;\C)$. We have the following alternative identification.

\begin{lemma}\label{dual-iso}
	Suppose $\{\sigma_k\}_{1\leq k\leq n_g}$ is a basis for $H^\ast(\cN_g;\C)$ as a vector space over $\C$. 
	Then, the map $\Phi:\mathbb{V}^{3}_{g, d} \to \C[\![t_2, t_3]\!]^{3n_{g}}$ given by
	\begin{equation}\label{Phi-def}
	  \Phi(\zeta):=(\langle \zeta, D_{\Delta_g,\delta_{g,d}+i\Sigma}
	  (S(\sigma_k)e^{t_2D_{(2)}+t_3D_{(3)}}) \rangle)_{1\leq k\leq n_g,\, 0\leq i\leq 2},
	\end{equation}
	defined using the pairing \eqref{eq:pairingfukfloer}, is an isomorphism.
\end{lemma}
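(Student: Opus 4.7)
The plan is to observe that $\Phi$ is a $\C[\![t_2,t_3]\!]$-module homomorphism between two free $\C[\![t_2,t_3]\!]$-modules of the same rank $3n_g$, and then show that its matrix representation is invertible by reducing modulo the maximal ideal $(t_2, t_3)$ of the local ring $\C[\![t_2,t_3]\!]$.

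First, I would note that by the discussion preceding the lemma, the elements $\{\mathbb{P}(\epsilon^i\cdot \sigma_k)\}_{1\leq k\leq n_g,\,0\leq i\leq 2}$ form a free $\C[\![t_2,t_3]\!]$-basis of $\mathbb{V}^3_{g,d}$. In this basis, the map $\Phi$ is represented by the Gram matrix
\[
M_{(k,i),(l,j)} \;=\; \langle \mathbb{P}(\epsilon^j \cdot \sigma_l),\; \mathbb{P}(\epsilon^i \cdot \sigma_k)\rangle \;\in\; \C[\![t_2,t_3]\!],
\]
since by definition $\mathbb{P}(\epsilon^i \cdot \sigma_k) = D_{\Delta_g,\delta_{g,d}+i\Sigma}(S(\sigma_k)e^{t_2 D_{(2)}+t_3 D_{(3)}})$, which is exactly the element used to define $\Phi$. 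So $\Phi$ is an isomorphism if and only if $M$ is invertible over $\C[\![t_2,t_3]\!]$.

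Since $\C[\![t_2,t_3]\!]$ is a local ring with maximal ideal $\fm = (t_2, t_3)$, invertibility of $M$ is equivalent to invertibility of its reduction $M \bmod \fm$ as a matrix over $\C$. The key point here is that setting $t_2 = t_3 = 0$ in the Fukaya-Floer setup recovers the ordinary instanton Floer setting: the exponential factor $e^{t_2 D_{(2)}+t_3 D_{(3)}}$ becomes $1$, the map $\mathbb{P}$ reduces to $P$ from \eqref{iso-coh-ins}, and the pairing \eqref{eq:pairingfukfloer} reduces to the ordinary pairing \eqref{eq:pairingintro} on $V^3_{g,d}$. This can be seen from the filtration structure on the Fukaya-Floer chain complexes described after \eqref{Fjl}: the leading order (i.e.\ constant in $t_2,t_3$) part of every cobordism map is the ordinary instanton cobordism map.

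Therefore $M \bmod \fm$ is the Gram matrix of the nondegenerate pairing \eqref{eq:pairingintro} on $V^3_{g,d}$ in the basis $\{P(\epsilon^i \cdot \sigma_k)\}$, which is a basis since $P$ is a vector space isomorphism. Nondegeneracy of \eqref{eq:pairingintro} thus implies that $M \bmod \fm$ is invertible, and hence $M$ itself is invertible over $\C[\![t_2,t_3]\!]$. This gives the isomorphism claim for $\Phi$. The only potentially subtle point is to justify that the Fukaya-Floer pairing of two power-series-valued relative invariants really does reduce to the ordinary pairing of the degree-zero parts, but this is immediate from the gluing formula \eqref{pairing-FFH}, which expresses both sides in terms of the same closed-manifold invariant evaluated on $z = S(\sigma_l)$, $z' = S(\sigma_k)$ at $t_2=t_3=0$.
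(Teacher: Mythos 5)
Your proof is correct, and it takes a genuinely different --- and arguably cleaner --- route than the paper's. The paper proves injectivity by citing a non-degeneracy result for the $R_3$-valued Fukaya--Floer pairing on $\widetilde{\mathbb V}^3_{g,d}$ (\cite[Prop.\ 3.30]{DX}), and proves surjectivity by an explicit inductive construction that lifts a solution modulo $(t_2,t_3)^n$ to one modulo $(t_2,t_3)^{n+1}$, using at each step the non-degeneracy of the $\C$-valued pairing \eqref{eq:pairingintro} on $V^3_{g,d}$. You package both halves into a single statement of linear algebra over a local ring: in the free $\C[\![t_2,t_3]\!]$-basis $\{\mathbb{P}(\epsilon^i\sigma_k)\}$ (whose existence comes from the preceding lemma that $\mathbb{P}$ is an isomorphism, so that $\mathbb{V}^3_{g,d}$ is free of rank $3n_g$), the map $\Phi$ is multiplication by the Gram matrix $M$, and $M$ is invertible because $\det M$ is a unit in the local ring $\C[\![t_2,t_3]\!]$, which is detected by reducing modulo $(t_2,t_3)$ and invoking non-degeneracy of \eqref{eq:pairingintro}. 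The paper's surjectivity step is really the same local-ring fact unwound by hand; your formulation makes that visible, and has the small structural advantage of not needing the non-degeneracy of the $R_3$-valued pairing at all --- only the more elementary non-degeneracy over $\C$. The one point worth spelling out a little more (and which you flag at the end) is that the restriction of the $R_3$-bilinear pairing \eqref{eq:pairingfukfloer} to $\mathbb{V}^3_{g,d}\times\mathbb{V}^3_{g,d}$ actually lands in the subring $\C[\![t_2,t_3]\!]\subset R_3$, so that $M$ genuinely has entries in the local ring and your determinant argument applies; this follows from the gluing formula \eqref{pairing-FFH} exactly as the paper observes immediately after the statement of the lemma.
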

\noindent Note that \eqref{pairing-FFH} implies the right side of \eqref{Phi-def} is indeed an element of $\C[\![t_2, t_3]\!]^{3n_{g}}$.

\begin{proof}
	If $\zeta$ is in the kernel of $\Phi$, then its pairing with any element of $\widetilde {\mathbb V}^{3}_{g, d}$ vanishes. Thus \cite[Proposition 3.30]{DX}
	implies that $\zeta=0$. It remains to show that $\Phi$ is surjective. Fix
	\[
	  \bv=\sum_{k=0}^\infty\sum_{i=0}^{k} t_2^it_3^{k-i} v_{i,k-i} \in \C[\![t_2, t_3]\!]^{3n_{g}},
	\]  
	where $v_{i,j}\in \C^{3n_{g}}$. We inductively define an element 
	\[
	  \sigma=\sum_{k=0}^\infty\sum_{i=0}^{k} t_2^it_3^{k-i} \sigma_{i,k-i}
	\]
	with $\sigma_{i,j}\in H^\ast(\cN_g;\C)[\epsilon]/(\epsilon^3-1)$ such that for any integer $n$, in the expression
	\[
	  \Phi\circ \mathbb P(\sum_{k=0}^n\sum_{i=0}^{k} t_2^it_3^{k-i} \sigma_{i,k-i})-\bv\in \C[\![t_2, t_3]\!]^{3n_{g}}
	\]
	only terms of the form $t_2^it_3^j$ with $i+j>n$ appear. In fact, assuming this holds for a given $n$, then we have the following, where $w_{i,j}\in \C^{3n_{g}}$:
	\[
	  \Phi\circ \mathbb P(\sum_{k=0}^n\sum_{i=0}^{k} t_2^it_3^{k-i} \sigma_{i,k-i})-\bv=\sum_{k=n+1}^\infty\sum_{i=0}^{k} t_2^it_3^{k-i} w_{i,k-i}
	\]
	The non-degeneracy of the pairing on $V_{g,d}^3$ implies that for any $0\leq i\leq n+1$, there is a unique 
	$\sigma_{i,n+1-i}\in H^\ast(\cN_g;\C)[\epsilon]/(\epsilon^3-1)$ such that $\phi\circ P(\sigma_{i,n+1-i})=w_{i,n+1-i}$. Here 
	$\phi:V^{3}_{g, d} \to \C^{3n_{g}}$ is defined in a similar way as $\Phi$. It is straightforward to check that 
	we can carry out the induction step with this choice of $\sigma_{i,j}$ when $i+j=n+1$.
\end{proof}

\begin{cor}\label{charVgd3}
	Any relative invariant $D_{\Delta_g,\delta_{g,d}+i\Sigma}(ze^{D_{(2)}+D_{(3)}})$ where $z\in \bA_g^3$ is an element of $\mathbb{V}^{3}_{g, d}$.
	In particular, $\mathbb{V}^{3}_{g, d}$ is the $\C[\![t_2, t_3]\!]$-module generated by such invariants.
\end{cor}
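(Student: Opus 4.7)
The plan is to leverage the isomorphism $\Phi$ of Lemma \ref{dual-iso} together with the gluing formula \eqref{pairing-FFH}, reducing everything to matching pairings. Write $\zeta := D_{\Delta_g,\delta_{g,d}+i\Sigma}(ze^{t_2 D_{(2)}+t_3 D_{(3)}})$, interpreted as an element of the Fukaya--Floer homology $\widetilde{\mathbb V}^3_{g,d}$. A priori $\zeta$ only lies in $\widetilde{\mathbb V}^3_{g,d}$, so the content of the corollary is that $\zeta$ actually belongs to the much smaller $\C[\![t_2,t_3]\!]$-submodule $\mathbb V^3_{g,d}$.

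First I would compute the pairing of $\zeta$ against each basis generator $\mathbb P(\epsilon^j\cdot \sigma_k) = D_{\Delta_g,\delta_{g,d}+j\Sigma}(S(\sigma_k)e^{t_2 D_{(2)}+t_3 D_{(3)}})$ of $\widetilde{\mathbb V}^3_{g,d}$ as an $R_3$-module. By the gluing formula \eqref{pairing-FFH}, each such pairing is equal to a $U(3)$ Donaldson-type invariant of the closed pair obtained by gluing two copies of $(\Delta_g,\delta_{g,d}+\ast\Sigma)$ along $(S^1\times\Sigma_g,\gamma_d)$, evaluated on an insertion involving only the closed surfaces $D\#D$. Since the resulting closed invariants are power series in $t_2,t_3$ (and not genuine elements of $R_3$), this shows $\Phi(\zeta)\in \C[\![t_2,t_3]\!]^{3n_g}$, where $\Phi$ is the map from Lemma \ref{dual-iso} extended by the same formula to $\widetilde{\mathbb V}^3_{g,d}$.

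Next, since $\Phi$ restricts to an isomorphism $\mathbb V^3_{g,d}\to \C[\![t_2,t_3]\!]^{3n_g}$ by Lemma \ref{dual-iso}, there is a unique $\zeta'\in \mathbb V^3_{g,d}$ with $\Phi(\zeta)=\Phi(\zeta')$. I would then argue $\zeta=\zeta'$ as follows. The element $\zeta-\zeta'\in \widetilde{\mathbb V}^3_{g,d}$ has trivial pairing with every basis vector $\mathbb P(\epsilon^j\cdot\sigma_k)$ by construction, hence by $R_3$-bilinearity of \eqref{eq:pairingfukfloer} it has trivial pairing with every element of $\widetilde{\mathbb V}^3_{g,d}$. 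The non-degeneracy result \cite[Prop. 3.30]{DX}, which was invoked in the proof of Lemma \ref{dual-iso}, then forces $\zeta-\zeta'=0$, so $\zeta\in \mathbb V^3_{g,d}$. The ``in particular'' clause is immediate: as $\sigma$ ranges over a basis of $H^\ast(\cN_g;\C)$, the relative invariants $\mathbb P(\epsilon^i\cdot\sigma)$ are by definition a set of $\C[\![t_2,t_3]\!]$-generators of $\mathbb V^3_{g,d}$, and they are of the prescribed form with $z=S(\sigma)$.

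The main point requiring care is the extension of $\Phi$ from $\mathbb V^3_{g,d}$ to $\widetilde{\mathbb V}^3_{g,d}$: one must check that the defining pairings on the right-hand side of \eqref{Phi-def} really do produce an element of $\C[\![t_2,t_3]\!]^{3n_g}$ rather than an arbitrary element of $R_3^{3n_g}$. This is the step where the gluing formula \eqref{pairing-FFH} is essential, since it is precisely the fact that the pairing of two closed-4-manifold relative invariants computes an honest closed $U(3)$ Donaldson series in $\C[\![t_2,t_3]\!]$ that cuts down the coefficient ring to the subring where Lemma \ref{dual-iso} applies.
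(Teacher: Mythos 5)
Your proof is correct and follows essentially the same route as the paper: use Lemma \ref{dual-iso} to produce $\zeta'\in\mathbb V^3_{g,d}$ with $\Phi(\zeta')=\Phi(\zeta)$, then invoke non-degeneracy of the pairing (\cite[Prop.~3.30]{DX}) to conclude $\zeta=\zeta'$. The only difference is that you make explicit the intermediate observation that $\Phi(\zeta)$ lands in $\C[\![t_2,t_3]\!]^{3n_g}$ via the gluing formula, which the paper treats as already noted after Lemma \ref{dual-iso}.
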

\begin{proof}
	Using Lemma \ref{dual-iso}, there is $\zeta\in \mathbb{V}^{3}_{g, d}$ such that 
	\[
	  \langle \zeta-D_{\Delta_g,\delta_{g,d}+j\Sigma}(z e^{D_{(2)}+D_{(3)}}) ,\;\; D_{\Delta_g,\delta_{g,d}+i\Sigma}(S(\sigma)e^{t_2D_{(2)}+t_3D_{(3)}}) \rangle=0.
	\]
	This implies that the pairing of $\zeta-D_{\Delta_g,\delta_{g,d}+j\Sigma}(z e^{D_{(2)}+D_{(3)}})$ with any element of $\widetilde {\mathbb{V}}^{3}_{g, d}$ 
	is trivial, and hence by \cite[Proposition 3.30]{DX} this element vanishes. In particular, $\zeta=D_{\Delta_g,\delta_{g,d}+j\Sigma}(z e^{D_{(2)}+D_{(3)}})$ belongs to $\C[\![t_2, t_3]\!]$.
\end{proof}

For any integer $i$ and $z \in \bA_g^3$, consider the homomorphism
\[
  \mathbb I([-1,1]\times S^1\times \Sigma_g,[-1,1] \times\gamma_{g,d}+ i\Sigma_g, ze^{t_2C_{(2)}+t_3C_{(3)}}):
  \widetilde {\mathbb{V}}^{3}_{g, d} \to \widetilde {\mathbb{V}}^{3}_{g, d}
\]
where $C=[-1,1]\times S^1\times \{{pt}\}$. Corollary \ref{charVgd3} and functoriality of Fukaya--Floer homology implies that this homomorphism maps ${\mathbb{V}}^{3}_{g, d}$ to itself. This gives $\mathbb{V}^{3}_{g, d}$ the structure of a cyclic module over $\bA_g^3\otimes \C[\![t_2, t_3]\!][\epsilon]/(\epsilon^3-1)$. That is to say, there is an ideal $\mathbb{J}_{g,d}^3$ of $\bA_g^3\otimes \C[\![t_2, t_3]\!][\epsilon]$ containing $\epsilon^3-1$ such that 
\[ 
  \mathbb V_{g,d}^3  = \bA^3_g\otimes \C[\![t_2, t_3]\!] [\varepsilon]/\mathbb J_{g,d}^3.
\]  
The following is another consequence of Lemma \ref{dual-iso}.
\begin{cor}\label{FFH-deform}
	For any element $z$ of the ideal ${J}_{g,d}^3\subset \bA_g^3[\epsilon]$, there is $\bz \in \mathbb{J}_{g,d}^3$ such that 
	\[
	  \bz=z+\sum_{k=1}^\infty\sum_{i=0}^{\infty}t_2^{i}t_3^{k-i}z_{i,k-i}.
	\]
	That is to say, $z$ is the constant term of the power series $\bz$.
\end{cor}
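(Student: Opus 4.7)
The plan is to reduce modulo $(t_2, t_3)$ and then correct by higher-order terms in $t_2, t_3$. The crux is that the quotient $\mathbb V_{g,d}^3/(t_2, t_3)\mathbb V_{g,d}^3$ is canonically identified with $V_{g,d}^3$ as a module over $\bA_g^3[\epsilon]/(\epsilon^3-1)$. To establish this identification, I would combine the $\C[\![t_2, t_3]\!]$-module isomorphism $\mathbb P$ introduced above with its constant-term reduction, the vector-space isomorphism $P$ from \eqref{iso-coh-ins}, to obtain $\mathbb V_{g,d}^3/(t_2,t_3)\mathbb V_{g,d}^3 \cong H^\ast(\cN_g;\C)[\epsilon]/(\epsilon^3-1) \cong V_{g,d}^3$. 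Compatibility with the $\bA_g^3[\epsilon]$-module structures follows from functoriality: for $y \in \bA_g^3[\epsilon]$, the endomorphism of $\mathbb V_{g,d}^3$ induced by $y$ is defined via the Fukaya--Floer cobordism map attached to $([-1,1]\times S^1\times \Sigma_g,\, [-1,1]\times\gamma_{g,d}+i\Sigma_g)$ with insertion $y e^{t_2 C_{(2)}+t_3 C_{(3)}}$, and setting $t_2=t_3=0$ strips the Fukaya--Floer exponential, recovering the ordinary cobordism map that defines the action on $V_{g,d}^3$.

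With this identification in hand, given $z \in J_{g,d}^3$ the equality $z\cdot\mathbf 1=0$ in $V_{g,d}^3$ forces $z\cdot\widetilde{\mathbf 1}\in (t_2,t_3)\mathbb V_{g,d}^3$, where $\widetilde{\mathbf 1}:=\mathbb P(1)=D_{\Delta_g,\delta_{g,d}}(e^{t_2 D_{(2)}+t_3 D_{(3)}})$ is the distinguished cyclic generator. Writing $z\cdot\widetilde{\mathbf 1} = t_2 v_2 + t_3 v_3$ and using the cyclicity of $\mathbb V_{g,d}^3$ over $\bA_g^3\otimes \C[\![t_2,t_3]\!][\epsilon]/(\epsilon^3-1)$, I would choose $a,b$ in this ring with $a\cdot\widetilde{\mathbf 1}=v_2$ and $b\cdot\widetilde{\mathbf 1}=v_3$. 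The element $\bz := z - t_2 a - t_3 b$ then annihilates $\widetilde{\mathbf 1}$, so $\bz\in \mathbb J_{g,d}^3$, while its constant term in $t_2, t_3$ is still $z$, as required.

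The main obstacle is the compatibility check in the first paragraph: verifying that the $\bA_g^3[\epsilon]$-action on $\mathbb V_{g,d}^3$ descends, upon setting $t_2=t_3=0$, to the action on $V_{g,d}^3$. This is essentially bookkeeping given the filtered chain-level construction of Fukaya--Floer cobordism maps reviewed in Subsection \ref{FF3}, but it requires some care because these maps are only well-defined up to chain homotopy, and the cleanest formulation is at the level of the filtration's associated graded, where the leading-order chain map is precisely the ordinary instanton cobordism map.
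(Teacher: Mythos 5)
Your proof is correct and follows essentially the same approach as the paper's: both reduce $z\cdot\widetilde{\mathbf 1}$ modulo $(t_2,t_3)$, use freeness of $\mathbb V_{g,d}^3$ over $\C[\![t_2,t_3]\!]$ to write it as $t_2 v_2 + t_3 v_3$, and then invoke cyclicity to produce the correction term. The only cosmetic difference is that the paper routes the vanishing of the constant term through the explicit isomorphism $\Phi$ of Lemma \ref{dual-iso}, whereas you argue directly via the identification $\mathbb V_{g,d}^3/(t_2,t_3)\cong V_{g,d}^3$; the compatibility check you flag (that the $\bA_g^3[\epsilon]$-action reduces correctly at $t_2=t_3=0$) is the same leading-order behavior of Fukaya--Floer cobordism maps that the paper also relies on implicitly when asserting the constant term of $\Phi(\zeta)$ vanishes.
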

\begin{proof}
	We may regard $z$ as an element of $\bA_g^3\otimes \C[\![t_2, t_3]\!][\epsilon]$ where the coefficient of $t_2^it_3^j$ is zero unless $i=j=0$.
	Thus $z$ determines an element $\zeta$ of $\mathbb V_{g,d}^3$. Since $z\in {J}_{g,d}^3$, $\Phi(\zeta)$ has a trivial constant term and hence we can find 
	$\bv_2,\,\bv_3\in \C[\![t_2, t_3]\!]^{3n_{g}}$ such that 
	\begin{equation}\label{bz-form}
	  \Phi(\zeta)=t_2\bv_2+t_3\bv_3.
	\end{equation}
	By Lemma \ref{dual-iso}, we can find $\eta_2,\, \eta_3\in \mathbb V_{g,d}^3$ such that $\Phi(\eta_i)=\bv_i$. In particular, $\zeta-t_2\eta_2-t_3\eta_3$
	is a trivial element of $\mathbb V_{g,d}^3$. Since $\mathbb{V}^{3}_{g, d}$ is a cyclic module over $\bA_g^3\otimes \C[\![t_2, t_3]\!][\epsilon]/(\epsilon^3-1)$,
	there are $\bz_2,\,\bz_3\in \bA_g^3\otimes \C[\![t_2, t_3]\!][\epsilon]/(\epsilon^3-1)$ such that $\bz_i$ is mapped to $\eta_i$. This implies that 
	$\bz:=z-t_2\bz_2-t_3\bz_3$ is in $\mathbb{J}_{g,d}^3$ and has the form in \eqref{bz-form}.
\end{proof}

Next, we define a Fukaya--Floer analogue of the simple-type ideal \eqref{eq:simpletypeideal}:
\[
\mathbb{S}^{3}_{g, d}:=\ker(\beta^3_2-27)\cap \ker(\beta_3) \cap \bigcap_{\substack{r=2,3\\ 1\leq j\leq 2g}} \ker(\psi^j_r)  \subset \mathbb{V}^{3}_{g, d}.
\]
An important ingredient in the proof of the structure theorem involves an understanding of this $\C[\alpha_2,\alpha_3,\beta_2,\beta_3][\![t_2, t_3]\!]$-module. The following is an adaption of the main argument that proves Theorem \ref{thm:mainev}, given in Section \ref{sec:strategy}. 

\begin{theorem}\label{thm:fukayafloersimpletypeideal}
$\mathbb{S}^{3}_{g, d}$ is a free $\C[\![t_2, t_3]\!]$-module of rank $3(2g-1)^2$. Moreover,
\begin{equation}\label{eq:simpletypeidealdecompfukayafloer}
\mathbb{S}^{3}_{g, d}= \bigoplus_{ \substack{k\in \{0,1,2\}\\ (a,b)\in \mathcal{C}_g}}R_{k, a,b}
\end{equation}
where $R_{k, a,b}$ is the free $\C[\![t_2, t_3]\!]$-module of rank one given by
\begin{equation}\label{Rkab}
R_{k, a,b}=\frac{\C[\![t_2, t_3]\!][\alpha_2, \alpha_3, \beta_2,\beta_3]}{(\alpha_2-(\zeta^k\sqrt{3} a + \zeta^{2k}t_2), \alpha_3-(\zeta^{2k}\sqrt{-3} b - 2 \zeta^{k} t_3), \beta_2-3\zeta^{2k}, \beta_3)}
\end{equation}
The above description also determines $\mathbb{S}^{3}_{g, d}$ as an $\C[\alpha_2,\alpha_3,\beta_2,\beta_3][\![t_2, t_3]\!]$-module.
\end{theorem}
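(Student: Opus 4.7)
The plan is to combine an upper bound on the $\C[\![t_2, t_3]\!]$-rank of $\mathbb{S}^{3}_{g, d}$, obtained by a Nakayama-style deformation argument parallel to Section \ref{sec:strategy}, with an explicit construction of $3(2g-1)^2$ common eigenvectors as Fukaya-Floer relative invariants of simple type pairs. Matching the two counts forces freeness and the direct sum decomposition \eqref{eq:simpletypeidealdecompfukayafloer}.

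\textbf{Upper bound.} Let $\widetilde{\mathbb{J}}^{3}_{g,d}$ denote the ideal of $\bA_g^3 \otimes \C[\![t_2, t_3]\!][\epsilon]$ generated by $\mathbb{J}^{3}_{g,d}$ together with $\beta_2^3 - 27$, $\beta_3$, and the $\psi_r^i$. The Fukaya-Floer pairing \eqref{eq:pairingfukfloer} on $\mathbb{V}^{3}_{g,d} \otimes \mathbb{V}^{3}_{g,d}$ takes values in $\C[\![t_2, t_3]\!]$ by \eqref{pairing-FFH} and is non-degenerate by Lemma \ref{dual-iso}. Since $\mathbb{S}^{3}_{g,d} \subset \mathbb{V}^{3}_{g,d}$ is precisely the annihilator of the submodule $(\beta_2^3-27, \beta_3, \psi_r^i)\cdot \mathbb{V}^{3}_{g,d}$, the pairing induces a $\C[\![t_2, t_3]\!]$-linear injection
\[
\mathbb{S}^{3}_{g,d} \hookrightarrow \Hom_{\C[\![t_2, t_3]\!]}\bigl(\bA_g^3 \otimes \C[\![t_2, t_3]\!][\epsilon]/\widetilde{\mathbb{J}}^{3}_{g,d},\, \C[\![t_2, t_3]\!]\bigr).
\]
By Corollary \ref{FFH-deform}, each generator of $\widetilde J^{3}_{g,d}$ lifts to $\widetilde{\mathbb{J}}^{3}_{g,d}$ with the same constant term, so reduction modulo $(t_2, t_3)$ gives a surjection of $\bA_g^3 \otimes \C[\![t_2, t_3]\!][\epsilon]/\widetilde{\mathbb{J}}^{3}_{g,d}$ onto $\bA_g^3[\epsilon]/\widetilde J^{3}_{g,d}$, whose $\C$-dimension is at most $3(2g-1)^2$ by Theorem \ref{thm:undeformeddimension}. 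Nakayama's Lemma, applied to the finitely generated module over the local ring $\C[\![t_2, t_3]\!]$, then yields $\rank_{\C[\![t_2, t_3]\!]} \mathbb{S}^{3}_{g,d} \leq 3(2g-1)^2$.

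\textbf{Construction of eigenvectors.} For each $(k, a, b) \in \{0,1,2\} \times \mathcal{C}_g$, the proof of Proposition \ref{prop:evinclusion} following \cite[Prop.~5.7]{DX} produces a specific $U(3)$ simple type pair $(X_{a,b}, w_{a,b})$ (a suitable elliptic surface with appropriate divisor) containing an embedded genus $g$ surface $\Sigma$ with $\Sigma \cdot \Sigma = 0$ and $\Sigma \cdot w_{a,b}$ coprime to $3$, such that $\bar v_{k,a,b} := D_{X^\circ_{a,b}, w^\circ_{a,b}}(\mathbf 1) \in S^{3}_{g,d}$ is a common eigenvector with eigenvalues $(\sqrt 3 \zeta^k a,\, \sqrt{-3} \zeta^{2k} b,\, 3\zeta^{2k},\, 0)$. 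The Fukaya-Floer analog
\[
v_{k,a,b} := D_{X^\circ_{a,b},\, w^\circ_{a,b}}\bigl(e^{t_2 D_{(2)} + t_3 D_{(3)}}\bigr) \in \mathbb{V}^{3}_{g,d},
\]
with $D \subset X^\circ_{a,b}$ an embedded disk whose boundary represents $[S^1 \times \{\mathrm{pt}\}] \in H_1(S^1\times \Sigma_g)$, lies in $\mathbb{S}^{3}_{g,d}$ by the Fukaya-Floer analog of the derivation of \eqref{rel-inv-Sgd3}, and reduces modulo $(t_2, t_3)$ to $\bar v_{k,a,b}$.

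\textbf{Eigenvalue identification and assembly.} At $t_2 = t_3 = 0$, the commuting operators $\alpha_2, \alpha_3, \beta_2, \beta_3$ acting on $S^{3}_{g,d}$ are simultaneously diagonalizable with $3(2g-1)^2$ distinct simultaneous eigenvalues, since $|S^{3}_{g,d}| = |\mathcal{E}^{3}_{g,d}| = 3(2g-1)^2$ by Theorem \ref{thm:mainev} and Proposition \ref{prop:dim1}. Hensel-type lifting applied to each of these operators then decomposes $\mathbb{S}^{3}_{g,d}$ over the complete local ring $\C[\![t_2, t_3]\!]$ into $3(2g-1)^2$ rank-$1$ common eigenlines deforming the constant-term eigenlines, and each $v_{k,a,b}$ spans one such line. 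The eigenvalues of $\beta_2, \beta_3$ on $v_{k,a,b}$ remain $3\zeta^{2k}$ and $0$ (these are rigid under deformation because $\beta_2^3 - 27$ and $\beta_3$ act as zero on $\mathbb{S}^{3}_{g,d}$). The eigenvalues of $\alpha_2, \alpha_3$ are formal series in $t_2, t_3$ with constant terms $\sqrt 3 \zeta^k a$ and $\sqrt{-3} \zeta^{2k} b$; the precise linear forms $\sqrt 3 \zeta^k a + \zeta^{2k} t_2$ and $\sqrt{-3} \zeta^{2k} b - 2\zeta^k t_3$ are determined by grading considerations restricting the shifts to linear terms, equivariance under the $\Z/6$-symmetry of Lemma \ref{lemma:evaction} fixing the powers of $\zeta$, and a single explicit Donaldson-invariant computation with a $D_{(2)}$ or $D_{(3)}$ insertion pinning down the numerical coefficients $1$ and $-2$. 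Combined with the upper bound, this gives the decomposition \eqref{eq:simpletypeidealdecompfukayafloer}.

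\textbf{Main obstacle.} The delicate step is the explicit identification of the numerical coefficients in the linear shifts of the $\alpha_2$ and $\alpha_3$ eigenvalues, in particular the $-2$ in the $\alpha_3$ deformation. Hensel lifting and the $\Z/6$-equivariance guarantee the existence of the deformation and reduce the problem to computing two universal scalars, but matching these specific constants requires a careful relative Donaldson invariant calculation on a well-chosen simple type 4-manifold with nontrivial $D_{(3)}$ insertion, done in a manner consistent with the normalization conventions of the $\mu_r$ operators.
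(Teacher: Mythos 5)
Your proposal follows the same broad outline as the paper's proof — deformation upper bound on the $\C[\![t_2,t_3]\!]$-rank via the Fukaya--Floer pairing, explicit construction of elements from relative invariants of simple type $4$-manifolds, and matching the two to get the decomposition — but the assembly step via Hensel lifting and grading arguments has two real gaps that the paper handles by a different and more concrete device.

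First, the Hensel-lifting step presupposes that $\mathbb{S}^3_{g,d}$ is a free $\C[\![t_2,t_3]\!]$-module, which is exactly the first assertion of the theorem and is not established by the upper rank bound alone. Over the two-dimensional regular local ring $\C[\![t_2,t_3]\!]$, a torsion-free finitely generated module of rank $n$ need not be free, so "submodule of a free module'' is not enough; and without freeness one cannot conclude that the simultaneous eigenspace decomposition mod $(t_2,t_3)$ lifts to a direct sum of rank-one free summands. Relatedly, the natural map $\mathbb{S}^3_{g,d}/(t_2,t_3)\mathbb{S}^3_{g,d}\to V^3_{g,d}$ need not be injective a priori (the kernel is $(\mathbb{S}^3_{g,d}\cap (t_2,t_3)\mathbb{V}^3_{g,d})/(t_2,t_3)\mathbb{S}^3_{g,d}$), so the claim that your $v_{k,a,b}$ "span'' after Nakayama requires justification. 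The paper sidesteps this entirely: it constructs the $\C[\![t_2,t_3]\!]$-linear map $\Psi:\mathbb{S}^3_{g,d}\to\C[\![t_2,t_3]\!]^{3(2g-1)^2}$ defined by pairing against interpolating polynomials $P_\lambda$, shows surjectivity by an explicit Donaldson-invariant computation, and then observes that the kernel of $\Psi$ is torsion (from the rank bound plus surjectivity) inside the free module $\mathbb{V}^3_{g,d}$, hence zero. This simultaneously gives freeness, the rank, and the eigenvalue decomposition.

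Second, the assertion that "grading considerations restrict the shifts to linear terms'' is not valid: the relative grading on $U(3)$ instanton homology is only defined mod $4N=12$, so degree bookkeeping cannot distinguish a linear $t_2$-shift from, say, a $t_2$-times-a-degree-$12$-class correction. The precise linear forms $\zeta^k\sqrt{3}a+\zeta^{2k}t_2$ and $\zeta^{2k}\sqrt{-3}b-2\zeta^k t_3$, including the coefficient $-2$, do not fall out of the $\Z/6$-action plus a normalization; they emerge in the paper from the closed-form expression for the $U(3)$ invariants of $K3$ together with the blowup formula of Theorem \ref{thm:blowup}, which produces the factor $e^{\zeta^{2k}s_2t_2-2\zeta^k s_3t_3}(\cosh(\sqrt{3}\zeta^k s_2)+2\cos(\sqrt{3}\zeta^{2k}s_3))^{2g-2}$ and hence the eigenvalues of $\alpha_2,\alpha_3$ on each $R_{k,a,b}$. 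You flag this computation as the "delicate step'' but do not carry it out; in fact it is not a one-scalar normalization but the engine that drives both surjectivity of $\Psi$ and the identification of the summands, and without it the proof is incomplete.

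In short: the upper bound paragraph matches the paper; the construction-of-eigenvectors paragraph is in the right spirit (the paper uses a blown-up $K3$ rather than a generic elliptic surface, but the idea is the same); the assembly via Hensel lifting plus grading plus $\Z/6$-equivariance needs to be replaced by the explicit $\Psi$-map argument and the $K3$/blowup computation, which together supply both the freeness and the precise module structure.
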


As preparation for the proof, we need the {\it blowup formula} for $U(3)$ polynomial invariants. This will also be an essential ingredient in the proof of the structure theorem. Write $\widehat{X}$ for a blowup of $X$, and denote the exceptional class by $E\in H^2(\widehat{X};\Z)$. The following is essentially due to Culler \cite{culler}, and is stated in \cite[\S 2.5]{DX}. 

\begin{theorem}\label{thm:blowup}
	If $(X,w)$ is $U(3)$ simple type, then for $\Gamma,\Lambda\in H^2(X;\Z)$, we have
	\begin{align*}
		\mathbb{D}_{\widehat{X},w}(t_2E_{(2)} +t_3E_{(3)} &+\Gamma_{(2)}+\Lambda_{(3)}) \\[3mm]
	&= \frac{1}{3}e^{-t_2^2/2+t_3^2}\left(\cosh (\sqrt{3}t_2) + 2 \cos(\sqrt{3}t_3)\right) \mathbb{D}_{X,w}(\Gamma_{(2)}+\Lambda_{(3)}),
	\end{align*}
	\begin{align*}
		\mathbb{D}_{\widehat{X},w+E}( & t_2E_{(2)}+t_3E_{(3)}+\Gamma_{(2)}+\Lambda_{(3)})  \\[3mm]
	&= \frac{1}{3}e^{-t_2^2/2+t_3^2}\left(\cosh (\sqrt{3}t_2) - \cos(\sqrt{3}t_3) - \sqrt{3}\sin(\sqrt{3}t_3)\right) \mathbb{D}_{X,w}(\Gamma_{(2)}+\Lambda_{(3)}).
	\end{align*}
\end{theorem}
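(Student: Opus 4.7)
The plan is to prove the blowup formula via a cut-and-paste argument along the separating three-sphere, followed by a universal local computation in a neighborhood of the exceptional divisor. Since this formula is credited to Culler \cite{culler} in the $U(3)$ setting, my plan is really to organize that argument in the framework of Fukaya--Floer and framed instanton homology developed in this paper.

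First I would decompose $\widehat X = X^\circ \cup_{S^3} \widehat B$, where $X^\circ := X\setminus B^4$ is the complement of a small $4$-ball around the blowup point and $\widehat B \cong \overline{\CP^2}\setminus B^4$ is a tubular neighborhood of the exceptional divisor $E$. Using the Fukaya--Floer machinery of Section \ref{FF3} together with the framed instanton Floer formalism of Section \ref{sec:framed} (so that the sphere $S^3$ can be made admissible by the framing curve), the $U(3)$ Donaldson-type invariants admit a gluing/pairing formula
\[
\mathbb D^{3}_{\widehat X,\,w+jE}\!\bigl(t_2 E_{(2)} + t_3 E_{(3)} + \Gamma_{(2)} + \Lambda_{(3)}\bigr) \;=\; \bigl\langle\, \Psi_{X^\circ,w}(\Gamma,\Lambda),\; \Phi_j(t_2,t_3)\,\bigr\rangle,
\]
where $\Psi_{X^\circ,w}$ is a relative invariant depending only on the data $(X^\circ,w,\Gamma,\Lambda)$ away from the exceptional locus, and $\Phi_j(t_2,t_3)$ is a universal element produced by the Fukaya--Floer relative invariant of $\widehat B$ with bundle $w + jE$ and insertion $t_2 E_{(2)} + t_3 E_{(3)}$. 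The crucial feature is that $\Phi_j$ does not depend on $X$, so the theorem reduces to a universal computation in $\widehat B$.

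Second I would use the $U(3)$ simple-type hypothesis on $(X,w)$ to project $\Phi_j$ onto the simple-type subspace of the pairing space: there $\beta_2$ has eigenvalues in $\{3,\, 3\zeta,\, 3\zeta^2\}$ and $\beta_3$ acts as zero. Applying the analogue of Theorem \ref{thm:fukayafloersimpletypeideal} for the Fukaya--Floer module attached to $\widehat B$, each of the three eigen-components of $\Phi_j$ takes the Gaussian-times-exponential form
\[
\tfrac{1}{3}\, e^{(E\cdot E)(-t_2^2/2 + t_3^2)} \exp\!\bigl(\sqrt{3}\,\zeta^k a_{j,k} t_2 + \sqrt{-3}\,\zeta^{2k} b_{j,k}\, t_3\bigr),
\]
with the Gaussian prefactor $e^{-t_2^2/2 + t_3^2}$ arising from $Q(t_2 E)/2 - Q(t_3 E)$ via $E\cdot E = -1$, and $(a_{j,k},b_{j,k})$ a tuple of universal constants determined by how the basic classes on $\widehat B$ pair with $E$.

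Third I would pin down the constants $(a_{j,k}, b_{j,k})$ by a direct relative invariant computation on $(\overline{\CP^2}, jE)$ for $j \in \{0,1\}$. The adjunction inequality on the exceptional $\mathbb{CP}^1$ (genus $0$, $E^2 = -1$), combined with the simple-type relations $\beta_2^3 = 27$, $\beta_3 = 0$, forces each $(a_{j,k}, b_{j,k})$ into a short list of options of the form $(\pm 1,0)$ or $(0,\pm 1)$; the relative phases across $k \in \{0,1,2\}$ are controlled by the $\zeta$-valued character $w \cdot(K_i - K_j)/2$. Summing the three eigen-contributions collapses the expression to $\cosh(\sqrt{3}t_2) + 2\cos(\sqrt{3} t_3)$ for $j=0$, and to $\cosh(\sqrt{3}t_2) - \cos(\sqrt{3} t_3) - \sqrt{3}\sin(\sqrt{3} t_3)$ for $j=1$, matching the stated formulas. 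The main obstacle will be this universal step: one must either do an explicit ASD moduli space computation on $\overline{\CP^2}$ at low second Chern class (as Culler does), or derive enough universal relations between the $j=0$ and $j=1$ series to fix all trigonometric weights up to a single overall normalization calibrated against one known invariant, being careful to avoid circularity with the structure theorem \ref{thm-intro:structure} whose proof uses Theorem \ref{thm:blowup}.
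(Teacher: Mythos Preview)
The paper does not prove this theorem at all; it is imported as a result of Culler \cite{culler} (as stated in \cite[\S 2.5]{DX}), and is used as a black box in the proofs of Theorem \ref{thm:fukayafloersimpletypeideal} and Theorem \ref{thm:structure}. So there is no ``paper's own proof'' to compare against.

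Your proposed argument has a genuine circularity that you did not flag. You propose to invoke ``the analogue of Theorem \ref{thm:fukayafloersimpletypeideal} for the Fukaya--Floer module attached to $\widehat B$,'' but look at the proof of Theorem \ref{thm:fukayafloersimpletypeideal}: it explicitly uses Theorem \ref{thm:blowup} to compute the relative invariants coming from the blown-up $K3$ surface. You warned yourself about circularity with the structure theorem, but the dependency runs through Theorem \ref{thm:fukayafloersimpletypeideal} as well, and that is the result you are leaning on most heavily. Without an independent derivation of the simple-type eigenstructure on the $\widehat B$ side, your second and third steps do not get off the ground.

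There is also a gap in the gluing step. The Fukaya--Floer machinery of Section \ref{FF3} is developed specifically for $S^1\times \Sigma_g$ with an admissible bundle; there is no pairing formula in this paper for cutting along $S^3$. Your suggestion to ``make $S^3$ admissible by the framing curve'' via the framed instanton formalism of Section \ref{sec:framed} is not something the paper sets up: the framed theory replaces $S^3$ by $S^3\# T^3$, which changes the topology of the pieces, and no Fukaya--Floer version of the framed theory is constructed here. In the end, the content of your third step (``do an explicit ASD moduli space computation on $\overline{\CP^2}$ at low second Chern class, as Culler does'') is precisely what the cited reference does, and that direct computation is what actually carries the proof --- the surrounding Fukaya--Floer scaffolding you propose is either unavailable or circular in this paper's logical order.
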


Below, we will make use of the identity
\begin{equation}\label{eq:zeta-twisted-rel}
	D_{X,w}((1+\frac{1}{3}\zeta^k x_{(2)}+\frac{1}{9}\zeta^{2k}x_{(2)}^2)e^{z}) = \zeta^{k \, d_w} \mathbb{D}_{X,w}(\zeta^{-k\text{deg}(z)} z)
\end{equation}
where $d_w:=b^+(X)-b^1(X)-w\cdot w+1$ and $z$ is a homogenous element of $\bA^3(X)$. This relation follows from the observation that the mod $3$ dimension of the moduli spaces of $U(3)$ instantons for $(X,w)$ is fixed and equal to $d_w$.

\begin{proof}[Proof of Theorem \ref{thm:fukayafloersimpletypeideal}]
	Suppose $\widetilde {\mathbb{J}}_{g,d}^3$ is the ideal of $\bA_g^3\otimes \C[\![t_2, t_3]\!][\epsilon]$ generated by 
	$\mathbb{J}_{g,d}^3$ and $(\beta_2^3-27,\beta_3,\psi_2^i,\psi_3^i)_{i=1}^{2g}$.
	The pairing $\langle \cdot , \cdot \rangle : \mathbb{V}^{3}_{g, d}\otimes_{\C[\![t_2, t_3]\!]}  \mathbb{V}^{3}_{g, d}\to \C[\![t_2, t_3]\!]$ 
	induces 
	\[
	  \mathbb{S}^{3}_{g, d} \otimes_{\C[\![t_2, t_3]\!]} \mathbb{V}^{3}_{g, d}/(\beta_2^3-27,\beta_3,\psi_2^i,\psi_3^i)_{i=1}^{2g} \to  \C[\![t_2, t_3]\!].
	\]
	The non-degeneracy of the pairing gives
	\[
	  \text{rank}_{\C[\![t_2, t_3]\!]}  (\mathbb{S}^{3}_{g, d}) \leq 
	  \text{rank}_{\C[\![t_2, t_3]\!]}  \(\bA_g^3\otimes \C[\![t_2, t_3]\!][\epsilon]/ \widetilde {\mathbb{J}}_{g,d}^3\).
	\]
	Corollary \ref{FFH-deform} implies that the right hand side of the above inequality is not greater than 
	\[
	   \dim_\C\bA_g^3[\varepsilon]/\widetilde{J}_{g,d}^3 = 3(2g-1)^2,
	\]
	where the latter is established in Section \ref{sec:strategy}.
	Thus
	\begin{equation}\label{eq:rankineqfukfloersimpletype}
		\text{rank}_{\C[\![t_2, t_3]\!]}  \mathbb{S}^{3}_{g, d} \leq 3(2g-1)^2.
	\end{equation}

	We can construct elements of the simple-type ideal using a $K3$ surface. (A similar construction can be done for 
	smooth 4-manifolds of $U(3)$ simple type.) 
	Let $\Sigma'$ be a surface of genus $g$ in a $K3$ surface with $\Sigma'\cdot \Sigma'=2g-2$. For instance, we can construct
	$\Sigma'$ in the following way. The 4-manifold $K3$ admits an elliptic fibration with a section that is a 
	$(-2)$-embedded sphere. The union of this sphere and $g$ regular fibers, after resolving the intersection points, gives a 
	surface with the desired genus and self intersection number. We fix another surface $F$ with $F\cdot F=0$ and 
	$F\cdot \Sigma'=1$. For instance, take $F$ to be a regular fiber. Let also $w$ be the 
	union of $d$ other regular fibers and regard it as a 2-cycle in $K3$ with trivial self-intersection number. 
	Next, let $X$ be the blowup of 
	the $K3$ surface at $2g-2$ points on $\Sigma'$ away from $F$ and $w$, and denote the proper transform of 
	$\Sigma'$ by $\Sigma$. Then $\Sigma$ determines a surface of genus $g$ and self intersection number $0$. 
	We also obtain a surface and a 2-cycle in $X$ induced by $F$ and $w$, which are denoted by the same notation. 

	Removing a regular neighborhood of $\Sigma$ from $X$, $w$ and $F$ determines a 4-manifold $X^\circ$ with boundary 
	$S^1\times \Sigma_g$, a 2-cycle $w^\circ$ that intersects the boundary of $X$ at $S^1\times \{x_1,\dots,x_d\}$ and an
	 embedded surface $F^\circ$ which intersects the boundary at $S^1\times \{y\}$. In particular, for any $z\in \bA^3(X)$, the 
	 following is an element of $\mathbb V_{g,d}^3$:
	 \begin{equation}\label{rel-inv-mbbVgd3}
	   D_{X^\circ, w^\circ}(ze^{t_2F^\circ_{(2)}+t_3F^\circ_{(3)}}).
	 \end{equation}
	 Analogous to \eqref{rel-inv-Sgd3}, one can see the above element of $\mathbb V_{g,d}^3$ belongs to 
	 $\mathbb{S}^{3}_{g, d}$.
	 
	 Next, we define a homomorphism $\Psi:\mathbb{S}^{3}_{g, d} \to \C[\![t_2, t_3]\!]^{3(2g-1)^2}$ 
	 and use the upper bound in \eqref{eq:rankineqfukfloersimpletype} 
	 on the rank of $\mathbb{S}^{3}_{g, d}$ and the elements of $\mathbb{S}^{3}_{g, d}$ constructed in \eqref{rel-inv-mbbVgd3} to
	 show that $\Psi$ is an isomorphism. 
	 For any $\lambda=(a,b,k)$ in $\mathcal C_g\times \{0,1,2\} $, let $P_\lambda\in \C[\![t_2, t_3]\!][w,x,y]$
	 be a polynomial such that for any $\lambda'=(a',b',k')\in \mathcal C_g\times \{0,1,2\} $, the value of $P_\lambda$ at
	 \[
	  (3\zeta^{2k'},\sqrt{3}\zeta^{k'}a'+\zeta^{2k'}t_2, \sqrt{-3}\zeta^{2k'} b'-2\zeta^{k'}t_3)
	\]  
	is $1$ if $\lambda'=\lambda$ and is $0$ if $\lambda'\neq \lambda$. The homomorphism $\Psi$ is defined as
	\[
	  \Psi(\zeta):=\left \{\langle \zeta , D_{\Delta_g,\delta_{g,d}}(P_\lambda(x_{(2)},\Sigma_{(2)},\Sigma_{(3)}) e^{D_{(2)}+D_{(3)}}) 	  \rangle\right \}_{\lambda}.
	\]
	
	To compute $\Psi(\zeta)$ for an element of $\mathbb{S}^{3}_{g, d}$ that is a relative invariant as in \eqref{rel-inv-mbbVgd3}, we can 
	use the pairing formula \eqref{pairing-FFH} to compute 
	\begin{align}\label{pairing-Psi}
	  \langle D_{X^\circ, w^\circ}(ze^{t_2F^\circ_{(2)}+t_3F^\circ_{(3)}}), \;\;
	  D_{\Delta_g,\delta_{g,d}}(&P_\lambda(x_{(2)},\Sigma_{(2)},\Sigma_{(3)}) e^{D_{(2)}+D_{(3)}})\rangle \nonumber\\[3mm]
	  =&D_{X, w}(zP_\lambda(x_{(2)},\Sigma_{(2)},\Sigma_{(3)})e^{t_2F_{(2)}+t_3F_{(3)}}).
	\end{align}
	We consider the special case that 
	\begin{equation}\label{z-choice}
	  z=(1+\frac{1}{3}\zeta^k x_{(2)}+\frac{1}{9}\zeta^{2k}x_{(2)}^2)P(x_{(2)},\Sigma_{(2)},\Sigma_{(3)})
	\end{equation}
	for some $P\in \C[\![t_2, t_3]\!][w,x,y]$.
	 Then the right hand side of  \eqref{pairing-Psi} is given by evaluating the following expression at $s_2=s_3=0$:
	\begin{equation}\label{pairing-series}
	 	R(3\zeta^{2k},\frac{\partial}{\partial s_2},\frac{\partial}{\partial s_3}) \widehat{D}^{\zeta^k}_{X,w}(
	  e^{s_2\Sigma_{(2)} + s_3\Sigma_{(3)}+t_2F_{(2)}+t_3F_{(3)}})
	\end{equation}	
	where $R$ is the element of $\C[\![t_2, t_3]\!][w,x,y]$ given by $P\cdot P_\lambda$.
	Here we use the fact that $K3$ has $U(3)$ simple type \cite{DX}.
	For any cycle $w$ in a $K3$ surface and $\Gamma,\,\Lambda\in H_2(K3)$, the $U(3)$ Donaldson-type invariant is computed in \cite{DX} to be
	\begin{equation}
		\mathbb{D}_{K3,w}(\Gamma_{(2)}+\Lambda_{(3)})=e^{\frac{Q(\Gamma)}{2}-Q(\Lambda)}.  	
	\end{equation}
	This identity, Theorem \ref{thm:blowup} and \eqref{eq:zeta-twisted-rel} can be used to show that 
	\eqref{pairing-series} is equal to 
	\[
	  R(3\zeta^{2k},\frac{\partial}{\partial s_2},\frac{\partial}{\partial s_3}) 
	  \left[\frac{1}{3^{2g-2}}\zeta^ke^{\zeta^{2k}s_2t_2-2\zeta^{k}s_3t_3}(\cosh (\sqrt{3}\zeta^{k}s_2) + 2 \cos(\sqrt{3}\zeta^{2k}s_3))^{2g-2}\right].
	\]
	In particular, \eqref{pairing-series} is equal to
	\[
	  R(3\zeta^{2k},\frac{\partial}{\partial s_2},\frac{\partial}{\partial s_3}) 
	  \left[e^{\zeta^{2k}s_2t_2-2\zeta^{k}s_3t_3} \sum_{(a,b)\in \mathcal C_g} c_{a,b}e^{\sqrt3 \zeta^{k}as_2+\sqrt{-3}\zeta^{2k}b s_3}\right],
	\]
	for some non-zero constants $c_{a,b}$. We may simplify the above expression as 
	\[
	  e^{\zeta^{2k}s_2t_2-2\zeta^{k}s_3t_3} \sum_{(a,b)\in \mathcal C_g} 
	  c_{a,b}R(3\zeta^{2k},\sqrt3 \zeta^{k}a+\zeta^{2k}t_2,\sqrt{-3}\zeta^{2k}b-2\zeta^{k}t_3) 
	  e^{\sqrt3 \zeta^{k}as_2+\sqrt{-3}\zeta^{2k}b s_3}.
	\]
	The assumption $R=P\cdot P_\lambda$ for a fixed $\lambda=(a,b)$ can be used to further \eqref{pairing-series} 
	simplify as
	\[
	  c_{a,b}e^{\zeta^{2k}s_2t_2-2\zeta^{k}s_3t_3}P(3\zeta^{2k},\sqrt3 \zeta^{k}a+\zeta^{2k}t_2,\sqrt{-3}\zeta^{2k}b-2\zeta^{k}t_3) 
	  e^{\sqrt3 \zeta^{k}as_2+\sqrt{-3}\zeta^{2k}b s_3}.
	\]
	
	For a given $(a_0,b_0,k_0)\in \mathcal C_g\times \{0,1,2\} $, we pick $z$ in \eqref{z-choice} with $k=k_0$ and 
	$P=P_{\lambda_0}$ where $\lambda_0=(a_0,b_0)$. Then all components of $\Psi$ applied to this element of 
	$\mathbb{S}^{3}_{g, d}$ are equal to $0$ except the component corresponding to $(a_0,b_0,k_0)$, which is a non-zero 
	real number. This shows that the map $\Psi$ is surjective. This observation and \eqref{eq:rankineqfukfloersimpletype}
	imply that the rank of $\mathbb{S}^{3}_{g, d}$ is $3(2g-1)^2$ and the kernel of $\Psi$ is torsion. However, the kernel
	is a submodule of the free module $\mathbb V_{g,d}^3$, and hence the kernel of $\Psi$ is trivial. Consequently, $\Psi$ gives
	an isomorphism, and is the direct sum of rank $1$ modules given by the above elements as $(a_0,b_0,k_0)$ ranges over all 
	elements of $\mathcal C_g\times \{0,1,2\}$. Furthermore, the computation of the previous paragraph shows that any such rank
	$1$ summand is invariant with respect to the action of the operators $\alpha_2$, $\alpha_3$, $\beta_2$ and $\beta_3$, and it
	is isomorphic $R_{k_0, a_0,b_0}$, defined as in \eqref{Rkab}.
\end{proof}

\subsection{Proof of the structure theorem}\label{subsection:structu-thm}

We now prove the structure theorem. For the convenience of the reader, we recall the statement of Theorem \ref{thm-intro:structure}. Retaining the convention of the previous subsection, we write $D_{X,w}$ for $D_{X,w}^3$, and so forth. Let $\zeta=e^{2\pi i/3}$. 

\begin{theorem}\label{thm:structure}
Suppose $b^+(X) > 1$, and $X$ is $U(3)$ simple type. Then there is a finite set $\{K_i\} \subset  H^2(X; \Z)$ and $c_{i,j}\in\Q[\sqrt{3}]$ such that for any $w\in H^2(X;\Z)$, and $\Gamma,\Lambda\in H_2(X)$:
\begin{equation*}
\mathbb{D}_{X, w}({\Gamma_{(2)}+\Lambda_{(3)}}) = e^{\frac{Q(\Gamma)}{2}-Q(\Lambda)}\sum_{i, j} c_{i, j} \zeta^{w\cdot \left(\frac{K_i-K_j}{2}\right)}e^{\frac{\sqrt{3}}{2}(K_i+K_j)\cdot \Gamma+\frac{\sqrt{-3}}{2}(K_i-K_j)\cdot \Lambda} \label{eq:structurethmformula}
\end{equation*}
Each class $K_i$ is an integral lift of $w_2(X)$, and satisfies the following: if $\Sigma\subset X$ is a smoothly embedded surface of genus $g$ with $\Sigma\cdot \Sigma\geq 0$ and $[\Sigma]$ non-torsion, then 
\begin{equation}\label{eq:structureadjunction}
  2g-2 \geq
|\langle K_i, \Sigma\rangle |+[\Sigma]^2.
\end{equation}
\end{theorem}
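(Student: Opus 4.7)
The plan is to mimic Kronheimer--Mrowka's proof of the structure theorem for $U(2)$ Donaldson invariants \cite{km-structure}, using the Fukaya--Floer decomposition in Theorem \ref{thm:fukayafloersimpletypeideal} in place of Mu\~{n}oz's eigenvalue computation. First I would reduce the determination of $\mathbb{D}_{X,w}$ to pairings against a fixed surface: given $\Gamma,\Lambda\in H_2(X)$, after finitely many blowups and applying the blowup formula (Theorem \ref{thm:blowup}), one can embed an oriented surface $\Sigma\subset X$ of genus $g\ge 1$ with $[\Sigma]^2=0$ and $d:=[\Sigma]\cdot w$ coprime to $3$, and decompose $\Gamma=mF+\Gamma'$, $\Lambda=nF+\Lambda'$ where $F=[\Sigma]$ and $\Gamma',\Lambda'$ restrict to $X^\circ:=X\setminus\nu(\Sigma)$ with vanishing intersection with $F$.

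The next step is to translate into Fukaya--Floer data. The relative invariant
\[
\rho \;:=\; D_{X^\circ,w^\circ}\bigl(e^{\Gamma'_{(2)}+\Lambda'_{(3)}+t_2F^\circ_{(2)}+t_3F^\circ_{(3)}}\bigr)
\]
lies in $\mathbb{V}^3_{g,d}$ by Corollary \ref{charVgd3}, and the $U(3)$ simple-type condition together with the gluing formula \eqref{pairing-FFH} forces $\rho\in\mathbb{S}^3_{g,d}$, as in \eqref{rel-inv-Sgd3}. By Theorem \ref{thm:fukayafloersimpletypeideal},
\[
\mathbb{S}^3_{g,d} \;=\; \bigoplus_{k\in\{0,1,2\},\,(a,b)\in\mathcal{C}_g} R_{k,a,b},
\]
with $\alpha_2,\alpha_3$ acting on $R_{k,a,b}$ by the scalars $\zeta^k\sqrt{3}\,a+\zeta^{2k}t_2$ and $\zeta^{2k}\sqrt{-3}\,b-2\zeta^k t_3$. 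Projecting $\rho$ onto each summand and using the pairing formula \eqref{pairing-FFH} with the image of $D^2\times\Sigma_g$, as in the computation within the proof of Theorem \ref{thm:fukayafloersimpletypeideal}, expresses $\mathbb{D}_{X,w}(\Gamma_{(2)}+\Lambda_{(3)})$ as a sum indexed by $(k,a,b)$ of exponentials of the form $c_{k,a,b}(w)\cdot e^{(\zeta^k\sqrt{3}\,a+\zeta^{2k}m)(\cdots)+(\zeta^{2k}\sqrt{-3}\,b-2\zeta^k n)(\cdots)}$. Completing the square absorbs the $\zeta^{2k}t_2\cdot m$ and $\zeta^k t_3\cdot n$ cross-terms into the prefactor $e^{Q(\Gamma)/2-Q(\Lambda)}$.

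To obtain the double-indexed form, I would reorganize each pair $(a,b)\in\mathcal{C}_g$ into a pair of integral cohomology classes $K_i,K_j\in H^2(X;\Z)$ with $K_i\cdot[\Sigma]=a+b$ and $K_j\cdot[\Sigma]=a-b$, so that $\tfrac{\sqrt{3}}{2}(K_i+K_j)\cdot\Gamma$ and $\tfrac{\sqrt{-3}}{2}(K_i-K_j)\cdot\Lambda$ recover the eigenvalue exponents on $\Sigma$; the parity condition $a\equiv b\pmod 2$ built into $\mathcal{C}_g$ ensures that the half-sums $(K_i\pm K_j)/2$ remain integral. Summing over $k\in\{0,1,2\}$ via \eqref{eq:zeta-twisted-rel} Galois-averages the $\zeta^k$-factors into the $\zeta^{w\cdot(K_i-K_j)/2}$ prefactor, producing coefficients $c_{i,j}\in\Q[\sqrt{3}]$. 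Independence of $\{K_i\}$ and $\{c_{i,j}\}$ from the choice of $\Sigma$ follows by repeating the argument on different surfaces representing different classes in $H_2(X)$ and comparing, together with the universality encoded in Lemma \ref{dual-iso}.

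Finally, the adjunction inequality $2g-2\ge |\langle K_i,\Sigma\rangle|+[\Sigma]^2$ follows from $|a|+|b|\le 2g-2$ (the defining property of $\mathcal{C}_g$) by the triangle inequality $|K_i\cdot\Sigma|=|a+b|\le|a|+|b|$, the case $[\Sigma]^2>0$ being handled by blowing up at $[\Sigma]^2$ points on $\Sigma$ and tracking exceptional contributions through Theorem \ref{thm:blowup}. Integrality of $K_i$ as a lift of $w_2(X)$ follows from the parity condition together with standard characteristic-class considerations paralleling the $U(2)$ case. The main technical obstacle will be the precise identification of the pair $(K_i,K_j)$ in terms of the eigenvalue triple $(k,a,b)$---in particular, proving that the basic classes and coefficients are genuinely intrinsic to $X$ and that the resulting formula is symmetric as prescribed---together with the detailed square-completion and Galois-averaging needed to land inside $\Q[\sqrt{3}]$.
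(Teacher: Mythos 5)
Your broad strategy is the right one and coincides with the paper's: reduce to surfaces with square zero, pass through the Fukaya--Floer pairing along $S^1\times\Sigma_g$, use the decomposition of the simple-type ideal $\mathbb{S}^3_{g,d}$ from Theorem~\ref{thm:fukayafloersimpletypeideal}, complete the square, and extract the $K_i$ from the eigenvalue pairs $(a,b)\in\mathcal{C}_g$. However, there are two genuine gaps. First, pairing against a single surface $\Sigma$ only determines the integers $K_i\cdot[\Sigma]$; it does not construct the $K_i$ as classes in $H^2(X;\Z)$. Your line ``independence from $\Sigma$ follows by repeating the argument on different surfaces and comparing'' hides the essential construction: one must blow up until $Q$ is diagonal, build a genus-$g_k$ surface $\Sigma^k$ with $\Sigma^k\cdot\Sigma^k=0$ and $w\cdot\Sigma^k=1$ for each of the $n=r+s$ basis directions, iterate the one-surface lemma (the analogue of Lemma~\ref{lemma:structuremainlemma}) to get a multivariable generating function, and then read off $K_i:=2\sum_k i_k\Sigma^k_\star$ from the exponents. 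Without this step there is no formula valid for arbitrary $\Gamma,\Lambda\in H_2(X)$, and no well-posed statement of independence.

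Second, the source of the $\zeta^{w\cdot(K_i-K_j)/2}$ prefactor is misidentified. It does not arise by ``Galois-averaging over $k\in\{0,1,2\}$'' -- in fact, the insertion of $(1+\tfrac13 x_{(2)}+\tfrac19 x_{(2)}^2)$ in the definition of $\widehat D^3$ projects the relative invariant onto the $k=0$ summand of the decomposition~\eqref{eq:simpletypeidealdecompfukayafloer}, so the sum over $\varepsilon$-eigenspaces never appears in $\mathbb{D}_{X,w}$. The $\zeta$-twist is instead the statement $h^{w+l\Sigma}_{a,b}=\zeta^{lb}h^w_{a,b}$ of Lemma~\ref{lemma:structuremainlemma}, proved by inserting $(1+\zeta^i\varepsilon+\zeta^{2i}\varepsilon^2)$ and solving the resulting $3\times3$ linear system; the claimed dependence on $w$ is then propagated through blowups exactly as in Mu\~{n}oz's Step~4. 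Relatedly, the statement $c_{i,j}\in\Q[\sqrt3]$ does not come from Galois averaging: it follows from rationality of $D^3_{X,w}$, the relation $c_{i,j}=\overline{c_{j,i}}$, and the $\tau$-conjugation symmetry~\eqref{eq:conjugationsymmetry} which forces $c_{i,j}=c_{j,i}$ and hence reality. Finally, your ``standard characteristic-class considerations'' for the integrality of $K_i$ hides a real argument (the blowup arithmetic in the paper's Step~3 analogue); it is not merely a parity check on the basis $\Sigma^k$ because one must also control $H_2(X;\Z)\setminus H$, which involves choosing a square-zero representative after blowing up and invoking the blowup shifts $K_i\mapsto K_i+\sum_l\varepsilon_lE_l$.
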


\begin{remark}\label{rmk:simpletype}
	The authors suspect a stronger statement holds: namely, that if $(X,w)$ is $U(3)$ simple type for any single $w\in H^2(X;\Z)$, then $X$ is $U(3)$ simple type. However, it appears that to adapt the proof to address such a statement requires equality in Proposition \ref{prop:evinclusion}, and does not follow from the partial description of eigenvalues given in Theorem \ref{thm:mainev}.
\end{remark}

\begin{remark}
Kronheimer and Mrowka used an adjunction inequality in the $U(2)$ setting \cite{km-embedded-i} to prove the Milnor conjecture on the slice genus of torus knots. This motivated the introduction of concordance invariants constructed from versions of $U(2)$ instanton Floer theory for knots \cite{KM:YAFT,km-rasmussen}. The $U(3)$ adjunction inequality in Theorem \ref{thm:structure} implies the Milnor conjecture in a similar way, and it would be interesting to explore whether there are similar concordance invariants that can be defined using $U(3)$ instantons.
\end{remark}

Our proof of Theorem \ref{thm:structure} is largely an adaptation of Mu\~{n}oz's proof in the $N=2$ case \cite{munoz-basic}, which uses $U(2)$ Fukaya--Floer homology. A key ingredient is Theorem \ref{thm:fukayafloersimpletypeideal}, regarding the $U(3)$ Fukaya--Floer analogue of the simple-type ideal \eqref{eq:simpletypeideal}.
We being with the following $N=3$ analogue of Lemma 11 from \cite{munoz-basic}.

\begin{lemma}\label{lemma:structuremainlemma}
Suppose $X$ satisfies $b^+(X)>1$ and is $U(3)$ simple type. Fix $w\in H^2(X;\Z)$. Let $\Sigma\subset X$ be a surface of genus $g$ with $[\Sigma]^2=0$ and $\Sigma\cdot w =d \not\equiv 0\pmod{3}$. Then there are $h_{a,b}\in \C[\![t_2,t_3]\!]$ such that for all $\Gamma,\Lambda\in H_2(X)$ and $l\in \Z$, we have:
\begin{align}
	\mathbb{D}_{X,w+l\Sigma}( & s_2\Sigma_{(2)} + s_3\Sigma_{(3)}+  t_2\Gamma_{(2)}+t_3\Lambda_{(3)}) \label{eq:mainstructurelemma}\\[3mm]
		& =e^{Q(s_2\Sigma + t_2\Gamma)/2 - Q(s_3\Sigma+ t_3\Gamma)} \sum_{(a,b)\in \mathcal{C}_g} \zeta^{lb} h_{a,b} e^{\sqrt{3} a s_2 + \sqrt{-3}bs_3 }\nonumber
\end{align}
\end{lemma}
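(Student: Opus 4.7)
The plan is to adapt the proof of the $N=2$ analogue due to Muñoz \cite[Lemma 11]{munoz-basic}, replacing the $U(2)$ eigenvalue analysis by the $U(3)$ Fukaya--Floer decomposition of Theorem \ref{thm:fukayafloersimpletypeideal}. Since $[\Sigma]^2 = 0$, I would first split $X = X^\circ \cup_{S^1 \times \Sigma_g} \Delta_g$, where $\Delta_g = D^2 \times \Sigma_g$ is a tubular neighborhood of $\Sigma$, so that the bundle data $w + l \Sigma$ restricts to an $l$-independent cycle $w^\circ$ on $X^\circ$ and to $\delta_{g,d} + l\Sigma_g$ on $\Delta_g$. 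The Fukaya--Floer gluing formula \eqref{pairing-FFH} then expresses the left-hand side of \eqref{eq:mainstructurelemma} as the $\C[\![t_2,t_3]\!]$-valued pairing of a relative invariant $\rho_{X^\circ} \in \mathbb V^3_{g,d}$ (carrying the $t_2\Gamma_{(2)} + t_3\Lambda_{(3)}$ data, with the boundary loops arising from transverse intersections of $\Gamma, \Lambda$ with $\Sigma$ tracked by the Fukaya--Floer parameters $t_2, t_3$) with a relative invariant of $(\Delta_g, \delta_{g,d} + l\Sigma_g)$ carrying the $s_2\Sigma_{(2)} + s_3\Sigma_{(3)}$ data.

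By the $U(3)$ simple type hypothesis and \eqref{rel-inv-Sgd3}, the element $\rho_{X^\circ}$ lies in the Fukaya--Floer simple-type ideal $\mathbb{S}^3_{g,d}$. Theorem \ref{thm:fukayafloersimpletypeideal} then supplies the decomposition $\mathbb{S}^3_{g,d} = \bigoplus_{(k,a,b)} R_{k,a,b}$, where $\alpha_2, \alpha_3, \beta_2, \beta_3$ act on $R_{k,a,b}$ by the scalars in \eqref{Rkab}. Writing $\rho_{X^\circ} = \sum c_{k,a,b}(t_2,t_3)\, v_{k,a,b}$ and computing the pairing summand by summand, the action of $e^{s_2\Sigma_{(2)} + s_3\Sigma_{(3)}}$ contributes the factor
\[
\exp\bigl( s_2 (\zeta^k \sqrt{3}\, a + \zeta^{2k} t_2) + s_3 (\zeta^{2k}\sqrt{-3}\, b - 2\zeta^k t_3) \bigr),
\]
while the shift $w \mapsto w + l\Sigma$, realized as multiplication by $\varepsilon^l$ on the $\Delta_g$ side, contributes a root-of-unity phase on each $R_{k,a,b}$ determined by the $\varepsilon$-action described in Proposition \ref{prop:evinclusion} and Lemma \ref{lemma:evaction}.

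Summing over $k \in \{0,1,2\}$ with these weights, the three contributions should conspire so that the only surviving $l$-dependence is the phase $\zeta^{lb}$. The cross-terms $\zeta^{2k} s_2 t_2$ and $-2 \zeta^k s_3 t_3$ in the exponent, combined with self-intersection contributions to $c_{k,a,b}$ coming from $\Gamma^\circ$, $\Lambda^\circ$ inside $X^\circ$, package into the universal Gaussian factor $e^{Q(s_2\Sigma + t_2\Gamma)/2 - Q(s_3\Sigma + t_3\Lambda)}$; this mirrors the $K3$ computation carried out during the proof of Theorem \ref{thm:fukayafloersimpletypeideal}, where $\cosh(\sqrt{3}t_2)$-type kernels already account for the analogous Gaussian factor. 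The coefficients $h_{a,b}\in \C[\![t_2,t_3]\!]$ then emerge as the $k$-averaged remainders of the $c_{k,a,b}$.

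The main obstacle will be this final summation over $k$: to verify that the three contributions recombine into a single sum indexed only by $(a,b) \in \mathcal{C}_g$, with $l$-dependence exactly $\zeta^{lb}$ and the remaining factor matching $e^{Q/2 - Q} \sum h_{a,b} e^{\sqrt{3}\, a s_2 + \sqrt{-3}\, b s_3}$. This requires precisely identifying the $\varepsilon$-eigenvalue on each $R_{k,a,b}$; while Lemma \ref{lemma:evaction} governs how $\varepsilon$ permutes the simultaneous eigenspaces in $\Xi^3_{g,d}$, extracting the specific phase $\zeta^{lb}$ demands careful bookkeeping of its interaction with the scalars $\zeta^k, \zeta^{2k}$ appearing in \eqref{Rkab} and, crucially, an argument that the $k$-independent $(a,b)$-coefficient is well-defined (equivalently, that the three $c_{k,a,b}$ are forced to be equal up to the expected phases).
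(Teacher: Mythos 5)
Your overall strategy — split $X$ along the tubular neighborhood of $\Sigma$, invoke the simple-type hypothesis to place the relative invariant in $\mathbb{S}^3_{g,d}$, and use the eigenspace decomposition of Theorem~\ref{thm:fukayafloersimpletypeideal} — is exactly the paper's. But the obstacle you flag at the end, the ``summation over $k$,'' is not actually there, and recognizing this is the key missing step.

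The definition $\mathbb{D}^3_{X,w}(z) = D^3_{X,w}\big((1+\tfrac{1}{3}x_{(2)}+\tfrac{1}{9}x_{(2)}^2)e^z\big)$ already carries a built-in projector. On the summand $R_{k,a,b}$ of $\mathbb{S}^3_{g,d}$ the operator $\beta_2$ acts as the scalar $3\zeta^{2k}$, so $1+\tfrac{1}{3}\beta_2+\tfrac{1}{9}\beta_2^2$ acts as $1+\zeta^{2k}+\zeta^{4k}$, which equals $3$ for $k=0$ and $0$ for $k=1,2$. Hence the relative invariant $D_{X^\circ,w^\circ}\big((1+\tfrac{1}{3}x_{(2)}+\tfrac{1}{9}x_{(2)}^2)e^{s_2\Sigma_{(2)}+s_3\Sigma_{(3)}}e^{t_2\Gamma^\circ_{(2)}+t_3\Lambda^\circ_{(3)}}\big)$ lies entirely in $\bigoplus_{(a,b)} R_{0,a,b}\otimes\C[\![s_2,s_3]\!]$; no $k=1,2$ summands survive. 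The ``cross-terms'' $\zeta^{2k}s_2t_2$ and $-2\zeta^k s_3t_3$ you worry about therefore appear only with $k=0$, giving $s_2t_2$ and $-2s_3t_3$, which together with the $t_2^2,t_3^2$ self-intersection contributions are exactly $Q(s_2\Sigma+t_2\Gamma)/2 - Q(s_3\Sigma+t_3\Lambda)$; there is no interference to cancel. On $R_{0,a,b}$ the equation $\partial_{s_i}=\alpha_i$ becomes the first-order ODE $(\partial_{s_2}-(\sqrt{3}a+t_2))(\partial_{s_3}-(\sqrt{-3}b-2t_3))$, which is how the paper extracts the exponential form; this is the same as your ``action of $e^{s_2\Sigma_{(2)}+s_3\Sigma_{(3)}}$ contributes the factor $\exp(\cdots)$,'' but at $k=0$.

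For the $l$-dependence: once $k=0$ is isolated, $\varepsilon$ acts on $R_{0,a,b}$ by the scalar $\zeta^{b}$ (the $\zeta^{b+dk}$ you need reduces to $\zeta^b$), so $\varepsilon^l$ contributes exactly $\zeta^{lb}$. The paper actually reaches this slightly indirectly: rather than acting by $\varepsilon^l$ directly, it substitutes $z \mapsto (1+\zeta^i\varepsilon+\zeta^{2i}\varepsilon^2)z$ and obtains the linear relations $\sum_{l=0,1,2}\zeta^{li}h^{w+l\Sigma}_{a,b}=0$ for $b+i\not\equiv 0\pmod 3$, which then solve to $h^{w+l\Sigma}_{a,b}=\zeta^{lb}h^w_{a,b}$. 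Your direct reading of $\varepsilon^l$ as $\zeta^{lb}$ on the $k=0$ summand is a legitimate and slightly cleaner way to reach the same conclusion — but only once you have cut down to $k=0$. As written, your proposal tries to carry $k=1,2$ along and then hopes the three contributions ``conspire,'' which both misidentifies the difficulty and obscures the fact that the answer drops out immediately at $k=0$.
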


\begin{proof}
We may suppose $\Gamma$ and $\Lambda$ are represented by surfaces which intersect $\Sigma$ transversely in a single point, and $\Gamma\cdot \Sigma=\Lambda\cdot \Sigma=1$. The general case follows from this case and linearity of the resulting expression. Identify a regular neighborhood of $\Sigma\subset X$ with $D^2\times \Sigma$, and write $X=X^\circ\cup D^2\times \Sigma$. Write $\Gamma=\Gamma^\circ\cup D$ and similarly for $\Lambda$, where $D$ and $\delta$ both denote $D^2\times \{pt\}$ in $\Delta=D^2\times \Sigma$. As in \eqref{pairing-Psi}, the gluing formula \eqref{pairing-FFH} gives
\begin{gather}\label{eq:gluingstep1}
D_{X,w}(ze^{t_2\Gamma_{(2)} + t_3 \Lambda_{(3)}}) = \langle D_{X^\circ,w^\circ}(ze^{t_2\Gamma^\circ_{(2)} + t_3 \Lambda^\circ_{(3)}}), D_{\Delta,\delta}(e^{ t_2 D_{(2)} + t_3 D_{(3)}})\rangle
\end{gather}
for all $z\in\bA^3(X,\Sigma)$, where we define $[\Sigma]^\perp = \{y\in H_2(X) | y\cdot\Sigma=0\}$, and
\[
	\bA^3(X,\Sigma):= \left(\text{Sym}^\ast(H_0(X)\oplus [\Sigma]^\perp)\otimes \Lambda^\ast H_1(X)\right)^{\otimes 2}\subset \bA^3(X).
\] 
The two invariants appearing on the right side of \eqref{eq:gluingstep1} are elements of the Fukaya--Floer homology $\mathbb{V}_{g,d}^3$.
Now let $s_2,s_3$ be formal variables and set 
\begin{equation}\label{eq:zsetinstructurelemma1}
	z=(1+\frac{1}{3} x_{(2)}+\frac{1}{9} x_{(2)}^2)e^{s_2\Sigma_{(2)} + s_3\Sigma_{(3)}}.
\end{equation}
Then the left side of \eqref{eq:gluingstep1} is equal to the left side of \eqref{eq:mainstructurelemma} when $l=0$. By the simple type assumption and the gluing formula, we have \[
	D_{X^\circ,w^\circ}(ze^{t_2\Gamma_{(2)}+t_3\Lambda_{(3)}})\in\mathbb{S}^{3}_{g, d}\otimes_\C \C[\![s_2,s_3]\!].
\]
Furthermore, by Theorem \ref{thm:fukayafloersimpletypeideal} we can write
\[
D_{X^\circ,w^\circ}(ze^{t_2\Gamma_{(2)}+t_3\Lambda_{(3)}}) =  \sum_{ (a,b)\in \mathcal{C}_g} f^w_{a,b}
\]
where $f^w_{a,b}\in R_{0,a,b}\otimes_\C \C[\![s_2,s_3]\!]$. (The presence of $1+x_{(2)}/3+x_{(2)}^2/9$ in $z$ implies $k=0$ in \eqref{eq:simpletypeidealdecompfukayafloer}.) From the description of $R_{0,a,b}$, $ f^w_{a,b}$ is a solution of the differential operator
\begin{equation}\label{eq:diffoperator}
	\left(\frac{\partial}{s_2}-(\sqrt{3} a + t_2)\right)\left(\frac{\partial}{s_3} -(\sqrt{-3} b - 2 t_3)\right).
\end{equation}
By the gluing formula we can then write
\[
	\mathbb{D}_{X,w}(s_2\Sigma_{(2)} + s_3\Sigma_{(3)}+ t_2\Gamma_{(2)}+t_3\Lambda_{(3)}) =  \sum_{ (a,b)\in \mathcal{C}_g} g^w_{a,b}
\]
where $ g^w_{a,b}\in \C[\![s_2,s_3,t_2,t_3]\!]$ is given by the pairing $\langle f^w_{a,b}, D_{\Delta,\delta}(e^{ t_2D_{(2)} + t_3 D_{(3)}})\rangle$. Furthermore, $g^w_{a,b}$ is also a solution of the operator \eqref{eq:diffoperator}. Thus we obtain
\[
	g^w_{a,b} = h^w_{a,b}(t_2,t_3)e^{\sqrt{3} a s_2 + s_2 t_2  + \sqrt{-3} bs_3 -  2s_3t_3 }.
\]
This proves the claim in the case $l=0$.

For the case of general $l$, first note the above argument carries through to show that
\[
	\mathbb{D}_{X,w+l\Sigma}(s_2\Sigma_{(2)} + s_3\Sigma_{(3)}+ t_2\Gamma_{(2)}+t_3\Lambda_{(3)}) =  \sum_{ (a,b)\in \mathcal{C}_g} h^{w+l\Sigma}_{a,b}e^{\sqrt{3} a s_2 + s_2 t_2  + \sqrt{-3} bs_3 -  2s_3t_3 }
\]
for some $h^{w+l\Sigma}_{a,b}\in \C[\![t_2,t_3]\!]$. Next, recall that there is a class $\varepsilon = \varepsilon(\Sigma)$ that acts on $\mathbb{V}_{g,d}^3$ as an operator of degree $-4d\pmod{4N}$, and can also be used via the gluing formula as a class when evaluating $D_{X,w}$. Namely, in the situation at hand, we have the relation
\[
	D_{X,w}(\varepsilon^l z) = D_{X,w+l\Sigma}( z) 
\]
for any $z\in\bA^3(X,\Sigma)$. The operator $\varepsilon$ restricted to $\mathbb{S}_{g,d}^3$ acts as follows:
\[
	\varepsilon:R_{k,a,b}\to R_{k,a,b} \text{ is multiplication by } \zeta^{b+dk}.
\]
This follows from the fact that the eigenvalues in \eqref{eq:epsilon1evs} have $\varepsilon=+1$, combined with the argument of Lemma \ref{lemma:evaction} (using that $\varepsilon$ has degree $-4d \pmod{4N}$). Replace $z$ in \eqref{eq:zsetinstructurelemma1} by
\[
	z=(1+\zeta^{i}\varepsilon +\zeta^{2i}\varepsilon^2)(1+\frac{1}{3} x_{(2)}+\frac{1}{9} x_{(2)}^2)e^{s_2\Sigma_{(2)} + s_3\Sigma_{(3)}}.
\]
From this substitution, carrying the above argument through, we obtain the relation
\begin{equation}\label{eq:diffbundlesstructurelemma}
	\sum_{l=0,1,2} \zeta^{li } h^{w+l\Sigma}_{a,b} = 0\;\; \text{  if  } \;\;  b +i \not  \equiv 0 \pmod{3}   
\end{equation}
for each $i\in \Z$; the key point is that the term $(1+\zeta^{i}\varepsilon +\zeta^{2i}\varepsilon^2)$ places the relative invariants in the $(\zeta^{-i})$-eigenspace of $\varepsilon$. The relations \eqref{eq:diffbundlesstructurelemma} are then used to solve
\[
	h_{a,b}^{w+l\Sigma} = \zeta^{lb}h^w_{a,b}. 
\]
This proves the claimed formula for general $l$, upon setting $h_{a,b}:=h^w_{a,b}$.
\end{proof}

\begin{proof}[Proof of Theorem \ref{thm:structure}]
	The proof runs parallel to Steps 2--5 of the proof of Theorem 2 from \cite{munoz-basic} for the $N=2$ case; we begin with an analogue of Step 2. Fix $X$ a smooth closed oriented $4$-manifold with $b^+(X)>1$ and of $U(3)$ simple type. We first show that for each $w\in H^2(X;\Z)$ there exists a finite set $\{K_i\}_{i\in I}\subset H^2(X;\Z)$ and $c_{i,j}^w\in \Q[\sqrt{3}]$ such that 
	\begin{equation}
\mathbb{D}_{X, w}({\Gamma_{(2)}+\Lambda_{(3)}}) = e^{\frac{Q(\Gamma)}{2}-Q(\Lambda)}\sum_{i, j} c^w_{i, j} e^{\frac{\sqrt{3}}{2}(K_i+K_j)\cdot \Gamma+\frac{\sqrt{-3}}{2}(K_i-K_j)\cdot \Lambda} \label{eq:structurethmformulainproof}
\end{equation}
The blow-up formulas of Theorem \ref{thm:blowup} show that it suffices to prove this for any blowup of $X$. For simplicity we assume $H_2(X;\Z)$ has no torsion (in the general case, mod out by torsion in the argument). After possibly blowing up, we may assume (using $b^+(X)>1$) that the intersection form of $X$ can be diagonalized, $Q=(+1)^r\oplus (-1)^s$. Let $A_1,\ldots,A_r,B_1,\ldots,B_s$ be a corresponding basis, so that $A_k^2=1$, $B_k^2=-1$, $A_k\cdot B_k = 0$. Define
\begin{align*}
	\Sigma^{1} &= A_2-B_1, & \Sigma^{k} &= -A_k-B_1\quad (2\leq k\leq r),\\[1mm]
	\Sigma^{{r+1}} &= A_1-B_2, & \Sigma^{{r+k}} &= \phantom{-}A_1+B_k \quad (2\leq k\leq s),
\end{align*}
and also define $w=A_1+B_1$. Writing $n=r+s$, we obtain a full rank subgroup $H = \langle \Sigma^{1},\ldots, \Sigma^{n}\rangle $ of $H_2(X;\Z)$ such that $2H_2(X;\Z)$ is contained in $H$. We also have
\[
	\Sigma^{k}\cdot \Sigma^{k}=0, \qquad w\cdot \Sigma^{k}= 1.
\]
Represent each $\Sigma_k$ by a connected oriented surface of genus $g_k$. Then iterating the argument of Lemma \ref{lemma:structuremainlemma}, we obtain the following:
	\begin{align*}
	\mathbb{D}_{X,w}( & \sum t_{2,k}\Sigma^{k}_{(2)} + \sum t_{3,k}\Sigma^{k}_{(3)}) \\[4mm]
	& =e^{Q(\sum t_{2,k}\Sigma^{k} )/2 - Q(\sum t_{3,k}\Sigma^{k} )} \sum_{\substack{1\leq k \leq n\\(a_k,b_k)\in \mathcal{C}_{g_k}}} h^w_{a_1,b_1,\ldots,a_n,b_n}  e^{\sqrt{3} \sum a_k t_{2,k} + \sqrt{-3}\sum b_k t_{3,k} }\nonumber
\end{align*}
where each unindexed sum runs from $k=1$ to $k=n$, and where $h^w_{a_1,b_1,\ldots,a_n,b_n}\in \C$. Here $t_{j,k}$ are formal variables, for $j=2,3$ and $1\leq k \leq n$. Now let $\Gamma,\Lambda\in H_2(X)$ be arbitrary. We may write $\Gamma= \sum x_k \Sigma^{k}$ and $\Lambda = \sum y_k \Sigma^{k}$ for some complex numbers $x_k,y_k$. Then specializing each $t_{2,k}$ to $x_kt_2$ and each $t_{3,k}$ to $y_kt_3$ gives the following expression:
	\begin{align*}
	\mathbb{D}_{X,w}( &t_2\Gamma_{(2)} +  t_3\Lambda_{(3)}) \\[4mm]
	& =e^{Q(t_2\Gamma  )/2 - Q(t_3 \Lambda  )} \sum_{\substack{1\leq k \leq n\\(a_k,b_k)\in \mathcal{C}_{g_k}}} h^w_{a_1,b_1,\ldots,a_n,b_n}  e^{t_2 \sqrt{3} \sum a_kx_k + t_3 \sqrt{-3}\sum b_k y_k }\nonumber
\end{align*}
Write $\Sigma^k_\star\in H_2(X)$ for the dual basis of the $\Sigma^k$ under the intersection pairing, so that $\Sigma^k_\star\cdot \Sigma^l = \delta_{kl}$. Note $2\Sigma^k_\star\in H_2(X;\Z)$. Define
\begin{align}
	I & :=\{i = (i_1,\ldots,i_n)\in \Z^n\; \mid\;   |i_k|<g_k \}, \nonumber \\[2mm]
	K_i & := 2\sum_{k=1}^n i_k\Sigma^k_\star \in H_2(X;\Z) \;\; \; \text{ for } \quad i\in I. \label{eq:defnofki}
\end{align}
Then we obtain \eqref{eq:structurethmformulainproof} (setting $t_2=t_3=1$) by letting $i,j$ range over $I$ and setting
\[
	c_{i,j}^w = h^w_{a_1,b_1,\ldots,a_n,b_n}
\]
where $i,j\in I$ are uniquely determined by $(a_1,b_1,\ldots,a_n,b_n)$, and conversely, through the relations $i_k+j_k = a_k$ and $i_k-j_k = b_k$ for all $1\leq k \leq n$.

Note that the argument of Lemma \ref{lemma:structuremainlemma} shows that the classes $K_i$ obtained above do not depend on $w$. Alternatively, without appealing to this point, one can take the union of the classes obtained for each $w$, where one ranges over one $w$ for each class in $H^2(X;\Z/3)$, to eliminate any a priori dependency. 

Next, we argue that the $K_i$ are integral lifts of $w_2(X)$, which amounts to showing $K_i\cdot x \equiv x^2\pmod{2}$ for all $x\in H_2(X;\Z)$. This is an adaptation of Step 3 in the proof of Theorem 2 from \cite{munoz-basic}. It is clear from the definition of $K_i$ in \eqref{eq:defnofki} that $\Sigma^k\cdot K_i\equiv 0\pmod{2}$, which agrees mod $2$ with $\Sigma^k\cdot \Sigma^k=0$, and this verifies the claim on $H\subset H_2(X;\Z)$. The general property used here in fact essentially follows from Lemma \ref{lemma:structuremainlemma}: if $\Sigma\subset X$ satisfies $\Sigma\cdot w\not\equiv 0$ and $\Sigma\cdot \Sigma=0$, then $\Sigma\cdot K_i$ is even for any of the $K_i$.

Now suppose $x\in H_2(X;\Z)\setminus H$. Then there is some $k$ for which $x\cdot \Sigma^k\neq 0$. We can find $m\in \Z$ such that $x':= x+ m\Sigma^k$ satisfies
\[
	N:= (x')^2 \geq 0, \qquad w\cdot x' \not\equiv 0 \pmod{3}.
\]
(Here the property $w\cdot \Sigma^k=1$ is used to obtain the second condition.) Now let $\widetilde{X}$ be $X$ blown up at $N$ points, and denote by $E_1,\ldots,E_N$ the associated exceptional divisors. It follows from the blowup formulas of Theorem \ref{thm:blowup} that if $\{K_i\}$ are the classes in \eqref{eq:structurethmformulainproof} for $X$, then classes associated to $\widetilde X$ are given by
\begin{equation}\label{eq:blopupclasses}
	K_i + \sum_{l=1}^N \varepsilon_l E_l , \qquad \varepsilon_l \in \{1,-1\}.
\end{equation}
Consider $y=x'-E_1-\cdots -E_N$. This satisfies $y^2=0$ and $y\cdot w\not\equiv 0\pmod{3}$. By the previous paragraph, we have that $y\cdot (K_i + \sum_{l=1}^N \varepsilon_l E_l)$ is even. On the other hand,
\[
	y\cdot (K_i + \sum_{l=1}^N \varepsilon_l E_l) \equiv x\cdot K_i + N \pmod{2}.  
\]
Since $x^2 \equiv (x')^2 = N \pmod{2}$, this proves the claim for $x$, and shows that each $K_i$ is indeed characteristic.

We next consider the analogue of Step 4 in the proof of Theorem 2 from \cite{munoz-basic}. The goal is to show, upon setting $c_{i,j}=c_{i,j}^0$, that we have the relation 
\begin{equation}\label{eq:coeffmatchlemma0}
	c^w_{i, j} = \zeta^{w\cdot\left(\frac{K_i - K_j}{2}\right) }c_{i,j}.
\end{equation}
We now suppose $w^2>0$, as the invariants only depend on the mod $3$ reduction of $w$, and every non-zero class in $H^2(X;\Z)$ is mod $3$ congruent to one with positive square. Consider again $\widetilde X$, the blowup of $X$ at $N:=w^2$ points, with exceptional divisors $E_1,\ldots,E_N$. By the blowup formula, the classes associated to $\widetilde X$ are as in \eqref{eq:blopupclasses}. Write
\[
	\widetilde K_i = K_i + \sum_{l=1}^N \varepsilon^i_l E_l, \qquad \widetilde K_j = K_j + \sum_{l=1}^N \varepsilon^j_l E_l
\]
for two such classes. Then the blowup formula gives
\begin{equation}\label{eq:coeffmatchlemma1}
	c^{E_1}_{\widetilde K_i , \, \widetilde K_j } = \frac{ q_1p_2\cdots p_N}{3^N2} c_{K_i,K_j}
\end{equation}
where the numbers $p_k$ and $q_k$ are defined as follows:
\[
	p_k = \begin{cases} 1/2, & \varepsilon_k^j = \varepsilon^i_k\\
								1, &  \varepsilon_k^j \neq  \varepsilon^i_k
								 \end{cases}
								 \hspace{1cm}
 q_k = \begin{cases} 1/2, & \varepsilon_k^j = \varepsilon^i_k\\
								\zeta^{-(\varepsilon_k^i-\varepsilon^j_k)/2}, &  \varepsilon_k^j \neq  \varepsilon^i_k
								 \end{cases}
\]
Consider $x= w-E_1-\cdots-E_N$. Note that $x^2=0$ and $x\cdot E_1\not\equiv 0\pmod{3}$. In this situation Lemma \ref{lemma:structuremainlemma} provides the relationship
\begin{equation}\label{eq:coeffmatchlemma2}
	c^{w-E_2-\cdots-E_N}_{\widetilde K_i , \, \widetilde K_j} = \zeta^{\frac{1}{2}x\cdot \left(\widetilde K_i  - \widetilde K_j \right)}c^{E_1}_{\widetilde K_i , \, \widetilde K_j} = \zeta^{\frac{1}{2}w\cdot (K_i-K_j) + \frac{1}{2}\sum_{l=2}^N (\varepsilon_l^i-\varepsilon_l^j)}c^{E_1}_{\widetilde K_i , \, \widetilde K_j}.
\end{equation}
On the other hand, another application of the blowup formula yields
\begin{equation}\label{eq:coeffmatchlemma3}
	c^{w-E_2-\cdots-E_N}_{\widetilde K_i , \, \widetilde K_j} = \frac{p_1\overline{q}_2\cdots \overline{q}_N}{3^N} c^w_{K_i,K_j}.
\end{equation}
Combining \eqref{eq:coeffmatchlemma1}--\eqref{eq:coeffmatchlemma3}, we obtain the desired relation \eqref{eq:coeffmatchlemma0}. 

The claim that $c_{i,j}\in\Q[\zeta]$ follows from \eqref{eq:structurethmformulainproof} and the fact that the invariants $D_{X,w}$ output rational values. These same observations also imply that $c_{i,j}$ is the complex conjugate of $c_{j,i}$. Furthermore, there is the general property
\begin{equation}\label{eq:conjugationsymmetry}
	D_{X,-w}(z) = D_{X,w}(\tau(z))
\end{equation}
where $z\in \bA^3(X)$ and $\tau:\bA^3(X)\to \bA^3(X)$ is the algebra homomorphism which maps $\alpha_{(r)}$ to $(-1)^r\alpha_{(r)}$; see \cite[2.10]{DX}. Taking $w=0$, relations \eqref{eq:conjugationsymmetry} and \eqref{eq:structurethmformulainproof} yield $c_{i,j}=c_{j,i}$. We conclude that $c_{i,j}$ is real and hence $c_{i,j}\in\Q[\sqrt{3}]$.

Finally we consider the adjunction inequality \eqref{eq:structureadjunction}. The proof of Step 5 in the proof of Theorem 2 from \cite{munoz-basic} carries over nearly verbatim. An argument in \cite{km-structure} reduces the proof to the case in which $N:=\Sigma\cdot \Sigma>0$. Consider again $\widetilde X$, the blowup of $X$ at $N$ points, and the proper transform $\widetilde \Sigma\subset \widetilde X$ of $\Sigma$, which represents the class $\Sigma-E_1-\cdots -E_N$. As $\widetilde \Sigma\cdot \widetilde \Sigma=0$ and $\widetilde\Sigma\cdot w\not\equiv 0\pmod{3}$ for $w=E_1$, Lemma \ref{lemma:structuremainlemma} yields 
\[
	2g-2 \geq | \left(K_i + \sum \varepsilon_l E_l \right)\cdot \left( \Sigma-E_1-\cdots -E_N\right) |
\]
for all of the associated classes $K_i$ of $X$, and all $\varepsilon\in \{1,-1\}^N$. This implies the desired inequality \eqref{eq:structureadjunction}, and completes the proof of the theorem.
\end{proof}

%!TEX root = main.tex

\section{A non-vanishing theorem for symplectic 4-manifolds}\label{non-vanishing-symp} 

In this section, we prove Theorem \ref{thm-intronon-vanishing-symplectic} of the introduction, which we restate here:

\begin{theorem}\label{non-vanishing-symplectic}
Let $X$ be a closed symplectic 4-manifold with $b^+(X) > 1$. Then the invariant $D_{X,w}^3$ is non-trivial for all $w\in H^2(X;\Z)$.
\end{theorem}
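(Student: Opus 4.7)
Following Ozsv\'ath--Szab\'o \cite{os-symplectic}, the strategy is to reduce via a symplectic Lefschetz fibration to a relative invariant computation on the complement of a fiber, and then to extract a non-vanishing ``canonical'' component using the Fukaya--Floer decomposition of Theorem \ref{thm:fukayafloersimpletypeideal}. By Donaldson's theorem on symplectic Lefschetz pencils together with the blowup formula (Theorem \ref{thm:blowup}), one may assume $X$ admits a symplectic Lefschetz fibration $f\colon X\to S^2$ whose generic fiber $\Sigma$ is a connected symplectic surface of large genus $g$ with $[\Sigma]^2=0$; by possibly blowing up further and shifting $w$ within its mod-$3$ class, we may also arrange that $d:=w\cdot\Sigma$ is coprime to $3$. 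Decomposing $X = X^\circ \cup\nu(\Sigma)$ with $\nu(\Sigma)\cong D^2\times\Sigma$ contributing the unit $\mathbf{1}\in\mathbb{V}^3_{g,d}$ under \eqref{pairing-FFH} reduces the non-vanishing of $D^3_{X,w}$ to producing a non-trivial pairing with the relative invariant
\[
\rho := D^3_{X^\circ,w^\circ}\bigl(e^{t_2F^\circ_{(2)}+t_3F^\circ_{(3)}}\bigr)\in \mathbb{V}^3_{g,d}.
\]

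Applying the adjunction inequality of Theorem \ref{thm:structure} to the symplectic surfaces in $X$ furnished by Donaldson's construction should force $X$ to be $U(3)$ simple type, so that $\rho$ lies in the Fukaya--Floer simple-type ideal $\mathbb{S}^3_{g,d}$. By Theorem \ref{thm:fukayafloersimpletypeideal}, $\mathbb{S}^3_{g,d}=\bigoplus_{k,(a,b)} R_{k,a,b}$, and the core of the argument is to isolate the ``canonical'' summand $R_{0,g-1,g-1}$ --- corresponding to the canonical class via the relation $K_\omega\cdot\Sigma = 2g-2$ --- and show that $\rho$ has non-zero component there. The plan mirrors the proof of Theorem \ref{thm:fukayafloersimpletypeideal}, where blowups of $K3$ populated each summand $R_{k,a,b}$ with explicit relative invariants; here, the Lefschetz fibration expresses $X^\circ$ as $\Sigma\times D^2$ with $2$-handles attached along the positively oriented vanishing cycles of $f$, and a positivity argument in the spirit of Sivek \cite{sivek}, exploiting that a symplectic Lefschetz fibration yields a product of right-handed Dehn twists in the mapping class group of $\Sigma_g$, should pin down a non-vanishing image in the canonical summand.

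The main obstacle will be executing this last step rigorously: one must verify that the cobordism maps associated to positive vanishing-cycle $2$-handles preserve non-vanishing on $R_{0,g-1,g-1}$ and do not produce cancellations with the other summands $R_{k,a,b}$. I expect this will require either (a) an explicit base-case computation in a model such as a product $\Sigma_g\times\Sigma_h$ with $h\geq 2$, whose $U(3)$ Donaldson series is computable from the ring structure of $\bA^3_g[\varepsilon]/J^3_{g,d}$ in \eqref{eq:firstringpres}, combined with a handle-by-handle deformation to the general case; or (b) a contact-geometric identification of $\rho$ with a distinguished ``contact element'' in $I^3_\ast(S^1\times\Sigma_g,\gamma_d)$, analogous to the contact invariants of Baldwin--Sivek. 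Either route requires careful control over how the ring structure on $\mathbb{V}^3_{g,d}$ interacts with the eigenspace decomposition coming from Theorem \ref{thm:mainev} and Theorem \ref{thm:fukayafloersimpletypeideal}.
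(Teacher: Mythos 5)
Your plan correctly starts from Donaldson's Lefschetz pencil and the blowup formula, but it contains a circularity and leaves the decisive step unexecuted, so it does not constitute a proof.

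The circularity: you propose to deduce that $X$ is $U(3)$ simple type by ``applying the adjunction inequality of Theorem~\ref{thm:structure} to the symplectic surfaces.'' But the adjunction inequality in Theorem~\ref{thm:structure} is a \emph{conclusion} that holds under the hypothesis that $X$ is $U(3)$ simple type; it cannot be used to establish that hypothesis. No argument is given for why an arbitrary closed symplectic $4$-manifold with $b^+>1$ should be $U(3)$ simple type, and this is not known. The paper's proof is organized precisely so as not to need this. Separately, you acknowledge that pinning down the non-vanishing component in the distinguished summand is ``the main obstacle,'' and you offer two speculative routes without carrying either out; that step is exactly where the work lies. (A smaller point: for a fiber $\Sigma$ of genus $g$ with $K_\omega\cdot\Sigma = 2g-2$, the summand matching the eigenvalue $\sqrt{3}(2g-2)$ of $\mu_2(\Sigma)$ in \eqref{surface-instanton} is $R_{0,\,2g-2,\,0}$, not $R_{0,\,g-1,\,g-1}$.)

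The paper's route is genuinely different and does not pass through the simple-type ideal $\mathbb{S}^3_{g,d}$ at all. After blowing up to a Lefschetz fibration $\widehat{X}\to S^2$ with irreducible fibers, the base is cut into two discs and a string of annuli each containing one critical value, decomposing $\widehat{X}$ into $D_-\times F$, cobordisms $W_1,\dots,W_l$, and $D_+\times F$. Since each intermediate $Y_i$ is a surface bundle over $S^1$, hence a closure of a product sutured manifold, $I_*^3(Y_i,\gamma_i\mid F)$ is one-dimensional, and together with excision this reduces the non-vanishing of $D^3_{\widehat X,\widehat w}$ to the non-vanishing of each cobordism map $I_*^3(W_l,w_l)$. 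A further excision splits $W_l$ into an isomorphism of $1$-dimensional spaces and a regular neighborhood $\mathcal L$ of the singular fiber, and the non-vanishing for $\mathcal L$ is checked by embedding it into the elliptic surface $E(g+1)$ (whose $U(3)$ Donaldson series and simple type are known) and pairing against a relative invariant built from an explicit interpolating polynomial. Simple type is invoked only for $E(g+1)$, a computed fact, never for the symplectic manifold $X$ itself.
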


As a consequence of Theorem \ref{non-vanishing-symplectic}, we have the following non-vanishing result for admissible bundles. In the same way that we deduce Theorem \ref{thm:intro2} from Corollary \ref{non-vanishing-3-man}, the following non-vanishing result can be used to give another proof of Theorem \ref{thm:intro2}.

\begin{cor}\label{non-vanishing-admissible-pair}
	Let $(Y,\gamma)$ be an admissible pair such that $Y$ is irreducible. Then the instanton Floer homology group 
	$I^3_*(Y, \gamma)$ is non-trivial. 
\end{cor}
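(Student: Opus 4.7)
The plan is to deduce non-vanishing of $I^3_*(Y,\gamma)$ from Theorem \ref{non-vanishing-symplectic} via a gluing argument. First, I would embed $(Y,\gamma)$ as a separating hypersurface in a closed symplectic $4$-manifold $(X,\omega)$ with $b^+(X)>1$, equipped with a class $w\in H^2(X;\Z)$ whose restriction modulo $3$ to $Y$ agrees with the $U(3)$-bundle data determined by $\gamma$. Granting such an embedding, $Y$ splits $X$ as $X_1\cup_Y X_2$ with $\partial X_1=Y$ and $\partial X_2=-Y$, and the standard gluing formula for $U(3)$ polynomial invariants (an instance of the pairing formula \eqref{pairing-FFH} with $z=z'=1$) expresses
\[
D^3_{X,w}(1)=\langle D^3_{X_1,w_1}(1),\, D^3_{X_2,w_2}(1)\rangle,
\]
with $D^3_{X_i,w_i}(1)\in I^3_*(\partial X_i,\gamma)$ and the angle brackets denoting the natural duality pairing between $I^3_*(Y,\gamma)$ and $I^3_*(-Y,\gamma)$. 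By Theorem \ref{non-vanishing-symplectic}, the left-hand side is non-zero for every $w$, hence so is the pairing, and in particular $I^3_*(Y,\gamma)\neq 0$.

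To construct the embedding, I would use contact topology. By Martinet's theorem, $Y$ admits a contact structure $\xi$, and the symplectic cap theorem of Eliashberg and Etnyre embeds $(Y,\xi)$ as a separating hypersurface in some closed symplectic 4-manifold $(X_0,\omega_0)$. To guarantee $b^+(X)>1$, I would perform a symplectic sum of $X_0$ with a $K3$ surface along a symplectic torus chosen disjoint from $Y$; this preserves the separating embedding of $Y$ while freely increasing $b^+$. To arrange that $PD(\gamma)\in H^2(Y;\Z/3)$ extends to $X$, I would construct the cap so that the $1$-cycle $\gamma\subset Y$ bounds a properly embedded oriented surface $F$ on one side of the splitting; the Poincar\'e dual of this surface in $X$, modified by further symplectic sums with additional building blocks (e.g.\ elliptic surfaces) if needed to generate the required cohomology, then yields the desired global class $w$.

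The main obstacle will be this last point: for an arbitrary admissible class in $H^2(Y;\Z/3)$, it is not automatic that a symplectic cap construction realizes this class as the restriction of a global class on $X$. A careful topological analysis of the Eliashberg--Etnyre cap is required, and one may need to augment the construction to freely realize arbitrary admissible classes while preserving the symplectic structure. The irreducibility hypothesis on $Y$ is used both to rule out $S^1\times S^2$ or $S^1\widetilde{\times}S^2$ summands that would obstruct or trivialize parts of the construction, and to guarantee that $\gamma$ can be represented by a curve adapted to an essential surface in $Y$ that extends into the cap as a symplectic surface, allowing the bundle data to extend compatibly with the symplectic structure.
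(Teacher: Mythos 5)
Your high-level strategy matches the paper's — embed $Y$ as a separating hypersurface in a symplectic $4$-manifold with $b^+>1$ and invoke Theorem \ref{non-vanishing-symplectic} — but there is a genuine gap in how you get from the non-triviality of the Donaldson invariant to the non-triviality of the Floer group. Theorem \ref{non-vanishing-symplectic} only asserts that the \emph{function} $D^3_{X,w}$ is non-trivial, i.e.\ $D^3_{X,w}(z)\neq 0$ for \emph{some} $z\in \bA^3(X)$; it says nothing about $D^3_{X,w}(1)$, which is the quantity your gluing formula actually pairs. The non-vanishing element $z$ will in general be a polynomial in $\Gamma_{(2)}, \Lambda_{(3)}$ with $\Gamma,\Lambda$ surfaces cutting across $Y$, and then the ordinary gluing pairing on $I^3_*(Y,\gamma)$ does not apply: the classes cannot be split into pieces over $X_1$ and $X_2$. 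This is exactly the reason the paper passes to the Fukaya--Floer groups $\mathbb I^3_*(Y,\gamma,L)$ (with $L$ recording the intersections $c=\Gamma\cap Y$, $l=\Lambda\cap Y$), applies the Fukaya--Floer pairing formula \eqref{pairing-FFH}, and then descends to $I^3_*(Y,\gamma)\neq 0$ via the filtration spectral sequence on $\mathbb I^{3,j}_*(Y,\gamma,L)$. Your argument as written skips this step and therefore doesn't close.

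There is a second, smaller issue with the embedding. You propose Martinet plus the Eliashberg--Etnyre cap plus symplectic sums with $K3$, and you correctly flag that arranging the cohomology class $w$ extending $\gamma$ is the delicate point. The paper resolves this cleanly: since $(Y,\gamma)$ is admissible, $b_1(Y)>0$; then, using irreducibility, Gabai's theorem produces a taut foliation, which by the Eliashberg--Thurston perturbation and the filling/capping results packaged in \cite[Prop.\ 15]{KM:prop-P} yields a separating embedding $Y\subset X$ into a closed symplectic $4$-manifold with $H^2(X;\Z)\to H^2(Y;\Z)$ \emph{surjective} and both sides having $b^+>0$. The surjectivity is what makes the extension of $\gamma$ to $w$ automatic. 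An arbitrary contact cap does not come with this cohomological control, so your construction would need the extra argument you anticipate; the Gabai route avoids it entirely and is where irreducibility is really used (not merely to exclude $S^1\times S^2$ summands).
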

\begin{proof}
	The corollary is a consequence of Theorem \ref{non-vanishing-symplectic} and a result about embeddings of 3-manifolds  
	into symplectic manifolds. Since $(Y,\gamma)$ is admissible, we have $b_1(Y)>0$. In particular, $Y$ can be embedded in a symplectic manifold 
	$X$ as a separating submanifold such that the map $H^2(X;\Z)\to H^2(Y;\Z)$ is surjective and the two components $X_1$ and $X_2$
	obtained by cutting $X$ along $Y$ have $b^+>0$. This follows from Gabai's theorem about the existence of taut foliations on 
	3-manifolds with $b_1>0$ \cite{Gab:fol-sut} and \cite[Proposition 15]{KM:prop-P}. 
	The latter is obtained by combining various earlier results \cite{El:symp-filling,Et:symp-filling,ET:confoliations,KM:prop-P} (see also
	\cite[Section 41.3]{km:monopole}). Our control on $H^2(X;\Z)$ implies that there is a $2$-cycle $w$ on $X$ whose intersection with $Y$ is homologous to 
	$\gamma$. Using Theorem \ref{non-vanishing-symplectic}, we know
	\[
	  \mathbb{D}^3_{X, w}({\Gamma_{(2)}+\Lambda_{(3)}})
	\]
	is non-trivial for some $\Gamma,\Lambda\in H_2(X;\Z)$ where $\Gamma$ and $\Lambda$ are represented by embedded surfaces whose intersection with 
	$Y$ are respectively equal to $c$ and $l$. Now we can use the pairing formula \eqref{pairing-FFH} to see that the Fukaya--Floer homology group 
	$\mathbb I^3_*(Y, \gamma,L)$ with $L$ given by the homology classes of $c$ and $l$ is non-trivial. The non-vanishing of this Fukaya--Floer homology group implies that $\mathbb I^{3,j}_*(Y, \gamma,L)$ is non-zero for some $j$. The spectral sequence from $I_\ast(Y,\gamma)\otimes \C^{2^{2j}}$ to $\mathbb I^{3,j}_*(Y, \gamma,L)$ implies that $I^3_*(Y, \gamma)$
	is non-zero.
\end{proof}

\begin{proof}[Proof of Theorem \ref{non-vanishing-symplectic}]
	After possibly perturbing the symplectic form of $X$ and then rescaling, we can assume that the symplectic form $\omega$ of $X$ represents an integral 
	cohomology class. Now \cite[Theorem 2]{Don-Lefs-pen-symp} implies that $X$ admits a (topological) Lefschetz pencil such that the 	
	fibers are symplectic subvarieties representing the Ponicar\'e dual of $k[\omega]$ where $k$ is a large enough integer. 
	In particular, the base locus of this Lefschetz pencil is given by a non-empty set of points $\{x_1,\cdots,x_m\}$, and by blowing up
	$X$ at these points, we obtain $\widehat X$, which is a Lefschetz fibration over $S^2$
	where a generic fiber $F$ (obtained as the 
	proper transform of a fiber of the Lefschetz pencil) represents the cohomology class
	\[
	  k[\omega]-E_1-\cdots -E_m
	\]  
	with $E_i$ the exceptional classes. Taking $k$ large enough, we may also assume that the genus of $F$ is as 
	large as we wish and all fibers of the Lefschetz fibration are irreducible. The latter claim is \cite[Theorem 3.10]{Smith:Lef-pen} and 
	the former follows from adjunction formula. (See, for example, (3.9) in \cite{Smith:Lef-pen}.)
	The 2-cycle $w$ in $X$ induces a cycle in $\widehat X$ and if necessary we add $\pm E_1$
	 to this cycle to guarantee that the resulting cycle $\widehat w$ in $\widehat X$ satisfies $\widehat w \cdot F \equiv 1$ mod $3$.
	 By Theorem \ref{thm:blowup}, if we show that $D^3_{\widehat X,\widehat w}$ is non-trivial, then $D^3_{X,w}$
	 is also non-trivial. 

	We decompose the base $S^2$ of the Lefschetz fibration structure on $\widehat X$ as a union
	$D_- \cup A_1 \cup\cdots A_l \cup D_{+}$ such that $D_{\pm}$ are discs and the $A_i$ are annuli, and the Lefschetz fibration 
	has no critical point over the discs and exactly one critical point over each annulus. This induces a decomposition of $\widehat X$
	as follows:
	\[
	  \widehat{X}=D_-\times F \cup W_1 \cup \cdots \cup W_l \cup D_+\times F
	\]
	where $W_i:Y_{i-1}\to Y_i$ is a cobordism that admits a Lefschetz fibration over the annulus $A_i$ with one singular fiber. In particular, $Y_i$ fibers 
	over $S^1$ with fiber $F$. We regard $D_-\times F$ as a cobordism from the empty set to $Y_0=S^1\times F$ and 
	$D_+\times F$ as a cobordism from $Y_l=S^1\times F$ to the empty set. Without loss of generality, we can 
	assume that the intersection of $\widehat w$ and $Y_i$ is transversal and we write $\gamma_i$ for the induced $1$-cycle on $Y_i$.
	We also denote the intersection of $\widehat w$ with $D_{\pm}\times F$ and $W_i$ by $w_\pm$ and $w_i$.
	Our assumption on $\widehat w \cdot F$ implies that $(Y_i,\gamma_i)$ is $3$-admissible. (In the special case of
	$\gamma_{0}$ and $\gamma_{l}$, they are given by circle fibers of $Y_0$ and $Y_l$.) The 3-manifolds $Y_i$ can be regarded 
	as a closure of the product sutured manifold. In particular, we can use Theorem \ref{thm:intro-SHI-invariance} to see that 
	$I_*^3(Y_i,\gamma_i\vert F)=\C$. 
	
	As the Floer homology of $(S^1\times F,\gamma_1)$ is generated by relative invariants of $D^2\times F$, there exist polynomials $p_{\pm}\in \C[x,y,z]$ such that 
	\begin{align}
	  D_{D_-\times F, w_-}^3(p_-(x_{(2)},F_{(2)},F_{(3)}))&\in I_*^3(Y_0,\gamma_0\vert F), \label{eq:cobinvtsympprf1}\\[2mm]
	  D_{D_+\times F,w_+}^3(p_+(x_{(2)},F_{(2)},F_{(3)}))&:I_*^3(Y_l,\gamma_l\vert F)\to \C \label{eq:cobinvtsympprf2}
	\end{align}
	are non-trivial. The gluing formula expresses the invariant
	\[
		D_{\widehat{X},\widehat{w}}^3(p_+p_-(x_{(2)},F_{(2)},F_{(3)}))
	\]
	in terms of a composition of the two quantities \eqref{eq:cobinvtsympprf1}, \eqref{eq:cobinvtsympprf2} and the maps
		\begin{align}
	  I_*^3(W_l, w_l) &: I_*^3(Y_{l-1},\gamma_{l-1}\vert F)\to I_*^3(Y_l,\gamma_l\vert F). \label{eq:sympproofmiddlemap}
	\end{align}
	Thus to prove our claim, it suffices to show that \eqref{eq:sympproofmiddlemap} is non-zero.
	
	The cobordism $W_l:Y_{l-1}\to Y_l$ can be decomposed further as
	the composition of the following two 4-dimensional cobordisms:
	\[
	  \mathcal L_l: \emptyset \to Y_{\phi_l}, \hspace{1cm}
	  \mathcal P_l: Y_{l-1} \sqcup Y_{\phi_l} \to Y_{l},
	\]
	where $Y_{\delta_l}$ is the mapping torus of a positive Dehn twist along a non-separating simple closed curve in $F$. 
	Here we are using the fact that 
	the fibers of our Lefschetz fibration are irreducible. The positive Dehn twist $\delta_l$ is determined by the property that if  $Y_{i}$ is the 
	mapping torus of the diffeomorphism $\phi_{i}:F\to F$, then $\phi_l=\delta_l\circ \phi_{l-1}$. The cycle $w_l$ induces the cycles $c_l$
	and $c_{l}'$ on $\mathcal L_l$ and $\mathcal P_l$. We also write $\epsilon_l$ for the induced cycle on $Y_{\delta_l}$.
	The excision theorem of \cite{DX} implies that the cobordism map 
	\begin{equation}\label{excision-iso}
	  I_*^3(\mathcal P_l,c_l'):I_*^3(Y_{l-1},\gamma_{l-1}\vert F)\otimes I_*^3(Y_{\delta_l},\epsilon_l \vert F) \to I_*^3(Y_l,\gamma_l\vert F)
	\end{equation}
	is an isomorphism of $1$-dimensional vector spaces. The following lemma and the non-vanishing of \eqref{excision-iso} implies the non-vanishing of \eqref{eq:sympproofmiddlemap}, completing the proof.
\end{proof}

\begin{figure}[t]
\centering
\includegraphics[scale=1.05]{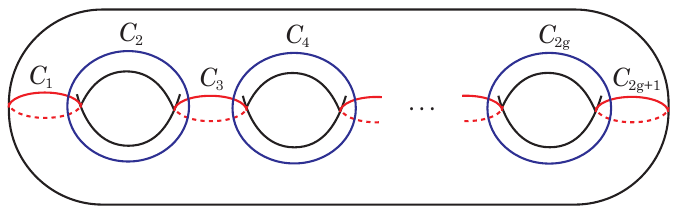}
\caption{The curves $C_i$ for $1\leq i \leq 2g+1$ on a surface of genus $g$.}
\label{fig:dehn-curves}
\end{figure}

 \begin{lemma}
	For an oriented closed surface $F$, let $\mathcal L$ be a Lefschetz fibration over the 2-dimensional disc with one irreducible
	singular fiber. Let $c$ be a 2-cycle on $\mathcal L$ such that $c\cdot F \equiv 1$ mod $3$. 
	Then $I_*^3({\mathcal L, c})$ has a non-trivial component in $I_*^3(Y,\gamma\vert F)$ 
	where $(Y,\gamma)$ is the boundary of $(\mathcal L, c)$.
 \end{lemma}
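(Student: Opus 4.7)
The plan is to realize $\mathcal{L}$ as the neighborhood of a singular fiber inside a closed Lefschetz fibration $\pi:X\to S^2$ whose total space is a closed symplectic $4$-manifold $X$ with $b^+(X)>1$ and \emph{known} non-vanishing $U(3)$ Donaldson invariants, and then to extract the desired conclusion from the gluing formula \eqref{pairing-FFH}. Importantly, this strategy is not circular: although the proof of Theorem \ref{non-vanishing-symplectic} invokes this lemma for a general symplectic $X$, the lemma only requires non-vanishing for \emph{one specific} target manifold, which we take to be $K3$ (or a blowup of $K3$), whose $U(3)$ invariants were computed explicitly in \cite{DX} by means independent of the present theorem.

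Since the vanishing cycle $\delta\subset F$ is non-separating, its $\operatorname{Mod}(F)$-orbit is unique, so the diffeomorphism type of $\mathcal{L}$ is determined by $g=g(F)$. Appealing to the theory of positive factorizations in $\operatorname{Mod}(F_g)$ — Matsumoto's hyperelliptic factorizations for general $g$, the classical genus-$2$ factorization associated to the $K3$ surface, and fiber summing with standard Lefschetz fibrations on elliptic surfaces $E(n)$ — one constructs a closed genus-$g$ Lefschetz fibration $\pi:X\to S^2$ that contains $\mathcal{L}$ as the neighborhood of one of its singular fibers and whose total space $X$ is diffeomorphic to a blowup of $K3$ (or a fiber sum of such with $E(n)$). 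For such $X$, the explicit formula $\mathbb{D}^3_{K3,w}(\Gamma_{(2)}+\Lambda_{(3)})=e^{Q(\Gamma)/2-Q(\Lambda)}$ of \cite{DX}, combined with the blowup formula (Theorem \ref{thm:blowup}), yields a bundle class $w\in H^2(X;\Z)$ and an element $z\in\bA^3(X')$ supported in the complement $X':=X\setminus\operatorname{int}(\mathcal{L})$ for which $D^3_{X,w}(z)\neq 0$.

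Applying the pairing formula \eqref{pairing-FFH} to the decomposition $X=\mathcal{L}\cup_{Y_\delta}X'$ gives
\[
  D^3_{X,w}(z)\;=\;\bigl\langle\,D^3_{\mathcal{L},c},\;D^3_{X',w'}(z)\,\bigr\rangle
\]
with pairing taken in $I^3_\ast(Y_\delta,\gamma)$. By Theorem \ref{thm:intro-SHI-invariance} the subspace $I^3_\ast(Y_\delta,\gamma\vert F)$ is one-dimensional, and the relative invariant $D^3_{X',w'}(z)$ lies in this subspace since the fiber $F\subset X'$ and the eigenvalue constraints established in Proposition \ref{eigenvalue-S} (applied to the operators $\mu_r(F),\beta_r$) pin the simultaneous generalized eigenvalues to those defining \eqref{surface-instanton}. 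Since the pairing respects the generalized eigenspace decomposition, non-vanishing of the left-hand side forces the projection of $D^3_{\mathcal{L},c}$ onto $I^3_\ast(Y_\delta,\gamma\vert F)$ to be non-zero, which is the desired conclusion.

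The main obstacle is the topological construction of $X$ in the first step: one must produce a closed Lefschetz-fibered symplectic manifold containing $\mathcal{L}$ whose invariants are deducible from the $K3$ formula by blowups, fiber sums, and the structure theorem (Theorem \ref{thm:structure}). For $g=2$ the classical $K3$ fibration handles the task directly, while for general $g\geq 1$ one must rely on positive factorizations in $\operatorname{Mod}(F_g)$ and iterated fiber sums with hyperelliptic Lefschetz fibrations, together with the blowup formula of Theorem \ref{thm:blowup} to transfer non-vanishing from $K3$ to $X$.
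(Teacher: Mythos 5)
Your overall strategy — embed $\mathcal{L}$ into a closed Lefschetz-fibered symplectic $4$-manifold with computable $U(3)$ invariants and then apply the gluing formula — is the same one the paper uses, and the observation that this is not circular because the lemma only needs \emph{one} target manifold with known invariants is correct. But there is a genuine gap at the heart of your argument, and a secondary issue in the topological construction.

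The crucial step that is missing is the justification of the sentence ``the relative invariant $D^3_{X',w'}(z)$ lies in this subspace.'' For a general $z$ supported in the complement, there is no reason the relative invariant should lie in the one-dimensional eigenspace $I^3_*(Y,\gamma|F)$: Proposition \ref{eigenvalue-S} only bounds the possible eigenvalues of $(\mu_2(F),\mu_3(F))$ by $(\sqrt3 a,\sqrt3 ib)$ with $|a|+|b|\leq 2g-2$, and the relative invariant will in general have components in \emph{all} of these eigenspaces, not just the extremal one $(\sqrt3(2g-2),0)$. Without pinning $D^3_{X',w'}(z)$ to the extremal eigenspace (or, dually, to the unique eigenspace that pairs non-trivially with $I^3_*(Y,\gamma|F)$), a non-vanishing $\langle D^3_{\mathcal{L},c},\,D^3_{X',w'}(z)\rangle$ only tells you that \emph{some} eigencomponent of $D^3_{\mathcal{L},c}$ is non-zero, which is weaker than the lemma. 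The paper handles this by further splitting the complement $\mathcal{Z}=X'$ as $Z_0\cup Z_1$, where $Z_1\cong D^2\times F$ is a regular fiber neighborhood, treating $(Z_0,c_0)$ as a cobordism $(Y,\gamma)\to(S^1\times F,\gamma_1)$, and cutting down $Z_0$ by a carefully constructed polynomial $Q(F_{(2)},F_{(3)})$. The polynomial $Q$ is engineered — using the explicit formula for $\mathbb{D}^3_{E(g+1),w}$ — to annihilate every non-extremal eigenvalue $(a,b)$ appearing in the finite sum for the elliptic surface invariant while returning $1$ at $(g-1,0)$. Pairing against all elements $D^3_{Z_1,c_1+l F}(R)$ then shows that $\widehat D^3_{Z_0\cup\mathcal{L},\,c_0\cup c}(Q)$ is both non-zero and actually lies in $I_*^3(S^1\times F,\gamma_1|F)$; since the $Z_0$-cobordism map intertwines $\mu_r(F)$ and $\beta_r$, this forces $I_*^3(\mathcal{L},c)$ to have a non-zero component in $I_*^3(Y,\gamma|F)$. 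This $Q$-construction and the $Z_0\cup Z_1$ splitting are the real content of the proof, and your proposal omits them entirely.

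Secondarily, the paper sidesteps your ``main obstacle'' (constructing a suitable closed fibration containing $\mathcal{L}$ for general $g$) by simply using the elliptic surface $E(g+1)$, which carries an explicit genus-$g$ hyperelliptic Lefschetz fibration with an irreducible singular fiber in the required mapping class; the uniqueness of the mapping-class-group orbit of a non-separating vanishing cycle then identifies $\mathcal{L}$ with a neighborhood of one of its singular fibers. Choosing $E(g+1)$ rather than a $K3$ blowup also avoids having to chase blowup and fiber-sum corrections through the structure theorem, since the $U(3)$ invariants of $E(g+1)$ are already written down in closed form in \cite{DX}.
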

 
 \begin{proof}
	The 4-manifold $\mathcal L$ can be embedded into any closed 4-manifold $X$ together with the structure of a genus $g=g(F)$
	Lefschetz fibration, which has at least one irreducible singular fiber. For instance, we can take the elliptic surface $X=E(g+1)$, 
	which has a genus $g$ Lefschetz fibration in addition to its standard elliptic fibration.
	In fact, we may take a Lefschetz fibration with the singular fibers given by the monodromies
	\[
	  (\psi_{1},\psi_2,\cdots,\psi_{2g-1}, \psi_{2g-1}, \cdots, \psi_{2},\psi_{1})^{4}
	\]
	where $\psi_i$ denotes the Dehn twist along the simple closed curve $C_i$ in Figure \ref{fig:dehn-curves}. 
	See \cite[Chapter 8]{GS:kirby-calc}. We still denote the fibers of this Lefschetz fibration by $F$. Then the algebraic intersection of $F$
	and a fiber $f$ of the elliptic fibration of $E(g+1)$ is equal to $2$.
	
	The 4-manifold $\mathcal L$ can be identified with a regular neighborhood of the singular fiber corresponding to $\psi_1$.
	The complement of $\mathcal L$ determines another Lefschetz fibration $\mathcal Z$ over a disc. 
	This manifold has the homotopy type of $F$ where we glue 
	2-cells to it along simple close curves corresponding to the Dehn twists involved in the Lefschetz fibration structure of $\mathcal Z$.
	In particular, $H_1(\mathcal Z)$ is trivial, and hence there is a 2-cycle $c'$ on $\mathcal Z$ whose restriction to $Y$ is $\gamma$. 
	The 2-cycles $c$ and $c'$ can be glued to each other to form a 2-cycle $\tilde c$ on $E(g+1)$.
	We may further split $(\mathcal Z,c)$ as the composition of $(Z_0,c_0)$ and $(Z_1,c_1)$ such that $Z_1$ is a regular neighborhood of 
	regular fiber of the Lefschetz fibration of $\mathcal Z$ and $Z_1$ is the complement. In particular, $Z_1$ is diffeomorphic to $D^2\times F$
	and we can assume that $c_1=D^2\times \{x\}$ for $x\in F$. The pair $(Z_0,c_0)$ can be regarded as a cobordism from $(Y,\gamma)$
	to $(S^1\times F,\gamma_1)$. To prove the claim, it suffices to show that there is a polynomial $Q\in \C[x,y]$ such that 
	\begin{equation}\label{relative-inv}
	  \widehat{D}^3_{Z_0,c_0}(Q(F_{(2)},F_{(3)})) \circ I_*^3(\mathcal L, c) 
	\end{equation}
	is a non-zero element of $I_*^3(S^1\times F,\gamma_1|F)$. Then functoriality implies that 
	$I_*^3(\mathcal L, c) $ has a non-trivial component in $I_*^3(Y,\gamma\vert F)$.
	
	The polynomial $p$ can be constructed as in \cite[Proposition 5.7]{DX} using the calculation of 
	$U(3)$ invariants of elliptic surfaces in \cite{DX}. For the pair $(E(g+1), w)$, we have 
	  \begin{equation*}\label{U(3)-series-Elliptic}
		\mathbb{D}^3_{E(g+1), w}({t_2F_{(2)}+t_3F_{(3)}}) = \(\frac{2}{3}\cosh(2\sqrt 3 t_2)-
		\frac{2}{3} \cosh(-\frac{2\pi i}{3}w \cdot f+2\sqrt 3 i t_3 )\)^{g-1}.
	\end{equation*}	
	In particular, it is a power series of the form 
	\begin{equation}\label{power-series-rewrite}
		\mathbb{D}^3_{E(g+1), w}({t_2F_{(2)}+t_3F_{(3)}})=\sum_{a,b} d^w_{a,b} e^{2\sqrt 3 at_2+2\sqrt 3 i bt_3}
	\end{equation}
	where $d^w_{a,b}$ are constant coefficients, $a+b$ has the same parity as $g-1$ and $|a|+|b|\leq g-1$. For instance, we have $d^w_{g-1,0}=(2/3)^{g-1}$.
	We may use the above identities to compute $\widehat{D}_{X,w}(P(F_{(2)},F_{(3)}))$ for any polynomial $P$. In fact, using the notation in 
	\eqref{power-series-rewrite} we have
	\begin{equation}\label{power-poly-rewrite}
		\widehat{D}_{E(g+1),w}^3(P(F_{(2)},F_{(3)}))=\sum_{a,b} d^w_{a,b} P(a,b).
	\end{equation}	
	Now let $Q$ be a polynomial such that $Q(g-1,0)=1$, and $Q(a,b)=0$ for any other $(a,b)$ as above. Using \eqref{power-poly-rewrite} 
	and the fact that $E(n)$ with $n\geq 2$ is $U(3)$ simple type, we have
	\begin{equation}\label{power-poly-rewrite}
		\widehat{D}_{E(g+1),w}^3(R(x_{(2)},x_{(3)},F_{(2)},F_{(3)})Q(F_{(2)},F_{(3)}))=\left(\frac{2}{3}\right)^{g-1}R(3,0,g-1,0)
	\end{equation}	
	for any polynomial $R\in \C[v,w,x,y]$. We claim that this polynomial $Q$ satisfies the required property for \eqref{relative-inv}.

	Gluing $(Z_0,c_0)$ and $(\mathcal L, c)$ produces a pair $(\widetilde Z_0,\widetilde c_0)$ with boundary $(S^1\times F,\gamma_1)$, and by functoriality \eqref{relative-inv} is equal to 	the following:
	\begin{equation}\label{relative-inv-2}
	  \widehat{D}^3_{\widetilde Z_0,\widetilde c_0}(Q(F_{(2)},F_{(3)})).
	\end{equation}	
	Using the functoriality of instanton Floer homology again, we see that 
	\begin{align*}
	\langle \widehat D^3_{\widetilde Z_0,\widetilde c_0}( Q(F_{(2)},F_{(3)})), \;\;
	& D^3_{Z_1,c_1+l\cdot F}(R(x_{(2)},x_{(3)},F_{(2)},F_{(3)}))\rangle \hspace{3cm}\\[2mm]
	=& \widehat{D}^3_{E(g+1),\widetilde c+l\cdot F}(R(x_{(2)},x_{(3)},F_{(2)},F_{(3)})Q(F_{(2)},F_{(3)})).
	\end{align*}
	Since instanton Floer homology of $(S^1\times F,\gamma_1)$ is generated by elements of the form 
	$D^3_{Z_1,c_1+l\cdot F}(R(x_{(2)},x_{(3)},F_{(2)},F_{(3)}))$, we may combine the above pairing formula and \eqref{power-poly-rewrite}
	to see that \eqref{relative-inv-2} is non-zero and belongs to the eigenspace $I_*^3(S^1\times F,\gamma_1|F)$.
\end{proof}

%!TEX root = main.tex
	
\section{$U(N)$ framed instanton homology}	\label{sec:framed}

	In this section we study the $U(N)$ framed instanton homology for $3$-manifolds. These groups were essentially introduced by Kronheimer and Mrowka \cite{KM:YAFT}, and have been extensively studied in the $N=2$ case, see for example \cite{KM:unknot,scadutothesis,bs-lspace}. After establishing some basic properties of $U(N)$ framed instanton homology, we compute its Euler characteristic and state a connected sum theorem. In the final subsection we discuss a decomposition result for cobordism maps in the $N=3$ case, which follows from an adaptation of the $U(3)$ Structure Theorem in this setting.

\subsection{Definition}
	
	Let $Y$ be a closed, oriented, connected $3$-manifold. Delete a small embedded open $3$-ball from $Y$ to obtain $M$, which has a $2$-sphere boundary. Let $\alpha$ be any simple closed curve on the boundary of $M$. Define the {\emph{$U(3)$ framed instanton homology of $Y$}} as follows:
	\[
		I^{\#,3}_\ast(Y) = SHI_\ast^3(M,\alpha).
	\]
	More concretely, the $U(3)$ framed instanton homology is given as
	\[
		I^{\#,3}_\ast(Y) = I^3_\ast(Y\# T^3, \gamma | R )
	\]
	where $\gamma$ is the $1$-cycle $S^1\times \{x\}$ in $T^3=S^1\times T^2$ and $R=\{y\}\times T^2$, where $x\in T^2$ and $y \in S^1$. As $\mu_2(R)=\mu_3(R)=\beta_3=0$ on the group $I^3_\ast(Y\# T^3)$, it follows that $I^{\#,3}_\ast(Y)$ is defined as the $(3)$-eigenspace of $\beta_2$ acting on $I^3_\ast(Y\# T^3)$. Note that we have already encountered these groups at the end of Section \ref{sec:sutured}.
	
	More generally, we define the $U(N)$ framed instanton homology for $N\geq 2$ as
		\begin{equation}\label{eq:framedun}
		I^{\#,N}_\ast(Y) = I^N_\ast(Y\# T^3, \gamma | R)
	\end{equation}
	where the notation on the right-side denotes the $(N)$-eigenspace of the operator $\beta_2$ acting on $I^N_\ast(Y\# T^3, \gamma )$.
	This construction is due to Kronheimer and Mrowka, see \cite[\S 4.1]{KM:YAFT}. The $N=2$ version has been studied in various settings (see e.g. \cite{scadutothesis}), often motivated by Kronheimer and Mrowka's conjecture \cite[\S 7.9]{km-sutures} that $I^{\#,2}(Y)$ is isomorphic to Ozsv\'{a}th and Szab\'{o}'s Heegaard Floer group $\widehat{HF}(Y)$ with complex coefficients.

\begin{remark}
	Note that definition \eqref{eq:framedun} allows one to use any coefficient ring when defining $U(N)$ framed instanton homology. In what follows, we will continue to assume that complex coefficients are used.
\end{remark}
	
	As $I^N_\ast(Y\# T^3, \gamma )$ is relatively $\Z/4N$-graded and $\beta_2$ has degree $4$, the group \eqref{eq:framedun} inherits a relative $\Z/4$-grading. This can be lifted to an absolute $\Z/4$-grading, just as in the $N=2$ case; the discussion in \cite[\S 7.3]{scadutothesis} adapts in a straightforward manner. 
	
	For our purposes, we only need to specify an absolute $\Z/2$-grading on $I^{\#,N}(Y)$. To do this, it suffices to define an absolute $\Z/2$-grading on $I^N(Y,\omega)$ for any $N$-admissible pair $(Y,\omega)$. For a critical point $\rho$ which is a generator of the complex defining $I^N_\ast(Y,\omega )$, set
	\begin{equation}\label{eq:mod2grdef}
		\text{gr}(\rho) = \text{ind}(A) + (N^2-1)(b_1(X)-b^+(X)+b_1(Y)-1) \pmod{2}
	\end{equation}
	where $\text{ind}(A)$ is the index of $D_A$, the (perturbed) ASD operator associated to a $PU(N)$-connection $A$ over a $4$-manifold $X$ with cylindrical end $Y\times [0,\infty)$, with $A$ restricting to the pullback of $\rho$ over $Y\times [0,\infty)$. This is well-defined by an argument analogous to the one given in \cite[\S 5.6]{donaldson-book},  using the index formulas found in \cite{kronheimer-higher}, for example. 
	
	Note from the construction of framed instanton homology that
	\[
		\dim I^{\#,N}(Y) = \frac{1}{N}\dim I^N_\ast(Y\# T^3, \gamma ).
	\]	
	Consider $Y=S^3$. As $I^N_\ast( T^3, \gamma )$ is of dimension $N$, generated by $N$ non-degenerate flat connections (see \cite{kronheimer-higher,KM:YAFT}), the dimension of $I^{\#,N}(S^3)$ is $1$.
	
	Note that the $(3)$-eigenspace of $\beta_2$ acting on $I_\ast(Y\# T^3)$ agrees with the $(1)$-eigenspace of the operator $\varepsilon$. The action of $\varepsilon$ can also be viewed as the action of a certain $PU(N)$-gauge transformation supported on the $T^3$-factor. From this viewpoint, which will be adapted below, $I^\#(Y,\gamma)$ is the Morse homology of a (perturbed) Chern--Simons functional on a configuration space of $PU(N)$-connections which is quotiented by a slightly larger gauge group, and its critical set (in the unpertubed case) is homeomorphic to
	\[
		R^N(Y):= \text{Hom}(\pi_1(Y),SU(N)),
	\]
	the $SU(N)$ representation space of $\pi_1(Y)$. Note that the quotient of $R^N(Y)$ by the action of conjugation, denoted $\mathfrak{X}^N(Y)$, is the $SU(N)$ character variety of $Y$.
	
\subsection{Euler characteristic}	

	In \cite{scadutothesis} it was shown that $I^{\#,2}(Y)$ has Euler characteristic equal to $|H_1(Y;\Z)|$ if $b_1(Y)=0$, and is otherwise zero. This fact generalizes as follows.
	
	\begin{theorem}\label{thm:eulercharframed}
		For any $N\geq 2$ and any closed, oriented, connected $3$-manifold $Y$:
		\begin{equation}\label{eq:framedeulercharthm}
			\chi\left( I^{\#,N}(Y) \right) = \begin{cases} |H_1(Y;\Z)|^{N-1}, & b_1(Y)=0 \\ 0, & b_1(Y)>0\end{cases}
		\end{equation}
	\end{theorem}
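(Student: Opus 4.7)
The plan is to compute $\chi(I^{\#,N}(Y))$ via a signed count of perturbed flat $SU(N)$-connections, generalizing the $N=2$ computation from \cite{scadutothesis}. Since $I^{\#,N}(Y)$ is the $\varepsilon=1$ eigenspace of $I^N_\ast(Y \# T^3, \gamma)$, holonomy-perturbed critical points correspond to flat $SU(N)$-connections on $Y$: the $N$ flat $PU(N)$-connections on the $T^3$ summand (cyclically permuted by $\varepsilon$) are collapsed by the $\varepsilon=1$ projection to a single ``diagonal'' copy. Hence $\chi(I^{\#,N}(Y))$ equals a weighted Euler characteristic over the conjugation-orbits in $R^N(Y) = \Hom(\pi_1(Y), SU(N))$, with signs determined by the absolute $\Z/2$-grading \eqref{eq:mod2grdef}.

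Next, I would stratify $R^N(Y)$ by the conjugacy type of the centralizer $C(\rho) \subset SU(N)$ of the image. Each orbit has the form $SU(N)/C(\rho)$, and by the Hopf--Samelson theorem, $\chi(SU(N)/C(\rho)) = 0$ unless $C(\rho)$ has maximal rank, which happens precisely when $\rho$ is abelian. Consequently, the Euler characteristic localizes to the abelian stratum, parametrized by $\Hom(H_1(Y;\Z), T^{N-1})$ for $T^{N-1} \subset SU(N)$ a maximal torus.

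For $b_1(Y) > 0$: the free part of $H_1(Y;\Z)$ contributes a positive-dimensional torus factor to $\Hom(H_1(Y;\Z), T^{N-1})$, so its Euler characteristic vanishes. Alternatively, a free $U(1)$-action on the perturbed critical set (from a non-trivial character $\phi: \pi_1(Y) \to U(1) \hookrightarrow SU(N)$ available by $b_1(Y) > 0$) shifts Chern--Simons by a nonzero amount, giving the same conclusion. For $b_1(Y) = 0$: the abelian stratum is a finite set of exactly $|H_1(Y;\Z)|^{N-1}$ points, and a sign analysis (generalizing \cite{scadutothesis} via \eqref{eq:mod2grdef}) shows each contributes $+1$.

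The main obstacle is the framing and sign analysis: confirming that each abelian representation is weighted by $+1$, rather than by $\chi(SU(N)/T) = N!$ from the orbit or by $1/|W|$ from a Weyl-group quotient. This requires a careful examination of how the $\varepsilon=1$ projection interacts with the orbit decomposition and how the $\Z/2$-grading \eqref{eq:mod2grdef} assigns signs, so that the naive Morse--Bott sum over $R^N(Y)$ collapses to the clean count $|H_1(Y;\Z)|^{N-1}$.
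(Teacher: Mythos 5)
Your overall strategy is the same as the paper's: stratify $R^N(Y)$ by conjugacy type, invoke Hopf--Samelson to kill contributions from non-maximal-rank stabilizers (i.e.\ non-abelian $\rho$), and conclude for $b_1>0$ via the torus factor. You have, however, left unresolved exactly the two points that carry the real content of the theorem.

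First, you flag as ``the main obstacle'' whether each abelian class is weighted by $+1$ or by $\chi(SU(N)/T)=N!$ or by $1/|W|$. The paper's resolution is a clean numerical coincidence that you should spell out: the standard abelian representations are $\Hom(H_1(Y;\Z),T^{N-1})$, and the Weyl group $W_{SU(N)}=S_N$ acts on this set with stabilizer of $\rho$ canonically identified with $W_{\Gamma_\rho}$. Orbit--stabilizer gives
\[
|H_1(Y;\Z)|^{N-1}=\sum_{[\rho]\in\mathfrak{R}^N(Y)}\frac{|W_{SU(N)}|}{|W_{\Gamma_\rho}|},
\]
while Hopf--Samelson gives $\chi(SU(N)/\Gamma_\rho)=|W_{SU(N)}|/|W_{\Gamma_\rho}|$ exactly when $\Gamma_\rho$ has maximal rank. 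So each conjugacy class with free Weyl orbit does contribute $N!$, and that is precisely its multiplicity among standard abelians; there is no overcounting to correct for. Without this identity your argument does not close.

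Second, you defer the sign analysis, but this is not a routine generalization of $N=2$: it requires verifying via the mod-$2$ grading convention \eqref{eq:mod2grdef} that every abelian Morse--Bott orbit sits in even degree, which the paper does by an explicit APS index computation. Concretely, one builds a filling $X$ of $Y$, splits $\mathrm{ad}\,A_X$ as a sum of complex line bundles plus a trivial $\underline{\R}^{N-1}$, uses that the complex summands contribute even index, and tracks the $\rho$-invariant difference across the cobordism $W$ attaching $D^2\times T^2$; the outcome is $\mathrm{gr}(\alpha\#\beta)\equiv \dim\Gamma_\rho-(N^2-1)\equiv 0\pmod 2$. Your sketch does not indicate how to produce this, and the step where ``the naive Morse--Bott sum collapses'' is exactly where it is needed. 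Finally, you should also note that the Morse--Bott non-degeneracy assumption is only achieved after a holonomy perturbation supported on $Y$, and one must check the abelian strata (their count and stabilizer types) are stable under small such perturbations, as in the paper's citation of \cite[Theorem 3.6]{mme}.
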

	
	\begin{proof}
		We first explain the proof under the assumption that $b_1(Y)=0$, $\mathfrak{X}^N(Y)$ is a finite set of non-degenerate points. In particular, $R^N(Y)$ is Morse--Bott nondegenerate for the Chern--Simons functional on $(Y\#T^3,\gamma)$. In particular, we have a homeomorphism of spaces
		\begin{equation}\label{eq:sunrepvarietyinproof}
			R^N(Y) \cong \bigsqcup_{[\rho]\in \mathfrak{X}^N(Y)} SU(N)/\Gamma_{\rho}
		\end{equation}
		where $\Gamma_\rho\subset SU(N)$ denotes the stabilizer of $\rho$ under the conjugation action. The stabilizer $\Gamma_\rho$ is isomorphic to a group of the form
		\[
			S(U(n_1)\times U(n_2)\times \cdots \times U(n_k))
		\]
		where $\sum n_i \leq N$. Let us say that $\rho$ is {\emph{abelian}} if $\sum n_i = N$. This terminology is justified by the fact that $\rho$ is abelian if and only if it factors through the abelianization $H_1(Y;\Z)$; an equivalent condition is that the stabilizer $\Gamma_\rho$ has the same rank as $SU(N)$. (Recall that the rank of a compact Lie group is the dimension of a maximal torus.)
		
		A abelian $SU(N)$ representation $\rho$ may be constructed by taking $N-1$ homomorphisms $\rho_i:H_1(Y;\Z)\to U(1)$ for $i=1,\ldots,N-1$ and composing 
		\[
			\rho_1\oplus \cdots \oplus \rho_{N-1}\oplus (\rho_1\cdots \rho_{N-1})^{-1}:H_1(Y;\Z)\to SU(N)
	\]
	 with the natural surjection $\pi_1(Y)\to H_1(Y;\Z)$. This constructs $|H_1(Y;\Z)|^{N-1}$ abelian representations; call these {\emph{standard}}. Every abelian representation is conjugate to a standard one, but some standard abelian representations are equivalent by conjugation. Conjugation induces an action of the Weyl group of $SU(N)$, the symmetric group $S_N$, on the set of standard abelian representations. The orbit-stabilizer formula for this $S_N$-action yields
	 \begin{equation}\label{eq:countstandardcompletelyreducibles}
	 	|H_1(Y;\Z)|^{N-1} = \sum_{[\rho]\in \mathfrak{R}^N(Y)} |W_{SU(N)}|/|W_{\Gamma_\rho}|
	 \end{equation}
	 where $\mathfrak{R}^N(Y)\subset \mathfrak{X}^N(Y)$ is the subset of abelian classes. The notation $W_G$ denotes the Weyl group of $G$. In writing this formula we have identified the stabilizer of $\rho$ under the $S_N=W_{SU(N)}$ action with the Weyl group of ${\Gamma_\rho}$. 		
		A result of Hopf and Samelson \cite{hopf-samelson} says that a connected homogeneous space $G/H$, where $G$ is a compact Lie group and $H$ is a closed subgroup, has Euler characteristic
		\begin{equation}\label{eq:hopfsamelson}
			\chi(G/H) = \begin{cases} |W_G|/|W_H|, & \text{rank}(G) = \text{rank}(H)\\[2mm]
														0, &  \text{rank}(G) > \text{rank}(H)\end{cases}
		\end{equation}
		Combining \eqref{eq:hopfsamelson}, \eqref{eq:sunrepvarietyinproof}, \eqref{eq:countstandardcompletelyreducibles}, and the earlier observation that $\rho$ is abelian if and only if the rank of $\Gamma_\rho$ is that of $SU(N)$, we obtain
		\[
			\chi(R^N(Y)) = |H_1(Y;\Z)|^{N-1}.
		\]
		A small perturbation used in defining $I^{\#,N}_\ast(Y\# T^3, \gamma )$ can be chosen so that the orbit of $\rho$ appearing in \eqref{eq:sunrepvarietyinproof} is replaced by the set of critical points $\{\alpha_i\}$ of a Morse function on $SU(N)/\Gamma_\rho$. The mod $2$ grading of $\alpha_i$ is given by its Morse index plus the mod $2$ grading of $\rho$ as defined by equation \eqref{eq:mod2grdef}. Thus the relation
			\begin{equation}\label{eq:claimmidproofeulerchar}
			\chi\left( I^{\#,N}(Y) \right) = \chi(R^N(Y)) = |H_1(Y;\Z)|^{N-1}
		\end{equation}
		will hold in the case at hand once it is shown that the mod $2$ grading of each abelian critical point is even. In what follows, we represent $\rho\in R^N(Y)$ by a connection $\alpha\#\beta$ on $Y\# T^3$, where $\alpha$ is a flat $SU(N)$ connection on $Y$ and $\beta$ is one of the $N$ flat non-degenerate $PU(N)$ connections on $T^3$ compatible with the bundle data $\gamma$. 
		
		Let $X$ be a $4$-manifold with boundary $Y$. Denote by $W$ the cobordism from $Y$ to $Y\# T^3$ which is topologically the boundary sum of $Y\times I$ with $D^2\times T^2$. Write $X'$ for the union of $X$ and $W$ along $Y$. Let $A_X$ be a $PU(N)$ connection on $X$ with a cylindrical end attached, restricting to the pullback of $\alpha$ over the end, and $\text{ind}^-(A_X)$ the index of the associated ASD operator with exponential decay (see for example \cite[\S 3.3.1]{donaldson-book}). Let $A_W$ be a $PU(N)$ connection on $W$ with cylindrical ends attached, equal to the pullback of $\alpha$ on the incoming end and that of $\alpha\#\beta$ on the outgoing end. Then by index additivity we have
		\begin{align}
			\text{gr}(\alpha\# \beta)  & \equiv \text{ind}(A)+ (N^2-1)(b_1(X')-b^+(X')+b_1(Y\# T^3)-1) \label{eq:indexcompproof} \\[2mm]
					& \equiv \text{ind}^{-}(A_X) + \text{ind}^{+-}(A_W) + (N^2-1)(b_1(X)-b^+(X))\pmod{2} \nonumber
		\end{align}	
		Here $\text{ind}^{+-}(A_W)$ is the index of the ASD operator associated to $A_W$ with exponential growth at the incoming end and exponential decay at the outgoing end; see \cite[\S 2.2]{DX} for this setup. By the Atiyah--Patodi--Singer theorem \cite{apsi} (see also \cite[Eq. 2.16]{DX}),
		\begin{align*}
			 \text{ind}^{+-}&(A_W) = 4N\kappa(A_W) -   \frac{N^2-1}{2}(\chi(W) + \sigma(W)) \\[2mm]
			 	&  + \frac{1}{2}\left( h^0(\alpha)+h^1(\alpha) -  h^0(\alpha\#\beta)-h^1(\alpha\# \beta) -\rho_{\text{ad}\alpha}(Y) + \rho_{\text{ad}(\alpha\# \beta)}(Y\# T^3)  \right).
		\end{align*}
		Here $h^i(\alpha)$ is the dimension of $H^i(Y;\text{ad}\alpha)$, and so forth. By our current assumptions, we have $h^0(\alpha\# \beta)=h^1(\alpha)=0$ and $h^0(\alpha)=\dim \Gamma_\rho$, while $h^1(\alpha\#\beta)=\dim  SU(N)/\Gamma_\rho$. We also choose $A_W$ to be a flat connection, obtained by gluing a flat connection extending $\beta$ over $D^2\times T^2$ to the product flat connection induced by $\alpha$ on $Y\times I$ and extending by translation to cylindrical ends. (That $\beta$ extends to a flat connection over $D^2\times T^2$ is easily verified by the description in \cite[\S 4.1]{KM:YAFT}.) By $\kappa(A_W)=0$, $\chi(W)=-1$, $\sigma(W)=0$:
		 		\begin{align*}
			 \text{ind}^{+-}&(A_W) =   \dim \Gamma_\rho  - \frac{1}{2}(\rho_{\text{ad}\alpha}(Y) - \rho_{\text{ad}(\alpha\# \beta)}(Y\# T^3) ).
		\end{align*}
		On the other hand, by Atiyah--Patodi--Singer's result \cite[Thm. 2.4]{apsii}, we have
		\begin{equation}\label{eq:apsuseno2}
			\rho_{\text{ad}\alpha}(Y) - \rho_{\text{ad}(\alpha\# \beta)}(Y\# T^3) = (N^2-1)\sigma(W) - \sigma_{\text{ad} A_W}(W)
		\end{equation}
		Here we use that the adjoint bundle has rank $N^2-1$, see \eqref{eq:adjointbundlered}. A computation using the Mayer--Vietoris sequence with local coefficients shows $H^2(W;\text{ad}A_W)=0$ and hence the right side of \eqref{eq:apsuseno2} vanishes. Thus $\text{ind}^{+-}(A_W) =   \dim \Gamma_\rho$. Plugging into \eqref{eq:indexcompproof} yields
				\begin{align}
			\text{gr}(\alpha\# \beta) \equiv  \text{ind}^{-}(A_X) + \dim \Gamma_\rho + (N^2-1)(b_1(X)-b^+(X))\pmod{2} \label{eq:grmidproofline}
		\end{align}
		Now suppose $\rho$ is abelian. Then $\alpha$ is compatible with a splitting $L_1\oplus\cdots\oplus L_N$ where each $L_i$ is a complex line bundle. The associated adjoint bundle is isomorphic to
		\begin{equation}\label{eq:adjointbundlered}
			 \bigoplus_{i<j} L_i\otimes L_j^{-1} \oplus \underline{\R}^{N-1}
		\end{equation}
		We may choose $X$ such that $H^2(X;\Z)\to H^2(Y;\Z)$ is a surjection; then we may choose line bundles $\widetilde L_i$ over $X$ which extend the $L_i$. Further, choose $A_X$ so that $\text{ad}A_X$ splits as $\oplus_{i<j} A_{ij}\oplus \Theta$ where $\Theta$ is a trivial connection on $\underline{\R}^{N-1}$ and $A_{ij}$ is a $U(1)$ connection on $\widetilde L_i\otimes \widetilde L_j^{-1}$. With these choices, we compute
		\begin{equation}\label{eq:indexmod2forfillingx}
			\text{ind}^-(A_X) \equiv -(N^2-1)(1-b_1(X)+b^+(X)) \pmod{2}
		\end{equation}
		Indeed, the index of $A_X$ splits into a sum; the indices associated to the $A_{ij}$ are even, because the relevant operators are complex linear, and the index associated to $\Theta$ is $(N^2-1)$ times the index of the standard ASD operator on $X$. We then obtain from \eqref{eq:grmidproofline}:
					\begin{align*}
			\text{gr}(\alpha\# \beta) \equiv \dim \Gamma_\rho -(N^2-1) \equiv 0 \pmod{2} 
		\end{align*}
		Here we have used that $\dim \Gamma_\rho =  \sum n_i^2-1$ for some non-negative integers $n_i$ which satisfy $\sum n_i=N$. This completes the proof of claim \eqref{eq:claimmidproofeulerchar} under the given assumptions.
		
		In the general case for $b_1(Y)=0$, a holonomy perturbation must be used. When $N=2$, it is explained in \cite[Theorem 3.6]{mme} that there are small holonomy perturbations for $Y$ such that the critical set of the Chern--Simons functional is discrete and non-degenerate, and the corresponding orbits on the framed configuration space are Morse--Bott non-degenerate. In our case, we use such a perturbation on $Y\# T^3$ which is supported on $Y$. For the above argument to adapt, it is important that for a small enough such perturbation, the number of abelian critical points and their stabilizer-types remain the same; this is true because these reducibles are cut out transversely within the subspace of the configuration space consisting of abelian connections.
		
				When $b_1(Y)>0$, the abelian representations in $R^N(Y)$ form a disjoint union of tori, each of dimension $b_1(Y)$. In the simplified version of the above argument, these tori now contribute zero to the Euler characteristic. Adapting the above argument, with similar remarks regarding perturbations, gives $\chi(I^{\#,N}(Y))=0$ in this case.
	\end{proof}
	
	\begin{remark}
		The absolute $\Z/2$-grading used here agrees with that of \cite[Proposition 6.20]{cdx}. In that reference, the $\Z/2$-grading is determined by the conditions that (i) for any cobordism $(W,c):(Y,\omega)\to (Y',\omega')$ between $N$-admissible pairs, the degree of the corresponding cobordism map is the parity of
		\[
			\frac{N^2-1}{2}(\chi(W) + \sigma(W) +b_0(Y') - b_0(Y) + b_1(Y') - b_1(Y));
		\]
		and (ii) the generator of $I^N_\ast(\emptyset)$, which is by convention $1$-dimensional, is supported in even degree. Condition (i) follows from the definition \eqref{eq:mod2grdef} along the same lines as \cite[Prop. 7.1]{scadutothesis}. Furthermore, the normalization condition (ii) is equivalent, assuming (i), to the condition that $I^{\#,N}(S^3)$ is supported in even degree. 
	\end{remark}

The framed instanton homology can also be defined for any $3$-manifold $Y$ with a $1$-cycle $\omega\subset Y$. In this case the $U(N)$ framed instanton homology is denoted
\begin{equation}\label{eq:unframedinstantonhomologydef}
	I^{\#,N}_\ast(Y,\omega)
\end{equation}
and is defined as the $(N)$-eigenspace of the operator $\beta_2$ acting on $I^N_\ast(Y\# T^3,\omega\cup \gamma )$. The isomorphism type of this group only depends on $Y$ and $[\omega]\in H_1(Y;\Z/N)$, and sometimes we conflate $\omega$ with its homology class, or its Poincar\'{e} dual. The argument of Theorem \ref{thm:eulercharframed} can be adapted to show that the Euler characteristic of \eqref{eq:unframedinstantonhomologydef} is also given by the right side of \eqref{eq:framedeulercharthm}, and is in particular independent of $\omega$.
	
	\begin{remark}
		Theorem \ref{thm:eulercharframed} (and its extension to \eqref{eq:unframedinstantonhomologydef} mentioned above) is compatible with the surgery exact $(N+1)$-gons of \cite{cdx}, which were proven for $N\leq 4$. In particular, it satisfies the Euler characteristic relation \cite[Cor. 1.9]{cdx}, giving evidence for the existence of surgery exact $(N+1)$-gons for $N>4$.
	\end{remark}
	
	It follows from a result of Borel \cite{borel} that if $G$ is a compact connected Lie group and $H$ is a closed connected subgroup with rank equal to that of $G$, then the cohomology of $G/H$ with complex coefficients is supported in even degrees. In particular, in the case that $b_1(Y)=0$, if $R^N(Y)$ consists entirely of abelian representations and is Morse--Bott non-degenerate for the Chern--Simons functional, then 
		\begin{equation}\label{eq:lspacecondition}
		\dim I^{\#,N}(Y) = |H_1(Y;\Z)|^{N-1}.
	\end{equation}
	This occurs in the case that $Y$ is a lens space. The condition \eqref{eq:lspacecondition}, that the dimension of the $U(N)$ framed instanton homology is equal to its Euler characteristic, is a natural generalization of the $U(2)$ instanton $L$-space condition \cite{bs-lspace}. A natural question is whether the class of $3$-manifolds defined by the condition \eqref{eq:lspacecondition} depends on $N$. In the case that $Y$ is an $U(2)$ instanton $L$-space and also satisfies \eqref{eq:lspacecondition} for some $N>2$, we have
			\begin{equation}\label{eq:frq}
		I^{\#,N}(Y) \cong I^{\#,2}(Y)^{\otimes (N-1)}.
	\end{equation}
	Thus we are led to ask: is there an example of a $3$-manifold $Y$ for which \eqref{eq:frq} does not hold for some $N>2$? 

\begin{remark}
	In the above discussion, one may also include the case $N=1$. The $U(1)$ framed instanton homology of $Y$ is a special case of the plane Floer homology of the first author \cite{daemi-plane}, and is isomorphic to the homology of the Jacobian torus of $Y$.
\end{remark}

\subsection{A product formula for connected sums}

The $U(N)$ framed instanton homology behaves in a simple way with respect to connected sums. The following generalizes a known result in the $N=2$ case.

\begin{theorem}
	Let $(Y,\omega)$ and $(Y',\omega')$ be connected $3$-manifolds with $1$-cycles. Then
\begin{equation}\label{eq:framedkunneth}
	I^{\#,N}(Y\#Y',\omega\cup \omega') \cong I^{\#,N}(Y,\omega)\otimes I^{\#,N}(Y',\omega')
\end{equation}
\end{theorem}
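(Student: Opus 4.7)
The strategy is to reduce the connected sum formula to a single application of the excision principle for $U(N)$ instanton Floer homology. By definition,
\[
  I^{\#,N}(Y,\omega) = I^{N}_*(Y\# T^3,\omega\cup\gamma)\bigr|_{\beta_2 = N},
\]
and this $(N)$-eigenspace of $\beta_2$ is precisely the surface-restricted Floer group $I^N_*(Y\# T^3,\omega\cup\gamma\mid R)$ from Section \ref{sec:sutured}, where $R = \{y\}\times T^2 \subset T^3 = S^1\times T^2$ is a fiber. (In the $N=3$ case this is exactly the reformulation used throughout Section \ref{sec:sutured}; for general $N$ it is the direct analogue.) So the formula to prove becomes an isomorphism of surface-restricted Floer groups.

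The first step is to form the disjoint union $(Y\#T^3)\sqcup(Y'\#T^3)$ with $1$-cycle $(\omega\cup\gamma)\sqcup(\omega'\cup\gamma)$ and distinguished surface $R_1\sqcup R_2$, a fiber $T^2$ in each copy of $T^3$. Since the Floer homology of a disjoint union of admissible pairs is the tensor product of the Floer homologies of the pieces, and the surface-restriction decomposes accordingly, we have
\[
  I^N_*\bigl((Y\#T^3)\sqcup(Y'\#T^3),\ldots\mid R_1\sqcup R_2\bigr)
  \;\iso\; I^{\#,N}(Y,\omega)\otimes I^{\#,N}(Y',\omega').
\]

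The second step is a topological identification. Cut each $T^3$ factor along its distinguished $T^2$ and reglue the four resulting torus boundary components cross-wise (i.e., the two boundary tori of the first piece are glued to the two boundary tori of the second piece, swapping ends). Parametrizing each $T^3 = S^1\times T^2$ by $S^1 = [0,1]/(0\sim 1)$, the cross-gluing merges the two cut intervals $(0,1)\times T^2$ into a single $S^1$-parametrized family, producing one $T^3$ summand containing both connecting $S^2$'s. Since $Y$ and $Y'$ are attached to these two $S^2$'s, the result is the connected admissible pair $((Y\#Y')\#T^3,\omega\cup\omega'\cup\gamma)$ with a single distinguished surface $R = \{y\}\times T^2$, and the two copies of $\gamma = S^1\times\{\text{pt}\}$ fuse into the single fiber curve of the new $T^3$.

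The final step is to invoke the excision theorem, which identifies the surface-restricted Floer homologies before and after the cut-and-reglue operation:
\[
  I^N_*\bigl((Y\#T^3)\sqcup(Y'\#T^3)\mid R_1\sqcup R_2\bigr)
  \;\iso\; I^N_*\bigl((Y\#Y')\#T^3,\omega\cup\omega'\cup\gamma\mid R\bigr).
\]
The right-hand side is by definition $I^{\#,N}(Y\#Y',\omega\cup\omega')$, and combining with the first step produces the desired isomorphism \eqref{eq:framedkunneth}. The main obstacle is the excision statement itself: for $N=3$ it is \cite[Theorem 5.16]{DX}, whose proof relies crucially on the $1$-dimensionality of the extremal eigenspace in Proposition \ref{prop:dim1}; for $N=2$ it is classical (see \cite{scadutothesis}). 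For general $N$ one needs the analogous description of a $1$-dimensional generalized eigenspace of the $(\mu_r(\Sigma_g),\mu_r(x))$-action on $V^N_{g,1}$, which is a $U(N)$ counterpart to the work of Section \ref{sec:strategy}; granted this, the neck-stretching argument of \cite{DX,KM:YAFT} applies verbatim.
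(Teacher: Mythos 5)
Your topological picture is exactly the one the paper uses: cut each of $Y\#T^3$ and $Y'\#T^3$ along the distinguished $2$-torus $R$, reglue the four torus boundary components cross-wise to produce $(Y\#Y')\#T^3$ with its distinguished $T^2$ and cycle, and then appeal to excision. So the overall structure of your argument is correct.

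The one genuine misstep is in your final paragraph, where you characterize the "main obstacle" as needing, for general $N$, the $U(N)$ analogue of the $1$-dimensionality of the extremal eigenspace of $V^N_{g,1}$ (i.e.\ the analogue of Proposition~\ref{prop:dim1} and the heavy eigenvalue analysis of Sections~\ref{sec:strategy}--\ref{sec:simpletypequotient}). That is not required here. What you need is only \emph{genus-one} excision: the tori $R_1,R_2$ are genus $1$, and genus-one excision is far more elementary than the general genus-$g$ statement proved in \cite[Thm.~5.16]{DX}. It holds for every $N\geq 2$ because $I^N_*(T^3,\gamma)$ has an explicit and very simple description --- it is $N$-dimensional, generated by $N$ isolated non-degenerate flat connections (see \cite{kronheimer-higher,KM:YAFT}), so the eigenvalue and pairing structure needed to run the neck-stretching argument can be read off directly, with no deformed-ring or Mumford-relation machinery. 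This is precisely why the paper is able to state and prove the connected sum formula for all $N\geq 2$ even though the hard eigenvalue computations are carried out only for $N=3$. As written, your proof suggests the result is conditional for $N\geq 4$, which is incorrect.

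Two smaller points worth tightening: first, the identification of the $(N)$-eigenspace of $\beta_2$ on $I^N_*(Y\#T^3,\omega\cup\gamma)$ with the surface-restricted group $I^N_*(\cdot\mid R)$ also simplifies dramatically at genus $1$ --- Remark~\ref{genus-one} (stated for $N=3$ but elementary in general, by the same $T^3$ computation) shows that $\mu_2(R)$ and $\mu_3(R)$ act trivially, so the surface restriction reduces to the point-class eigenspace condition alone, and you should say so rather than silently invoking the general framework. Second, the paper additionally records that the isomorphism preserves $\bZ/2$-gradings and is natural with respect to split cobordisms; neither is hard but both follow from keeping track of degrees in the excision cobordism, and a complete proof should note them.
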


The proof of this result is analogous to the proof in the $N=2$ case, which is a slight variation of the argument given in \cite[Cor. 5.9]{KM:unknot}. The main device used is genus $1$ excision, which holds for all $N$, just as in the $N=2$ case, by the simple description of the Floer homology of $(T^3,\gamma)$ as given in \cite{kronheimer-higher, KM:YAFT}. With genus $1$ excision, the key observation in proving \eqref{eq:framedkunneth} is that one may cut open $Y\# T^3$ and $Y'\# T^3$ along the  copies of $2$-tori labelled $R$ in each $3$-torus, and reglue the resulting boundary components so as to form a connected $3$-manifold diffeomorphic to $Y\# Y'\# T^3$. From the proof it is also easy to see that, just as in the $N=2$ case, the isomorphism \eqref{eq:framedkunneth} preserves $\Z/2$-gradings, and is natural with respect to ``split'' cobordisms.

\subsection{A decomposition result for cobordism maps}

Despite the various properties of $U(N)$ framed instanton homology discussed above, for $N>2$ very few computations of these groups are currently known. In the $N=2$ case, many computations have been aided by the use of Floer's exact triangle \cite{floer-dehn,scadutothesis}. The surgery exact $(N+1)$-gons of \cite{cdx}, proved in the cases $N\leq 4$, provide a generalization that may be useful for computations in the higher rank cases. Another tool that has been very useful in the $N=2$ case, as is illustrated in the work of Baldwin and Sivek \cite{bs-lspace}, is a decomposition result for cobordism maps. In this subsection we explain how to obtain an analogous decomposition result in the $N=3$ case. This is essentially an adaptation of Theorem \ref{thm:structure} to the setting of cobordism maps in $U(3)$ framed instanton homology. 

Let $(X, w):(Y,\omega)\to (Y',\omega')$ be a cobordism of pairs, where $w$ is a $2$-cycle restricting to $\omega$ and $\omega'$ at the boundary components. Here and below we suppose $X$, $Y$ and $Y'$ are connected. There is an associated cobordism map of framed instanton homology groups
\[
	I^{\#,N}(X,w): I^{\#,N}(Y,\omega)\to I^{\#,N}(Y',\omega')
\]
obtained by choosing a path $c$ embedded in $X\setminus w$ and splicing $I\times T^3$ onto $X$ along this path, to obtain a cobordism $X^\#$ from $Y\# T^3$ to $Y'\#T^3$; see \cite[\S 7.1]{scadutothesis}. The choice of path $c$ is suppressed from the notation.

In the case $N=3$, following the decomposition \eqref{es-decomp} we may write
\begin{equation}\label{eq:frameddecomp3mfld}
	I^{\#,3}(Y,\omega) = \bigoplus_{s} I^{\#,3}(Y,\omega;s)
\end{equation}
where the direct sum is over homomorphisms $s:H_2(Y;\Z)\to \Gamma \subset \Z\oplus \Z$, with $\Gamma$ being the sublattice of pairs $(a,b)$ with $a$ and $b$ of the same parity. 

In what follows, we write $s=(s_2,s_3)$ for any such homomorphism, where $s_2$ and $s_3$ are the projections to the two $\Z$ factors. Further, if $s:H_2(X;\Z)\to \Z\oplus \Z$ is a homomorphism where $X$ is manifold with a submanifold $Y\subset X$, we write $s|_Y$ for the composition of $s$ with the inclusion-induced homomorphism $H_2(Y;\Z)\to H_2(X;\Z)$. 

The decomposition result for cobordism maps in this setting is as follows.

\begin{theorem}\label{thm:cobordismmapdecomposition}
	Let $(X, w):(Y,\omega)\to (Y',\omega')$ be a cobordism of pairs with $b_1(X)=0$ and $b^+(X)>0$. Then there is a natural decomposition of the cobordism map
	\[
		I^{\#,3}(X,w) = \sum_{s} I^{\#,3}(X,w; s),
	\]
	\[
		I^{\#,3}(X,w; s): I^{\#,3}(Y,\omega;s|_Y) \to  I^{\#,3}(Y',\omega';s|_{Y'})
	\]
	where the sum is over homomorphisms $s:H_2(X;\Z)\to \Z\oplus\Z$. 	These maps satisfy:
	\begin{enumerate}
		\item[{\emph{(i)}}] $I^{\#,3}(X,w; s)=0$ for all but finitely many $s$.
		\item[{\emph{(ii)}}] If $I^{\#,3}(X,w; s)\neq 0$, then $s_2(x)+s_3(x)+x\cdot x\equiv 0\pmod{2}$ for all $x\in H_2(X;\Z)$, and for any smoothly embedded, connected, orientable surface $\Sigma\subset X$ with non-negative self-intersection and having $[\Sigma]$ non-torsion we have
		\[
			|s_2([\Sigma]) \pm s_3([\Sigma]) | + [\Sigma]\cdot [\Sigma] \leq 2g(\Sigma)-2.
		\]
		
		\item[{\emph{(iii)}}] If $(X,w)$ is a composition of two cobordisms $(X'',w''):(Y,\omega)\to (Y'',\omega'')$ and $(X',w'):(Y'',\omega'')\to (Y',\omega')$ each with $b_1=0$ and $b^+>0$, then
		\[
			I^{\#,3}(X',w'; s')\circ I^{\#,3}(X'',w''; s'') = \sum I^{\#,3}(X,w; s)
		\]
		where the sum is over $s:H_2(X;\Z)\to \Z\oplus \Z$ such that $s|_{X'}=s'$ and $s|_{X''}=s''$.
		\item[{\emph{(iv)}}] Let $\widehat{X}=X\# \overline{\mathbb{C}\mathbb{P}}^2$ denote the blowup of $X$, with $e$ the exceptional sphere and $E$ its Poincar\'{e} dual. Let $\zeta= e^{2\pi i/3}$. Then for all $s:H_2(X;\Z)\to \Z$ and $l,k\in \Z$:
		\begin{align*}
			I^{\#,3}(\widehat{X},w; s + l E_2 + k E_3 ) &= \begin{cases} \frac{1}{6}I^{\#,3}(\widehat{X},w; s),  & l=\pm 1, k=0 \\ 
			 \frac{1}{3}I^{\#,3}(\widehat{X},w; s),  & l=0, k=\pm 1\\
			 0, & \text{ otherwise }
			\end{cases} \\[2mm]
			I^{\#,3}(\widehat{X},w+e; s + l E_2 + k E_3 ) &= \begin{cases} \frac{1}{6}I^{\#,3}(\widehat{X},w; s),  & l=\pm 1, k=0 \\ 
			 \frac{1}{3}\zeta^{k} I^{\#,3}(\widehat{X},w; s),  & l=0, k=\pm 1\\
			 0, & \text{ otherwise }
			\end{cases} 
		\end{align*}
		Here $E_2$ (resp. $E_3$) is the homomorphism $H_2(\widehat{X};\Z)\to\Z$ which is pairing with $E$ composed with the inclusion of $\Z$ into the first factor (resp. second factor) of $\Z\oplus \Z$. 
		\item[{\emph{(v)}}] $I^{\#,3}(X,w+a; s)= \zeta^{s_3(a)}I^{\#,3}(X,w; s)$ for any $a\in H_2(X;\Z)$.
	\end{enumerate}
\end{theorem}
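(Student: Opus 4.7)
The plan is to decompose cobordism maps via the simultaneous generalized eigenspaces of the commuting operators $\mu_2(\sigma)$ and $\mu_3(\sigma)$ for $\sigma \in H_2(X;\Z)$. For each $\sigma$ and $r \in \{2,3\}$, $\mu_r(\sigma)$ is defined by slant product with the $r$-th Chern class of a universal $PU(3)$-bundle over the spliced cobordism $X^\#: Y\#T^3 \to Y'\#T^3$; it acts compatibly on the Floer chain complexes of both boundary pieces (via the inclusion-induced maps $H_2(Y;\Z), H_2(Y';\Z) \to H_2(X;\Z)$) and on the cobordism chain map itself. Commutativity of these operators with the differential up to homotopy --- standard in Donaldson-theoretic constructions --- produces a decomposition of $I^{\#,3}(X,w)$ indexed by homomorphisms $s: H_2(X;\Z) \to \Z \oplus \Z$, sending the $s|_Y$-eigenspace of the source to the $s|_{Y'}$-eigenspace of the target. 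The fact that eigenvalues land in $\sqrt{3}\,\Z \oplus \sqrt{-3}\,\Z$ with the parity pattern defining $\Gamma$ follows from Proposition \ref{eigenvalue-S} applied to surface representatives in $X^\#$. Property (iii), compatibility with composition, is immediate from the functoriality of $\mu$-operators: for $X = X'' \cup_{Y''} X'$, the $\mu_r(\sigma)$ on $X$ are determined by their restrictions to $X''$ and $X'$ via Mayer--Vietoris.

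To prove (i) finiteness and (ii) the characteristic and adjunction properties, I would employ a closure trick: given $(X,w)$, cap both ends by $U(3)$ simple-type fillings --- for instance, by connected-summing with sufficiently blown-up $K3$ surfaces along appropriate embedded surfaces, following the relative-invariant construction in the proof of Theorem \ref{thm:fukayafloersimpletypeideal} --- to obtain a closed $U(3)$ simple-type pair $(\overline X, \overline w)$ containing $(X,w)$. A nonvanishing component $I^{\#,3}(X,w;s)$ then contributes nontrivially to some coefficient $c_{i,j}$ in the structure theorem (Theorem \ref{thm:structure}) for $(\overline X, \overline w)$, with $s_2 = \tfrac{1}{2}(K_i + K_j)|_{H_2(X)}$ and $s_3 = \tfrac{1}{2}(K_i - K_j)|_{H_2(X)}$. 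Finiteness of the set $\{K_i\}$ gives (i). The characteristic property $K_i(x) \equiv x \cdot x \pmod 2$ translates to $s_2(x) + s_3(x) + x \cdot x \equiv 0 \pmod 2$ for all $x \in H_2(X;\Z)$, and the adjunction inequality \eqref{eq:structureadjunction} applied to $\Sigma \subset X \subset \overline X$ gives the stated bound $|s_2([\Sigma]) \pm s_3([\Sigma])| + [\Sigma]^2 \leq 2g - 2$.

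Properties (iv) and (v) reduce to direct computations. For the blowup formula (iv), I would apply Theorem \ref{thm:blowup}: on the $(s + lE_2 + kE_3)$-eigenspace the operators $\mu_2(E)$ and $\mu_3(E)$ act by $\sqrt{3}\,l$ and $\sqrt{-3}\,k$ respectively, so expanding the $w$-blowup factor $\cosh(\sqrt{3}\,t_2) + 2\cos(\sqrt{3}\,t_3)$ as a sum of four exponentials (after normalization by $\tfrac{1}{3}$) isolates the four nonzero components for $(l,k) \in \{(\pm 1, 0), (0, \pm 1)\}$ with coefficients $\tfrac{1}{6}$ and $\tfrac{1}{3}$, and the $w+e$-blowup factor $\cosh(\sqrt{3}\,t_2) - \cos(\sqrt{3}\,t_3) - \sqrt{3}\,\sin(\sqrt{3}\,t_3)$ produces the phase $\zeta^k$ on the $k = \pm 1$ components via the identity $1 \pm \sqrt{3}\,i = 2e^{\pm i\pi/3}$. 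Property (v) follows because tensoring the bundle data $w$ by the Poincar\'e dual of $a \in H_2(X;\Z)$ multiplies the $(K_i, K_j)$-coefficient of the closed-manifold Donaldson series by $\zeta^{a\cdot (K_i - K_j)/2} = \zeta^{s_3(a)}$; this inheritance carries through the closure trick, and is consistent with the action of $\varepsilon(a)$ on $s$-eigenspaces analyzed in the proof of Theorem \ref{thm:fukayafloersimpletypeideal}.

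The main obstacle will be verifying the adjunction inequality (ii) rigorously. The closure trick must preserve $[\Sigma]$ as non-torsion in $\overline X$ while keeping $(\overline X, \overline w)$ of $U(3)$ simple type. When $[\Sigma]^2 > 0$, this likely requires first blowing up $X$ enough times --- passing to a proper transform with trivial self-intersection --- and then capping carefully, paralleling Steps 3--5 in the proof of Theorem \ref{thm:structure}. A further subtlety is ensuring the restriction maps $s \mapsto s|_Y$ and $s \mapsto s|_{Y'}$ interact correctly with the basic classes $K_i$ of $\overline X$ when the capping pieces carry nontrivial $H_2$; resolving this will require a careful Mayer--Vietoris analysis of how functionals on $H_2(\overline X;\Z)$ descend to $H_2(X;\Z)$ and restrict to the boundary 3-manifolds.
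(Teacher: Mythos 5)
Your proposal gets the right geometric ingredients on the table (eigenspace decomposition of the $\mu$-operators, the blowup formula of Theorem~\ref{thm:blowup}, the $w$-dependence $\zeta^{w\cdot(K_i-K_j)/2}$), and your computations for (iv) and (v) are correct and essentially identical to the paper's. The trigonometric expansion $-\cos\theta-\sqrt{3}\sin\theta = \zeta e^{i\theta}+\zeta^{-1}e^{-i\theta}$ producing the $\frac13\zeta^k$ coefficients is exactly the right point.

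However, your treatment of (i) and (ii) goes by a genuinely different route than the paper's, and the route has a real gap. The paper does \emph{not} cap the ends of the cobordism. Instead it proves a \emph{relative} structure theorem directly: it defines the operator-valued Donaldson series $\mathbb{D}^\#_{X,w}$ in~\eqref{eq:frameddonaldsonseries}, whose coefficients are maps $I^{\#,3}(Y,\omega)\to I^{\#,3}(Y',\omega')$, observes that the adaptation of Theorem~\ref{thm:structure} (following Baldwin--Sivek in the $U(2)$ case) applies to these relative invariants, and defines $I^{\#,3}(X,w;s)$ from the resulting formula~\eqref{eq:structurethmformulacobordism}. The key observation that makes this work, and which you miss, is that the spliced cobordism $X^\# : Y\#T^3\to Y'\#T^3$ \emph{automatically} satisfies the $U(3)$ simple-type relations, because of the $I\times T^3$ piece with its nontrivial bundle together with $b_1(X)=0$. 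This is what licenses applying the structure theorem machinery without any capping. Your closure trick, by contrast, tries to deduce facts about the \emph{map} $I^{\#,3}(X,w;s)$ from the \emph{number} $D_{\overline X,\overline w}$ obtained after capping both ends; but the closed structure theorem only constrains those numbers, and recovering properties of the cobordism map requires a nondegeneracy/detection argument across the pairing with the cap relative invariants that you do not supply. Moreover, you would need the caps to be simple-type fillings of $Y\#T^3$ and $Y'\#T^3$ with compatible bundle data and $b^+>1$ on the closed result, which is delicate to arrange --- indeed you flag precisely this as your ``main obstacle.''

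There is a secondary issue in your first paragraph: you define the decomposition via simultaneous generalized eigenspaces of the $\mu_r(\sigma)$ operators acting on the cobordism map, and then assert (iii) via a Mayer--Vietoris argument on these operators. But the indexing set of the decomposition in the theorem is all homomorphisms $H_2(X;\Z)\to\Z\oplus\Z$, and the content of (i)--(ii) is precisely that only finitely many, constrained by characteristic and adjunction conditions, can appear. The eigenspace picture does not by itself deliver this; it is the relative structure theorem formula~\eqref{eq:structurethmformulacobordism} that does, and your two paragraphs do not quite join up to establish that the eigenspace decomposition coincides with the structure-theorem decomposition. In the paper, the composition property (iii) is likewise read off from the relative structure theorem (via the multiplicativity of the Donaldson series under composing cobordisms), not from a direct functoriality-of-$\mu$-operators argument.
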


This result should be compared to a similar decomposition result in the case of $U(2)$ framed instanton homology, given as Theorem 1.16 in \cite{bs-lspace}. The proof of Theorem \ref{thm:cobordismmapdecomposition} is largely a consequence of a straightforward adaptation of Theorem \ref{thm:structure} to the case of cobordisms. This adaptation is carried out in the $U(2)$ case in \cite{bs-lspace}. We only mention some essential points. First, one defines a formal power series in $\Gamma,\Lambda\in H_2(X;\R)$ by
\begin{equation}\label{eq:frameddonaldsonseries}
	\mathbb{D}^\#_{X, w}({\Gamma_{(2)}+\Lambda_{(3)}}) = I^{\#,3}(X,w ,(1+\frac{1}{3}x_{(2)}+\frac{1}{9}x_{(2)}^2)e^{{\Gamma_{(2)}+\Lambda_{(3)}}})
\end{equation}
where the notation $I^{\#,3}(X,w,z)$ for $z\in \mathbf{A}^3(X)$ is the cobordism map defined by cutting down via the divisor associated to $z$. The coefficients of this power series in $\Gamma,\Lambda$ are linear maps from $I^{\#,3}(Y,\omega)$ to $I^{\#,3}(Y',\omega')$. The proof of Theorem \ref{thm:structure} adapts to show that
	\begin{equation}
\mathbb{D}^\#_{X, w}({\Gamma_{(2)}+\Lambda_{(3)}}) =  e^{\frac{Q(\Gamma)}{2}-Q(\Lambda)}\sum_{i, j} c_{i, j}  \zeta^{w\cdot \left(\frac{K_i-K_j}{2}\right)}e^{\frac{\sqrt{3}}{2}(K_i+K_j)\cdot \Gamma+\frac{\sqrt{-3}}{2}(K_i-K_j)\cdot \Lambda}  \label{eq:structurethmformulacobordism}
\end{equation}
The new feature here is that the $c_{i,j}$ are no longer constants, but are instead linear maps $I^{\#,3}(Y,\omega)\to I^{\#,3}(Y',\omega')$. Furthermore, $c_{i,j}$ has coefficients in $\Q[\sqrt{3}]$ with respect to rational bases of the framed instanton groups. Here we view $K_i:H_2(X;\Z)\to\Z$; these are characteristic, just as before, and the adjunction inequality is also as stated in \eqref{eq:structureadjunction}.

The presence of $T^3$ with its non-trivial bundle in the formation of $X^\#$, and the assumption $b_1(X)=0$, guarantee that $X^\#$ has the corresponding $U(3)$ simple type condition for all choices of $w\subset X$ (with bundle data over the $I\times T^3$ part in $X^\#$ fixed). By the definition of framed instanton homology, $\beta_2=\mu_2(x)$ acts as $3$, and so the right side of \eqref{eq:frameddonaldsonseries} is
\[
	3 I^{\#,3}(X,w , e^{{\Gamma_{(2)}+\Lambda_{(3)}}}).
\]
Now for a homomorphism $s=(s_2,s_3):H_2(X;\Z)\to \Z\oplus \Z$ we define
\[
	I^{\#,3}(X,w; s) = \begin{cases} \frac{1}{3} \zeta^{w\cdot \left(\frac{K_i-K_j}{2}\right)} c_{i, j} & \text{ if }s_2=\frac{1}{2}(K_i+K_j),\;\, s_3=\frac{1}{2}(K_i-K_j)\\
	0 & \text{ otherwise} \end{cases}
\]
The properties listed in Theorem \ref{thm:cobordismmapdecomposition} are then proved in much the same way as in the $U(2)$ case, using the formula \eqref{eq:structurethmformulacobordism} and its properties related to the structure theorem; see \cite{bs-lspace} for details. Note that property (iv) follows from a straightforward adaptation of Theorem \ref{thm:blowup}, the $N=3$ blowup formula, to the invariants \eqref{eq:frameddonaldsonseries}.

\begin{remark}
	In the $U(2)$ decomposition result of \cite{bs-lspace}, the assumption $b^+(X)>0$ is removed using a trick that involves the trace cobordism of $1$-surgery on the $(2,5)$ torus knot. We expect that the assumption $b^+(X)>0$ can also be removed from Theorem \ref{thm:cobordismmapdecomposition}.
\end{remark}

%!TEX root = main.tex

\section{$N=3$ knot homology and the Alexander polynomial}\label{sec:alexander}

In this final section, we describe a conjectural relationship between the $U(3)$ instanton knot homology group $KHI_*^3(Y,K)$ introduced in Section \ref{sec:sutured} and the Alexander polynomial. Throughout this section, $K$ is a knot in an integer homology $3$-sphere $Y$.

There is a $\Z/2$-grading on $KHI_*^3(Y,K)$ defined analogously as in the $U(2)$ case. More generally, there is a relative $\Z/2$-grading on the $U(3)$ sutured instanton homology of any balanced sutured $3$-manifold. This is because of the following: $U(3)$ instanton homology of an admissible bundle has a relative $\Z/2$-grading; the operators from which the simultaneous eigenspaces are defined all have even degree; and the excision maps used in the proof of invariance are also homogeneously $\Z/2$-graded. To define an absolute $\Z/2$-grading on $KHI_*^3(Y,K)$, one can use \eqref{eq:mod2grdef} for a particular closure of the knot complement. However, the specific choice of convention will not concern us for what follows. 

Using the $\Z/2$-grading on $KHI_*^3(Y,K)$ and the decomposition
\begin{equation}\label{eq:knothomologyu3graded2}
  KHI_*^3(Y,K)=\bigoplus_{(a,b)\in \mathcal C_{g+1}}KHI_*^3(Y,K;a,b),
\end{equation} 
from Section \ref{sec:sutured} (where $g$ is the Seifert genus of $K$), which is compatible with the $\Z/2$-grading, we define a Laurent polynomial in two-variables $t_2$ and $t_3$ as follows:
	\[
	  \Delta^3_{(Y,K)}(t_2,t_3):=\sum_{(a,b)\in \mathcal{C}_{g+1}} \chi(KHI_*^3(Y,K;a,b))t_2^at_3^b
	\]
	The authors expect that this polynomial is determined by the symmetrized Alexander polynomial $\Delta_{(Y,K)}(t)$ through the following formula:
	\begin{equation}\label{eq:conjecturalalexanderrelation}
		\Delta^3_{(Y,K)}(t_2,t_3) = \pm \Delta_{(Y,K)}(t_2t_3)\Delta_{(Y,K)}(t_2t^{-1}_3).
	\end{equation}
	We present an argument for this relation that is based on some hypotheses which will be made clear momentarily. Let $Z=Z(K)$ be the closed 3-manifold which is the following closure of the sutured manifold associated to the knot:
	\[
		Z = Y\setminus N(K) \cup S^1\times (F_{1,1}\setminus D^2)
	\]
	Let $c,c'$ be two closed simple curves in the interior of $F_{1,1}$ which generate $H_1(F_{1,1};\Z)$. Then there are tori $\Sigma_1=S^1\times c$, $\Sigma_2=S^1\times c'$ in $Z$, and a surface $\Sigma_0\subset Z$ formed by gluing a Seifert surface for $K$ to $\{pt\}\times \partial F_{1,1}$. (The surface $\Sigma_0$ is denoted by $\overline S$ at the end of Section \ref{sec:sutured}.) Recall from \eqref{eq:knothomologyu3} that we have:
	\begin{equation}\label{eq:knothomologyu3-2}
  KHI_*^3(Y,K)=I_*^3(Z ,c'\vert T).
\end{equation}
	
	Consider the $4$-manifold $X=S^1\times Z$. We have tori $\Sigma_3=S^1\times c$, $\Sigma_4=S^1\times c'$, $\Sigma_5=S^1\times \mu$ (in each case the $S^1$ is external to $Z$), where $\mu$ is a meridian for $K$. Then 
	\[
		H_2(X;\Z) = \bigoplus_{i=0}^5 \Z\cdot [\Sigma_i]
	\]
	Furthermore, $\Sigma_i\cdot \Sigma_i=0$ for all $i$. Note the signature of $X$ is zero. The $4$-manifold $X$ is {\emph{$U(2)$ strong simple type}} in the sense of Mu\~{n}oz \cite{munoz-basic}; this means that
\begin{equation}\label{eq:u2strongsimpletype}
	D_{X,w}^2(x^2 z) = 4 D_{X,w}^2(z),  \qquad D_{X,w}^3(\delta z)=0
\end{equation}
for all $z\in \bA^2(X)=\text{Sym}^\ast(H_0(X)\otimes H_2(X))\otimes \Lambda^\ast H_1(X)$, $\delta\in H_1(X)$, and $w\in H^2(X;\Z)$, where $x$ is a point class. To see that $X$ is $U(2)$ strong simple type, one first shows that $(X,w)$ is $U(2)$ strong simple type for a certain $w\in H^2(X;\Z)$. Choose 
	\[
		w = \text{P.D.}\left( \Sigma_0 \cup \Sigma_1 \cup \Sigma_2\right).
	\]
	Then $w$ has odd pairing with $\Sigma_i$ for $3\leq i\leq 5$. The strong simple type relations are obtained for $D^2_{X,w}$ through gluing formulas along the Fukaya--Floer homology of $S^1\times \Sigma_i$ for $3\leq i\leq 5$. Here it is key that the surfaces $\Sigma_i$ for $3\leq i \leq 5$ are genus $1$, and the Fukaya--Floer homology in the genus $1$ case is particularly simple; in particular, the action of a 1-cycle in $S^1\times \Sigma_i$ on its Fukaya--Floer homology is trivial, and the operator associated to the point class squares to $4$ times the identity. As each element of $H_1(X)$ comes from some element of $H_1(\Sigma_i)$ for $3\leq i\leq 5$, one obtains the relations
	\[
		D_{X,w}^2(\delta z)=0
\]
for all $\delta \in H_1(X)$ and $z\in \bA^2(X)=\text{Sym}^\ast(H_0(X)\otimes H_2(X))\otimes \Lambda^\ast H_1(X) $. Then, strong simple type for one $w\in H^2(X;\Z)$ implies the result for all $w$ (see Mu\~{n}oz \cite{munoz-basic}).
	
	Next, as $X$ is $U(2)$ strong simple type, we can apply the structure theorem in this case (again, see Mu\~{n}oz \cite{munoz-basic}). Let $K\in H^2(X;\Z)$ be a $U(2)$ basic class for $X$. Then 
	\[
		2g(\Sigma_i)-2 \geq |K\cdot \Sigma_i | + \Sigma_i\cdot \Sigma_i = |K\cdot \Sigma_i |
	\]
	by the adjunction inequality. For $1\leq i\leq 5$, we have $g(\Sigma_i)=1$, and we obtain $K\cdot \Sigma_i=0$. For $i=0$, we obtain $|K\cdot \Sigma_0|\leq 2g$ where $g=g(\Sigma_0)-1$ is the Seifert genus of $K$. Define
	\[
		K_r = 2r \text{P.D.}[\Sigma_5].
	\]
	We conclude that the possible $U(2)$ basic classes of $X$ are the $K_r$ for which $r\in \Z$, $|r|\leq g$. Now let $w=\text{P.D.}[\Sigma_4]$. The structure theorem in the $U(2)$ case then reads
	\[
		\widehat{D}^2_{X,w}(e^{\Sigma_0}) =D^2_{X,w}((1+\frac{x}{2})e^{\Sigma_0}) = e^{Q(\Sigma_0)/2} \sum_{r=-g}^g (-1)^{(w^2+ K_r\cdot w)/2}\beta_r e^{K_r\cdot \Sigma_0}.
	\]
	Witten's conjecture adapted to this case gives $\beta_r = 2^{2 + \frac{1}{4}(7\chi(X) + 11\sigma(X))}\text{SW}(K_r)= 4 \text{SW}_X(K_r)$. Using this, and $Q(\Sigma_0)=0$, $w\cdot w=0$, $K_r\cdot w=0$, we obtain
	\[
		\widehat{D}^2_{X,w}(e^{\Sigma_0}) = 4 \sum_{r=-g}^g \text{SW}_X(K_r)e^{2r}
	\]
	This invariant can also be expressed as the super trace of a combination of maps induced on the $U(2)$ instanton knot homology $KHI^2_\ast(Y,K)$ by the cobordism $X'=[0,1]\times Z$ with bundle cyle $w'=[0,1]\times c'$, adorned with the operators $(1+\frac{x}{2})\mu(\Sigma_0)^i$. First, we recall that 
	\[
		KHI^2_\ast(Y,K) = \bigoplus_{j=-g}^g KHI^2_\ast(Y,K;j)
	\]
	where $KHI_\ast(Y,K)$ is the generalized eigenspace of $\mu(pt)$ acting on $I_\ast^2(Z,c')$ with eigenvalue $2$, and $KHI^2_\ast(Y,K;j)$ is the simultaneous generalized eigenspace of $(\mu(pt),\mu(\Sigma_0))$ acting on $I_\ast^2(Z,c')$ with eigenvalues $(2,2j)$. We compute:
	\begin{align*}
		\widehat{D}^2_{X,w}(e^{\Sigma_0}) &= 2 \sum_{i=0}^\infty \frac{1}{i!}\text{tr}_s \left( I^2(X',w', (1+\frac{x}{2})\mu(\Sigma_0)^i)\right) \\
		& = 2 \sum_{j=-g}^g\sum_{i=0}^\infty \frac{1}{i!}\chi( KHI_\ast(Y,K;j) ) 2 (2j)^i   = 4 \sum_{j=-g}^g A_j e^{2j}
	\end{align*}
	where $A_j$ is the coefficient of $t^j$ in $\Delta_{(Y,K)}(t)$. Here, in the last equality, we have used that the graded Euler characteristic of $KHI_\ast(Y,K)$ is the Alexander polynomial \cite{KM:alexander,lim}. The ``2'' appearing outside the summands in the middle expressions comes from a gluing factor (see the discussion \cite[\S 5.2]{KM:unknot}). In particular, we obtain
	\begin{equation}\label{eq:swinalexstep}
		\text{SW}_X(K_r) = A_r.
	\end{equation}
	
	We then repeat this analysis in the $U(3)$ setting. Assume $X$ is $U(3)$ simple type (we predict this is true, but we cannot adapt the above argument in the $U(2)$ case; see Remark \ref{rmk:simpletype}). Mari\~{n}o and Moore \cite{marino-moore} conjecture that the basic classes in the $U(3)$ structure theorem are the same as the $U(2)$ basic classes (see also \cite[Conjecture 7.2]{DX}). The $U(3)$ structure theorem then gives the following:
	\begin{equation}\label{eq:u3casealexanderstep}
\widehat{D}_{X, w}^3(e^{s_2 (\Sigma_0)_{(2)}+s_3(\Sigma_0)_{(3)}}) =\sum_{i, j} c_{i, j} e^{s_2\sqrt{3}(i+j) + s_3\sqrt{-3}(i-j)}.
\end{equation}
The conjecture also states that the constants $c_{i,j}$ are given by
\begin{align}
	c_{i,j} &= 2^{\chi(X)+\frac{3}{2}\sigma(X)+\frac{1}{2}K_i\cdot K_j} 3^{2 + \frac{7}{4}\chi(X) + \frac{11}{4}\sigma(X)}\text{SW}_X(K_i)\text{SW}_X(K_j) \nonumber \\[2mm] &= 9\text{SW}_X(K_i)\text{SW}_X(K_j). \label{eq:cijinalexanderstep}
\end{align}
We compute that the left side of \eqref{eq:u3casealexanderstep} is also equal to the following (where the ``$3$'' on the outside of the first sum comes from a gluing factor analogous to the $N=2$ case):
	\begin{align*}
		 3 \sum_{k,l=0}^\infty \frac{1}{k!l!}\text{tr}_s & \left( I^3(X',w', (1+\frac{x_{(2)}}{3} + \frac{x_{(2)}^2}{9})(s_2\mu_2(\Sigma_0))^{k}(s_3\mu_3(\Sigma_0))^l)\right) \\
		& = 3 \sum_{(a,b)\in \mathcal{C}_{g+1}}\sum_{k,l=0}^\infty \frac{1}{k!l!}\chi( KHI^3_\ast(Y,K;a,b) ) 3 (\sqrt{3}a)^k  (\sqrt{-3}b)^l \\
		&   = 9 \sum_{(a,b)\in \mathcal{C}_{g+1}}^g A_{a,b} e^{\sqrt{3} a s_2 + \sqrt{-3}b s_3}
	\end{align*}
	where $A_{a,b}$ is the coefficient of $t_2^at_3^b$ in $\Delta_{(Y,K)}^3(t_3,t_3)$. By \eqref{eq:swinalexstep}, \eqref{eq:u3casealexanderstep} and \eqref{eq:cijinalexanderstep}, we get:
	\[
		A_{a,b} = c_{(a+b)/2, (a-b)/2} = A_{(a+b)/2}A_{(a-b)/2}.
	\]
	This establishes \eqref{eq:conjecturalalexanderrelation} under the assumptions stated above, apart from an overall sign $\pm$, which is determined by conventions that are not discussed here.
	
	A question arises regarding the extent to which the relationship \eqref{eq:conjecturalalexanderrelation}, which is at the level of graded Euler characteristics, might hold at the level of Floer homologies. For example, in the special case that the $U(2)$ and $U(3)$ instanton knot homology groups of $(Y,K)$ are supported in even gradings, \eqref{eq:conjecturalalexanderrelation} implies a vector space isomorphism
	\begin{equation}
		KHI_\ast^3(Y,K) \cong KHI_\ast^2(Y,K)\otimes KHI_\ast^2(Y,K). \label{eq:conjknotu2tou3}
	\end{equation}
	Thus, along the same lines following \eqref{eq:frq}, we are led to ask: is there a knot for which the isomorphism \eqref{eq:conjknotu2tou3} does not hold? 
	
	\begin{remark}
		The $4$-manifold $X=S^1\times Z(K)$ is an instance of Fintushel--Stern's knot surgery on a standard $T^2$ inside the $4$-torus $T^4$, and the relationship \eqref{eq:swinalexstep} is the same kind that is established in \cite{fs-knot-surgery} between Seiberg--Witten invariants and the Alexander polynomial (see \cite{ni} for a more general statement, relevant to our case). One approach to proving \eqref{eq:conjecturalalexanderrelation} is to establish Fintushel--Stern knot surgery formulas in the setting of Donaldson invariants, of type $U(2)$ and $U(3)$, removing the dependency of the above argument on conjectural relationships to Seiberg--Witten theory.
	\end{remark}
	
	\begin{remark}
	The $U(2)$ instanton knot homology group $KHI_\ast^2(Y,K)$ is isomorphic to a version of singular instanton homology where one takes the connected sum of $(Y,K)$ with the Hopf link in $S^3$, uses a bundle associated to an arc connecting the two resulting link components, and the singular condition is that the holonomy of a connection along shrinking meridians limits to an element in $U(2)$ conjugate to $\text{diag}(i,-i)$. See \cite[\S 5.4]{KM:unknot}.
	
		The $U(3)$ instanton knot homology $KHI^3_\ast(Y,K)$ is isomorphic to a version of $U(3)$ singular instanton homology as developed in \cite{KM:YAFT}, using a similar description as above, but with the singular condition that the holonomy of a connection along shrinking meridians limits to an element in $U(3)$ conjugate to $\text{diag}(1,\zeta,\zeta^2)$. A similar application of excision as in \cite{KM:unknot} can be used to give this isomorphism.
		
		A natural question is whether the relationship between Khovanov homology and the instanton group $KHI^2_\ast(S^3,K)$ established in \cite{KM:unknot} has a counterpart in the setting of $U(3)$ (or more generally, $U(N)$), and if so, what quantum knot homology theory plays the role of Khovanov homology in this setting.
	\end{remark}

\bibliography{references}
\bibliographystyle{alpha.bst}
\Addresses

\end{document}